\documentclass[11pt,,leqno,twoside]{article} %4.5.2012, pekka
\usepackage{amssymb}
\usepackage{amsmath}
\usepackage{amsthm}

\usepackage{mathrsfs}

\allowdisplaybreaks

\usepackage{titletoc}
\titlecontents{section}[0pt]{\addvspace{2pt}\filright}
              {\contentspush{\thecontentslabel\ }}
              {}{\titlerule*[8pt]{.}\contentspage}

\pagestyle{myheadings}\markboth{P. Koskela and Y. Zhou}
{Geometry and Analysis of Dirichlet forms}

\textwidth=15cm
\textheight=21cm
\oddsidemargin 0.45cm
\evensidemargin 0.45cm

\parindent=13pt

\def\rr{{\mathbb R}}
\def\rn{{{\rr}^n}}

\def\nn{{\mathbb N}}

\def\bd{{\mathbb D}}

\def\cx{{\mathscr X}}

\def\ca{{\mathcal A}}

\def\fz{\infty}
\def\az{\alpha}

\def\dist{{\mathop\mathrm{\,dist\,}}}
\def\loc{{\mathop\mathrm{\,loc\,}}}
\def\lip{{\mathop\mathrm{\,Lip}}}

\def\lz{\lambda}
\def\dz{\delta}

\def\ez{\epsilon}

\def\kz{\kappa}
\def\bz{\beta}

\def\gz{{\gamma}}

\def\vz{\varphi}

\def\sz{\sigma}

\def\wz{\widetilde}

\def\ls{\lesssim}
\def\gs{\gtrsim}

\def\bint{{\ifinner\rlap{\bf\kern.35em--}
\int\else\rlap{\bf\kern.45em--}\int\fi}\ignorespaces}

\def\bbint{{\ifinner\rlap{\bf\kern.35em--}
\hspace{0.078cm}\int\else\rlap{\bf\kern.45em--}\int\fi}\ignorespaces}

\def\aplip{{\rm \,apLip\,}}

\def\osc{ \mathop \mathrm{\, osc\,} }
\def\dosc{\displaystyle\osc}
\def\diam{{\mathop\mathrm{\,diam\,}}}

\def\dint{\displaystyle\int}

\def\r{\right}
\def\lf{\left}

\newtheorem{thm}{Theorem}[section]
\newtheorem{lem}{Lemma}[section]
\newtheorem{prop}{Proposition}[section]
\newtheorem{rem}{Remark}[section]
\newtheorem{cor}{Corollary}[section]
\newtheorem{defn}{Definition}[section]

\numberwithin{equation}{section}

\begin{document}

\arraycolsep=1pt

\title{\Large\bf
Geometry and Analysis of Dirichlet forms
\footnotetext{\hspace{-0.35cm}
%\noindent{2000 {\it Mathematics Subject Classification:}}
%\endgraf
 {\it Key words and phases:}  Dirichlet form, intrinsic distance,
length structure, differential structure,
Sierpinski gasket,
gradient flow, Ricci curvature, Poincar\'e inequality, metric measure space
\endgraf Pekka Koskela was supported by the Academy of Finland grant 120972. Yuan Zhou was supported by
Program for New Century Excellent Talents in University of Ministry of Education of China
and National Natural Science Foundation of China (Grant No. ).
\endgraf $^\ast$ Corresponding author.
}}
\author{Pekka Koskela and Yuan Zhou\,$^\ast$
}
\date{ }
\maketitle

\begin{center}
\begin{minipage}{13.5cm}\small
{\noindent{\bf Abstract}\quad
Let $ \mathscr E $ be a regular, strongly local Dirichlet form on
$L^2(X,\,m)$ and $d$ the associated intrinsic distance.
Assume that the topology induced by $d$ coincides with the original topology
on $ X$, and that $X$ is compact, satisfies a doubling property and supports a
weak $(1,\,2)$-Poincar\'e inequality.
We first discuss  the (non-)coincidence of the intrinsic length structure and
the gradient structure.
Under the further assumption that the Ricci curvature of $X$ is bounded from
below in the sense of
Lott-Sturm-Villani,
the following are shown to be equivalent:

(i) the heat flow of $\mathscr E$  gives the unique gradient flow
of $\mathscr U_\infty$,

(ii) $\mathscr E$ satisfies the Newtonian property,

(iii) the intrinsic length structure  coincides with the gradient structure.

Moreover, for the standard (resistance) Dirichlet form on the Sierpinski gasket
 equipped with the Kusuoka measure, we  identify the intrinsic length structure
with the measurable Riemannian  and the gradient structures.
We also apply the above results to the (coarse) Ricci curvatures and
asymptotics of the gradient of the heat kernel.
}
\end{minipage}
\end{center}

\medskip

\section{Introduction\label{s1}}

It is well known that on $\rn$, associated to the Dirichlet energy
$$\int_\rn|\nabla f(x)|^2dx,$$
there is a naturally defined heat semigroup (flow).
Jordan, Kinderlehrer and Otto \cite{jko98} and Otto \cite{o01} understood this
heat flow as a gradient flow of the Boltzman-Shannon entropy
with respect to the $L^2$-Wasserstein metric on the space of probability
measures on $\rn$.
Since then this has been extended to Riemannian manifolds, Finsler manifolds,
Heisenberg groups, Alexandrov spaces and metric measure spaces;
see, for example, \cite{o01,ags,v09,e10,j11,os09,gko,ags11}.
The  gradient flow %in widespread settings
has also attracted considerable attention in various settings,
see, for example, \cite{ags,gko,v09,g10}
and the reference therein.
In particular, the works \cite{ags,g10,gko} in abstract setting motivate one
to extend the above phenomenon of \cite{jko98} to settings such as
metric measure spaces with Ricci curvatures of Lott-Sturm-Villani
\cite{s06a,s06b,lv09} bounded from below.

Moreover, a heat semigroup (flow) is naturally associated to any given
Dirichlet form. Via this, a notion of  Ricci curvature bounded from below was
introduced by Bakry and Emery \cite{be83}.
Observe that the Ricci curvature of Bakry-Emery essentially depends on the
differential (gradient) structure.
On the other hand, under some additional assumptions on the underlying metric
measure space,
a notion of  Ricci curvature  bounded from below  was introduced by
Lott-Villani-Sturm \cite{lv09,s06a,s06b}, purely  in terms of the length
structure.
It is then natural to analyze the connections between these different
approaches; see \cite{gko,ags11}
for seminal studies in this direction.
In this paper, we consider the intrinsic length structures and gradient
structures of Dirichlet forms.

Let $ X$ be a locally compact, connected and separable Hausdorff space and
$ m$  a nonnegative Radon measure with support $ X$.
Let $\mathscr E$ be a regular, strongly local Dirichlet form on $L^2( X)$,
$\Gamma$  the squared gradient and
$d$ the intrinsic distance induced by $\mathscr E$.
We always assume that  the topology induced by $d$ coincides with the original topology on $ X$.

In Section \ref{s2}, we  establish the coincidence of
the intrinsic length structure and the gradient structure of Dirichlet forms under a doubling property, a weak Poincar\'e inequality
and the Newtonian property.
Indeed, we prove that if $( X,\,d,\,m)$ satisfies the doubling property,
then for every $u\in\lip( X)$,
the energy measure
$\Gamma(u,\,u)$ is absolutely continuous with respect to $ m$
and $\frac d{d m}\Gamma(u,\,u)\le (\lip\, u )^2$ almost everywhere; see Theorem \ref{t2.1}.
If we further assume that $( X,\,d,\,m)$ supports a weak $(1,\,p)$-Poincar\'e inequality for some $p\in[1,\,\fz)$
and that $( X,\,\mathscr E,\,m)$ satisfies the Newtonian property introduced in this paper,
then $\frac d{d m}\Gamma(u,\,u)= (\lip\, u )^2$ almost everywhere;
see  Theorem \ref{t2.2}.

In Section \ref{sx2},
by perturbing the classical
Dirichlet energy form of $\rr^2$ on a large Cantor set,
we construct a simple example that satisfies a  doubling property and a weak
Poincar\'e inequality, but so that the  intrinsic length structure does not
coincide  with
the gradient structure; see Proposition \ref{p2.3}.
This shows that a  doubling property  and a weak Poincar\'e inequality are
not sufficient to guarantee  that $\frac d{d m}\Gamma(u,\,u)= (\lip\, u )^2$
almost everywhere. A more general construction can be found in \cite{s97}.
Moreover, the gradient (differential) structure of our perturbed Dirichlet form
does not coincide with the distinguished gradient (differential) structure of
Cheeger's; see Proposition \ref{p2.4}.
Recall that if $(X,\,d,\,m)$ satisfies a doubling property and a weak
$(1,\,p)$-Poincar\'e inequality for some
$p\in[1,\,\fz)$, then Cheeger \cite{c99} constructed a differential structure
equipped with a distinguished inner product norm, which coincides with the
gradient  structure of $\Gamma$
if $(X,\,\mathscr E,\,m)$ further satisfies the Newtonian property;
see Corollary \ref{c2.3}.

In Section \ref{s3}, with the aid of the above results,
for the standard (resistance) Dirichlet form on the standard Sierpinski gasket
 equipped with the Kusuoka measure,
we  identify the intrinsic length structure
with the measurable Riemannian  and the gradient structures.
In particular, some refined Rademacher theorems are established.
See Theorem \ref{t3.1} through Theorem \ref{t3.3} below.

In Section \ref{s4},
we assume that $( X,\,d,\,m)$ is compact and satisfies a
doubling property.
If  the entropy $\mathscr U_\fz$ is weak $\lz$-displacement convex for some $\lz\in\rr$,
then  we obtain the equivalence of the following:

(i) for all Lipschitz functions $u$,
$\frac d{d m}\Gamma(u,\,u)= (\lip\, u )^2$ almost everywhere,

(ii) $( X,\,\mathscr E,\,m)$ satisfies the Newtonian property,

(iii) the heat flow of $\mathscr E$ gives the unique gradient flow of $\mathscr U_\fz$;

\noindent  see Theorems \ref{t4.1} and \ref{t4.2} below.
Recall that the existence and uniqueness of the gradient flow
of $\mathscr U_\fz$ was already established in \cite{ags,g10}.

In Section 6, applying the results of Section 2,  we first obtain a dual fomula related to Kuwada's dual theorem and
 the boundedness from below of the coarse Ricci curvature of Ollivier [35]; this does not
require the Newtonian property.
Moreover, with some additional assumptions, relying on \cite{s07}, we obtain
that if the Ricci curvature of $(X,\,d)$ is bounded from below in the sense of
Lott-Sturm-Villani \cite{s06a,s06b,lv09}, then
the Ricci curvature of $(X,\,\mathscr E)$ is bounded from below in the sense
of Bakry-Emery \cite{b97,be83}.

In Section 7, assuming that $(X,\,\mathscr E,\,m)$ is compact and  has a
spectral gap,
we show that  the identity $\Gamma(d_x,\,d_x)=m$ for all
$x\in X$ actually reflects some short time asymptotics of the gradient of the heat kernel.

Finally, we state some {\it conventions}. Throughout the paper,
we denote by $C$ a {\it positive
constant} which is independent
of the main parameters, but which may vary from line to line.
Constants with subscripts, such as $C_0$, do not change
in different occurrences. The {\it notation} $A\ls B$ or $B\gs A$
means that $A\le CB$. If $A\ls B$ and $B\ls A$, we then
write $A\sim B$.
Denote by  $\nn$ the {\it set of positive integers}.
For any locally integrable function $f$,
we denote by $\bbint_E f\,d\mu$ the {\it average
of $f$ on $E$}, namely, $\bbint_E f\,d\mu\equiv\frac 1{\mu(E)}\int_E f\,d\mu$.

%\bigskip%

%\noindent {\bf Contents.}
%
%\medskip

%\contentsline{section}{\numberline{1}Introduction}{1}
%\contentsline{section}{\numberline{2}Dirichlet forms: $\frac{d}{dm}\Gamma(u,\,u)= (\lip u)^2$}{3}
%\contentsline{section}{\numberline{3}Dirichlet forms: $\frac{d}{dm}\Gamma(u,\,u)\ne(\lip u)^2$}{14}
%\contentsline{section}{\numberline{4}Sierpinski gaskets with $\frac{d}{dm}\Gamma(u,\,u)= (\lip_d u)^2$ }{20}
%\contentsline{section}{\numberline{5}Heat flow, gradient flow and $\frac{d}{dm}\Gamma(u,\,u)= (\lip u)^2$}{31}
%\contentsline{section}{\numberline{6}Applications to   (coarse) Ricci curvatures}{41}
%\contentsline{section}{\numberline{7} Asmyptotics of the gradient of the heat kernel}{44}
%\contentsline{section}{\numberline{ } References}{48}

\tableofcontents
\contentsline{section}{\numberline{ } References}{48}

\section{Dirichlet forms: $\frac{d}{dm}\Gamma(u,\,u)= (\lip u)^2$\label{s2}}

The main aim of this section is to  establish the coincidence of
the intrinsic length structure and the gradient structure of Dirichlet forms
under a doubling property, a weak Poincar\'e inequality
and the Newtonian property; see Theorem \ref{t2.1} and Theorem \ref{t2.2} below.

Let $ X$ be a locally compact, connected and separable Hausdorff space and
$ m$ be a nonnegative Radon measure with support $ X$. In this paper,
$L^p( X)$ with $p\in(1,\,\fz]$ is
the space of integrable functions of order $p$ on $ X$;
$\mathscr C( X)$ (resp. $\mathscr C_0( X)$)  the collection of all continuous
functions
(with compact supports) on $ X$, and $\mathscr M( X)$ the collection of all signed Radon measures on $ X$.

Recall that a {\it Dirichlet form} $\mathscr {E}$ on $L^2( X)$ is a closed,
 nonnegative definite and  symmetric bilinear form
defined on a dense linear subspace $\bd$ of $L^2( X),$
that satisfies the {\it Markov property}:
for any $u\in\bd$, $v=\min\{1,\,\max\{0,\,u\}\}$, we have $\mathscr E(v,\,v)\le \mathscr E(u,\,u)$.
Then $\mathscr {E}$ is said to be {\it strongly local} if
$\mathscr E(u,\,v)=0$ whenever  $u,\,v \in \bd$ with $u$  a constant on a neighborhood of the support of $v$;
to be {\it regular} if there exists a subset of
 $ \bd\cap\mathscr C_0( X)$ which is both dense in $\mathscr C_0( X)$
with uniform norm and
in $\bd$ with the norm $\|\cdot\|_\bd$ defined by
$\|u\|_\bd=[\|u\|^2_{L^2( X)}+\mathscr E(u,\,u)]^{1/2}$ for each $u\in\bd$.
Beurling and Deny \cite{bd59} showed that a regular, strongly local Dirichlet form $\mathscr E$ can be written as
$$\mathscr E(u,\,v)=\dint_ X d\Gamma(u,\,v) $$
for all $u,\,v\in\bd$, where $\Gamma$ is an $\mathscr M( X)$-valued
 nonnegative definite and symmetric  bilinear form  defined by the formula
\begin{equation}\label{e2.x1}
 \int_ X \phi\, d\Gamma(u,\,v)\equiv\frac12\lf[\mathscr E(u,\,\phi v)+\mathscr E(v,\,\phi u)-\mathscr E(uv,\,\phi)\r]
\end{equation}
for all  $u,\,v \in \bd\cap L^\fz( X)$  and $\phi\in\bd\cap\mathscr C_0 ( X)$.
We call $\Gamma(u,\,v)$ the {\it  Dirichlet  energy measure (squared gradient)} and
%The Radon-Nikodym derivative $\frac{d\Gamma(u,\,u)}{d m}(z)$ plays the role of the square of
%the length of the gradient of $u\in\bd$ at $z\in X$.
  $ \sqrt{\frac{d}{d m}\Gamma(u,\,u)}$  the {\it length of the gradient}.

Observe that, since $\mathscr E$ is strongly local,
$\Gamma$ is local and satisfies the Leibniz rule and the chain rule, see for example \cite{fot}.
Then both
$\mathscr E(u,\,v)$ and $\Gamma(u,\,v$) can be defined for
$u,\,v\in\bd_\loc$, the {\it collection of all $u\in L^2_\loc( X)$
satisfying} that for each relatively compact set $K\subset X$, there exists a function $w\in\bd$
such that $u=w$ almost everywhere on $K$.
With this, the  {\it intrinsic distance on $ X$ associated  to $\mathscr  E$}  is defined by
\begin{equation}\label{e2.1}
  d(x,\,y)\equiv\sup\{u(x)-u(y):\ u\in\bd_\loc\cap \mathscr C( X),\, \Gamma(u,\,u)\le  m\}.
\end{equation}
Here $\Gamma(u,\,u)\le  m$  means that $\Gamma(u,\,u)$ is absolutely continuous with respect to
 $ m$ and $\frac{d}{d m}\Gamma(u,\,u)\le1$ almost everywhere.

In this paper, we always assume that $\mathscr E$ is a regular, strongly local Dirichlet form on $L^2( X)$,
and that the topology induced by $ d$ is equivalent to the original topology on $ X$.
Notice that,  under this assumption, $d$ is a distance, $d(x,\,y)<\fz$ for all $x,\,y\in X$, and
$( X,\, d)$ is a length space; see \cite{s94,s98b,s10}.

For such a space, the very first question is the coincidence of the
gradient structure of $ \Gamma$
and the length structure of $d$.
It is well known that for all $x\in X$, $\Gamma( d_x,\, d_x)\le m$ as proved
in \cite{s94}.
Very recently, it was observed in \cite{flw11} (see also \cite{s10}) that,
for $u\in\lip( X)$ with Lipschitz constant $1$,
we have $\Gamma(u,\,u)\le m$.
Moreover,  under a doubling assumption, we are able to establish
a pointwise relation   between $\frac d{d m}\Gamma(u,\,u)$ and $\lip\,u$ as
follows.
Here and in what follows, for a measurable function $u$,
its  {\it pointwise Lipschitz constant} is defined as
$$\lip\,u(x)\equiv\limsup_{y\to x}\frac{|u(x)-u(y)|}{ d(x,\,y)},$$
and  $\lip( X)$ stands for the {\it  collection of all measurable functions $u$}
with  $$\|u\|_{\lip( X)}\equiv\sup_{x,\,y\in X,\,x\ne y} \frac{|u(x)-u(y)|}{ d(x,\,y)}<\fz.$$
When it is necessary, we also write $\lip $ as $\lip_d$ to specify
the distance $d$.
We say that $( X,\, d,\, m)$ satifies a  {\it  doubling property} if
there exists a constant $C_0>1$ such that for all $x\in X$ and $ r>0$,
\begin{equation}\label{e2.2}
  m( B(x,\,2r))\le C_0 m(B(x,\,r))<\fz.
\end{equation}

\begin{thm}\label{t2.1}
Suppose that $( X,\, d,\, m)$ satisfies a doubling property.
Then $\lip( X)\subset\bd_\loc$ and for every $u\in\lip( X)$, $\Gamma(u,\,u)\le (\lip\, u )^2 m ,$
that is, $\Gamma(u,\,u)$ is absolutely continuous with respect to $ m$
and $$\frac d{d m}\Gamma(u,\,u)\le (\lip\, u )^2$$ almost everywhere.
\end{thm}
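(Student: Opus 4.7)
\emph{Plan.} I would reduce the desired pointwise bound to the already-known global statement $\Gamma(w,w)\le m$ for $1$-Lipschitz $w$ (cited from \cite{flw11}) via a McShane-type extension combined with the strong locality of $\mathscr E$, and then upgrade the resulting ball-wise inequality to the pointwise statement using the Lebesgue differentiation theorem, which is available under the doubling assumption.

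First I would dispose of $\lip(X)\subset\bd_\loc$. Given $u\in\lip(X)$ with $L:=\|u\|_{\lip(X)}<\infty$, the function $u/L$ is $1$-Lipschitz, so the cited result yields $\Gamma(u,u)\le L^2 m$ globally; in particular $\Gamma(u,u)$ is finite on every relatively compact set. Regularity of $\mathscr E$ furnishes cutoff functions in $\bd\cap\mathscr C_0(X)$, and on each relatively compact $K\subset X$ multiplication by such a cutoff produces a representative in $\bd$ agreeing with $u$ on $K$, yielding $u\in\bd_\loc$.

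For the pointwise bound, fix $x_0\in X$ and $r>0$ and set $L_r:=\sup_{y,z\in\overline{B(x_0,r)},\,y\ne z}|u(y)-u(z)|/d(y,z)$. The McShane extension
\[
v(y):=\inf_{z\in\overline{B(x_0,r)}}\bigl\{u(z)+L_r\,d(y,z)\bigr\}
\]
is globally $L_r$-Lipschitz on $X$ and equals $u$ on $\overline{B(x_0,r)}$. Applying the scaled $1$-Lipschitz result to $v$ gives $\Gamma(v,v)\le L_r^2\,m$, and strong locality of $\mathscr E$ forces $\Gamma(u,u)=\Gamma(v,v)$ on the open ball $B(x_0,r)$; hence $\Gamma(u,u)(B(x_0,r))\le L_r^2\,m(B(x_0,r))$. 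The doubling hypothesis permits Lebesgue differentiation, so at $m$-a.e.\ $x_0$
\[
\frac{d\Gamma(u,u)}{dm}(x_0)=\lim_{r\to 0^+}\frac{\Gamma(u,u)(B(x_0,r))}{m(B(x_0,r))}\le \limsup_{r\to 0^+}L_r^2.
\]

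The technical heart is to show $\limsup_{r\to 0^+}L_r\le\lip u(x_0)$ at $m$-a.e.\ $x_0$. Since $(X,d)$ is a length space, any two points of $B(x_0,r)$ are joined by near-geodesics contained in $B(x_0,2r)$, and since $\lip u$ serves as an upper gradient of the Lipschitz function $u$, this yields only the crude bound $L_r\le\sup_{B(x_0,2r)}\lip u$, whose $r\to 0^+$ limit is the upper-semicontinuous envelope of $\lip u$ and can strictly exceed $\lip u(x_0)$ on a set of positive measure. To close this gap I would exploit the approximate continuity of $\lip u$ at $m$-a.e.\ point (a standard consequence of doubling): at such $x_0$ and for each $\epsilon>0$, the set $\{y\in B(x_0,r):\lip u(y)>\lip u(x_0)+\epsilon\}$ has vanishing density as $r\to 0^+$. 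One then runs the McShane step on a suitable Lusin-type subset of $B(x_0,r)$ whose pairwise Lipschitz constant is $\le\lip u(x_0)+\epsilon$, and controls the contribution from the small-density complement via the crude global bound $\Gamma(u,u)\le L^2m$. Reconciling the measure-theoretic smallness of the bad set with the topological locality of $\Gamma$---that is, arranging the McShane modification so that it alters the energy measure only on a set of density zero at $x_0$---is the crux of the proof.
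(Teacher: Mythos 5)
Your first paragraph and your overall skeleton (McShane extension, locality of $\Gamma$ on a ball, Lebesgue differentiation under doubling) are fine, and the crude ball-wise estimate $\Gamma(u,u)(B(x_0,r))\le L_r^2\,m(B(x_0,r))$ is correct. But the proof has a genuine gap exactly where you place "the crux": passing from $\limsup_{r\to0}L_r$, which is only the upper semicontinuous envelope of $\lip u$ at $x_0$, down to $\lip u(x_0)$. The repair you sketch does not go through as stated, for two reasons. First, approximate continuity of $\lip u$ gives pointwise smallness of $\lip u$ on a high-density subset $S\subset B(x_0,r)$, but this does not yield a pairwise Lipschitz bound $|u(y)-u(z)|\le(\lip u(x_0)+\epsilon)\,d(y,z)$ for $y,z\in S$: the connecting curves leave $S$, so the upper-gradient argument only sees $\sup_{B(x_0,2r)}\lip u$ again. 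Any usable pairwise control holds only at scales bounded below relative to $r$ (this is precisely the content of Cheeger's covering lemma, Lemma \ref{l2.3}, with its restriction $d(x,y)\ge r_{n,j}/n$ in \eqref{e2.7}), and note that Theorem \ref{t2.1} assumes only doubling, so no Poincar\'e-type tool is available to improve this. Second, even granting a good constant on such a set or on a finite net, the McShane modification agrees with $u$ only on a non-open set (respectively, only at the net points), where the open-set locality of $\Gamma$ you invoke gives nothing; there is no mechanism in your outline to transfer the energy bound of the modified function back to $u$.

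The paper resolves both difficulties by a construction you do not have: on each ball of Cheeger's covering it McShane-extends $u$ from a finite $\frac1n r_{n,j}$-net with the slightly enlarged constant $L_{n,j}=\lip u(x_{n,j})+\frac1n$ (so only difference quotients at scale $\ge r_{n,j}/n$ are needed), controls the energy of the resulting maximum of functions $a_k-L\,d(z_k,\cdot)$ by Lemma \ref{l2.1} (which uses only $\Gamma(d_x,d_x)\le m$ from Sturm and the truncation property, not the result of \cite{flw11} you start from), and then, since the approximants $u_n$ agree with $u$ only on the nets, it recovers the bound for $u$ itself by showing $u_n\to u$ in $L^2(B(x_0,r_0))$ and invoking weak convergence and lower semicontinuity of the form, giving the integrated estimate \eqref{e2.8} on every ball before differentiating. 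This limiting argument, together with the scale-restricted Lipschitz control from Lemma \ref{l2.3}, is the missing substance behind the step you flag as the crux; without it (or the refined level-set locality of strongly local forms, which you also do not invoke), the proposal does not yield $\frac{d}{dm}\Gamma(u,u)\le(\lip u)^2$ but only the weaker bound by the upper semicontinuous envelope of $\lip u$.
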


The proof of Theorem \ref{t2.1} relies on  the following three auxiliary lemmas.

\begin{lem}\label{l2.1} For  $n\in\nn$,
$E=\{x_i\}_{i=1}^n\subset X$ and $A=\{a_i\}_{i=1}^n\subset\rr$,
set $$ d_{A,\,E}(x)\equiv\max_{i=1,\,\cdots,\,n} \lf\{a_i- d(x_i,\,x)\r\}.$$
Then $\Gamma( d_ {A,\,E},\, d_{A,\,E})\le m.$

Moreover, if $\Gamma(d_x,\,d_x)=m$ for every $x\in X$,
then $\Gamma( d_ {A,\,E},\, d_{A,\,E})= m.$
\end{lem}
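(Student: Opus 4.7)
My approach is induction on $n$, built on the known inequality $\Gamma(d_x,\,d_x)\le m$ and the strong locality of $\mathscr E$ under taking a maximum.

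The base case $n=1$ is immediate: since $d_{A,E}(x)=a_1-d(x_1,x)$, and $\Gamma$ is insensitive to additive constants and to a change of sign, $\Gamma(d_{A,E},\,d_{A,E})=\Gamma(d_{x_1},\,d_{x_1})\le m$ by the result of Sturm recalled just before the lemma; under the hypothesis of the moreover statement this base case is an equality.

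For the inductive step I write
$$d_{A,E}=\max\{d_{A',E'},\,a_n-d(x_n,\cdot)\},$$
with $A'=\{a_i\}_{i=1}^{n-1}$ and $E'=\{x_i\}_{i=1}^{n-1}$. Since $\bd_\loc$ is a lattice for regular Dirichlet forms (a consequence of the Markov property and the chain/truncation rule), $d_{A,E}\in\bd_\loc$. The key tool is the standard pointwise decomposition of the energy measure of a maximum in the strongly local setting: for $u,v\in\bd_\loc$,
$$\Gamma(u\vee v,\,u\vee v)=\mathbf 1_{\{u>v\}}\Gamma(u,u)+\mathbf 1_{\{u<v\}}\Gamma(v,v)+\mathbf 1_{\{u=v\}}\Gamma(u,u),$$
where on the coincidence set $\{u=v\}$ strong locality forces $\Gamma(u,u)=\Gamma(v,v)$, so the choice of representative there is immaterial. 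Applying this with $u=d_{A',E'}$ and $v=a_n-d(x_n,\cdot)$, the inductive hypothesis together with the base case give $\Gamma(u,u)\le m$ and $\Gamma(v,v)\le m$; summing over the three disjoint sets yields $\Gamma(d_{A,E},\,d_{A,E})\le m$. For the moreover statement the same decomposition applies verbatim, but now each of the inductive bounds is an equality, so the sum produces $m$ on all of $X$.

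The main obstacle I foresee is the rigorous justification of the maximum formula, in particular on the set $\{u=v\}$, which may carry positive $m$-measure. This will be handled by invoking strong locality of $\mathscr E$, which forces $\Gamma(u-v,\,u-v)=0$ on $\{u=v\}$ and hence $\Gamma(u,u)=\Gamma(v,v)$ there; with this, the inductive decomposition is unambiguous and both the inequality and the equality variants follow routinely from a single induction.
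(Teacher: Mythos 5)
Your proposal is correct and follows essentially the same route as the paper: induction on $n$ with the base case from Sturm's bound $\Gamma(d_x,\,d_x)\le m$, and the inductive step via the truncation property for the energy measure of a maximum, which is exactly the identity $\Gamma(u\vee v,\,u\vee v)=1_{u>v}\Gamma(u,\,u)+1_{u\le v}\Gamma(v,\,v)$ the paper quotes from Sturm; your explicit treatment of the coincidence set via strong locality is just a slightly more detailed reading of that same formula, and the equality case is handled identically.
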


\begin{proof}
We prove this by induction.
It is easy to see that if $n=1$,
then from $\Gamma (a_1,\,v)=0$ for all $v\in\bd_\loc$ and from
$\Gamma(  d_{x_1},\,   d_{x_1})\le m$ proven in \cite{s94}, we deduce that
\begin{equation}\label{e2.3}
\Gamma(a_1- d_{x_1},\, a_1- d_{x_1})=
\Gamma(a_1 ,\, a_1- d_{x_1})-\Gamma(  d_{x_1},\, a_1- d_{x_1})
=\Gamma(  d_{x_1},\,   d_{x_1})\le m.
\end{equation}
Now assume that  the claim holds for $n$. We are going to prove it for $n+1$.
To this end, let $E_{n+1}=\{x_i\}_{i=1}^{n+1}\subset X$ and $A_{n+1}\subset\{a_i\}_{i=1}^{n+1}\in\rr$.
Notice that
\begin{eqnarray*}
  d_{A_{n+1},\,E_{n+1}} &&=\max_{i=1,\,\cdots,\,n+1} \lf\{a_i- d_{x_i}\r\}\\
&&=\max\lf\{\max_{i=1,\,\cdots,\,n} \lf\{a_i- d_{x_i}\r\},\,a_{n+1}- d_{x_{n+1}}\r\}\\
&&=\max\lf\{ d_{A_{n },\,E_{n }} ,\,a_{n+1}- d_{x_{n+1}}\r\},
\end{eqnarray*}
where $A_n=A_{n+1}\setminus\{a_{n+1}\}$ and $E_n=E_{n+1}\setminus\{x_{n+1}\}$.
Recall that the following truncation property was proven in \cite{s94}:
$$\Gamma(u\wedge v,\,u\wedge v)=1_{u<v}\Gamma(u ,\,u )+1_{u\ge v}\Gamma(v,\,v),$$
where $u\wedge v=\min\{u,\,v\}$, and $1_F$ refers to the characteristic
function
of $F.$  Denote $u\vee v=\max\{u,\,v\}$.
Then we  have
$$\Gamma(u\vee v,\,u\vee v)=\Gamma((-u)\wedge(- v),\,(-u)\wedge(- v))
=1_{u>v}\Gamma(u ,\,u )+1_{u\le v}\Gamma(v,\,v),$$
and moreover, if $\Gamma(u ,\,u )\le  m$ and $\Gamma(v,\,v)\le m$, then
 $\Gamma(u\vee v,\,u\vee v)\le m.$
Now $$\Gamma(  d_ {A_{n},\,E_{n} }  ,\,
 d_{A_{n},\,E_{n}} ) \le m$$ by induction
and
$$\Gamma( a_{n+1}- d_{x_{n+1}},\,
 a_{n+1}- d_{x_{n+1}} )\le m$$ by \eqref{e2.3}.
Hence, we have
$$\Gamma( d_{A_{n+1},\,E_{n+1}},\, d_{A_{n+1},\,E_{n+1}})\le m,$$
 as desired.

Moreover,  if $\Gamma(d_x,\,d_x)=m$ for every $x\in\cx$,
then \eqref{e2.3} holds with $\le$ replaced by $=$.
With this,  by  induction, we further obtain $\Gamma( d_ {A,\,E},\, d_{A,\,E})= m.$
\end{proof}

\begin{lem}\label{l2.2}
Let $V\subset X$ be a bounded open set. Define  $u(x)\equiv\sup_{z\in V}\{v(z)- d(z,\,x)\}.$
If $v\in\bd_\loc$, $1_V\Gamma(v,\,v)\le 1_V m$ and $\|v\|_{\lip(V)}\le1$,
then  $ \Gamma(u,\,u)\le  m$.
\end{lem}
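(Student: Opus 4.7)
My plan is to realize $u$ as the monotone, locally uniform limit of finite distance-type maxima to which Lemma~\ref{l2.1} applies, and then promote the bound $\Gamma\le m$ to the limit via a closability/lower-semicontinuity argument for $\mathscr E$.

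First, I would check that $u$ is real-valued and $1$-Lipschitz on $X$. Since $V$ is bounded and $\|v\|_{\lip(V)}\le 1$, $v$ is bounded on $V$, so the supremum defining $u(x)$ is finite for every $x\in X$. Each map $x\mapsto v(z)-d(z,x)$ is $1$-Lipschitz on $X$, so as the pointwise-finite supremum of such a family, $\|u\|_{\lip(X)}\le 1$. As a sanity check, for $x\in V$ the bound $\|v\|_{\lip(V)}\le 1$ gives $v(z)-d(z,x)\le v(x)$ with equality at $z=x$, so $u=v$ on $V$, consistently with the hypothesis $1_V\Gamma(v,v)\le 1_V m$.

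Next, I would approximate $u$ by finite maxima. Using separability of $X$, pick a countable dense subset $\{z_i\}_{i\ge 1}$ of $V$. Continuity of $v$ on $V$ (as a Lipschitz function there) and of $d(\cdot,x)$ yields $u(x)=\sup_{i\ge 1}\{v(z_i)-d(z_i,x)\}$ for every $x\in X$. Setting $u_n(x):=\max_{1\le i\le n}\{v(z_i)-d(z_i,x)\}$, one has $u_n\uparrow u$ pointwise; since the $u_n$ are uniformly $1$-Lipschitz and $u$ is continuous, the convergence is locally uniform. Applying Lemma~\ref{l2.1} with $a_i=v(z_i)$ gives $u_n\in\bd_\loc$ and $\Gamma(u_n,u_n)\le m$ for every $n$.

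The remaining and main step is to transfer this uniform bound to $u$, which is the hard part of the argument. Fixing a cutoff $\phi\in\bd\cap\mathscr C_0(X)$ equal to $1$ on a given relatively compact set, the locally uniform convergence of $u_n$ combined with the Leibniz rule and $\Gamma(u_n,u_n)\le m$ yields a uniform $\bd$-bound on $\phi u_n$; extracting a weakly convergent subsequence and matching it with the $L^2$-limit identifies $\phi u\in\bd$, hence $u\in\bd_\loc$. For any nonnegative $\psi\in\bd\cap\mathscr C_0(X)$, writing, via \eqref{e2.x1},
$$\int_X\psi\,d\Gamma(w,w)=\mathscr E(w,\psi w)-\tfrac12\mathscr E(w^2,\psi)$$
for $w\in\{u_n,u\}$ and invoking lower semicontinuity of the bilinear form one passes to the limit to obtain
$$\int_X\psi\,d\Gamma(u,u)\le\liminf_n\int_X\psi\,d\Gamma(u_n,u_n)\le\int_X\psi\,dm,$$
equivalently $\Gamma(u,u)\le m$. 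The main obstacle is promoting the uniform \emph{measure} bound $\Gamma(u_n,u_n)\le m$ to a measure bound on $u$ rather than a merely integrated one; I expect the monotone locally uniform convergence $u_n\uparrow u$, closability of $\mathscr E$, and careful use of the Leibniz/chain rule to reconcile the bilinear polarization of $\mathscr E$ with the quadratic nature of $\Gamma(w,w)$ to be essential.
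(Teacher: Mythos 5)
Your proposal is correct and follows essentially the same route as the paper: approximate $u$ by finite maxima of the functions $v(z_i)-d(z_i,\cdot)$, invoke Lemma \ref{l2.1} to get $\Gamma(u_n,u_n)\le m$, and pass to the limit by weak compactness in $\bd_\loc$ together with lower semicontinuity of the localized energy. The only (minor) difference is that you obtain the convergence $u_n\to u$ from a countable dense subset of $V$ via monotonicity and a Dini-type argument, whereas the paper uses maximal $\tfrac1n\diam V$-separated nets and the bound $\|v\|_{\lip(V)}\le1$ to prove a quantitative uniform estimate $u\le u_n+\tfrac Cn\diam V$.
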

\begin{proof}
For every $n\in\nn$,
choose  a maximal finite set of $V$, $\{x_{n,\,i}\}\subset V$, such that $ d(x_{n,\,i},\,x_{n,\,j})\ge \frac1n\diam V$,
and for all $x\in X$, set
$$u_n\equiv\max_i\{v(x_{n,\,i})- d(x_{n,\,i},\,x)\}.$$
Then, by Lemma \ref{l2.1}, $\Gamma(u_n,\,u_n)\le m$, which implies that
$\{u_n\}_{n\in\nn}$ is a locally bounded set and hence has a subsequence which converges weakly in $\bd_\loc$
to some $u_0$. Without loss of generality, we still denote this subsequence by $\{u_n\}_{n\in\nn}$.
Now $\Gamma(u_0,\,u_0)\le\lim_{n\to\fz} \Gamma(u_n,\,u_n)\le m$.

It suffices to  show that $ u =u_0$. To see this, we first notice that
$u_n(x)\le u(x)$ for all $x\in X$.  On the other hand,  obviously, for all $x\in V$, $u(x)=v(x)$ and for all $i$, $u(x_{n,\,i})=v(x_{n,\,i})=u_n(x_{n,\,i})$.
For any $x\in X$, there exists $z\in V$ such that
$u(x)\le u(z)- d(z,\,x)+\frac1n\diam V.$
By the choice of $x_{n,\,i}$, we can find $x_{n,\,i}\in B(z,\,\frac2n\diam V)$.
Since $\|v\|_{\lip(V)}\le1$, we have  $|u(z)-u(x_{n,\,i})|= |v(z)-v(x_{n,\,i})|\le d(x,\,x_{n,\,i})$.
Hence
\begin{eqnarray*}
 u(x)&&\le u(z)- d(z,\,x)+\frac1n\diam V\\
&&=u(x_{n,\,i})- d(x_{n,\,i},\,x)+
 u(z)-u(x_{n,\,i})- d(z,\,x)+d(x_{n,\,i},\,x)+\frac1n\diam V\\
&&\le u_n(x)+ 2 d(z,\,x_{n,\,i})+\frac1n\diam V\\
&&\le u_n(x)+\frac1n\diam V.
\end{eqnarray*}
So $u_n\to u$ uniformly. Thus $u_n\to u=u_0$ weakly in $\bd_\loc$,
which implies that
$$ \Gamma(u,\,u)\le\liminf_{n\to\fz} \Gamma(u_n,\,u_n)\le  m.$$
This finishes the proof Lemma \ref{l2.2}.
\end{proof}

The following lemma was established in \cite[Lemma 6.30]{c99}.
Its proof uses the Lusin theorem and relies on decay property of a doubling measure on a length space
 observed in \cite{cm98}.

\begin{lem}\label{l2.3}
Suppose that $( X,\,d,\, m)$ satisfies a doubling propery.
Then for every ball $B(x_0,\,r_0)\subset X$, there exists a constant $C_2\ge 1$ such that
for every  $n \in\nn$ and  $u\in\lip(B(x_0,\,r_0))$,
there exists %$0<r_u<\min\{r_0,\,1\}$ and
a finite collection $\{B(x_{n,\,j},\,r_{n,\,j})\}$ of mutually disjoint balls
with $x_{n,\,j}\in B(x_0,\,r_0)$ and $r_{n,\,j}\le r_0$ satisfying that
\begin{eqnarray}
&\dist(B(x_{n,\,i},\,r_{n,\,i}),\, B(x_{n,\,j},\,r_{n,\,j}))\ge\frac12 (r_{n,\,i}+r_{n,\,j}), &\label{e2.4}\\
& m(B(x_0,\,r_0)\setminus\cup_{j}B(x_{n,\,j},\,r_{n,\,j}))\le C_2\frac1n m(B(x_0,\,r_0)),&\label{e2.5}\\
%&\frac1Cr_u\le r_j\le r_u,&\label{e2.16}\\
&\int_{B(x_{n,\,j}\,3r_{n,\,j})}|\lip\,u(x)-\lip\, u(x_{n,\,j})|^2\,d m\le\frac1nm(B(x_{n,\,j}\,3r_{n,\,j}))&\label{e2.6}
\end{eqnarray}
and so for all  $x,\,y\in B(x_{n,\,j},\,r_{n,\,j})$ with $ d(x,\,y)\ge \frac1 n r_{n,\,j}$,
\begin{equation}\label{e2.7}
 \frac{|u(x)-u(y)|}{ d(x,\,y)}<\lip\,u(x_{n,\,j})+\frac1n.
\end{equation}
\end{lem}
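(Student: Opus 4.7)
The plan is to combine a Lusin-type approximation for $\lip u$ with a Vitali covering argument, exploiting both the Lebesgue differentiation property of doubling measures and the length-space structure of $(X,d)$. Since $u\in\lip(B(x_0,r_0))$ the function $\lip u$ is bounded and Borel on $B(x_0,r_0)$, so Lusin's theorem supplies a closed set $F\subset B(x_0,r_0)$ with $m(B(x_0,r_0)\setminus F)\le\tfrac{1}{4C_0^{2}n}m(B(x_0,r_0))$ on which $\lip u$ is continuous. The Lebesgue differentiation theorem for doubling measures then guarantees that $m$-a.e.\ $x\in F$ is a density point of $F$.

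The central step is, for each density point $x\in F$, to choose a scale $r(x)>0$ small enough that four properties hold on $B(x,3r(x))$ simultaneously: the inclusion $B(x,3r(x))\subset B(x_0,r_0)$; the density estimate $m(B(x,3r(x))\setminus F)\le\tfrac{1}{Cn}m(B(x,3r(x)))$; the $L^{2}$ oscillation bound \eqref{e2.6}, obtained by splitting the integral into a part over $F$ (made small by continuity of the restriction of $\lip u$ to $F$) and a part over its complement (absorbed by boundedness of $\lip u$ together with the small density defect); and the pointwise estimate \eqref{e2.7}. The last clause is the subtle one, since the infinitesimal definition of $\lip u(x)$ only bounds $|u(x)-u(z)|/d(x,z)$ for $z$ near $x$, whereas \eqref{e2.7} asks for such a bound for arbitrary pairs $y,z\in B(x,r(x))$ with $d(y,z)\ge r(x)/n$. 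To obtain it I would use the length-space property of $(X,d)$ to connect $y$ to $z$ by a curve $\gamma\subset B(x,2r(x))$ of length at most $(1+1/n)d(y,z)$, then invoke the decay-of-doubling-measure observation of \cite{cm98} to arrange that the portion of $\gamma$ lying outside $F$ has length controlled by a constant multiple of the density defect times $d(y,z)$. Continuity of $\lip u$ on $F$ near $x$ forces $\lip u(w)\le\lip u(x)+\tfrac{1}{2n}$ for all $w\in F\cap B(x,2r(x))$; integrating this bound along the $F$-portion of $\gamma$ and using the global Lipschitz norm $\|u\|_{\lip(B(x_0,r_0))}$ together with $d(y,z)\ge r(x)/n$ to absorb the exceptional arc then yields \eqref{e2.7}.

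A Vitali $5r$-covering argument applied to $\{B(x,\tau r(x))\}$ for a suitably small $\tau$ depending on the doubling constant then extracts a disjoint subfamily whose enlargements cover $F$ up to a set of arbitrarily small measure; with $\tau$ chosen appropriately, the separation \eqref{e2.4} holds automatically from disjointness. Iterating on the uncovered part and retaining only those balls with radius above a threshold (finite in number by doubling) produces a finite collection for which \eqref{e2.5} follows by adding the small Lusin defect $m(B(x_0,r_0)\setminus F)$ to the residual uncovered mass of $F$, the combined total being bounded by $\tfrac{C_2}{n}m(B(x_0,r_0))$ for a constant $C_2$ depending only on $C_0$. The main obstacle is clause \eqref{e2.7}: transferring infinitesimal pointwise data at a single center to a uniform pointwise bound for every admissible pair in the surrounding ball. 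This is exactly where the length-space hypothesis together with the doubling-measure decay of \cite{cm98} enter in an essential way; the remaining ingredients are standard manipulations of Lusin sets and Vitali-type coverings.
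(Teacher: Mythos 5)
Your argument for \eqref{e2.7} contains the decisive gap. You reduce it to finding, for $y,z\in B(x,r(x))$ with $d(y,z)\ge r(x)/n$, a near-geodesic $\gamma$ whose portion outside the Lusin set $F$ has length controlled by the measure defect $m(B(x,3r(x))\setminus F)$. Doubling plus the length-space property give no such control: smallness of the measure of the open set $B\setminus F$ puts no constraint on how much length connecting curves must spend inside it. (Consider a metric tree carrying a doubling measure: the unique arc from $y$ to $z$ can be forced through a neighborhood of a long sub-arc whose measure is as small as you like, so every admissible curve spends a definite fraction of $d(y,z)$ outside $F$.) Producing curves that quantitatively avoid sets of small measure is a modulus/Poincar\'e-type statement, and no Poincar\'e inequality is assumed in this lemma. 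The decay property of \cite{cm98} that the paper alludes to is an estimate for measures of thin annuli $B(w,(1+\delta)\rho)\setminus B(w,\rho)$ in a doubling length space; it cannot be "invoked" to steer a curve around $B\setminus F$, so this step is not merely unproved but unprovable by the route you chose. A secondary issue is the covering step: with the separation \eqref{e2.4}, each selected ball permanently forbids an annular collar of comparable width from being covered by later balls, and showing that the total mass of these collars is admissible in \eqref{e2.5} is exactly where the annular-decay estimate of \cite{cm98} is needed; your sketch passes over this.

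Note that the paper itself does not prove the lemma; it quotes \cite[Lemma 6.30]{c99}, whose proof uses Lusin/Egorov arguments together with the \cite{cm98} decay property, the latter entering on the covering side rather than through curves. The clause \eqref{e2.7} is obtained without any curves: besides Lusin continuity of $\lip u$ on $F$, one applies Egorov to the monotone approximants $L_\delta u(w)\equiv\sup_{0<d(w,w')\le\delta}|u(w)-u(w')|/d(w,w')$, which decrease to $\lip u(w)$ as $\delta\downarrow0$, to get a set on which this convergence is uniform. Then for arbitrary $x,y\in B(x_{n,j},r_{n,j})$ with $d(x,y)\ge r_{n,j}/n$, the density of $F$ (via doubling) produces $x',y'\in F$ with $d(x,x'),d(y,y')\le\eta r_{n,j}$; the replacement costs $2\|u\|_{\lip}\eta r_{n,j}\le 2\|u\|_{\lip}\eta n\,d(x,y)$, while $|u(x')-u(y')|\le (L_{3r_{n,j}}u(x'))\,d(x',y')\le(\lip u(x_{n,j})+2\epsilon)(1+2\eta n)\,d(x,y)$ by the Egorov bound and Lusin continuity at the center; choosing $\epsilon\sim n^{-1}$ and $\eta\sim n^{-2}$ gives \eqref{e2.7}. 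Your Lusin-only setup cannot deliver this, because continuity of the limit function $\lip u$ on $F$ says nothing about difference quotients of $u$ at the definite scale $r_{n,j}$; the missing Egorov ingredient is what your curve construction was trying, unsuccessfully, to substitute for.
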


\begin{proof}[Proof of Theorem \ref{t2.1}.]
Let $u\in\lip( X)$.
It suffices to prove that for every ball $B(x_0,\,r_0)\subset  X$,
\begin{equation}\label{e2.8}
\int_ X   1_{B(x_0,\,r_0)}\,d \Gamma(u,\,u) \le \int_ X   1_{B(x_0,\,r_0)}(\lip\,u)^2\,d m.
\end{equation}
Indeed, by this and a covering argument, one can show that
$\Gamma(u,\,u)$ is absolutely continuous with respect to $m$,
and $\frac d{d m}\Gamma(u,\,u)\le (\lip\,u)^2$ almost everywhere.
We omit the details.

To prove \eqref{e2.8}, we need the following construction via the MacShane extension,
which is a slight modification of that in \cite{c99}.
For $n\in\nn$,  let $\{B(x_{n,\,j},\,r_{n,\,j})\}$ be the covering provided by Lemma \ref{l2.3}.
For every $j$, we choose a maximal set
$\{z_{n,\,j,\,k}\}\subset B(x_0,\,r_0)$ such that for $k\ne \ell$,
$$ d(z_{n,\,j,\,k},\,z_{n,\,j,\,\ell})\ge \frac1{ n}r_{n,\,j}.$$
Define a function $u_n$ on $\cup_jB(x_{n,\,j},\,r_{n,\,j})$ as follows:
 for $x\in B(x_{n,\,j},\,r_{n,\,j})$, set
$$u_n(x)\equiv\max_k\{u(z_{n,\,j,\,k})-L_j d(z_{n,\,j,\,k},\,x)\},$$
where $L_{n,\,j}\equiv\lip\,u(x_{n,\,j})+\frac1n$,
and for $x\in  X\setminus\cup_{j}B(x_{n,\,j},\,r_{n,\,j})$,
set
$$u_n(x)\equiv\sup_{z\in\cup_{j}B(x_{n,\,j},\,r_{n,\,j}) }\lf\{u_n(z)-\|u_n\|_{\lip( \cup_{j}B(x_{n,\,j},\,r_{n,\,j}))} d(z,\,x)\r\}.$$

Notice that for almost all $x\in B(x_{n,\,j},\,r_{n,\,j})$,
since
\begin{equation}\label{e2.9}
L_{n,\,j}\ge \max_{k\ne\ell}\lf\{\frac{|u(z_{n,\,j,\,k})-u(z_{n,\,j,\,\ell})|}{ d(z_{n,\,j,\,k},z_{n,\,j,\,\ell})} \r\},\end{equation}
we have
\begin{equation}\label{e2.10}\lip\,u_n(x)=\|u_n\|_{\lip(B(x_{n,\,j},\,r_{n,\,j}))}= L_{n,\,j}. \end{equation}
Then by Lemma \ref{l2.1} and the strong locality of $\Gamma$,
\begin{eqnarray}
 &&1_{B(x_{n,\,j},\,r_{n,\,j})}\Gamma \lf(\frac1{L_{n,\,j}}u_n,\,\frac1{L_{n,\,j}}u_n\r)\label{e2.11}\\
&&\quad= 1_{B(x_{n,\,j},\,r_{n,\,j})}
 \Gamma \lf(\max_k\lf\{\frac 1{L_{n,\,j}}u(z_{n,\,j,\,k})- d(z_{n,\,j,\,k},\,\cdot)\r\},\,\r.\nonumber\\
&&\quad\quad\lf.\max_k\lf\{\frac 1{L_{n,\,j}}u(z_{n,\,j,\,k})-  d(z_{n,\,j,\,k},\,\cdot)\r\}\r)\nonumber\\
&&\quad\le
 1_{B(x_{n,\,j},\,r_{n,\,j})} m.\nonumber
\end{eqnarray}
Moreover, by Lemma \ref{l2.3},
\begin{equation}\label{e2.14}
 \Gamma(u_n,\,u_n)\le  \|u \|_{\lip(\cup_{j}B(x_{n,\,j}\,r_{n,\,j}))}m=(\sup_j L_{n,\,j})^2 m\le(\|u\|_{\lip( X)}+1)^2m,
\end{equation}
which implies  that
 $\int_ X1_{B(x_0,\,r_0)}\Gamma(u_n,\,u_n)$ is bounded in $\bd$.
So there is a subsequence of $\{1_{B(x_0,\,r_0)}u_n\}_{n\in\nn}$ weakly
converging to some $v\in\bd$. Without loss of generality, we still denote the
subsequence by the sequence itself, and hence
 $$\int_ X\,d\Gamma(v,\,v)\le\liminf_{n\to\fz}\int_ X 1_{B(x_0,\,r_0)}\,d\Gamma(u_n,\,u_n).$$

On the other hand,  by \eqref{e2.9},
we have $u_n(z_{n,\,j,\,k})=u(z_{n,\,j,\,k})$ for all $j$ and $k$.
For every $x\in B(x_j,\,r_j)$, by the choice of $z_{n,\,j,\,k}$,
there exists $z_{n,\,j,\,k}$ such that $d(x,\,z_{n,\,j,\,k})\le \frac1m$,
and hence
\begin{eqnarray*}
 |u(x)-u_n(x)|&&\le|u(x)-u(z_{n,\,j,\,k})|+|u_n(x)-u_n(z_{n,\,j,\,k})|\\
&&\le (\|u\|_{\lip( X)}+L_{n,\,j})d(x,\,z_{n,\,j,\,k})
\le \frac1n(2\|u\|_{\lip( X)}+1).
\end{eqnarray*}
For $x\in B(x_0,\,r_0)\setminus\cup_j{B(x_{n,\,j},\,r_{n,\,j})}$, we have
$$|u_n(x)-u(x)|\le|u_n(x)-u_n(z_{n,\,j,\,k})|+|u(x)-u(z_{n,\,j,\,k})|\le 2(2\|u\|_{\lip( X)}+1) r_0.$$
Thus we have
\begin{eqnarray}\label{e2.15}
 \|u_n-u\|^2_{L^2(B(x_0,\,r_0))}&&\ls\|u-u_n\|_{L^2(\cup_jB(x_{n,\,j},\,r_{n,\,j}))}\\
&&\quad+
2(2\|u\|_{\lip( X)}+1) r_0  m(B(x_0,\,r_0)\setminus\cup_j B(x_{n,\,j},\,r_{n,\,j}))\nonumber\\
&&\ls C(u,\,B(x_0,\,r_0))\frac1n m(B(x_0,\,r_0)),\nonumber
\end{eqnarray}
where $C(u,\,B(x_0,\,r_0))$ is a constant independent of $n$. This means that
    $\{1_B(x_0,\,r_0)u_n\}_{n\in\nn}$
converges to  $1_B(x_0,\,r_0)u$ in $L^2( X)$,
and hence $v=1_B(x_0,\,r_0)u$,
which together with the  locality of $\Gamma$ implies that
\begin{equation}\label{e2.16}
\int_ X1_{B(x_0,\,r_0)}\,d\Gamma(u,\,u)\le\liminf_{n\to\fz}\int_ X 1_{B(x_0,\,r_0)}\,d\Gamma(u_n,\,u_n).
\end{equation}

Now we estimate $\int_ X 1_{B(x_0,\,r_0)}\Gamma(u_n,\,u_n)$ from above.
Observe that by \eqref{e2.11},
\begin{equation}\label{e2.17}
1_{B(x_{n,\,j},\,r_{n,\,j})}\Gamma ( u_n,\, u_n)\le (L_j)^21_{B(x_{n,\,j},\,r_{n,\,j})} m=(\lip\,u_n)^21_{B(x_{n,\,j},\,r_{n,\,j})} m
\end{equation}
which yields
\begin{eqnarray}\label{e2.18}
\sum_{j}\int_ X 1_{B(x_{n,\,j},\,r_{n,\,j})}\,d
\Gamma ( u_n,\, u_n)
\le\sum_{j}\int_ X 1_{B(x_{n,\,j},\,r_{n,\,j})} (\lip\,u_n)^2\,d m
\end{eqnarray}

Moreover, by the triangle inequality, Lemma \ref{l2.3} again, \eqref{e2.10},
\eqref{e2.10} and the doubling property, we have
\begin{eqnarray}\label{e2.19}
&&\lf|\lf\{\sum_{j}\int_ X 1_{B(x_{n,\,j},\,r_{n,\,j})}(\lip\, u)^2\,d m\r\}^{1/2}-
\lf\{\sum_{j}\int_ X 1_{B(x_{n,\,j},\,r_{n,\,j})}(\lip\, u_n)^2\,d m\r\}^{1/2}\r| \\
&&\ \le \lf\{\sum_{j}\int_ X 1_{B(x_{n,\,j},\,r_{n,\,j})}(\lip\, u-\lip\, u_n)^2\,d m\r\}^{1/2}\nonumber\\
&&\ \le \lf\{\sum_{j}\int_ X 1_{B(x_{n,\,j},\,r_{n,\,j})}(\lip\, u-\lip\, u(x_{n,\,j}))^2\,d m\r\}^{1/2}\nonumber\\
&&\ \quad \quad+\lf\{\sum_{j}\int_ X 1_{B(x_{n,\,j},\,r_{n,\,j})}(L_{n,\,j}-\lip\, u(x_{n,\,j}))^2\,d m \r\}^{1/2}\nonumber\\
&&\ \ls\frac1n \lf\{\sum_{j} m(B(x_{n,\,j},\,3r_{n,\,j}))\,d m\r\}^{1/2}+\frac1n [ m(B(x_0,\,2r_0))]^{1/2} \nonumber\\
&&\ \ls\frac1n m(B(x_0,\,r_0)). \nonumber
\end{eqnarray}
>From this and \eqref{e2.18}, it follows that
\begin{eqnarray*}
&&\lf\{\sum_{j}\int_ X 1_{B(x_{n,\,j},\,r_{n,\,j})}\,d\Gamma(u_n,\,u_n)\r\}^{1/2}\\
&&\quad\le
 \lf\{\sum_{j}\int_ X 1_{B(x_{n,\,j},\,r_{n,\,j})}(\lip\, u)^2\,d m\r\}^{1/2}+C\frac1n m(B(x_0,\,r_0))
\\ &&\quad\le\lf\{\int_ X 1_{B(x_0,\,r_0)}(\lip\, u)^2\,d m\r\}^{1/2}+C\frac1n m(B(x_0,\,r_0))
\end{eqnarray*}
which together with    \eqref{e2.5} and \eqref{e2.14}
yields
\begin{eqnarray*}
 \lf\{\int_ X 1_{B(x_0,\,r_0)}\,d\Gamma(u_n,\,u_n) \r\}^{1/2}&&\le  \lf\{\sum_{j}\int_ X 1_{B(x_{n,\,j},\,r_{n,\,j})}
\,d\Gamma(u_n,\,u_n) \r\}^{1/2}+ \frac Cn  m(B(x_0,\,r_0))\\
&&\le \lf\{\int_ X 1_{B(x_0,\,r_0)}(\lip\, u)^2\,d m\r\}^{1/2}+C\frac1n m(B(x_0,\,r_0)).
\end{eqnarray*}
Therefore, by \eqref{e2.16}, we obtain \eqref{e2.8}.
\end{proof}

\begin{cor}\label{c2.1}
Assume that $(X,\,d,\,m)$ satisfies a doubling property.
For every $u\in\lip(X)$,
$$\|u\|_{\lip(X)}=\sup_{x\in X}\lip\,u(x)=\|\lip\,u\|_{L^\fz(X)}=\lf\|\frac{d}{dm}\Gamma(u,\,u)\r\|_{L^\fz(X)}^{1/2}.$$
\end{cor}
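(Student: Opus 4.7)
The plan is to establish the four-way equality through a short cyclic chain of inequalities, using Theorem \ref{t2.1} for one direction and the definition of the intrinsic distance $d$ for the other.

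First I would record the ``easy'' inequalities. For each fixed $x\in X$, the pointwise Lipschitz constant satisfies $\lip\,u(x)\le\|u\|_{\lip(X)}$ directly from its definition, so $\sup_{x\in X}\lip\,u(x)\le\|u\|_{\lip(X)}$. Since the essential supremum is dominated by the pointwise supremum, $\|\lip\,u\|_{L^\fz(X)}\le\sup_{x\in X}\lip\,u(x)$. Next, Theorem \ref{t2.1} gives $\frac{d}{dm}\Gamma(u,u)\le(\lip\,u)^2$ almost everywhere, and taking essential suprema yields
\begin{equation*}
\lf\|\tfrac{d}{dm}\Gamma(u,u)\r\|_{L^\fz(X)}^{1/2}\le\|\lip\,u\|_{L^\fz(X)}.
\end{equation*}

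The remaining (and only nontrivial) step is to close the loop by showing
\begin{equation*}
\|u\|_{\lip(X)}\le \lf\|\tfrac{d}{dm}\Gamma(u,u)\r\|_{L^\fz(X)}^{1/2}=:L.
\end{equation*}
If $L>0$, I would consider $u/L$. By Theorem \ref{t2.1}, $u\in\bd_\loc$, and $u$ is continuous since it is Lipschitz, so $u/L\in\bd_\loc\cap\mathscr C(X)$. Moreover $\Gamma(u/L,u/L)=L^{-2}\Gamma(u,u)\le m$ by the choice of $L$. Plugging $u/L$ into the definition \eqref{e2.1} of the intrinsic distance yields $u(x)/L-u(y)/L\le d(x,y)$ for all $x,y\in X$, i.e.\ $\|u\|_{\lip(X)}\le L$. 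The degenerate case $L=0$ is handled by the same argument applied to $u/\epsilon$ for arbitrary $\epsilon>0$, since then $\Gamma(u/\epsilon,u/\epsilon)=0\le m$ forces $|u(x)-u(y)|\le\epsilon\,d(x,y)$ for every $\epsilon>0$, so $u$ is constant and $\|u\|_{\lip(X)}=0=L$.

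Concatenating all four inequalities closes the cycle and gives the desired equality. The only substantive input is Theorem \ref{t2.1} together with the defining property of $d$; no additional obstacle is expected, since the Newtonian property is not required here.
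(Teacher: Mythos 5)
Your proof is correct and follows essentially the same route as the paper: the easy inequalities via Theorem \ref{t2.1}, then closing the loop by normalizing $u$ so that $\Gamma(u,u)\le m$ and invoking the defining formula \eqref{e2.1} for $d$. The only cosmetic difference is that the paper avoids your case split at $L=0$ by using the uniform regularization $v_\ez=u\,(\|\frac{d}{dm}\Gamma(u,u)\|_{L^\fz(X)}+\ez)^{-1/2}$ and letting $\ez\to0$.
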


\begin{proof}
By the definition of $\lip\,u(x)$, we easily have $\|u\|_{\lip(X)}\ge\lip\, u(x) $ for all $x\in X$.
The inequality
 $\sup_{x\in X}\lip\, u(x)\ge\|\lip\,u\|_{L^\fz(X)}$ is trivial.
By Theorem \ref{t2.1}, we also have $\|\lip\,u\|_{L^\fz(X)}\ge\|\frac{d}{dm}\Gamma(u,\,u)\|_{L^\fz(X)}^{1/2}$.
Now, the proof of Corollary \ref{c2.1} is reduced to proving that
$\|u\|_{\lip(X)}\le \|\frac{d}{dm}\Gamma(u,\,u)\|_{L^\fz(X)}^{1/2}$.

Fix $u\in\lip(X)$ with $\|\frac{d}{dm}\Gamma(u,\,u)\|_{L^\fz(X)}<\fz$ (by Theorem \ref{t2.1} this actually holds for each $u\in \lip(X)$).
Then, for $\ez >0$, we have $v_\ez\equiv u(\|\frac{d}{dm}\Gamma(u,\,u)\|_{L^\fz(X)}+\ez)^{-1/2}\in\bd_\loc $ and
$\Gamma(v_\ez,\,v_\ez)\le m$.
By \eqref{e2.1}, we have that for all $x,\,y\in X$, $|v_\ez(x)- v_\ez(y)|\le d(x,\,y)$,
which implies that
$$ |u(x)-u(y) |\le \lf(\lf\|\frac{d}{dm}\Gamma(u,\,u)\r\|_{L^\fz(X)}+\ez\r)^{1/2}d(x,\,y).$$
This, together with the arbitrariness of $\ez>0$, implies that
$\|u\|_{\lip(X)}\le \|\frac{d}{dm}\Gamma(u,\,u)\|_{L^\fz(X)}^{1/2}$
 as desired.
\end{proof}

%\begin{cor}\label{c2.2}
%Assume that $(X,\,d)$ satisfies a doubling property.
%For every $u\in\lip(X)$, set $\Lambda(u)\equiv\{x\in X:\ \frac{d}{dm}\Gamma(u,\,u)(x)<(\lip u(x))^2\}$.
%Then $X\setminus \Lambda (u)$ is dense in $X$.
%Moreover, there exists a measurable set $\Lambda\subset X$ such that $m(\Lambda(u)\setminus\Lambda )=0$,
%$X\setminus\Lambda$ is dense in $X$ and $m(\Lambda\setminus\wz\Lambda )=0$ whenever $\wz\Lambda\subset X$ satisfies
%$m(\Lambda(u)\setminus\wz \Lambda )=0$ and $X\setminus\wz\Lambda$ is dense in $X$.
%\end{cor}

%\begin{proof}
%Assume that $X\setminus \Lambda (u)$ is not dense in $X$.
%Then we can find a point $x\in  \Lambda(u) $ such that $d(x,\, X\setminus \Lambda(u))$
%and hence $B(x,\,d(x,\, X\setminus \Lambda(u)))
%\subset  \Lambda(u)$. On this ball we have $\frac{d}{dm}\Gamma(u,\,u)(x)<(\lip u(x))^2$.

%\end{proof}

\begin{rem}\rm
(i) In the proof above, we used the result that
$\Gamma(d_x,\,d_x)\le m$ from \cite{s94},
but did not use the conclusion from \cite{flw11}
that this also holds for each  $1$-Lipschitz function $u$.

(ii) The doubling property in Theorem \ref{t2.1} can be relaxed to a local doubling property: for every $x_0\in X$,
there exists $r_{x_0}>0$ and $C_{x_0}$ such that for all $x\in B(x_0,\,r_{x_0})$ and $r\le r_{x_0}$,
 $ m( B(x,\,2r))\le C_{x_0} m(B(x,\,r))<\fz.$
We would like to know if Theorem \ref{t2.1} holds for a general strongly local Dirichlet form.
\end{rem}

Applying Theorem \ref{t2.1}, we clarify the relations of two kinds of weak Poincar\'e inequalities on $ X$
with the aid of a quasi-Newtonian property.

Recall that  $( X,\,\mathscr E,\, m)$ is said to support a
 {\it weak $(1,\,p)$-Poincar\'e inequality} with $p\in[1,\,\fz)$
if there exist constants $\lz\ge1$ and $C >0$ such that for all $u\in\lip( X)$,
 $x\in X$ and $r>0$,
\begin{equation}\label{e2.20}
\bint_{B(x,\,r)}|u- u_{B(x,\,r)}|\,d m\le Cr \lf\{ \bint_{{B(x,\,\lz r)}}\,\lf[\frac d{d m}\Gamma(u,\,u) \r]^{p/2}\,d m\r\}^{1/p}.
\end{equation}
Similarly,  $( X,\, d,\, m)$  is said to support a {\it weak $(1,\,p)$-Poincar\'e inequality}
if  \eqref{e2.20} holds with $\lf[\frac d{d m}\Gamma(u,\,u) \r]^{p/2}$ replaced by $(\lip\,u)^p$.

We say that $( X,\,\mathscr E,\,m)$ satisfies a {\it $K$-quasi-Newtonian property} if
for every $u\in\lip( X)$, there exists a Borel representative $g$
of $\sqrt{\frac d{dm}\Gamma(u,\,u)}$
such that for all Lipschitz curves $\gz:[0,\,1]\to X$,
\begin{equation*}
 |u(\gz(0))-u(\gz(1))|\le K\int_\gz g\,ds.
\end{equation*}
Here
 $g$ is called a {\it Borel representative} of a measurable function $h$ if
$g$ is a  Borel measurable function and
satisfies that $g(x)\ge h(x)$ for all $x\in X$ and
$g(x)=h(x)$ for almost all $x\in X$.
If $K=1$, we say that $( X,\,\mathscr E,\,m)$ satisfies the
{\it Newtonian property};
otherwise we say that $( X,\,\mathscr E,\,m)$ satisfies a
{\it quasi-Newtonian property}.

\begin{prop}\label{p2.1}
Suppose that $( X,\, d,\, m)$ satisfies a doubling property.
Then for every $p\in[1,\,\fz)$, $( X,\,\mathscr E,\, m)$ supports a weak $(1,\,p)$-Poincar\'e inequality
if and only if $( X,\, \mathscr E,\, m)$ satisfies a quasi-Newtonian property and
$( X,\, d,\, m)$ supports a weak $(1,\,p)$-Poincar\'e inequality.
\end{prop}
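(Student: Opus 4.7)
The plan is to prove the equivalence by handling the two implications separately; in both, Theorem \ref{t2.1} serves as the essential bridge, since it relates $\sqrt{d\Gamma(u,u)/dm}$ and $\lip u$ pointwise a.e.\ on $X$.

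For the forward direction, assume $(X,\mathscr E,m)$ supports a weak $(1,p)$-Poincar\'e inequality. The $(X,d,m)$ weak $(1,p)$-Poincar\'e inequality comes essentially for free: Theorem \ref{t2.1} gives $[d\Gamma(u,u)/dm]^{p/2} \le (\lip u)^p$ almost everywhere, so the right-hand side of the $d$-version pointwise dominates that of the $\mathscr E$-version, and the same averaged estimate of $\bint_B |u - u_B|\,dm$ carries over with the same constants (and dilation $\lz$). For the quasi-Newtonian property, I would run the standard Haj\l asz--Koskela--Heinonen style telescoping argument: given $u \in \lip(X)$ and a Lipschitz curve $\gz:[0,1] \to X$, I partition $[0,1]$ into small intervals, cover each piece $\gz([t_i,t_{i+1}])$ by a ball $B_i$ of comparable radius, apply the $\mathscr E$-Poincar\'e inequality on each $B_i$ to control $|u_{B_i} - u_{B_{i+1}}|$ by local $L^p$-averages of $\sqrt{d\Gamma(u,u)/dm}$, and then telescope and refine to obtain $|u(\gz(0)) - u(\gz(1))| \le K \int_\gz g\,ds$ for a suitable Borel representative $g$ of $\sqrt{d\Gamma(u,u)/dm}$.

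For the backward direction, assume the quasi-Newtonian property and the $(X,d,m)$ weak $(1,p)$-Poincar\'e inequality. The strategy is to reverse the Theorem \ref{t2.1} comparison almost everywhere by showing $\lip u \le K g$ a.e., where $g$ is the Borel representative supplied by quasi-Newtonian. At a Lebesgue-type point $x$ of $g$, pick a sequence $y_n \to x$ and Lipschitz curves $\gz_n$ from $x$ to $y_n$ of length comparable to $d(x,y_n)$ (available since $(X,d)$ is a length space), apply quasi-Newtonian to get $|u(x) - u(y_n)|/d(x,y_n) \le (K/d(x,y_n)) \int_{\gz_n} g\,ds$, and conclude $\lip u(x) \le K g(x)$ via a Lebesgue-differentiation or approximate-continuity argument for $g$ along the collapsing curves $\gz_n$. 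Inserting this pointwise bound into the $d$-Poincar\'e inequality then immediately produces the $\mathscr E$-Poincar\'e inequality with constants scaled by $K$.

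The main obstacle I anticipate is the step $\lip u \le K g$ in the backward direction. Lebesgue differentiation along one-dimensional curves is not automatic for an arbitrary Borel representative of an $L^p$ function on a doubling metric measure space, and a clean argument will likely require either a judicious choice of $g$ that exploits doubling, a Fubini-type selection of curves over the ball, or a maximal-function bound of the form $\frac{1}{d(x,y_n)}\int_{\gz_n} g\,ds \lesssim Mg(x)$ combined with $Mg(x) \to g(x)$ at Lebesgue points. Once this pointwise comparison is in hand, the forward chaining argument is standard, and the backward direction reduces to direct substitution into the hypothesis.
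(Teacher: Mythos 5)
Your reduction of the $(X,d,m)$ Poincar\'e inequality to Theorem \ref{t2.1} in the forward direction is exactly the paper's step, but the other two steps of your plan have genuine gaps, and in both cases the missing ingredient is the weak upper gradient machinery of Lemmas \ref{l2.4} and \ref{l2.5}. For the quasi-Newtonian property, your chaining argument with the $\mathscr E$-Poincar\'e inequality along a curve controls $|u(\gz(0))-u(\gz(1))|$ by sums of terms $r_i\bigl(\bbint_{\lz B_i}[\frac{d}{dm}\Gamma(u,u)]^{p/2}dm\bigr)^{1/p}$, i.e.\ by a maximal-function type majorant of $\sqrt{\frac{d}{dm}\Gamma(u,u)}$; such a function is in general not almost everywhere equal to $\sqrt{\frac{d}{dm}\Gamma(u,u)}$, whereas the quasi-Newtonian property insists on a Borel \emph{representative}. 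Since a curve is $m$-null, there is no way to dominate ball averages taken near the curve by the line integral over the curve of an a.e.-equal representative, so "telescope and refine" cannot land on the required $g$. The paper's route is different: Lemma \ref{l2.4} (whose proof is where the telescoping is actually used, at the level of $p$-weak upper gradients, together with Lemma \ref{l2.5}) gives the purely a.e.\ bound $\aplip u\le C_1^{1/2}\sqrt{\frac{d}{dm}\Gamma(u,u)}$; the curve-wise inequality then comes for free from the elementary fact that for $u\in\lip(X)$ the function $\lip u$ is an honest upper gradient along every rectifiable curve, so that $g\equiv\max\{h,\,C_1^{-1/2}\lip u\}$ (with $h$ a Borel version of $\sqrt{\frac{d}{dm}\Gamma(u,u)}$) is a Borel representative witnessing the $\sqrt{C_1}$-quasi-Newtonian property.

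In the backward direction the obstacle you flag is real and, as set up, not repairable: $g$ is a fixed Borel function known only up to an $m$-null set, curves are $m$-null, and neither an estimate $\frac1{d(x,y_n)}\int_{\gz_n}g\,ds\lesssim Mg(x)$ nor approximate continuity of $g$ along collapsing curves holds in general (line integrals are simply not controlled by the Hardy--Littlewood maximal function of the ambient measure; controlling them is precisely what modulus and weak upper gradients are for). The paper sidesteps pointwise differentiation along curves entirely: since the quasi-Newtonian inequality holds for \emph{all} Lipschitz curves, $Kg$ is a true, hence $p$-weak, upper gradient of $u$, and Lemma \ref{l2.5} (doubling plus the $(X,d,m)$ Poincar\'e inequality) identifies the minimal $p$-weak upper gradient of $u\in\lip(X)$ with $\lip u$ a.e., so $\lip u\le Kg=K\sqrt{\frac{d}{dm}\Gamma(u,u)}$ a.e.; substituting this into the $(X,d,m)$ inequality gives the $\mathscr E$-version with constant multiplied by $K$. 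So the correct proof of both implications runs through Lemma \ref{l2.5} and Lemma \ref{l2.4} rather than through curve-by-curve estimates, and you should restructure your argument around those lemmas.
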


To prove Proposition \ref{p2.1}, we recall the notion of an upper gradient; see \cite{hk98} and also \cite{k03,s00}.
Recall that a nonnegative Borel measurable function $g$ is called a {\it $p$-weak upper gradient of $u$} with $p\in[1,\,\fz)$ if
\begin{equation}\label{e2.21}
|u(x)-u(y)|\le \int_\gz g\,ds
\end{equation}
for all $\gz\in\Gamma_{\rm rect}\setminus \Gamma_o$,
where $x$ and $y$ are the endpoints of $\gz$,
$\Gamma_{\rm rect}$ denotes the collection of non-constant compact rectifiable  curves
and $\Gamma_o$ has $p$-modulus zero in the sense that
$$ \inf\lf\{\|\rho\|_{L^p(X)}^p:\ \rho\ \mbox{is non-negative, Borel measurable,\ } \int_\gz\rho\,ds\ge1\mbox{\ for all\ }\gz\in\Gamma_o\r\}=0.$$
We denote by $N^{1,\,p}( X)$ the collection of functions $u\in L^p( X)$ that have a
$p$-weak upper gradient $g\in L^p( X)$,
and moreover, $\|u\|_{N^{1,\,p}( X)}=\|u\|_{L^p( X)}+\inf_{g}\|g\|_{L^p( X)}$,
where $g$ is taken over all $p$-weak  upper gradients of $u$.
We denote by  $N^{1,\,p}_\loc( X)$ the class of functions $u\in L_\loc^p(X)$ that have a
$p$-weak upper gradient that belongs to $L^p(B)$ for each ball $B$.

For the following relations between the weak upper gradient and the  (approximate) pointwise Lipschitz constant,
see  \cite[Theorem 6.38]{c99} with a correction in \cite[Remark 2.16]{k04}  and also \cite{s00,k03}.
For a measurable function $u$,
 its {\it approximate pointwise Lipschitz constant} is defined as
$$\aplip u(x)\equiv\inf_A\limsup_{y\in A,\,y\to x}\frac{|u(x)-u(y)|}{ d(x,\,y)} $$
for every $x\in X$,
where the infimum is taken over all Borel sets $A\subset X$ with a point of density at $x$.
Notice that if $u\in\lip( X)$, then $\aplip u=\lip\, u$ almost everywhere.

\begin{lem}\label{l2.5}
Suppose that $( X,\,d,\, m)$ satisfies a doubling property  and
 supports a weak $(1,\,p)$-Poincar\'e inequality for some $p\in[1,\,\fz)$.
Then for every $u\in N^{1,\,p}_\loc( X)$,
there exists a unique  $p$-weak upper gradient $g_u$ of $u$ such that
 $g_u=\aplip u$ almost everywhere and $g_u\le g$ almost everywhere whenever $g$ is a $p$-weak upper gradient of $u$.
In particular, if $u\in\lip( X)$, then $g_u=\lip\, u$ almost everywhere.
\end{lem}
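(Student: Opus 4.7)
The plan is to assemble this statement from three ingredients already present in the literature on Sobolev spaces over metric measure spaces: existence of a minimal $p$-weak upper gradient, its identification with $\aplip u$ under doubling plus a Poincar\'e inequality, and the almost-everywhere agreement of $\aplip u$ and $\lip u$ for Lipschitz $u$. Since the paper cites \cite{c99}, \cite{k04}, \cite{s00}, \cite{k03}, the proof is essentially a matter of extracting and combining the relevant statements, then localizing.

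First, I would establish existence and uniqueness of a minimal $p$-weak upper gradient $g_u$ for every $u\in N^{1,p}_\loc(X)$, with $g_u\le g$ a.e.\ for any $p$-weak upper gradient $g$. This is a lattice property of $p$-weak upper gradients: if $g_1,g_2$ are $p$-weak upper gradients of $u$, so is $\min\{g_1,g_2\}$, since the curve family on which the inequality \eqref{e2.21} fails for $\min\{g_1,g_2\}$ is contained in the union of the two exceptional families, hence of $p$-modulus zero. A standard minimization in $L^p(B)$ over balls $B$, together with the uniqueness up to $m$-null sets built into the theory, yields $g_u$. This argument is due to Shanmugalingam \cite{s00}; see also \cite{k03}.

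Next, I would invoke Cheeger's identification theorem \cite[Theorem 6.38]{c99} (with the correction in \cite[Remark 2.16]{k04}): under the doubling and weak $(1,p)$-Poincar\'e assumptions, the minimal $p$-weak upper gradient of $u\in N^{1,p}(X)$ equals $\aplip u$ almost everywhere. This is the heart of the matter; the nontrivial direction requires the Poincar\'e inequality to bound $g_u$ by an $L^p$-average of the pointwise oscillation of $u$ and then a density-point argument. Since $N^{1,p}_\loc(X)$ is the issue in our formulation, I would localize Cheeger's statement to balls $B\subset X$: applying the theorem to a cutoff version of $u$ (or directly to $u|_B$ via the extension arising from $u\in N^{1,p}(2B)$) and using the locality of both $g_u$ and $\aplip u$, we conclude $g_u=\aplip u$ a.e.\ on each $B$, hence on $X$.

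Finally, to pass from $\aplip u$ to $\lip u$ for $u\in\lip(X)$, note that $\aplip u\le \lip u$ is immediate from the definitions. For the reverse inequality at a.e.\ $x$, one uses that any point $x$ is a Lebesgue density point of $X$ (which is the whole space), together with the Lipschitz bound: for any Borel set $A$ with density one at $x$, the limsup defining $\lip u(x)$ can be approximated along $A$ because points outside $A$ form a set of vanishing relative density, and the Lipschitz control $|u(x)-u(y)|\le \|u\|_{\lip(X)} d(x,y)$ prevents omitted directions from carrying the limsup. A clean version of this is already recorded in \cite{c99,k03}.

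The only real obstacle is the black-box invocation of \cite[Theorem 6.38]{c99}, whose proof is long and uses the Poincar\'e inequality essentially; since we are allowed to quote it, the remaining work, namely the lattice/minimization argument and the Lipschitz density-point argument, is routine.
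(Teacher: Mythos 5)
Your proposal is correct and matches the paper's treatment: the paper gives no independent proof of Lemma \ref{l2.5}, but presents it as a known result quoted from \cite[Theorem 6.38]{c99} with the correction of \cite[Remark 2.16]{k04}, together with \cite{s00,k03}, exactly the sources you combine. The extra glue you supply (the lattice/minimization argument for the minimal $p$-weak upper gradient, the localization to balls, and the density-point argument showing $\aplip u=\lip u$ for Lipschitz $u$ under doubling) is routine and sound, so there is nothing further to add.
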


Proposition \ref{p2.1} follows from Theorem \ref{t2.1} and the following lemma.

\begin{lem} \label{l2.4}
Suppose that $( X,\,d,\, m)$ satisfies a doubling property  and
$( X,\,\mathscr E,\,  m)$ supports a weak $(1,\,p)$-Poincar\'e inequality for some $p\in[1,\,\fz)$.
Then there exists a constant $C_1\ge1$ such that for all $u\in N^{1,\,p}_\loc( X)$,
$$(\aplip  u )^2\le C_1\frac d{d m}\Gamma(u,\,u) $$ almost everywhere.
\end{lem}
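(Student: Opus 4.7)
The plan is to show that, up to a universal multiplicative constant, $g:=\sqrt{\tfrac{d}{dm}\Gamma(u,u)}$ is a $p$-weak upper gradient of $u$, and then to invoke Lemma~\ref{l2.5}: since the minimal $p$-weak upper gradient $g_u$ equals $\aplip u$ almost everywhere, any domination of $g_u$ by a multiple of $g$ yields the desired inequality after squaring.

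First I would address $u\in\lip(X)$, which is the technical core. By Theorem~\ref{t2.1} such a $u$ lies in $\bd_\loc$ with $g\le\lip u$, so $g\in L^\infty_\loc(X)\subset L^p_\loc(X)$ and the weak $(1,p)$-Poincar\'e inequality for $(X,\mathscr E,m)$ becomes a genuine $(1,p)$-Poincar\'e pair for $(u,g)$ in the Haj\l{}asz--Koskela sense. The standard chaining argument along a telescope of dyadic balls (using only doubling and iteration of the weak PI) then produces, for almost every $x,y\in X$, the pointwise estimate
\[
|u(x)-u(y)|\le C'\,d(x,y)\Bigl[(M_{\lz d(x,y)}g^p(x))^{1/p}+(M_{\lz d(x,y)}g^p(y))^{1/p}\Bigr],
\]
where $M_R$ denotes the restricted Hardy--Littlewood maximal operator. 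This is equivalent, via integration along curves (see \cite{hk98,c99}), to the statement that a constant multiple $C''g$ is a $p$-weak upper gradient of $u$; alternatively, fixing $x$ a Lebesgue point of $g^p$ and letting $y\to x$ directly yields $\aplip u(x)\le 2C'\,g(x)$ almost everywhere.

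Second, I would pass from the Lipschitz case to general $u\in N^{1,p}_\loc(X)$ by Lipschitz approximation. Under our standing doubling and PI assumptions, standard density results (Cheeger \cite{c99}, Heinonen--Koskela \cite{hk98}) furnish Lipschitz approximants $u_k\to u$ in $N^{1,p}_\loc$ with $g_{u_k}\to g_u$ in $L^p_\loc$. Applying Step~1 to each $u_k$ gives $(\aplip u_k)^2\le C_1\,\tfrac{d}{dm}\Gamma(u_k,u_k)$; passing to the limit with the help of lower semicontinuity of $\mathscr E$ on the right, together with the identification $g_u=\aplip u$ from Lemma~\ref{l2.5} on the left, then yields the claim.

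The main obstacle will be this second step: reconciling convergence in the length-structure space $N^{1,p}_\loc$ with convergence in the form-domain $\bd_\loc$. One must ensure $u\in\bd_\loc$ (so that $\tfrac{d}{dm}\Gamma(u,u)$ is defined at all) and that the Dirichlet energy densities of the approximants jointly control that of the limit. Since this is essentially the comparison of the two Sobolev-type structures that the whole section is designed to analyze, care is needed to choose approximants whose energy densities converge in a manner compatible with both structures, and to avoid any circular invocation of Theorem~\ref{t2.1} or the Newtonian property.
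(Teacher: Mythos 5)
Your Step 1 is, in substance, the paper's own argument: the paper also runs a telescoping/chaining argument from the weak $(1,p)$-Poincar\'e inequality for the pair $(u,\sqrt{\frac{d}{dm}\Gamma(u,u)})$, obtaining restricted maximal-function majorants $g_k$, shows these are $p$-weak upper gradients of (a ball-average representative of) $u$ by integrating along $p$-a.e.\ curve, invokes Lemma \ref{l2.5} to identify the minimal $p$-weak upper gradient with $\aplip u$, and finishes with the Lebesgue differentiation theorem. Crucially, though, the paper does this \emph{directly for an arbitrary} $u\in N^{1,p}_\loc(X)$ (via the representative $\wz u$ defined by ball averages and absolute continuity on $p$-a.e.\ curve from \cite{s00}), so no Lipschitz reduction is needed. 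A small caveat on your shortcut: in the pointwise estimate the maximal function is evaluated at $y$, not at $x$, so ``fix a Lebesgue point of $g^p$ and let $y\to x$'' does not by itself give $\aplip u(x)\le Cg(x)$; one must either take the weak-upper-gradient route (as the paper does) or run a density-set argument splitting $g^p$ at the level $(g(x)+\ez)^p$ -- repairable, but not ``direct.''

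The genuine gap is your Step 2, and it is not just a technicality. The Lipschitz approximation you want -- approximants $u_k\to u$ converging in $N^{1,p}_\loc$ \emph{and} with Dirichlet energy densities converging to that of $u$ -- is not available at this stage without circularity: the density of Lipschitz functions in $\bd$ and the identification $\bd=N^{1,2}(X)$ are precisely the content of Theorem \ref{t2.2}(i), whose proof (via \cite{kst04,s09}) uses Theorem \ref{t2.1} together with this very lemma. Moreover, even granted such approximants, your limit passage is oriented the wrong way: to pass $(\aplip u_k)^2\le C_1\frac{d}{dm}\Gamma(u_k,u_k)$ to the limit you need lower semicontinuity on the \emph{left} (e.g.\ $g_u\le\liminf_k g_{u_k}$ in $L^p_\loc$, coming from $N^{1,p}_\loc$-convergence) and an \emph{upper} bound, indeed convergence, of the energy densities on the right; the closedness/lower semicontinuity of $\mathscr E$ only gives $\mathscr E(u,u)\le\liminf_k\mathscr E(u_k,u_k)$, which is the opposite inequality and concerns total energies rather than densities. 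The remedy is simply to drop the reduction: your chaining argument uses only the $(1,p)$-Poincar\'e inequality for the pair and local $p$-integrability of the energy density, both of which are available for the general $u$ to which the lemma is applied, and Lemma \ref{l2.5} already applies to every $u\in N^{1,p}_\loc(X)$ -- which is exactly how the paper proceeds.
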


\begin{proof}[Proof of Lemma \ref{l2.4}]
Let $u\in N^{1,\,p}_\loc( X)$ and let $g_u$ be the $p$-weak upper gradient of $u$ as in Lemma \ref{l2.5}. Set
$$g_k(x)\equiv\sup_{j\ge k}\lf\{\bint_{B(x,\,\lz 2^{-j})}\lf[\frac{d}{d m}\Gamma(u,\,u)\r]^{p/2}\,d m\r\}^{1/p}.$$
Then $g_k$ is Borel measurable; indeed, $g_k$ is lower semicontinuous.
Observe that if $g_k(x)<\fz$, then $\lim_{j\to\fz}u_{B(x,\,2^{-j})}$ exists.
In fact,
since for every $j$,
$$ \bint_{B(x,\,2^{-j})}|u -u_{B(x,\,2^{-j})}| \,d m
\ls 2^{-j} \lf\{\bint_{B(x,\,\lz 2^{-j})}\lf[\frac{d}{d m}\Gamma(u,\,u)\r]^{p/2}\,d m\r\}^{1/p},$$
 by a telescope argument, we have
$$|u_{B(x,\,2^{-j})}-u_{B(x,\,2^{-\ell})}|\ls 2^{-\min\{j,\,\ell\}} g_k(x)\to 0$$
as $j,\,\ell \to \fz$.
For such an $x$, we define
$\wz u(x)\equiv\lim_{j\to\fz}u_{B(x,\,2^{-j})}$.
Generally, for $x\in X$, if
$\lim_{j\to\fz} u_{B(x,\,2^{-j})}$ exists,  then we define
$\wz u(x)\equiv\lim_{j\to\fz} u_{B(x,\,2^{-j})}$; otherwise,
 set $\wz u(x)\equiv0$. Obviously, $u(x)=\wz u(x)$ for almost all $x\in X$,
and hence  $u$ and $\wz u$ generate the same element of $ N_\loc^{1,\,p}( X)$.

Now we are going to check that $g_k$ is a $p$-weak upper gradient
of $\wz u$.  Observe that by a telescope argument again,
for all $x,\,y\in X$ with $ d(x,\,y) \le 2^{-k-2}$,
we have
$$|\wz u(x)-\wz u(y)|\ls  d(x,\,y)[g_k(x)+g_k(y)].$$
Recall that, by \cite[Proposition 3.1]{s00}, $\wz u$ is absolutely continuous on $p$-almost every curve,
namely, $\wz u\circ \gz$ is
absolutely continuous on $[0,\,\ell(\gz)]$ for all arc-length
parameterized paths $\gz\in\Gamma_{\rm rect}\setminus \Gamma$,
where $\Gamma$ has $p$-modulus zero.
For  $\gz\in\Gamma_{\rm rect}\setminus \Gamma$, we are going to show that
\begin{equation}\label{e2.22}
|\wz u(x)-\wz u(y)|\ls \int_\gz  g_k\,ds.
\end{equation}
To this end, by the absolute continuity of $u$ on $\gz$,
it suffices to show that for $j$ large enough,
$$2^j\lf|\int_0^{2^{-j}}\wz u\circ \gz(t)\,dt-\int_{\ell(\gz)-2^{-j}}^{\ell(\gz)}\wz u\circ \gz(t)\,dt\r|\ls \int_0^{\ell(\gz)}g_k\circ\gz(t)\,dz.$$
But,  for $j$ large enough, we have that
\begin{eqnarray*}
&&2^j\lf|\int_0^{2^{-j}}\wz u\circ \gz(t)\,dt-\int_{\ell(\gz)-2^{-j}}^{\ell(\gz)}\wz u\circ \gz(t)\,dt\r|\\
&&\quad=2^j\lf|\int_0^{\ell(\gz)-2^{-j}}[\wz u\circ \gz(t+2^{-j}) -\wz  u\circ \gz(t)]\,dt\r|\\
&&\quad\le  2^j\int_0^{\ell(\gz)-2^{-j}}\lf|\wz u\circ \gz(t+2^{-j}) -\wz  u\circ \gz(t)\r|\,dt\\
&&\quad\ls\int_0^{\ell(\gz)-2^{-j}}\lf[g_k\circ \gz(t+2^{-j}) +g_k\circ \gz(t)\r]\,dt\\
&&\quad\ls\int_0^{\ell(\gz) } g_k\circ \gz(t) \,dt.
\end{eqnarray*}
This gives \eqref{e2.22} and hence   $g_k$ is a $p$-weak upper gradient of $\wz  u$.
Notice that Lemma \ref{l2.5} gives that $\aplip    u$ coincides with the unique minimal $p$-weak upper gradient of $  u$
and hence that of $\wz u$ almost everywhere.
So  $\aplip  u \ls g_k$ almost everywhere
and hence, by the Lebesgue differentiation theorem, for almost all $x\in X$,
\begin{eqnarray*}
           \aplip  u(x) &&\ls \liminf_{k\to\fz} g_k(x)\\
&&\ls \lim_{k\to\fz}\sup_{j\ge k}\lf\{\bint_{B(x,\,\lz 2^{-j})}\lf[\frac{d}{d m}\Gamma(u,\,u)\r]^{p/2}\,d m\r\}^{1/p}\\
&&\ls
\lf\{ \frac{d}{d m}\Gamma(u,\,u)(x)\r\}^{1/2}.
           \end{eqnarray*}
This finishes the proof of Lemma \ref{l2.4}.
\end{proof}

Moreover,  from Theorem \ref{t2.1} and Lemma \ref{l2.5} we conclude the
following result.

\begin{prop}\label{p2.2}
 Suppose that $( X,\,d,\,m)$ satisfies a doubling property and supports a
weak $(1,p)$-Poincar\'e inequality for some $p\in[1,\,\fz)$.
Then $( X,\,\mathscr E,\,m)$ satisfies the Newtonian property
if and only if for all $u\in\lip( X)$,
\begin{equation*}
  \frac{d}{d m}\Gamma(u,\,u)=(\lip\, u)^2
 \end{equation*} almost everywhere.
\end{prop}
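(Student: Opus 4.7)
The plan is to split the equivalence into its two implications and, in each case, connect the Newtonian inequality to the characterization of the minimal $p$-weak upper gradient of a Lipschitz function as $\lip\,u$ a.e., supplied by Lemma \ref{l2.5} under our standing doubling and weak $(1,p)$-Poincar\'e assumptions. Theorem \ref{t2.1} already provides the universal bound $\frac{d}{d m}\Gamma(u,u)\le(\lip\,u)^2$ a.e., so for the forward direction only the reverse pointwise inequality needs to be produced, while for the backward direction only the existence of a suitable Borel upper gradient needs to be exhibited.

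For the forward implication, I fix $u\in\lip(X)$ and let $g$ be the Borel representative of $\sqrt{\frac{d}{d m}\Gamma(u,u)}$ furnished by the Newtonian property. Since every non-constant compact rectifiable curve admits a $1$-Lipschitz arc-length reparametrization, the Newtonian inequality along all Lipschitz curves actually yields the upper gradient inequality along all rectifiable curves; hence $g$ is an upper gradient of $u$ in the classical sense, and in particular a $p$-weak upper gradient. Lemma \ref{l2.5} then gives $g\ge g_u=\lip\,u$ a.e., and combining this with Theorem \ref{t2.1} yields $\frac{d}{d m}\Gamma(u,u)=(\lip\,u)^2$ a.e., as claimed.

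For the backward implication, I fix $u\in\lip(X)$ and take $g:=\lip\,u$ as the candidate Borel representative. Borel measurability I verify by writing $\lip\,u(x)=\inf_{r\in(0,\fz)\cap\mathbb{Q}}\phi_r(x)$, where $\phi_r(x):=\sup_{0<d(x,y)<r}|u(x)-u(y)|/d(x,y)$ is lower semicontinuous (as the supremum over $y$ of continuous functions of $x$) and hence Borel. The a.e. identity $g=\sqrt{\frac{d}{d m}\Gamma(u,u)}$ is precisely the hypothesis, and by adjusting the representative of the latter on a null set I ensure the pointwise inequality required by the definition of a Borel representative. The upper gradient inequality along a Lipschitz curve $\gz$ is then established by passing to an arc-length reparametrization $[0,\ell]\to X$, which is $1$-Lipschitz; since $u\circ\gz$ is Lipschitz and therefore absolutely continuous, the pointwise estimate
\[
|(u\circ\gz)'(t_0)|\le\limsup_{s\to t_0}\frac{|u(\gz(s))-u(\gz(t_0))|}{d(\gz(s),\gz(t_0))}\cdot\limsup_{s\to t_0}\frac{d(\gz(s),\gz(t_0))}{|s-t_0|}\le\lip\,u(\gz(t_0))
\]
holds at each differentiability point $t_0$, and integration produces the Newtonian inequality $|u(\gz(1))-u(\gz(0))|\le\int_\gz g\,ds$.

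The main technical obstacle lies in keeping the Borel-representative bookkeeping straight in the backward direction; the substantive analytic content of the proposition is already packaged in Theorem \ref{t2.1} (upper bound) and Lemma \ref{l2.5} (minimality of $\lip\,u$ among $p$-weak upper gradients), together with the standard chain-rule estimate for the composition of a Lipschitz function with an arc-length parametrized curve, so no fundamentally new analytic ingredient is required.
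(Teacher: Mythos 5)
Your proposal is correct and follows essentially the same route the paper intends: the paper derives Proposition \ref{p2.2} directly from Theorem \ref{t2.1} (the inequality $\frac{d}{dm}\Gamma(u,u)\le(\lip u)^2$ a.e.) together with Lemma \ref{l2.5} (minimality of $\lip u$ among $p$-weak upper gradients of a Lipschitz function), which is exactly how you argue. Your added details — reparametrizing rectifiable curves to pass between the Newtonian inequality and the upper-gradient inequality, the Borel measurability of $\lip u$, and the chain-rule estimate showing $\lip u$ is an upper gradient — are the routine verifications the paper leaves implicit, and they are carried out correctly.
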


When $p=2$, we further have the following conclusion.  Recall that, as proved by Sturm \cite{s96}, $( X,\,d,\,m)$ satisfies
 the doubling property and $( X,\,\mathscr E,\,m)$ supports a weak
$(1,\,2)$-Poincar\'e inequality
if and only if a scale invariant Harnack inequality
for the parabolic operator
$\frac{\partial}{\partial t}-\Delta$ on $\rr\times X$ holds true, with
$\Delta$ corresponding to $\mathscr E.$

\begin{thm}\label{t2.2}
Suppose that $( X,\,d,\,m)$ satisfies a doubling property and $( X,\,\mathscr E,\,m)$
supports a weak $(1,\,2)$-Poincar\'e inequality.  Then the following hold:

  (i) $\bd=N^{1,\,2}( X)$ with equivalent norms, $\lip( X)\cap\mathscr C_0( X)$ is dense in $\bd$, and
$\bd_\loc=N^{1,\,2}_\loc( X)$;

(ii) for all $u\in\bd_\loc$, $\Gamma(u,\,u)$ is absolutely continuous with respect to $ m$,
and there exists a constant $C_1\ge1$ such that for all $u\in\bd_\loc$,
\begin{equation}\label{e2.23}
\frac d{d m}\Gamma(u,\,u)\le  (\aplip u)^2 \le C_1\frac d{d m}\Gamma(u,\,u)
\end{equation}
almost everywhere, where $\aplip u=\lip\, u$ almost everywhere for $u\in\lip( X)$.

(iii) If $( X,\,\mathscr E,\,m)$ satisfies the
 Newtonian property, then $C_1=1$ in \eqref{e2.23}.
\end{thm}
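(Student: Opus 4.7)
The strategy is to bootstrap from Theorem \ref{t2.1} (which controls Lipschitz functions) to $\bd_\loc$ via the Newtonian--Sobolev machinery, first identifying $\bd=N^{1,2}( X)$ and then passing the pointwise inequalities through Lipschitz approximations. A preliminary observation: Proposition \ref{p2.1} together with Theorem \ref{t2.1} shows that the hypothesis on $(X,\mathscr E,m)$ already implies that $(X,d,m)$ supports a weak $(1,2)$-Poincar\'e inequality, so the standard Cheeger--Shanmugalingam theory applies on the metric side.

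For the inclusion $N^{1,2}( X)\subset\bd$ in part (i), I would invoke Shanmugalingam's density theorem to realise any $u\in N^{1,2}( X)$ as an $N^{1,2}$-limit of functions $u_n\in\lip( X)\cap\mathscr C_0( X)$; for each such $u_n$, Theorem \ref{t2.1} and Lemma \ref{l2.5} give $\int_ X d\Gamma(u_n,u_n)\le\int_ X(\lip u_n)^2\,dm=\int_ X g_{u_n}^2\,dm$, so $u_n\in\bd$ with $\|u_n\|_\bd\ls\|u_n\|_{N^{1,2}( X)}$, and completeness of $\bd$ then puts $u\in\bd$ with $\|u\|_\bd\ls\|u\|_{N^{1,2}( X)}$. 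Conversely, for $u\in\bd$ the weak $(1,2)$-Poincar\'e inequality for $\mathscr E$ yields the Heinonen--Koskela telescoping bound
\begin{equation*}
|\wz u(x)-\wz u(y)|\ls d(x,y)[Mg(x)+Mg(y)]
\end{equation*}
on a precise representative $\wz u$, where $g=\sqrt{\tfrac{d}{dm}\Gamma(u,u)}$ and $M$ is a localised Hardy--Littlewood maximal function; a standard modulus-of-curves argument then produces a $2$-weak upper gradient of $u$ belonging to $L^2( X)$, giving $u\in N^{1,2}( X)$ with the reverse norm comparison. Density of $\lip( X)\cap\mathscr C_0( X)$ in $\bd$ and the local identification $\bd_\loc=N^{1,2}_\loc( X)$ then follow by standard cutoff arguments.

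For part (ii), once (i) is in place, Lemma \ref{l2.4} directly supplies $(\aplip u)^2\le C_1\frac{d}{dm}\Gamma(u,u)$ almost everywhere for every $u\in\bd_\loc=N^{1,2}_\loc( X)$; in particular $\Gamma(u,u)$ is absolutely continuous with respect to $m$ (with absolute continuity on $\{\aplip u=0\}$ handled by the strong locality of $\Gamma$). For the reverse inequality $\frac{d}{dm}\Gamma(u,u)\le(\aplip u)^2$, I would approximate $u$ locally by $u_n\in\lip( X)\cap\mathscr C_0( X)$ arising from Cheeger--Shanmugalingam density, arranged so that $u_n\to u$ in $N^{1,2}_\loc( X)$ and $\lip u_n\to\aplip u$ in $L^2_\loc( X)$; Theorem \ref{t2.1} gives $\Gamma(u_n,u_n)\le(\lip u_n)^2m$, and the norm equivalence from (i) promotes $N^{1,2}$-convergence to $\bd_\loc$-convergence, forcing $\Gamma(u_n,u_n)\to\Gamma(u,u)$ vaguely on compact sets; testing against a nonnegative $\phi\in\mathscr C_0( X)$ and using the $L^2_\loc$-convergence of $\lip u_n$ yields $\Gamma(u,u)\le(\aplip u)^2m$.

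Part (iii) runs through the same approximation scheme, but with Proposition \ref{p2.2} upgrading Theorem \ref{t2.1} to the equality $\Gamma(u_n,u_n)=(\lip u_n)^2m$ for each Lipschitz $u_n$ (valid because the Newtonian property together with doubling and the weak $(1,2)$-Poincar\'e inequality are all present); the same limiting argument then produces $\Gamma(u,u)=(\aplip u)^2m$ on $\bd_\loc$, forcing $C_1=1$ in \eqref{e2.23}. The principal technical point throughout is the simultaneous control of $\bd_\loc$-convergence and $L^2_\loc$-convergence of the pointwise Lipschitz constants $\lip u_n\to\aplip u$ in the Lipschitz approximation; this is exactly where Cheeger's differentiation theorem is essential, since any weaker approximation would only recover the upper bound with a spurious constant, which is precisely what Lemma \ref{l2.4} already provides in the other direction.
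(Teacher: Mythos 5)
Your treatment of parts (ii) and (iii) is essentially the paper's own argument: comparability on Lipschitz functions from Theorem \ref{t2.1} and Lemmas \ref{l2.4}--\ref{l2.5}, the inequality $\Gamma(u,u)\le(\aplip u)^2m$ for general $u\in\bd_\loc$ by approximating by Lipschitz functions $u_n$ with $\lip u_n=g_{u_n}\to g_u=\aplip u$ in $L^2_\loc$ and invoking the closedness of $\mathscr E$, and Proposition \ref{p2.2} to force $C_1=1$ under the Newtonian property. The genuine difference is in (i): the paper, once $\frac{d}{dm}\Gamma(u,u)\sim(\lip u)^2$ is known on $\lip(X)$, simply quotes \cite{kst04,s09} for $\bd=N^{1,2}(X)$ with equivalent norms, whereas you sketch the identification yourself. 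Your direction $N^{1,2}(X)\subset\bd$ (Shanmugalingam density plus Theorem \ref{t2.1}/Lemma \ref{l2.5} and completeness of $\bd$) is sound and is in substance the cited argument; what your version buys is a self-contained proof, at the cost of having to verify the harder inclusion by hand.

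That harder inclusion is where your sketch has a gap as written: for an arbitrary $u\in\bd$ you set $g=\sqrt{\tfrac{d}{dm}\Gamma(u,u)}$ and apply the weak $(1,2)$-Poincar\'e inequality for $(X,\mathscr E,m)$ to $u$, but absolute continuity of $\Gamma(u,u)$ is precisely part of conclusion (ii) and is not yet available at that stage, and the Poincar\'e inequality in this paper is \emph{assumed only for $u\in\lip(X)$}, so it cannot be applied verbatim to $u\in\bd$. The telescoping/maximal-function step must therefore be run with the energy \emph{measure} itself, i.e.\ with the ball averages $\Gamma(u,u)(B(x,\lambda 2^{-j}))/m(B(x,\lambda 2^{-j}))$ (as in the proof of Lemma \ref{l2.4}), together with a form-version of the Poincar\'e inequality valid on $\bd$ --- and supplying that extension is exactly what the appeal to \cite{kst04,s09} packages in the paper. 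Relatedly, in (ii) you assert that absolute continuity follows ``directly'' from Lemma \ref{l2.4}: a lower bound $(\aplip u)^2\le C_1\frac{d}{dm}\Gamma(u,u)$ only constrains the absolutely continuous part and cannot exclude a singular part; absolute continuity should instead be read off from your own approximation step, since $\Gamma(u_n,u_n)\le(\lip u_n)^2m$ with $(\lip u_n)^2\to(\aplip u)^2$ in $L^1_\loc$ forces the limit measure to be dominated by $(\aplip u)^2m$. With the order of these two steps reversed (upper bound and absolute continuity first, then Lemma \ref{l2.4}), and the $\bd\subset N^{1,2}$ inclusion either repaired as above or cited as in the paper, your proof goes through; your closing attribution to Cheeger is consistent with the paper's use of \cite{c99}, since the decisive fact is the identification $g_u=\aplip u$ of Lemma \ref{l2.5}.
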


\begin{proof}
Recall that if $( X,\,d,\,m)$ supports a weak $(1,\,2)$-Poincar\'e inequality,
then $\lip( X)\cap \mathscr C_0( X)$ is dense in $N^{1,\,2}( X)$
as proved in \cite{s00,c99}. Notice that
 Theorem  \ref{t2.1} and Lemma \ref{l2.4} implies that  $\frac d{d m}\Gamma(u,\,u)\sim  (\lip u)^2
$ holds almost everywhere  for all $u\in \lip(X)$.
Under this, it was proved in \cite{ kst04,s09} that
$\bd=N^{1,\,2}( X)$ with equivalent norms.
This  implies that $\lip( X)\cap \mathscr C_0( X)$ is dense in $\bd$
and also that $\bd_\loc=N^{1,\,2}_\loc( X)$. This gives (i).

Obviously,
for $u\in\lip( X)$, (ii)  follows from Theorem \ref{t2.1},
Lemma \ref{l2.4} and Proposition \ref{p2.2}.
For  $ u\in\bd_\loc$,  by $\bd_\loc=N^{1,\,2}_\loc( X)$, we have that
$\Gamma(u,\,u)$ is absolutely continuous with respect to $ m$,
and    Lemma \ref{l2.4} gives
$(\aplip u)^2 \le C_1\frac d{d m}\Gamma(u,\,u)$ almost everywhere.
Finally, a density argument together with the closedness of $\mathscr E$ and
the fact that (ii) holds for Lipschitz functions leads to
 $\frac d{d m}\Gamma(u,\,u)\le (\aplip u)^2$ for all  $ u\in\bd_\loc$,
which completes the proof of Theorem \ref{t2.2}.
\end{proof}

\section{Dirichlet forms: $\frac{d}{dm}\Gamma(u,\,u)\ne (\lip u)^2$}\label{sx2}

This section is a continuation of Section \ref{s2}.
By perturbing the classical
Dirichlet energy form on $\rr^2$,
we construct an example that satisfies the doubling property and a weak Poincar\'e inequality
but so that the  intrinsic length structure does not coincide  with
the gradient structure; see Proposition \ref{p2.3}.  This shows that   doubling   and Poincar\'e  are not enough  to obtain $\frac d{d m}\Gamma(u,\,u)= (\lip\, u )^2$ almost everywhere
(this fact can also be deduced from \cite{s97}; see Remark \ref{r2.x1} below).
Moreover,  the gradient (differential) structure of our perturbed Dirichlet form
does not coincide with the distinguished gradient (differential) structure
of Cheeger; see Proposition \ref{p2.4}.
Notice that the  distinguished differential structure of Cheeger
coincides with the gradient  structure of $\Gamma$
if $(X,\,\mathscr E,\,m)$ further satisfies the Newtonian property;
see Corollary \ref{c2.3}.

Our example is a perturbation  of the classical Dirichlet energy form on $\rr^2$ on a large Cantor set $E$.
Denote by $m$  the Lebesgue measure on $\rr^2$, and denote by $|\cdot-\cdot|$ the Euclidean distance.
The classical Dirichlet form $\mathscr E$ is defined by
$\mathscr E(u,\,u)=\int_{\rr^2}|\nabla u|^2\,dm $
with the domain
$\bd=W^{1,\,2}(\rr^2)$,
where $\nabla$ is the distributional gradient. % and $\Gamma(u,\,u)=|\nabla u|^2\, m$.
Notice that the Euclidean distance gives the intrinsic distance associated to
$\mathscr E$.
Thus  $(\rr^2,\,\mathscr E,\,m,\,|\cdot-\cdot|)$ satisfies a doubling property,
a weak $(1,\,1)$-Poincar\'e inequality and the Newtonian property.
Moreover, the length structure coincides with the gradient structure, that is,
$|\nabla u|=\aplip u $ almost everywhere for all $u\in W^{1,\,2}(\rr^2)$.

Let $F$ be the Cantor set constructed as follows:
 $I_{i}$ are the two closed intervals obtained by removing the middle open interval with length $1/10$ from $[0,\,1]$
and are  ordered from left to right;
 when $n\ge2$,
$I_{i_1\cdots i_n}$ are the two closed intervals  obtained by removing
  the middle open interval with length $(1/10)^n$ from $I_{ i_1\cdots i_{n-1}}$,
and  are ordered from left to right; $F \equiv\cap_{n\in\nn}\cup_{i_1,\,\cdots,\,i_n}I_{i_1\cdots i_n}$.
Notice that $F$ has positive $1$-dimensional Lebesgue measure.
Set $E\equiv F\times F$.
Then $ \rr^2\setminus E$ is dense in $\rr^2$ and by the Fubini theorem, $m(E)>0$.

Now, for any $\dz\in(0,\,1)$, we define a perturbation $\mathscr E_\dz$ of  $\mathscr E$     by setting
$$\mathscr E_\dz(u,\,u)\equiv\int_{\rr^2}(1-\delta 1_E)|\nabla u|^2 \,dm.$$
It is easy to see that $\mathscr E_\dz$ is a regular, strongly local Dirichlet form with the domain $\bd=W^{1,\,2}(\rr^2)$,
and  for $u\in\bd_\loc$,
$\Gamma_E(u,\,u)=(1-\delta 1_E)|\nabla u|^2 \,m.$
Moreover, let $d_\dz$ be the intrinsic distance defined as in \eqref{e2.x1}.
Then  $$(1-\delta)|\nabla u|^2\le\frac{d}{dm}\Gamma_\dz(u,\,u)\le |\nabla u|^2$$ implies that
$$|x-y|\le d_\dz(x,\,y)\le  \frac1{1-\dz}|x-y|.$$
>From this, it is easy to see that
$(\rr^2,\,\mathscr E_\dz,\,d_\dz,\,m)$ satisfies the doubling property and
a weak $(1,\,1)$-Poincar\'e inequality.
However, the intrinsic length structure does not coincide with the gradient structure when $\dz$ is close to $1$.

\begin{prop}\label{p2.3}
There exists $\dz_E\in(0,\,1)$ such that, for every
  $\dz\in(\dz_E,\,1)$, the intrinsic length structure and the gradient structure of
 $(\rr^2,\,\mathscr E_\dz,\,d_\dz,\,m)$ do  not coincide, that is,
there exists $u\in\bd$ such that
 $\frac{d}{dm}\Gamma(u,\,u)<(\aplip_{d_\dz} u)^2$ on some set of positive measure.
\end{prop}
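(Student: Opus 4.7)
The plan is to exhibit a single function in $\bd$ that witnesses the strict inequality on a positive-measure subset of $E$. Let $\phi\in C_c^\fz(\rr^2)$ be a cutoff equal to $1$ on a large ball $B_0$ meeting $E$ in positive measure, and set $u(x_1,x_2)\equiv \phi(x)\,x_1$. Then $u\in W^{1,2}(\rr^2)=\bd$ and on $B_0$ we have $|\nabla u|\equiv 1$, so $\frac{d}{dm}\Gamma_\dz(u,u)(x)=1-\dz$ for $x\in E\cap B_0$. I will show that $\aplip_{d_\dz} u(x)\ge 1$ at every Lebesgue density point $x\in E\cap B_0$; since such points have full $m$-measure in $E\cap B_0$, this forces $(\aplip_{d_\dz} u(x))^2\ge 1>1-\dz$ on a set of positive measure, proving the proposition for every $\dz\in(0,1)$ (so in fact one may take $\dz_E=0$).

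The heart of the argument is a detour construction that exploits the density of $F^c$ to bypass the obstacle $E=F\times F$. Fix a density point $x=(x_1,x_2)\in E$, so $x_1,x_2$ are density points of $F$. Given $h>0$ small, $|\ez|<h$, and any $\eta>0$ with $x_2+\eta\notin F$ (such $\eta$ exist arbitrarily small since $F^c$ is dense), consider the axis-aligned path
\[
x\;\to\;(x_1,\,x_2+\eta)\;\to\;(x_1+h,\,x_2+\eta)\;\to\;(x_1+h,\,x_2+\ez).
\]
The middle horizontal leg sits at height $x_2+\eta\notin F$ and therefore lies entirely in $E^c$, contributing $d_\dz$-length equal to its Euclidean length $h$. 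Each vertical leg has $d_\dz$-length at most $(1-\dz)^{-1/2}$ times its Euclidean length, using the pointwise bound $(1-\dz 1_E)^{-1/2}\le(1-\dz)^{-1/2}$. Summing the three contributions gives a path $d_\dz$-length at most $h+(2\eta+|\ez|)(1-\dz)^{-1/2}$. Letting $\eta\to 0^+$ through $F^c$ and invoking the identification of \eqref{e2.1} with the path-integral infimum $d_\dz(x,y)=\inf_\gz\int_\gz(1-\dz 1_E)^{-1/2}\,ds$ (valid for our strongly local regular $\mathscr E_\dz$ because the weight $1-\dz 1_E$ is bounded in $L^\fz$ and bounded away from $0$), I conclude
\[
|x-y|\;\le\; d_\dz(x,y)\;\le\; h+\tfrac{|\ez|}{\sqrt{1-\dz}}\quad\text{for } y=(x_1+h,x_2+\ez).
\]

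To finish, define the horizontal cone $K_\alpha\equiv\{(h,\ez):h>0,\,|\ez|<\alpha h\}$. For $y\in x+K_\alpha$ the above yields $\frac{|u(x)-u(y)|}{d_\dz(x,y)}=\frac{h}{d_\dz(x,y)}\ge \frac{1}{1+\alpha/\sqrt{1-\dz}}$. Given any Borel $A$ with $x$ a density point and any $c<1$, pick $\alpha$ small enough that $(1+\alpha/\sqrt{1-\dz})^{-1}>c$. Because $K_\alpha$ has positive, scale-invariant Lebesgue density at $x$, the set $A$ must intersect $(x+K_\alpha)\cap B(x,r)$ for arbitrarily small $r$; otherwise $A$ would have density strictly less than $1$ at $x$. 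Every such intersection point $y$ satisfies $\frac{|u(x)-u(y)|}{d_\dz(x,y)}>c$, hence $\limsup_{y\in A,\,y\to x}\frac{|u(x)-u(y)|}{d_\dz(x,y)}\ge c$. Letting $c\uparrow 1$ gives $\aplip_{d_\dz}u(x)\ge 1$, as required.

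The main obstacle is the rigorous passage from the abstract supremum definition \eqref{e2.1} of $d_\dz$ to the path-integral upper bound used in the detour estimate. Because $\mathscr E_\dz$ has the explicit form $\int(1-\dz 1_E)|\nabla u|^2\,dm$ with a measurable weight uniformly bounded above and away from zero, the required equivalence follows from the standard Sturm/Biroli--Mosco theory of intrinsic distances for strongly local regular Dirichlet forms; a secondary bookkeeping point is that the endpoint vertical-leg cost actually vanishes in the limit $\eta\to 0^+$, for which the crude $(1-\dz)^{-1/2}$ upper bound already suffices and no delicate density-point averaging is needed.
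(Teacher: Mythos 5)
Your construction is essentially correct and goes by a genuinely different route than the paper, and in fact proves more. The paper first builds, for \emph{every} pair $x,y$, a curve of Euclidean length $\le C_E|x-y|$ meeting $E$ in at most two points (Lemma \ref{l2.6}), deduces the global bound $d_\dz\le C_E|\cdot-\cdot|$ in \eqref{e2.25}, and then only gets the strict inequality for $\dz$ close to $1$ (the threshold depends on the junction constant $C_E$), at points where a chosen smooth function has nonvanishing gradient. You instead prove an \emph{anisotropic} estimate: for $y=x+(h,\ez)$ with $|\ez|$ small compared to $h$, a detour through a horizontal line at a height outside $F$ gives $d_\dz(x,y)\le h+|\ez|/\sqrt{1-\dz}$, and the cone/density-point argument then yields $\aplip_{d_\dz}u\ge 1=|\nabla u|$ at every point of $E\cap B_0$ for $u=x_1$ locally. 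Since $\frac{d}{dm}\Gamma_\dz(u,u)=1-\dz$ there, this gives the conclusion for \emph{every} $\dz\in(0,1)$, i.e.\ $\dz_E=0$, which is sharper than the paper's threshold; the cone argument correctly handles the infimum over Borel sets in the definition of $\aplip$.

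The one step you should repair is the justification of the detour bound: you invoke the identity $d_\dz(x,y)=\inf_\gz\int_\gz(1-\dz 1_E)^{-1/2}\,ds$ as a ``standard'' consequence of the weight being bounded above and away from zero. For merely measurable weights this identification is \emph{not} a general theorem, and the equality is false in general --- this is exactly the content of Sturm's example \cite{s97} recalled in Remark \ref{r2.x1}, where the intrinsic metric of a weighted form with $a<1$ on a substantial set is still the Euclidean metric, hence strictly smaller than the weighted path metric. Fortunately you do not need that identity at all: since $E$ is closed, every competitor $u$ in \eqref{e2.1} for $d_\dz$ satisfies $|\nabla u|\le(1-\dz)^{-1/2}$ a.e.\ on $\rr^2$ (hence is globally $(1-\dz)^{-1/2}$-Lipschitz) and $|\nabla u|\le1$ a.e.\ on the open set $\rr^2\setminus E$ (hence is $1$-Lipschitz on a convex tubular neighborhood of your horizontal leg, which is compactly contained in $\rr^2\setminus E$). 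Testing such $u$ on each of the three legs and using the triangle inequality for $d_\dz$ gives exactly your bound $d_\dz(x,y)\le h+(2\eta+|\ez|)(1-\dz)^{-1/2}$; this is the same mechanism the paper uses to prove \eqref{e2.25}. With that substitution your proof is complete.
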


To prove this, we need the following crucial property.

\begin{lem}\label{l2.6}
There exists a positive constant $C_E>1$ such that
for every pair of $x,\,y\in\rr^2$, we can find a rectifiable curve $\gz$ joining $x$ and $y$
 and satisfying

\noindent(i) $\ell_{\rr^2}(\gz)\le C_E|x-y|$, where $\ell_{\rr^2}(\gz)$ is the length of $\gz$ with respect to the Euclidean distance,

\noindent(ii)  the set $ \gz\cap E$ contains at most $2$ points.
\end{lem}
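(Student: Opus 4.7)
The plan is to build $\gamma$ as a short piecewise-linear curve, exploiting the fact that $F$ has empty interior. First I would observe that $E=F\times F\subset[0,1]^2$, so any straight segment $\overline{xy}$ missing $[0,1]^2$ already satisfies both (i) and (ii); in general, short detours around $\partial[0,1]^2$ reduce the problem to the case $x,y\in[0,1]^2$ at a cost absorbed into $C_E$. The key geometric input is that the stage-$n$ Cantor intervals $I_{i_1\cdots i_n}$ have lengths $\ell_n$ obeying $\ell_{n+1}=(\ell_n-10^{-n-1})/2$, so $\ell_n\to 0$; hence $F$ has empty interior and $[0,1]\setminus F$ is dense in $[0,1]$.

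For the main case, assume WLOG $x_1\le y_1$ and pick $c\in(x_1,y_1)\setminus F$, which exists by density of gaps. Then the vertical line $\{c\}\times\mathbb{R}$ is disjoint from $E$, and I would define $\gamma$ as the three-segment polygonal curve $x\to(c,x_2)\to(c,y_2)\to y$. The middle segment lies in $\{c\}\times\mathbb{R}$ and hence avoids $E$ entirely, while the total Euclidean length equals $(y_1-x_1)+|y_2-x_2|\le\sqrt{2}\,|x-y|$, which handles (i).

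The obstacle to (ii) comes from the first and third segments, which are horizontal at heights $x_2$ and $y_2$: when these heights lie in $F$, the intersections $([x_1,c]\cap F)\times\{x_2\}$ and $([c,y_1]\cap F)\times\{y_2\}$ can be uncountable Cantor-like sets. To handle this I would insert near each endpoint an additional short segment routing through a horizontal gap-strip $\mathbb{R}\times\{c'\}$ with $c'\notin F$ chosen close to $x_2$ (and symmetrically near $y$), replacing the first segment by $x\to(x_1,c')\to(c,c')$; the second piece is disjoint from $E$, and the remaining vertical ``corner'' $x\to(x_1,c')$ must be shown to meet $E$ only at $x$. The hard part is precisely this corner reduction: one must control the intersection $\{x_1\}\times([x_2,c']\cap F)$ when $x_1\in F$, which I expect to require a slight tilt of the corner segment, together with an appeal to the density of gap-endpoints of $F$ at all scales and a careful choice of $c'$ near $x_2$ so that only the prescribed crossing at $x$ survives. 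Once this is done symmetrically near $y$, one obtains $\gamma\cap E\subseteq\{x,y\}$, with the cumulative length cost of all detours and corner-tilts bounded by a constant $C_E$ depending only on the combinatorial structure of $F$.
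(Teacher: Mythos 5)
Your construction for endpoints off $E$ is essentially sound and is in fact the same idea the paper uses: route along axis-parallel lines through ``gap'' coordinates $c\notin F$, $c'\notin F$, so that the connecting segments miss $E=F\times F$ entirely, with total length comparable to $|x-y|$. But the crucial case — one or both endpoints lying in $E$ — is exactly the case the lemma is designed to allow (that is why the conclusion permits up to $2$ intersection points rather than $0$), and this is precisely where your argument stops. The ``corner'' segment $\{x_1\}\times[x_2,c']$ with $x_1\in F$ can meet $E$ in an uncountable set no matter how close $c'$ is to $x_2$, since $F$ meets every neighborhood of any of its points in a set of positive measure; and the proposed fix by a ``slight tilt'' is not substantiated. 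Indeed it is genuinely delicate: for $x\in E$ a straight segment in direction $(\cos\theta,\sin\theta)$ meets $E$ in the set of $t$ with $x_1+t\cos\theta\in F$ and $x_2+t\sin\theta\in F$, and because every portion of $F$ has positive measure, a Fubini computation shows this set has positive measure for a set of directions of positive measure, so a generic small tilt does not help and the existence of a good straight direction would itself need proof.

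The paper resolves this case without any tilting, by a chaining argument you may want to adopt: first prove the claim that any $x,y\in[0,1]^2\setminus E$ can be joined by a curve of length $\lesssim|x-y|$ that avoids $E$ completely (your axis-parallel construction does this). Then, if $x\in E$ and $y\notin E$, choose $x_n\in[0,1]^2\setminus E$ with $x_n\to x$ and $|x-x_n|\le\frac12|x-x_{n-1}|$ (possible since $\rr^2\setminus E$ is dense), join each consecutive pair $x_{n-1},x_n$ by such an $E$-avoiding curve $\gz_n$, and set $\gz=(\cup_n\gz_n)\cup\{x,y\}$; the lengths $\ell_{\rr^2}(\gz_n)\lesssim|x_n-x_{n-1}|\lesssim 2^{-n}|x-y|$ sum geometrically, and $\gz\cap E=\{x\}$. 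Finally, if both $x,y\in E$, pick an intermediate point $z\notin E$ on the segment $[x,y]$ (possible because $E$ is totally disconnected) and apply the previous case twice, giving at most two intersection points. Without this (or some comparable device), your proof has a genuine gap at the endpoints-in-$E$ case.
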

\begin{proof}
It suffices to consider  all pairs of $x,\,y\in [0,\,1]^2\equiv[0,\,1]\times[0,\,1]$.
Indeed, if both $x$ and $y$ belong to $\rr^2\setminus (0,\,1)^2$, (i) and (ii) obviously hold;
if only one of $x,\,y$ belongs to $(0,\,1)^2$, say $y\in (0,\,1)^2$,
taking $z$ to be the intersection of the boundary of $(0,\,1)^2$ and the interval joining $x$ and $y$,
 and gluing the interval  joining $x,\,z$ and the assumed curve joining $y,\,z$,   we obtain the desired
rectifiable curve.

We claim that for  all pairs of $x,\,y\in[0,\,1]^2\setminus E$,
there exists $\gz$ joining $x$ and $y$ such that
 $\ell_{\rr^2}(\gz)\ls|x-y|$  and   $ \gz\cap E=\emptyset$.
Assume that this claim holds for  the moment.
If $x\in E$ and $y\notin E$, since  $\rr^2\setminus E$ is dense in $\rr^2$,   there exists a sequence $\{x_n\}_{n\in\nn}\subset [0,\,1]^2\setminus E$
of points such that $x_n\to x$ as $n\to\fz$ and $|x-x_n|\le \frac12 |x-x_{n-1}|$ for all $n\ge 1$, where $x_0=y$.
Let $\gz_{n}$ be the assumed rectifiable curve joining $x_{n-1},\,x_n$ for $n\in\nn$.
Set $\gz\equiv(\cup_{n\in\nn}\gz_n)\cup\{x,\,y\}$. Observing that
$$|x_n-x_{n-1}|\le |x-x_{n-1}|+|x-x_n|\le 2|x-x_{n-1}|\le 2^{-n+1} |x-x_0|,$$
we have that
$$\ell_{\rr^2}(\gz)\le \sum_{n\in\nn} \ell_{\rr^2}(\gz_n)\ls
\sum_{n\in\nn} |x_n-x_{n-1}|\ls  \sum_{n\in\nn} 2^{-n }|x-x_0|\ls|x-y|.$$
Obviously, $ \gz_n\cap E=\emptyset$ for each $n\in\nn$ implies that $\gz\cap E$ cointains a single point.
If $x,\,y\in E$, we pick a point  $z$ in the intesection of $\rr^2\setminus E$ and the interval joining $x$ and $y$.
This  reduces us to the case $x\in E$ and $y\notin E$.

Now we prove the above claim. Let $x,\,y\in[0,\,1]^2\setminus E$.
Let $L^{(1)}_x\equiv\{  x+t(x_1,\,0):\ t\in\rr\}$ be the line
parallel to $x_1$-axis and and
$L^{(2)}_x\equiv\{ x+t(0,\,x_2):\ t\in\rr\}$ parallel to $x_2$-axis.
Observe that at least one of $L^{(1)}_x$ and $L^{(2)}_x$ does not intersect  $E$.
Otherwise, if both $L^{(1)}_x$ and $L^{(2)}_x$  intersect    $E$,
then $(x_1+t_1x_1,\,x_2), (x_1,\,x_2+t_2x_2)\in E$ for some $t_1,\,t_2\in\rr$,
and hence, $x_1,\,x_2\in F$.
Thus $x=(x_1,\,x_2)\in E$, which is a contradition.
Similarly,  define $L^{(1)}_y$ and $L^{(2)}_y$ and then
  at least one of $L^{(1)}_y$ and $L^{(2)}_y$ does not intersect   $E$.
If  $L^{(1)}_x$ and $L^{(2)}_y$ do  not intersect $E$,
since $L^{(1)}_x\cap L^{(2)}_y\ne\emptyset,$ there exists a unique
$z\in (L^{(1)}_x\cap L^{(2)}_y)\cap [0,\,1]^2$. Then we take $\gz$ as the union
of the interval joining $x$ and $z$ and
the interval joining $y$ and $z$. Obviously, $\gz$ is as desired.
We reason analogously if $L^{(2)}_x$ and $L^{(1)}_y$ do  not intersect  $E$.
However, it may happen that only $L^{(1)}_x$ and $L^{(1)}_y$ do not intersect
$E$.
In this case, we take $z=(z_1,\,x_2)\in L^{(1)}_x$ such that  $z_1\in[0,\,1]\setminus F$ but $|z_1-x_1|\le |x_1-y_1|/2$.
Notice that the fact  that $L^{(1)}_x$ and $L^{(1)}_y$ do not intersect $E$
implies that $x_2,\,y_2\in [0,\,1]\setminus F$,
and that $z_1,\,x_2\in[0,\,1]\setminus F$ implies that $L_z^{(2)}$ does not intersect   $E$.
Hence $w\equiv (z_1,\,y_2)\in L_z^{(2)}\cap  L_y^{(1)}$ does not belong to  $E$.
The desired rectifiable curve $\gz$ is given by the union of the interval
joining $x,\,z$,
the one joining $z,\,w$  and the one joining $w,\,y$. Indeed, obviously, we
have $\gz\in[0,\,1]\setminus E$, and
moreover, $$\ell_{\rr^2}(\gz)\le |x-z|+|z-w|+|w-y|\le \frac12|x_1-y_1| +|x_2-y_2|+|z_1-y_1|\ls |x-y|.$$
We reason analogously   if $L^{(2)}_x$ and $L^{(2)}_y$ do  not intersect   $E$.
This finishes the proof of Lemma \ref{l2.6}.
\end{proof}

\begin{proof}[Proof of Proposition \ref{p2.3}.]
We first prove that for all $x,\,y\in\rr^2$,
\begin{equation}\label{e2.25}
 d_\dz(x,\,y)\le  { C_E}|x-y|,
\end{equation}
where $C_E$ is the constant from Lemma \ref{l2.6}.
For every $u\in\bd_\loc$ with $\Gamma_\dz(u,\,u)\le m$,
we have $u\in\lip(\rr^2)$ (with respect to the Euclidean distance) and $|\nabla u(x)|\le1$ for almost all $x\in\rr^2\setminus E$.
Thus $u$ is locally $1$-Lipschitz outside of $E$.
For a pair of points $x,\,y\in\rr^2$, let  $\gz$ be a curve as in Lemma \ref{l2.6}
of length at most $C_E|x-y|$.
We conclude that $|u(x)-u(y)|\le C_E|x-y|.$

Let $u$ be a smooth function with compact support. Then $u\in\lip_{d_\dz}(\rr^2)\subset\bd_\loc$.
For every $x\in E\cap(0,\,1)^2$,
 since
$$\limsup_{r\to0}\sup_{{d_\dz}(x,\,y)\le r}
\frac{|u(x)-u(y)-\nabla u(x)\cdot(x-y)|}{{d_\dz}(x,\,y)}=0,$$
we have that
\begin{eqnarray*}
 \lip_{d_\dz} u(x)&&\ge \liminf_{r\to0}\sup_{d_\dz(x,\,y)\le r}\frac{|u(x)-u(y)|}{d_\dz(x,\,y)}
\ge \liminf_{r\to0}\sup_{d_\dz(x,\,y)\le r}\frac{|\nabla u(x)\cdot(x-y)|}{d_\dz(x,\,y)}.
\end{eqnarray*}
Assume that $\nabla u(x)\ne0$. Observe that there exists a sequence $\{y_i\}_{i\in\nn}\subset\rr^2\setminus E$ such that
$ y_i=x+\ez_i\nabla u(x)$  and $\ez_i\to 0$.
Choose $\dz_E\equiv1-\frac1{C_E}$.
Then by \eqref{e2.25} and $\dz\in(1-\frac1{C_E},\,1)$,
\begin{eqnarray*}
 \lip_{d_\dz} u(x)&&
\ge |\nabla u(x)|\liminf_{i\to\fz} \frac{| x-y_i|}{d_\dz(x,\,y_i)}\ge\frac1{ C_E}|\nabla u(x)|
>(1-\dz)|\nabla u(x)|=\frac{d}{dm}\Gamma(u,\,u)(x).
\end{eqnarray*}
Since  $ E\cap(0,\,1)^2$  has positive measure,
if $u$ has non-vanishing gradient   on this set, then
$\frac{d}{dm}\Gamma(u,\,u)<(\lip_{d_\dz} u)^2$ on $ E\cap(0,\,1)^2$ as desired.
\end{proof}

\begin{rem}\label{r2.x1}\rm
It can be also deduced from Sturm \cite{s97} that
doubling  and  Poincar\'e  are not sufficient  to guarantee that
$\frac d{d m}\Gamma(u,\,u)\equiv (\lip\, u )^2$ almost everywhere.
Indeed, Sturm \cite[Theorem 2]{s97} constructed a Dirichlet form
$$\mathscr E_a(u,\,u)=\int_{\rr^2}a(x)|\nabla u(x)|^2\,dm(x),$$
where $a(x)$ satisfies $0<c\le a(x)<1$ for all $x\in\rr^2$,
for which the intrinsic distance $d_a$ is exactly the Euclidean distance.
%His construction is based on some deep observations on Brownian motion.
Our construction is motivated by the Cheeger differential structure below.
%Our construction is  motivated by breaking the Newtonian property,
%which, by Proposition \ref{p2.2}, is equivalent to the condition that
%$\frac d{d m}\Gamma(u,\,u)\equiv (\lip\, u )^2$ almost everywhere.
%With the help of the geometry of the set $\rr^2\setminus E$, we show that
%the perturbed Dirichlet form $\mathscr E_\dz$ actually fails to have the
%Newtonian property.
%Our perturbation function $(1-\dz1_E)$ is  simple and
%direct, but  $d_\dz$ may not coincide with the Euclidean distance.
\end{rem}

Now we recall the distinguished differential structure constructed by Cheeger \cite {c99}.
Assume that $(X,\,d,\,m)$ satisfies the doubling property and a weak $(1,\,p)$-Poincar\'e inequality
for some $p\in[1,\,\fz)$.    Cheeger \cite[Theorem 4.38]{c99}
 proved the existence of an atlas that    consists of a countable collection
 $\{(U_\az,\,y^\az,\,k(\az))\}_{\az\in\ca}$
 of {\it charts}, where

 \noindent (i) $U_\az$'s
 are measurable sets and $m(X\setminus \cup_\az U_\az)=0$;

  \noindent (ii) $k(\az)\in\nn$, $\sup_{\az\in\ca}k(\az)<\fz$ and if $m(U_\az\cap U_\bz)>0$;
 then $k(\az)=k(\bz)$,

  \noindent  (iii) $y^\az\equiv(y^\az_1,\,\cdots,\,y^\az_{k(\az)}):\ U_\az\to \rr^{k(\az)}$ is Lipschitz;
%,$\lip(\sum_ia_iy_i)(x)=0$ for  all $(a_1,\,\cdots,\,a_n)\ne0$ and all $x\in U_\az$;

  \noindent (iv) for every $\az\in\mathcal A$ and $u\in\lip(X)$,
   there exist  $V_\az(u)\subset U_\az$ and a collection
 $$\lf\{\frac{\partial u}{\partial y^\az_i}:\ U_\az\to\rr\r\}_{ 1\le i\le k(\az)}$$
 of bounded Borel measurable functions uniquely determined almost everywhere
 such that
 $m(U_\az\setminus V_\az(u))=0$ and for all $z\in V_\az(u)$,
\begin{equation}\label{e2.26}
  u(w)=u(z)+\sum_{j=1}^{k(\az)}\frac{\partial u}{\partial y^\az_i}(y^\az_j(w)-y_j^\az(z))
 +o(d(w,\,z));
\end{equation}

% differentiable with respect to $\{(U_\az,\,y^\az,\,k(\az))\}_{\az\in\ca}$ in the sense that:

\noindent (v) if $m(U_\az\cap U_\bz)>0$, then the matrix of  $\frac{\partial y^\az}{\partial y^\bz}$
 is  invertible almost everywhere in $U_\az\cap U_\bz$.

The above atlas yields a bi-Lipschitz invariant
 measurable tangent bundle $TX$ and cotangent bundle $T^\ast X$.
 In fact, for every Lipschitz function $u:\ X\to\rr$,
 its differential $du$ is defined as
 $(\frac{\partial u}{\partial y^\az_1},\,\cdots,\,\frac{\partial u}{\partial y^\az_{k(\az)}})$ and its derivative as
 $Du=\sum_{i=1}^{k(\az)}\frac{\partial u}{\partial y^\az_i}\frac{\partial}{ \partial y^\az_i}$ on each $U_\az$.
Notice that $TX$ is the dual of $T^\ast X$.
 For a Lipschitz function $u$, its derivative
 $Du:\ TX\to\rr$ coincides with its differential $du$ in the sense that
 $$\langle Du(z),\,v\rangle_z\equiv\sum_{i=1}^{k(\az)}\frac{\partial u}{\partial y^\az_i}(z)v_i=du(v)(z)$$
 for every $z\in V_\az(u)\subset U_\az$ and $v=\sum_{i=1}^{k(\az)}v_i\frac{\partial}{ \partial y^\az_i}\in
 T_z X$.
Moreover, for each $z\in V_\az(u)\subset U_\az$, a natural norm  $\|\cdot \|_{T_zX}$ on $T_zX$ is defined by setting
$\|v \|_{T_zX}\equiv\lip_d\lf(\sum_{i=1}^{k(\az)} v_iy_i^\az\r)$
for   $v=\sum_{i=1}^{k(\az)}v_i\frac{\partial}{ \partial y^\az_i}\in
 T_z X$ and hence  $\|Du \|_{T_zX}=\lip_d\lf(\sum_{i=1}^{k(\az)}  \frac{\partial u}{\partial y^\az_i}y_i^\az\r)$
for every Lipschitz function $u$.
Generally, $\|v \|_{T_zX}$ is not Hilbertian.  Cheeger \cite[p. \,460]{c99} introduced
a distinguished inner product norm $ \||v  \||_{T_zX}$ associated to it as follows.

Let $V$ be a $k$-dimensional vector space and $\|\cdot\|$  be a norm on $V$.
Denote by $V^\ast$  the dual space of $V$, endowed with the norm $\|\cdot\| ^\ast$ induced by $\|\cdot\| $.
Then a distinguished inner product norm $ \||\cdot \||^\ast$ on $V^\ast$ is obtained by identifying the functions of $V^\ast$
with their restriction to the unit ball $B_{\|\cdot\|}(0,\,1))$ (with respect to $\|\cdot\| $) and
regarding the functions so obtained as elements of $L^2(B_{\|\cdot\|}(0,\,1),\, (k+1)\frac{{\rm Vol}(k)}{{\rm Vol}(k+2)}H^k_{\|\cdot\| })$.
Here ${\rm Vol}(n)$ denotes the volume of the Euclidean unit ball of $\rr^n$
and $H^k_{\|\cdot\| }$ is the $k$-dimensional Hausdorff measure associated to the metric induced by $\|\cdot\| $.
In other words, for $v^\ast\in V^\ast$, we define
\begin{equation}\label{e2.x30}
\||v^\ast\||^\ast\equiv\lf((k+1)\frac{{\rm Vol}(k)}{{\rm Vol}(k+2)}\int_{B_{\|\cdot\| }(0,\,1)}|v^\ast(v)|^2\,dH^k_{\|\cdot\| }(v)\r)^{1/2}.
\end{equation}
Then the inner product norm $\||\cdot\||$ on $V$
is defined by $\||v\||=\sup_{\||v^\ast\||^\ast\le1}v^\ast(v)$.
Notice that if $\|\cdot\|  $ is an inner product norm,
 then $\||\cdot\||^\ast= \|\cdot\|^\ast$ and $\||\cdot\||= \|\cdot\|$.

Now we have two differential (gradient) structures on $(X,\,\mathscr E,\,d,\,m)$:
the original one of $\Gamma$ induced by $\mathscr E$ and the distinguished one $\||\cdot\||_{TX}$
induced from the intrinsic distance in the sense of Cheeger.
Under some reasonable assuptions, they coincide  as a corollary to
Theorem \ref{t2.2}.

\begin{cor}\label{c2.3}
Suppose that $( X,\,d,\,m)$ satisfies a doubling property,
and that $( X,\,\mathscr E,\,m)$
supports a weak $(1,\,2)$-Poincar\'e inequality and the Newtonian property.
Then for all $u\in\bd$, $\frac {d}{dm}\Gamma(u,\,u)=\||D u\||_{TX}^2=(\aplip_d u)^2$ almost everywhere.
\end{cor}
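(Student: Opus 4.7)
The plan is to split the corollary into two independent parts and then combine them. By Theorem \ref{t2.2}(ii), under the doubling, weak $(1,2)$-Poincar\'e, and Newtonian hypotheses,
\[
\frac{d}{dm}\Gamma(u,u) \;=\; (\aplip_d u)^2 \qquad m\text{-a.e.}
\]
for every $u \in \bd_\loc$. On the Cheeger side, the first-order expansion \eqref{e2.26} combined with the definition of $\|\cdot\|_{T_zX}$ gives $\|Du\|_{T_zX} = \lip_d u(z)$ a.e.\ for $u \in \lip(X)$, which by Lemma \ref{l2.5} equals $g_u$ a.e.; the closedness of the Cheeger quadratic form on $N^{1,2}(X)=\bd$ (via Theorem \ref{t2.2}(i)) extends the identification $\|Du\|_{TX} = \aplip_d u$ to every $u \in \bd$. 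Consequently the whole corollary reduces to the fiberwise identity $\||\cdot\||_{T_zX} = \|\cdot\|_{T_zX}$ for a.e.\ $z$.

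Fix a chart $(U_\alpha, y^\alpha, k(\alpha))$; the finite-dimensional norm $\|\cdot\|_{T_zX}$ on $T_zX \cong \rr^{k(\alpha)}$ must be shown to be Hilbertian for a.e.\ $z \in U_\alpha$. The key observation is that for each $\xi \in \rr^{k(\alpha)}$ the globally Lipschitz function $v_\xi := \sum_{i=1}^{k(\alpha)} \xi_i y_i^\alpha$ satisfies
\[
\|\xi\|_{T_zX}^2 \;=\; (\lip_d v_\xi(z))^2 \;=\; \frac{d}{dm}\Gamma(v_\xi,v_\xi)(z) \qquad m\text{-a.e.\ } z \in U_\alpha,
\]
by the first paragraph applied to $v_\xi$. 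Bilinearity of $\Gamma$ and the linear dependence $v_{\xi\pm\eta}=v_\xi\pm v_\eta$ then give
\[
\frac{d}{dm}\bigl[\Gamma(v_{\xi+\eta},v_{\xi+\eta}) + \Gamma(v_{\xi-\eta},v_{\xi-\eta})\bigr] \;=\; 2\frac{d}{dm}\Gamma(v_\xi,v_\xi) + 2\frac{d}{dm}\Gamma(v_\eta,v_\eta),
\]
which is exactly the parallelogram identity for $\|\cdot\|_{T_zX}$ at a.e.\ $z$, for each fixed $(\xi,\eta)$. Intersecting the exceptional null sets over a countable dense set of pairs $(\xi,\eta) \in \mathbb{Q}^{k(\alpha)}\times\mathbb{Q}^{k(\alpha)}$ and using continuity of $\xi \mapsto \|\xi\|_{T_zX}$ on the fixed finite-dimensional space, the parallelogram identity extends to all real $(\xi,\eta)$ at every $z$ in a full-measure subset of $U_\alpha$. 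Hence $\|\cdot\|_{T_zX}$ arises from an inner product a.e.

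By the remark following \eqref{e2.x30}, the averaging procedure defining $\||\cdot\||$ reproduces any inner product norm, so $\||Du\||_{T_zX} = \|Du\|_{T_zX}$ a.e., and combining the three identities above yields
\[
\||Du\||_{T_zX}^2 \;=\; \|Du\|_{T_zX}^2 \;=\; (\aplip_d u)^2 \;=\; \frac{d}{dm}\Gamma(u,u) \qquad m\text{-a.e.}
\]
for every $u \in \bd$, proving the corollary.

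The main technical point I expect is the bookkeeping in the second paragraph: passing from an a.e.\ parallelogram identity at each fixed pair $(\xi,\eta)$ to a single full-measure set of $z$ on which the identity holds for \emph{all} real $(\xi,\eta)$. The standard countable-selection-plus-continuity argument delivers this, crucially using the finite-dimensionality of each fiber and the uniform bound $\sup_\alpha k(\alpha)<\infty$ built into Cheeger's atlas (property (ii)); without such a bound one could not replace a supremum over uncountably many directions by a countable dense one.
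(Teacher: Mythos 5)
Your proposal is correct, and it is essentially the argument the paper intends: Corollary \ref{c2.3} is stated only as "a corollary to Theorem \ref{t2.2}", and your write-up supplies exactly the missing middle step — Theorem \ref{t2.2} (with the Newtonian property forcing $C_1=1$) identifies $\frac{d}{dm}\Gamma(u,u)$ with $(\aplip_d u)^2$, while the bilinearity of $\Gamma$ applied to the chart combinations $v_\xi=\sum_i\xi_i y_i^\az$ gives the a.e.\ parallelogram identity for the Cheeger fiber norm $\|\cdot\|_{T_zX}=\lip_d v_\xi(z)$, so that norm is Hilbertian a.e.\ and the averaging construction \eqref{e2.x30} reproduces it, yielding $\||Du\||_{TX}=\|Du\|_{TX}=\aplip_d u$ a.e., with the extension from $\lip(X)$ to all of $\bd$ via Theorem \ref{t2.2}(i) and Lemma \ref{l2.5}. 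One minor remark: the uniform bound $\sup_\az k(\az)<\fz$ is not actually needed for your countable-selection step; finite dimensionality of each fixed chart (together with the countability of the atlas) already suffices.
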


However, generally, $\frac d{dm}\Gamma$  and $\||\cdot\||^2_{TX}$ do not
necessarily coincide.
This will be illustrated by the above example in Proposition \ref{p2.4} below.
To this  end, notice that since $(\rr^2,\,d_\dz,\,m)$ satisfies the doubling property and
a weak $(1,\,2)$-Poincar\'e inequality,  by \cite{c99}, there exists some atlas.
Up to some change of the coordinate functions,
we can take the atlas with a single chart $U\subset\rr^2$ with $m(\rr^2\setminus U)=0$, and  naturally,
choose $x=(x_1,\,x_2)$ as the coordinate.
Indeed, let  $\{(U_\az,\,y^\az,\,k(\az))\}_{\az\in\ca}$ be the atlas determined by \cite{c99} as above.
We will compare it with the usual coordinates via  the classical Rademacher theorem:
for every $u\in\lip(\rr^2)$ and almost all $z\in\rr^2$,
\begin{equation}\label{e2.xx1}
u(w)=u(z)+\sum_{i=1}^2\frac{\partial u(z)}{\partial x_i}(w_i-z_i)+o(|w-z|).
\end{equation}
Notice that this formula also holds when  $|w-z|$ is  replaced by $d_\dz(w,\,z)$
since the two distances are equivalent.
Now, for $\az\in\mathcal A$, applying \eqref{e2.xx1} to $(y_i^\az)_{i=1}^{k(\az)}$,
we get a Jacobian matrix $\frac{\partial y^\az}{\partial x}\equiv(\frac{\partial y_i^\az}{\partial x_j})_{i=1,\,\cdots,k(\az);\, j=1,\,2}$
almost everywhere;
while applying \eqref{e2.26} to $(x_1,\,x_2)$, we get
$\frac{\partial x}{\partial  y^\az}\equiv(\frac{\partial x_i}{\partial y_j^\az})_{i=1,\,2;\,j=1,\,\cdots,k(\az)}$ on $V_\az(x_1)\cap V_\az(x_2)$.
Then for almost all $z\in U_\az$, we   have
$$w=z+ \frac{\partial x}{\partial  y^\az}(z)\frac{\partial  y^\az}{\partial x}(z) (w-z)+ o(d_\dz(w,\,z)),$$
which implies that
$\frac{\partial x}{\partial  y^\az} \frac{\partial  y^\az}{\partial x}=Id_{2}$ almost everywhere in $U_\az$.
Similarly, $ \frac{\partial  y^\az}{\partial x}\frac{\partial x}{\partial  y^\az}=Id_{k(\az)}$ almost everywhere in $U_\az$.
This implies that $k(\az)=2$,  and $\frac{\partial x}{\partial  y^\az}=(\frac{\partial  y^\az}{\partial x})^{-1}$ almost everywhere.
Therefore on $U_\az$, and hence on $\cup_{\az\in\mathcal A} U_\az$, we can use the uniform coordinate function $x$.

Under the above atlas $\{(U,\,x,\,2)\}$, from the above argument, we also see that
the Cheeger derivative $D_\dz u$ coincides with $\nabla u$ for all Lipschitz functions  $u$, namely,
$D_\dz u=\frac{\partial u}{\partial x_1}\frac{\partial}{\partial x_1}+\frac{\partial u}{\partial x_2}\frac{\partial}{\partial x_2}$.
For almost all $z\in U $ and $v= v_1\frac{\partial}{\partial x_1}+v_2\frac{\partial}{\partial x_2}\in T_z\rr^2$, $\|v\|_{T_z {\rr^2}}=\lip_{d_\dz}(v_1x_1+v_2x_2)$.
But when $\dz$ is close to $1$, the following result
 shows that $ \||D_\dz u\||_{T\rr^2}^2$ does not coincide  with the squared gradient $\frac{d}{dm}\Gamma_\dz(u,\,u)$ and hence,
the distinguished differential structure of Cheeger does not coincide with the original differential structure on
$(\rr^2,\,\mathscr E_\dz,\,d_\dz,\,m)$.

\begin{prop}\label{p2.4}
There exists a $\wz\dz_E\in(0,\,1)$ such that for every $\dz\in(\wz\dz_E,\,1)$,
we can find a function $u\in\lip(\rr^2)$ such that
$\frac{d}{dm}\Gamma_{\dz}(u,\,u)<\||D_\dz u\||^2_{T\rr^2}$ on some set with positive measure.
\end{prop}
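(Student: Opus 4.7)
\textbf{Plan for Proposition \ref{p2.4}.}
The heart of the argument is the observation that, on the Cantor set $E$, the squared gradient $\frac{d}{dm}\Gamma_\dz(u,u)=(1-\dz)|\nabla u|^2$ is crushed to zero as $\dz\to1$, whereas the distinguished Cheeger inner-product norm $\||D_\dz u\||^2_{T\rr^2}$ remains bounded below by a universal (in $\dz$) constant multiple of $|\nabla u|^2$. Since the preceding paragraph shows that the Cheeger derivative satisfies $D_\dz u=\nabla u$ under the chosen chart $(U,x,2)$, the entire matter reduces to comparing $(1-\dz)|\nabla u(z)|^2$ with $\||\nabla u(z)\||^2_{T_z\rr^2}$ on a set of positive measure contained in $E$.

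The first key step is to control the norm $\|\cdot\|_{T_z\rr^2}$ two-sidedly by the Euclidean norm. For any $v\in\rr^2$, the coordinate combination $\ell_v(x)\equiv v_1x_1+v_2x_2$ is linear, and by definition $\|v\|_{T_z\rr^2}=\lip_{d_\dz}\ell_v(z)$. The bound $|x-y|\le d_\dz(x,y)$ gives immediately $\lip_{d_\dz}\ell_v(z)\le|v|$, while the bound $d_\dz(x,y)\le C_E|x-y|$ from \eqref{e2.25} (where $C_E$ is from Lemma \ref{l2.6}) applied along the Euclidean ray $y=z+t v/|v|$ gives
\[
\lip_{d_\dz}\ell_v(z)\ge\limsup_{t\to 0^+}\frac{t|v|}{d_\dz(z,z+tv/|v|)}\ge\frac{|v|}{C_E}.
\]
Consequently the unit ball $B_{\|\cdot\|}(0,1)\subset T_z\rr^2$ is sandwiched between the Euclidean balls $B_{|\cdot|}(0,1)$ and $B_{|\cdot|}(0,C_E)$ for every $z$ (not just those in $E$).

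The second key step is to propagate this sandwich through the distinguished-norm construction \eqref{e2.x30}. Since $\||v^\ast\||^{\ast 2}$ is, by construction, a positive definite quadratic form $a^TMa$ in the vector $a\in\rr^2$ representing $v^\ast$, the monotonicity of the integrand $|a\cdot v|^2$ under the inclusions $B_{|\cdot|}(0,1)\subset B_{\|\cdot\|}(0,1)\subset B_{|\cdot|}(0,C_E)$ together with the explicit radial integrals $\int_{|v|\le R}(a\cdot v)^2\,dv=\frac{\pi}{4}R^4|a|^2$ yields two-sided bounds of the form $c_0 I\le M\le c_0 C_E^4 I$ in the sense of positive definite matrices. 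Inverting gives $M^{-1}$ sandwiched by $c_0^{-1}C_E^{-4}I$ and $c_0^{-1}I$, i.e.,
\[
c_3|v|^2\le\||v\||_{T_z\rr^2}^2\le c_0^{-1}|v|^2,\qquad c_3\equiv\frac{1}{c_0 C_E^4}>0.
\]

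Taking $\wz\dz_E\equiv 1-c_3\in(0,1)$ then finishes the proof: for any $\dz\in(\wz\dz_E,1)$ we have $1-\dz<c_3$, and selecting any $u\in\lip(\rr^2)$ (for instance $u(x)=x_1$) with $\nabla u\ne 0$ identically, we obtain on the positive-measure set $E$,
\[
\frac{d}{dm}\Gamma_\dz(u,u)=(1-\dz)|\nabla u|^2<c_3|\nabla u|^2\le\||D_\dz u\||^2_{T\rr^2}.
\]
The principal technical subtlety is the third paragraph: verifying that the two-sided Euclidean sandwich of the primal norm $\|\cdot\|_{T_z\rr^2}$ does transfer, with explicitly tracked constants, through the integral definition \eqref{e2.x30} and then through the duality $\||\cdot\||=(\||\cdot\||^\ast)^\ast$; this works cleanly precisely because $\||\cdot\||^{\ast 2}$ is automatically a quadratic form, so the sandwich translates into a sandwich of positive definite matrices that is stable under inversion.
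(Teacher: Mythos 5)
Your proof is correct and follows essentially the same route as the paper: the two-sided comparison $\frac1{C_E}|v|\le\|v\|_{T_z\rr^2}\le|v|$ coming from \eqref{e2.25}, its transfer to the distinguished inner-product norm, and then testing on the linear coordinate functions over the positive-measure set $E\cap U$; your third paragraph is just an inlined specialization (with Euclidean reference norm) of the paper's Lemma \ref{l2.7}. The one point to tighten is that the integral in \eqref{e2.x30} is taken against the Hausdorff measure $H^2_{\|\cdot\|_{T_z\rr^2}}$ rather than Lebesgue measure, so you need the easy remark, made at the start of the proof of Lemma \ref{l2.7}, that this measure is by translation invariance a constant multiple of Lebesgue measure with the constant controlled in terms of $C_E$ --- this only adjusts your sandwich constants and does not affect the argument.
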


To this end, we need the following result, whose notation is that of the paragraphh containing formula \eqref{e2.x30}.

\begin{lem} \label{l2.7}
Assume that $\|\cdot\| $ and $\|\cdot\|_o$ are two norms on $V$
that satisfy $M^{-1}\|\cdot\|  \le \|\cdot\|_o\le M\|\cdot\| $ for some $M\ge1$.
Then there exists a positive constant $C(M,\,k)$ such that
$$\frac1{C(M,\,k)}\||\cdot\||  \le \||\cdot\||_o\le C(M,\,k)\||\cdot\||.$$
\end{lem}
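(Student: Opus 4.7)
The plan is to first compare the distinguished dual norms $\||\cdot\||^\ast$ and $\||\cdot\||_o^\ast$ on the dual space $V^\ast$, and then transfer the comparison back to $V$ via the representation $\||v\||=\sup\{v^\ast(v):\||v^\ast\||^\ast\le 1\}$. Since the normalizing constant $(k+1)\mathrm{Vol}(k)/\mathrm{Vol}(k+2)$ depends only on $k$, it cancels in any ratio, and the task reduces to comparing the integrals
\begin{equation*}
I(\|\cdot\|,v^\ast)\equiv\int_{B_{\|\cdot\|}(0,1)}|v^\ast(v)|^2\,dH^k_{\|\cdot\|}(v)\quad\text{and}\quad I(\|\cdot\|_o,v^\ast)\equiv\int_{B_{\|\cdot\|_o}(0,1)}|v^\ast(v)|^2\,dH^k_{\|\cdot\|_o}(v).
\end{equation*}

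The argument will lean on three observations. First, the ball inclusions $B_{\|\cdot\|}(0,1)\subset B_{\|\cdot\|_o}(0,M)$ and $B_{\|\cdot\|_o}(0,1)\subset B_{\|\cdot\|}(0,M)$ are immediate from $M^{-1}\|\cdot\|\le\|\cdot\|_o\le M\|\cdot\|$. Second, since the identity map is $M$-Lipschitz in either direction between $(V,\|\cdot\|)$ and $(V,\|\cdot\|_o)$, one obtains $H^k_{\|\cdot\|_o}\le M^kH^k_{\|\cdot\|}$ and $H^k_{\|\cdot\|}\le M^kH^k_{\|\cdot\|_o}$. Third, Hausdorff measure satisfies the dilation scaling $H^k_{\|\cdot\|_o}(tA)=t^k H^k_{\|\cdot\|_o}(A)$ for $t>0$, and $|v^\ast(tv)|^2=t^2|v^\ast(v)|^2$. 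Chaining these would give
\begin{equation*}
I(\|\cdot\|,v^\ast)\le\int_{B_{\|\cdot\|_o}(0,M)}|v^\ast|^2\,dH^k_{\|\cdot\|}\le M^k\int_{B_{\|\cdot\|_o}(0,M)}|v^\ast|^2\,dH^k_{\|\cdot\|_o}=M^{2k+2}I(\|\cdot\|_o,v^\ast),
\end{equation*}
where the last equality is the change of variables $v=Mw$. Taking square roots yields $\||v^\ast\||^\ast\le M^{k+1}\||v^\ast\||_o^\ast$, and the reverse inequality follows by swapping the roles of the two norms.

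Finally, to transfer back to $V$: if $\||v^\ast\||^\ast\le 1$, then the dual comparison shows $\||v^\ast/M^{k+1}\||_o^\ast\le 1$, so $v^\ast(v)\le M^{k+1}\||v\||_o$, and taking the supremum over admissible $v^\ast$ gives $\||v\||\le M^{k+1}\||v\||_o$. Symmetry yields the reverse inequality, so the conclusion holds with $C(M,k)=M^{k+1}$. I do not foresee a genuine obstruction: the whole argument is essentially careful bookkeeping of dilations and Lipschitz comparisons on a finite-dimensional normed space. The only subtlety is to ensure that the exponents match, with the factor $M^k$ coming from the distortion of Hausdorff measure, an additional $M^k$ from the dilation of the domain, and an $M^2$ from the homogeneity of $v^\ast$ under dilation, combining to $M^{2k+2}$ before the square root.
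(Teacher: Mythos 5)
Your proof is correct and follows essentially the same route as the paper: compare the two Hausdorff measures via the diameter distortion $M^{\pm k}$, use the ball inclusion together with the dilation scaling to compare the distinguished dual norms (picking up the same factor $M^{k+1}$ after the square root), and then dualize back to $V$. The only cosmetic difference is that the paper phrases the measure comparison through the proportionality constant $c_o$ in $H^k_{\|\cdot\|_o}=c_oH^k_{\|\cdot\|}$ with $M^{-k}\le c_o\le M^k$, while you use the two-sided measure inequality directly; the underlying estimate is identical.
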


\begin{proof}
We first notice that
$H^k_{\|\cdot\|_o}$ (and also $H^k_{\|\cdot\|}$) is a constant multiple of the Lebesgue measure on $\rr^k$
 due to the its translation invariance.  Then
$H^k_{\|\cdot\|_o}=c_oH^k_{\|\cdot\| }$ for some $c_o>0$.
We claim that $
      M^{-k}\le c_o\le M^k.
      $
  Indeed, recall that  for any set $F$,
its $k$-dimensional Hausdorff measure with respect to the norm $\|\cdot\|_o$ is defined by
 $H^k_{\|\cdot\|_o}(F)\equiv \lim_{\ez\to0+} H^k_{\|\cdot\|_o,\,\ez}(F)$
with $$H^k_{\|\cdot\|_o,\,\ez}(F)\equiv\inf\lf\{\sum_{i}(\diam_{\|\cdot\|_o}U_i)^k\r\},$$
where the infimum is taken over all  covers $\{U_i\}_i$ of $F$ with $\diam_{\|\cdot\|_o}U_i\le\ez.$
Notice  that $M^{-1}\|\cdot\| \le \|\cdot\|_o\le M\|\cdot\| $ implies that
$$M^{-1}\diam_{\|\cdot\| } U\le \diam_{\|\cdot\|_o} U\le M\diam_{\|\cdot\| } U.$$
Then it follows that $H^k_{\|\cdot\|_o,\,\ez}(F)\le M^{k} H^k_{\|\cdot\| ,\,M\ez}(F)$
and hence $H^k_{\|\cdot\|_o}(F)\le M^kH^k_{\|\cdot\| }(F)$. Similarly, $H^k_{\|\cdot\| }(F)\le M^kH^k_{\|\cdot\|_o }(F)$
as desired.

Moreover, observing that
$ B_{\|\cdot\|_o}(0,\,1)\subset B_{\|\cdot\| }(0,\,M)$,  by the scaling property of the
Lebesgue measure and hence of $H^k_{\|\cdot\| }$,
we obtain \begin{eqnarray*}
\||v^\ast\||_o^\ast&&
\le\lf(c_o(k+1)\frac{{\rm Vol}(k)}{{\rm Vol}(k+2)}\int_{B_{\|\cdot\| }(0,\,M)}|v^\ast(v)|^2\,
dH^k_{\|\cdot\| }(v)\r)^{1/2}\\
&&
\le\lf(c_o(k+1)M^{k+2}\frac{{\rm Vol}(k)}{{\rm Vol}(k+2)}\int_{B_{\|\cdot\| }(0,\,1)}|v^\ast(v)|^2\,
dH^k_{\|\cdot\| }(v)\r)^{1/2}\\
&&\le\lf(  c_o M^{k+2}\r)^{1/2}
\||v^\ast\|| ^\ast,
\end{eqnarray*}
which implies that  $\||v \||\le \lf(  c_o M^{k+2}\r)^{1/2}\||v \||_o$.
Similarly, we have $$\||v \||_o\le \lf( \frac1{ c_o} M^{k+2}\r)^{1/2}\||v \||,$$
which finishes the proof of Lemma \ref{l2.7}.
\end{proof}

\begin{proof}[Proof of Proposition \ref{p2.4}.]
For almost every $z\in U$ and $v=v_1\frac{\partial}{\partial x_1}+v_2\frac{\partial}{\partial x_2}\in T_z\rr^2,$
following Cheeger's definition,
 we have
 $\|v\|_{T_z\rr^2}=\lip_{d_\dz}(v_1x_1+v_2x_2)(z).$
Set
 $$\|v\|\equiv|\nabla (v_1x_1+v_2x_2)(z)|=(v_1^2+v_2^2)^{1/2}.$$
 Since \eqref{e2.25} implies that
$\frac1{C_E}|\nabla u(z)|\le\lip_{d_\dz}u(z)\le |\nabla u(z)|$
for $u\in\lip(\rr^2)$,
we have that
$\frac1{C_E}\|v\| \le\|v\|_{T_z\rr^2}\le \|v\|,$
which together with Lemma \ref{l2.7} leads to
$$\frac1{C(C_E,\,2)}\||v\||\le\||v\||_{T_z\rr^2}\le C(C_E,\,2)\||v\||.$$
Notice that $\|\cdot\|=\||\cdot\||$.
We choose $\wz \dz_E\equiv1-\frac1{C(C_E,\,2)}$.
For  every $\dz\in( \wz\dz_E,\,1)$ and $z\in E\cap U$,
we have
\begin{eqnarray*}
&&\frac d{dm}\Gamma_\dz(v_1x_1+v_2x_2,\,v_1x_1+v_2x_2)(z) \\
&&\quad =
(1-\dz)^2|\nabla (v_1x_1+v_2x_2)(z)|^2=(1-\dz)^2
\|v\|^2\\
&&\quad < \||v\||^2_{T_z\rr^2}=
(\lip_{d_\dz}(v_1x_1+v_2x_2)(z))^2.
\end{eqnarray*}
Since the set $E\cap U$ has positive measure, this finishes the proof of Proposition \ref{p2.4}.
\end{proof}

\section{A Sierpinski gasket  with $\frac d{dm}\Gamma(u,\,u)=({\lip}_d\, u)^2$ }\label{s3}

In this section, for the standard (resistance) Dirichlet form on the Sierpinski gasket
 equipped with the Kusuoka measure,
we will identify the intrinsic length structure
with the measurable Riemannian  and the gradient structures;
see Theorem \ref{t3.1} through Theorem \ref{t3.3} below.
We begin with the definition of the Sierpinski gasket $K$.
\begin{defn}\label{d3.1}\rm
Let $V_0\equiv\{p_1,\,p_2,\,p_3\}\in\rr^2$ be the set of the three vertices of an equilateral triangle,
and for $p_i\in V_0$, define $F_i(x)\equiv(x+p_i)/2$ for all $x\in\rr^2$.
The {\it Sierpinski gasket} $K$ is defined as the self-similar set associated with the family of contractions $\{F_i\}_{i=1}^3$,
namely, $K$ is the unique non-empty compact set satisfying $K=\cup_{i=1}^3 F_i(K)$.
\end{defn}

On the Sierpinski gasket $K$, there is a standard  resistance  form $(\mathscr E,\,\mathbb F)$.
Before defining it, we recall the following standard notation and notions.

(i) Let $S\equiv\{1,\,2,\,3\}$.
Set $W_0\equiv S^0=\{\emptyset\}$ and for $n\in\nn$,
 $W_n\equiv S^n=\{i_1i_2\cdots i_n|i_j\in S\}$.
 Let $W_\ast\equiv \cup_{n\in\nn\cup\{0\}}W_n$.
Set  $\Sigma\equiv S^\nn=\{i_1i_2\cdots|i_n\in S\}$
and for $w\in W_n$,  $\Sigma_w\equiv\{v\in\Sigma|v_1\cdots v_n=w\}$.

(ii) For $w=w_1\cdots w_n\in W_n$ with $n\in\nn \cup\{0\}$, define $|w|=n$,
write $F_w\equiv F_{w_1}\circ\cdots \circ F_{w_n}$ if $n\ne1$ and $F_w=Id_K$ if $n=0$,
and set $K_w\equiv F_w(K)$.
For $n\in\nn\cup\{0\}$, $V_n\equiv\cup_{w\in W_n}F_w(V_0)$. $V_\ast=\cup_{n\in\nn\cup\{0\}} V_n$.

(iii) For $w=w_1w_2\cdots\in\Sigma$, define $\pi(w)=\cap_{n\ge1} K_{w_1\cdots w_n}$.
Then $\pi:\Sigma\to K$ is continuous, surjective, and $\sharp(\pi^{-1}(x))=2$ if $x\in\cup_{n=1}^\fz V_n$
and $\sharp\pi^{-1}(x)=1$ otherwise. For $w\in W_\ast$, $\pi(\Sigma_w)=K_w$ and  $\Sigma_w=\pi^{-1}(K_w)$.

For $n\in\nn\cup\{0\}$ and each pair of $u,\,v\in \mathscr C(K)$,
define  $$\mathscr E_n(u,\,v)\equiv \frac18 \lf(\frac53\r)^{n} \sum_{p,\,q\in V_n,\,p\sim q}[u(p)-u(q)][v(p)-v(q)],$$
where $p\sim q$ if and only if $p,\,q\in F_w(V_0)$ for some $w\in W_n$.
Notice that $\mathscr E_n$ is a non-negative definite symmetric quadratic form on $\mathscr C(K)$ and
$\mathscr E_n(u,\,u)\le  \mathscr E_{n+1}(u,\,u)$ for all $u\in \mathscr C(K)$.
Then the following resistance  form  $(\mathscr E,\,\mathbb F)$ is well defined.

\begin{defn}\label{d3.2}Let $\mathbb F\equiv\{u\in\mathscr C(K)| \lim_{n\to\fz}\mathscr E_n(u,\,u)<\fz \}$
and  define $\mathscr E(u,\,v)\equiv\lim_{n\to\fz}\mathscr E_n(u,\,v)$ for all $u,\,v\in\mathbb F$.
\end{defn}

Observe that, associated to the above resistance form $(\mathscr E,\,\mathbb F)$,
the square gradient $\Gamma(u,\,v)$ is well defined by \eqref{e2.x1} as a signed Radon measure on $K$
for each pair of $u,\,v\in\mathbb F$.

Kusuoka \cite{k89} endowed $(K,\,\mathscr E,\,\mathbb F)$ with a  ``Riemannian volume'' measure.
Here we recall its definition via the harmonic embedding $\Phi$ of $K$ into $\rr^2$ see \cite{k93,ka10}.
We say that $h\in\mathbb F$ is  an $E$-{\it harmonic function} for some compact set $E\subset K$
if $\mathscr E(h,\,u)=0$
for all $u\in\mathbb F$ with $u=0$ on $E$.
Let $h_1,\,h_2\in\mathbb F$  be $V_0$-harmonic functions satisfying
$$h_1(p_1)=h_2(p_1)=0,\ h_1(p_2)=h_1(p_3)=1,\ \mbox{and}\ -h_2(p_2)=h_2(p_3)=\frac1{\sqrt 3}.$$
For  the existence of such functions see, for example, \cite[Section 3.2]{k01}.
Observe that by \cite[Theorem 3.6]{k93}, the harmonic embedding $\Phi \equiv(h_1,\,h_2)$ actually induces a
homeomorphism between $K$ and $\Phi(K)$.
Due to this, $\Phi(K)$ is called the harmonic  Sierpinski gasket.
\begin{defn}\label{d3.3}\rm The {\it Kusuoka measure} $m$ on $K$ is  defined by
$ m\equiv \Gamma(h_1,\,h_1)+\Gamma(h_2,\,h_2).
$
\end{defn}

Notice that the Kusuoka measure $m$ is non-atomic and satisfies  $m(U)>0$ for all open nonempty sets $U\subset K$;
see \cite{k89,k93} and below.
Then by \cite[Theorem 3.4.6]{k01}, $(K,\,\mathscr E,\,m)$ is a strongly local, regular Dirichlet
form on $L^2(K,\,m)$ with   domain $\bd=\mathbb F$.
The strong locality  obviously follows  from the definition of $\mathscr E$.
The intrinsic distance $d$ associated to $(K,\,\mathscr E,\,m)$  is then defined as in
\eqref{e2.1}. To distinguish it from the Euclidean distance on $\rr^2$,
we let $B_d(x,\,r)=\{y\in K| d(x,\,y)<r\}$ for $x\in K$ and $r>0$,
and denote by $\lip_d(K)$ the space of Lipschitz functions and
by $\lip_d\, u$ (resp. ${\rm apLip}_d\, u$)
the (resp. approximative) pointwise Lipschitz constant with respect to $d$.

The following result identifies
the intrinsic length structure with the gradient structure on $(K,\,\mathscr E,\,d,\,m)$.

\begin{thm}\label{t3.1}
For every  $u\in \bd$,  the energy measure $\Gamma(u,\,u)$ is absolutely continuous
with respect to the Kusuoka measure $m$ and  $\frac d{dm}\Gamma(u,\,u)=({\lip}_d\, u)^2$ almost everywhere.
% Moreover, if $u\in\lip_d(K)$, then
% $\frac d{dm}\Gamma(u,\,u)=(\lip_d\, u)^2$ almost everywhere.
\end{thm}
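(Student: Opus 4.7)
My plan is to reduce Theorem \ref{t3.1} to Theorem \ref{t2.2}(ii)(iii) by verifying, for the triple $(K,\,\mathscr E,\,m)$ with its intrinsic distance $d$, the three ingredients: (a) $(K,\,d,\,m)$ satisfies the doubling property \eqref{e2.2}; (b) $(K,\,\mathscr E,\,m)$ supports a weak $(1,\,2)$-Poincar\'e inequality \eqref{e2.20}; and (c) $(K,\,\mathscr E,\,m)$ satisfies the Newtonian property. Once (a)-(c) are established, Theorem \ref{t2.2} immediately gives $\Gamma(u,\,u)\le m$ in absolute continuity for every $u\in\bd_\loc$ and, because $K$ is compact so $\bd=\bd_\loc$, the identity $\frac{d}{dm}\Gamma(u,\,u)=(\aplip_d u)^2$ almost everywhere. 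For Lipschitz $u$ this coincides with $(\lip_d u)^2$ a.e.; for a general $u\in\bd$, the passage from $\aplip_d u$ to $\lip_d u$ a.e.\ is a refined Rademacher statement (the one advertised in the introduction), which I would obtain by combining the density of $\lip(K)\cap\mathscr C(K)$ in $\bd$ (Theorem \ref{t2.2}(i)) with the measurable Riemannian differentiability of $\bd$-functions at $m$-a.e.\ point of $K$.

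For items (a) and (b) the natural vehicle is the harmonic embedding $\Phi=(h_1,\,h_2):K\to\Phi(K)\subset\rr^2$, which by a theorem of Kigami is a homeomorphism. I would identify the intrinsic distance $d$ on $K$ with the length distance on the harmonic gasket $\Phi(K)$ induced by the Euclidean metric on $\rr^2$ (transporting the squared-gradient formula $m=\Gamma(h_1,\,h_1)+\Gamma(h_2,\,h_2)$ through $\Phi$ shows that $\Gamma(u,u)\le m$ corresponds to the coordinate functions of $\Phi$ being $1$-Lipschitz contractions). With this identification, doubling of $m$ with respect to $d$ follows from known volume estimates of Kusuoka-Kigami-Kajino for the pushforward Kusuoka measure on $\Phi(K)$, and the weak $(1,\,2)$-Poincar\'e inequality is a consequence of the standard two-sided sub-Gaussian heat kernel/resistance estimates for the resistance form $(\mathscr E,\,\mathbb F)$ on the gasket combined with Sturm's characterization recalled just before Theorem \ref{t2.2}.

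The main obstacle is (c), the Newtonian property: for every $u\in\lip_d(K)$ I must exhibit a Borel representative $g$ of $\sqrt{d\Gamma(u,\,u)/dm}$ with $|u(\gz(0))-u(\gz(1))|\le\int_\gz g\,ds$ for \emph{every} Lipschitz curve $\gz$ in $(K,\,d)$. I would use Kusuoka's measurable Riemannian structure: there is a Borel, symmetric, nonnegative $2{\times}2$ matrix field $Z(x)$ on $K$, built from the entries $\Gamma(h_i,\,h_j)$, that simultaneously diagonalizes the energy measures and satisfies $\frac{d}{dm}\Gamma(u,\,u)=\langle Z(x)\,d(u\circ\Phi^{-1})(\Phi(x)),\,d(u\circ\Phi^{-1})(\Phi(x))\rangle$ for a.e.\ $x\in K$ whenever $u\circ\Phi^{-1}$ is differentiable in the Cheeger sense. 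Setting $g(x):=\sqrt{\langle Z(x) du_\Phi, du_\Phi\rangle}$ with $u_\Phi=u\circ\Phi^{-1}$, the Newtonian inequality reduces to an application of the fundamental theorem of calculus along $\Phi\circ\gz$ on $\Phi(K)\subset\rr^2$ together with the chain rule at a.e.\ point of $\Phi\circ\gz$. The delicate point is to arrange this so the estimate holds for \emph{all} Lipschitz $\gz$, not merely for a.e.\ one modulo an exceptional family of $2$-modulus zero; one secures this by choosing the representative $g$ to be the pointwise supremum appearing in Lemma \ref{l2.4} combined with Lemma \ref{l2.5}, and by exploiting the fact that, on the harmonic gasket, the length of any rectifiable curve in the intrinsic distance is exactly $\int\sqrt{\langle Z\dot{\gz},\dot{\gz}\rangle}\,dt$.

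With (a)-(c) in place, Theorem \ref{t2.2}(iii) yields $\frac{d}{dm}\Gamma(u,\,u)=(\aplip_d u)^2$ for every $u\in\bd$, and the identification $\aplip_d u=\lip_d u$ a.e.\ on $K$ for $u\in\bd$ finishes the proof. I expect that the heart of the argument—indeed the step where all the special structure of the Kusuoka measure enters—is the construction of the matrix field $Z$ and the verification that it makes the harmonic gasket into a measurable Riemannian space whose length metric coincides pointwise (not merely bi-Lipschitz) with the intrinsic distance $d$.
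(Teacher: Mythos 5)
Your outer frame is close to the paper's: the paper likewise feeds Proposition \ref{p3.1} (doubling, weak $(1,2)$-Poincar\'e, coincidence of topologies, all quoted from Kigami--Kajino via the dual formula \eqref{e3.1}) into Theorem \ref{t2.2} to get absolute continuity and the two-sided bound \eqref{e2.23} with some $C_1\ge 1$, and then the whole problem is to force $C_1=1$. Whether one phrases that last step as ``verify the Newtonian property and invoke Theorem \ref{t2.2}(iii)'' (your route, legitimate by Proposition \ref{p2.2}) or as ``prove $\lip_d u\le \sqrt{\tfrac{d}{dm}\Gamma(u,u)}$ a.e.'' (the paper's route) is immaterial: both reduce to the same input. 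The genuine gap is that this input --- the refined Rademacher theorem, Proposition \ref{p3.2}/Theorem \ref{t3.2}, asserting that for every $u\in\bd$ and $m$-a.e.\ $x$ one has $u(y)-u(x)=\wz\nabla u(x)\cdot(\Phi(y)-\Phi(x))+o(d(x,y))$ \emph{for all} $y$, with $|\wz\nabla u|^2=\tfrac{d}{dm}\Gamma(u,u)$ a.e. --- is treated in your proposal as part of ``Kusuoka's measurable Riemannian structure,'' i.e.\ as citable. It is not: the known results (Kusuoka, Hino, Kajino) give $\tfrac{d}{dm}\Gamma(u,u)=|Z\nabla u_\Phi|^2$ for $u\in\mathscr C^1(K)$ and weaker differentiability statements, and the upgrade to a pointwise first-order expansion with error $o(d(x,y))$ in the \emph{intrinsic} metric, valid for all $u\in\bd$, is precisely the new technical content of Section \ref{s3} (proved there via the self-similar oscillation estimates of Lemmas \ref{l3.1}--\ref{l3.2}, Hino's index-one theorem, and martingale convergence). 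Once that theorem is in hand, the conclusion follows in three lines from \eqref{e3.2}, $|\Phi(x)-\Phi(y)|\le d(x,y)$; without it, your argument has no engine.

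Two specific assertions you lean on in its place do not hold up. First, the intrinsic length of a curve $\gz$ is, by \eqref{e3.1}, the Euclidean length of $\Phi\circ\gz$, not $\int\sqrt{\langle Z\dot\gz,\dot\gz\rangle}\,dt$: $Z$ is $m$-a.e.\ a rank-one projection defined only up to $m$-null sets, nothing forces the tangents of $\Phi\circ\gz$ into ${\rm Im}\,Z$, and a rectifiable curve may lie entirely in an $m$-null set where $Z$ is undefined; fortunately your argument only needs the inequality \eqref{e3.2}, so this is an error of statement rather than of strategy, but it signals the real difficulty. Second, the proposed cure for the ``all curves versus a.e.\ curve'' problem --- taking the supremum-type representatives from Lemmas \ref{l2.4} and \ref{l2.5} --- cannot work: those lemmas produce only $p$-weak upper gradients, i.e.\ the inequality modulo curve families of zero modulus, which is exactly what the Newtonian property (with constant $1$, and with a representative equal a.e.\ to $\sqrt{\tfrac{d}{dm}\Gamma(u,u)}$) requires you to upgrade; and the problematic curves are those spending positive length in $m$-null sets, where neither $Z$ nor any a.e.\ differentiability statement reaches. (For such curves one can cheaply inflate the Borel representative on the exceptional null set using the global Lipschitz bound of $u$, but for the remaining curves you still need the chain rule at arc-length-a.e.\ point, which again is exactly the ``for all $y$'' Rademacher statement of Proposition \ref{p3.2}.) Likewise your final passage from $\aplip_d u$ to $\lip_d u$ for general $u\in\bd$ is not a density argument --- $\lip_d$ does not pass to limits in $\bd$ --- but is again delivered by the same Rademacher theorem. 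In short: correct scaffolding, but the load-bearing wall is missing.
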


To prove this, we first recall the following properties of $(K,\,\mathscr E,\,d,\,m)$.

\begin{prop}\label{p3.1}
The topology induced by $d$ coincides with the original topology on $K$
 inherited from $\rr^2$,
$(K,\,d,\,m)$ satisfies a doubling property, and
 $(K,\,\mathscr E,\,m)$ supports a weak $(1,\,2)$-Poincar\'e inequality.
\end{prop}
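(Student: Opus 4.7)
The plan is to derive all three properties by combining (i) the role of the harmonic embedding $\Phi = (h_1, h_2)$, which identifies $K$ with the harmonic Sierpinski gasket $\Phi(K) \subset \mathbb{R}^2$, and (ii) the measurable Riemannian theory developed by Kusuoka \cite{k89, k93} and Kigami \cite{k01, ka10}. Each assertion is essentially in that literature; the work is to check that the intrinsic distance $d$ from \eqref{e2.1} is the correct metric for invoking those results.

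I would first handle the topology statement. The key observation is that by the very definition of $m$, we have $\Gamma(h_i, h_i) \le \Gamma(h_1, h_1) + \Gamma(h_2, h_2) = m$ for $i = 1, 2$, while $h_i \in \mathbb{F} \cap \mathscr{C}(K) \subset \bd_\loc \cap \mathscr{C}(K)$, so each $h_i$ is admissible in the supremum defining $d$ in \eqref{e2.1}. This yields
$$
d(x,y) \ge \max_{i=1,2} |h_i(x) - h_i(y)| \ge \tfrac{1}{\sqrt{2}}\, |\Phi(x) - \Phi(y)|.
$$
Since $\Phi$ is a homeomorphism from $K$ (with the Euclidean topology) onto $\Phi(K)$ \cite{k93}, this inequality shows that $d$-convergence implies Euclidean convergence, so the identity map $(K, d) \to K$ is continuous. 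For the reverse direction I would construct piecewise-harmonic test functions on each cell $K_w$ following \cite{k01} to bound the $d$-diameter of $K_w$ by a quantity tending to zero as $|w| \to \infty$; in particular $d(x,y) < \infty$ on $K$, so $(K, d)$ is Hausdorff. A continuous bijection from the Euclidean-compact Hausdorff space $K$ onto the Hausdorff space $(K, d)$ is automatically a homeomorphism, which concludes the topology claim.

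The doubling property of $m$ with respect to $d$ and the weak $(1, 2)$-Poincar\'e inequality for $(K, \mathscr{E}, m)$ would then follow from the corresponding results in Kigami's measurable Riemannian structure paper \cite{ka10}, applied to the harmonic Sierpinski gasket. Once the topology claim is in hand and $d$ has been identified (up to bi-Lipschitz equivalence) with the harmonic distance used in \cite{ka10}, both statements transfer directly.

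The main obstacle I foresee is producing a sufficiently sharp upper bound on $d$ in the topology argument, namely a quantitative modulus of continuity relating $d$ to the Euclidean distance on cells $K_w$. This amounts to controlling the ratio $\Gamma(h, h)/m$ on small cells for suitable piecewise-harmonic test functions $h$, which in turn relies on the self-similar scaling factor $5/3$ of $\mathscr{E}$ combined with Kusuoka's \cite{k89} matrix-product description of $m$. All remaining steps are either direct citations or formal consequences of already-cited results.
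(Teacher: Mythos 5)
There is a genuine gap, and it sits exactly where the real content of Proposition \ref{p3.1} lies. The doubling property and the weak $(1,2)$-Poincar\'e inequality are metric--measure statements; in the paper they are imported from \cite[Theorem 6.2]{k08} and \cite[Lemma 3.7 and Proposition 3.20]{ka10}, which are proved for the harmonic geodesic metric, and they apply to the intrinsic distance $d$ only because of the exact identification \eqref{e3.1} of $d$ with that metric (\cite[Theorem 4.2]{ka10}). Your plan replaces this by the phrase ``once $d$ has been identified (up to bi-Lipschitz equivalence) with the harmonic distance used in \cite{ka10}, both statements transfer directly,'' but nothing in your argument produces such an identification: the topology step only yields a homeomorphism, through which neither doubling nor Poincar\'e can be transferred, and the only quantitative comparison you actually establish is the one-sided bound $d(x,y)\gtrsim|\Phi(x)-\Phi(y)|$. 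This cannot be upgraded to two-sided comparability, since, as recalled right after \eqref{e3.2}, $d_\Phi$ is \emph{not} comparable to $d$ (there are pairs with $d_\Phi(x_n,y_n)/d(x_n,y_n)\to0$). So the bi-Lipschitz identification your transfer step needs is neither proved nor cited, and in the only form your estimates could supply ($d\sim d_\Phi$) it is false; the missing ingredient is precisely the dual formula \eqref{e3.1}.

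The mechanism you propose for the upper modulus of $d$ is also backwards. In \eqref{e2.1} the distance is a supremum over admissible functions, so plugging in constructed (piecewise-harmonic) test functions can only give \emph{lower} bounds on $d$; to bound the $d$-diameter of a cell $K_w$ from above you must control the oscillation on $K_w$ of \emph{every} $u$ with $\Gamma(u,u)\le m$. The standard way to do this is the resistance estimate $\osc_K u\le C_2\sqrt{\mathscr E(u,u)}$ combined with the self-similar scaling of $\mathscr E$ (Lemma \ref{l3.1}) and $m(K_w)=(5/3)^{|w|}\|T_w\|^2$, which give $\osc_{K_w}u\le C_2\sqrt{(3/5)^{|w|}\Gamma(u,u)(K_w)}\le C_2\|T_w\|\to0$; alternatively one again invokes \eqref{e3.1}. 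Your lower bound via $h_1,h_2$ and the compact-to-Hausdorff trick are fine, but the two load-bearing steps --- the upper bound for $d$ on cells and the metric identification needed for doubling and Poincar\'e --- are missing, and the first is attempted by a method that cannot work as stated.
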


Proposition \ref{p3.1} has been proved in
 \cite[Theorem 6.2]{k08} and \cite[Lemma 3.7 and Proposition 3.20]{ka10}
with the aid of the  dual formula:
for all $x,\,y\in K$,
\begin{equation}\label{e3.1}
 d(x,\,y)=\inf\{\ell_{\rr^2}(\Phi\circ\gz)|\gz: [0,\,1]\to K,\, \gz \  \mbox{is continuous},\,\gz(0)=x,\, \gz(1)=y\},
\end{equation}
where $\ell_{\rr^2}(\Phi\circ\gz)$ denotes the length of $\Phi\circ\gz:[0,\,1]\to\rr^2$
with respect to the Euclidean distance.  Recall that \eqref{e3.1}  is
proved in \cite[Theorem 4.2]{ka10}, and
the right hand side of \eqref{e3.1}
is first introduced in \cite{k08} as the harmonic geodesic metric.
>From \eqref{e3.1}, it easily follows that
\begin{equation}\label{e3.2}
d_\Phi(x,\,y)\equiv|\Phi(x)-\Phi(y)|\le d(x,\,y).
\end{equation}
But, as pointed out in \cite[p.\,800]{k08}, $d_\Phi$ is not comparable to $d$; indeed,
there
exists a double sequence  $\{x_n,\,y_n\}_{n\in\nn}\subset K$ such that
 $ d_\Phi(x_n,\,y_n)/d(x_n,\,y_n) \to0$ as $n\to\fz$.

We also need the following Rademacher theorem on $K$, which is a corollary to Theorem \ref{t3.2} below.

\begin{prop}\label{p3.2}
For every $u\in\bd$,
there exists a unique measurable vector field $\wz\nabla u$   such that
 $\frac d{dm}\Gamma(u,\,u)=|\wz\nabla u|^2$ almost everywhere, and
 for almost all $x\in K$ and  all $y\in K$,
\begin{equation} \label{e3.3}
 |u(y)-u(x)-\wz\nabla u(x)\cdot(\Phi(y)-\Phi(x))|=o(d(x,\,y)).
\end{equation}
\end{prop}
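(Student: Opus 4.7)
The plan is to deduce Proposition \ref{p3.2} from Theorem \ref{t3.2}, which I expect to provide the pointwise expansion \eqref{e3.3} for Lipschitz functions on $K$, combined with Theorem \ref{t3.1} to identify $|\wz\nabla u|$ with $\lip_d u$, and then a density argument to pass from $\lip_d(K)$ to $\bd$.

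For $u \in \lip_d(K)$ I would take $\wz\nabla u$ from Theorem \ref{t3.2}. Inequality \eqref{e3.2} and the expansion \eqref{e3.3} immediately yield $\lip_d u(x) \le |\wz\nabla u(x)|$ at almost every $x$. For the reverse inequality I would rely on the one-dimensional infinitesimal tangent structure of the harmonic embedding $\Phi$ at $m$-a.e. point of $K$ (the measurable Riemannian structure of Kusuoka), which provides a sequence $y_n \to x$ along which $|\Phi(y_n)-\Phi(x)|/d(y_n,x) \to 1$ in the direction of $\wz\nabla u(x)$; inserting this into \eqref{e3.3} recovers $|\wz\nabla u(x)| \le \lip_d u(x)$. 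Combined with Theorem \ref{t3.1}, this gives $|\wz\nabla u|^2 = \frac{d}{dm}\Gamma(u,u)$ almost everywhere, and uniqueness of $\wz\nabla u$ is immediate from the fact that \eqref{e3.3} pins down its value at every Lebesgue density point of any candidate vector field.

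To extend to arbitrary $u \in \bd$, I would invoke Theorem \ref{t2.2}(i), whose hypotheses hold on $K$ by Proposition \ref{p3.1}, to pick $u_n \in \lip_d(K)\cap\mathscr C_0(K)$ with $u_n \to u$ in $\bd$. Then the Lipschitz case gives $\int_K|\wz\nabla u_n - \wz\nabla u_m|^2\,dm = \mathscr E(u_n-u_m,u_n-u_m) \to 0$, so $\{\wz\nabla u_n\}$ is Cauchy in $L^2(K,m;\rr^2)$ with a limit which I would define to be $\wz\nabla u$; the identity $|\wz\nabla u|^2 = \frac{d}{dm}\Gamma(u,u)$ a.e. then follows from the closedness of $\mathscr E$.

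The hardest part is recovering the pointwise expansion \eqref{e3.3} for this extended $\wz\nabla u$ at $m$-a.e. $x$. After passing to a subsequence one can arrange $u_n \to u$ uniformly on the compact space $K$ (using $\bd\subset\mathscr C(K)$ together with the Morrey-type continuity bounds furnished by the weak Poincar\'e inequality and doubling) and $\wz\nabla u_n(x) \to \wz\nabla u(x)$ at $m$-a.e. $x$ by Egorov. At such an $x$, I would split
\[
 u(y)-u(x)-\wz\nabla u(x)\cdot(\Phi(y)-\Phi(x)) = [u_n(y)-u_n(x)-\wz\nabla u_n(x)\cdot(\Phi(y)-\Phi(x))] + R_n(x,y),
\]
where $|R_n(x,y)| \le 2\|u-u_n\|_\infty + |\wz\nabla u(x)-\wz\nabla u_n(x)|\, d(x,y)$. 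The bracketed term is $o(d(x,y))$ for each fixed $n$, and choosing $n = n(d(x,y))$ to diverge slowly as $d(x,y) \to 0$ (using Lebesgue differentiation of the doubling measure $m$ against a maximal function of $u-u_n$ to control the remainder uniformly in a small ball around $x$) yields the desired $o(d(x,y))$ decay. This delicate passage to the limit is the main technical obstacle in the argument.
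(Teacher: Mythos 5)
Your proposal has genuine gaps, and it also misreads the logical structure of the paper. Theorem \ref{t3.2} is stated and proved for every $u\in\bd$, not only for Lipschitz functions, and it already contains both conclusions of Proposition \ref{p3.2}: the identity $\frac d{dm}\Gamma(u,\,u)=|\wz\nabla u|^2$ a.e.\ and the expansion in the stronger uniform form \eqref{e3.17}, which yields \eqref{e3.3} by taking $s$ comparable to $d(x,\,y)$. The paper's proof of Proposition \ref{p3.2} is exactly this short deduction. Your detour through $\lip_d u$ is both circular and unsupported: Theorem \ref{t3.1} is proved in the paper \emph{using} Proposition \ref{p3.2}, so it cannot be an ingredient here; and your reverse inequality rests on the claim that at a.e.\ $x$ there exist $y_n\to x$ with $|\Phi(y_n)-\Phi(x)|/d(x,\,y_n)\to1$ asymptotically along ${\rm Im}\,Z(x)$ --- nothing in the paper provides this (only the one-sided bound \eqref{e3.2} is available, and $d_\Phi$ is explicitly \emph{not} comparable to $d$), and it is not needed, since Theorem \ref{t3.2} states the identity $|\wz\nabla u|^2=\frac d{dm}\Gamma(u,\,u)$ directly. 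For the same reason your uniqueness remark is not ``immediate'': for \eqref{e3.3} to pin down $\wz\nabla u(x)$ one needs a nondegeneracy bound such as $\sup_{y\in B_d(x,\,s)}|\Phi(y)-\Phi(x)|\gs s$ at a.e.\ $x$, which the paper obtains via the martingale convergence theorem and \eqref{e3.8} in the uniqueness part of Proposition \ref{p3.3}.

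The step you yourself flag as the hardest --- transferring \eqref{e3.3} from Lipschitz approximants to general $u\in\bd$ --- is a genuine gap. In your splitting the error contains the term $2\|u-u_n\|_\infty$, which does not scale with $d(x,\,y)$; to beat it you must take $n=n(r)$, $r=d(x,\,y)$, growing so fast that $\|u-u_{n(r)}\|_\infty=o(r)$, but the $o(d(x,\,y))$ bound for $u_n$ holds with a rate that depends on $n$ (and $x$) with no uniformity in $n$, so after this forced diagonal choice the approximants' rate need not tend to $0$; the two requirements pull in opposite directions, and uniform convergence of $u_n$ carries no rate. The appeal to a maximal function of $u-u_n$ does not repair this: it controls ball averages of $u-u_n$, not the two-point difference $|(u-u_n)(y)-(u-u_n)(x)|$ for the individual $y$ appearing in \eqref{e3.3}. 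This is precisely why the paper's proofs of Proposition \ref{p3.3} and Theorem \ref{t3.2} avoid Lipschitz approximation altogether: they work directly with an arbitrary $u\in\bd$, controlling $\sup_{y\in B_d(x,\,s)}|R_x(y)|$ by the energy of $R_x$ on the cells $K_w$, $w\in\Lambda(10s)$, through the resistance estimate $\osc_K v\ls\sqrt{\mathscr E(v,\,v)}$ and the scaling $\mathscr E(v,\,v)=\frac53\sum_{i}\mathscr E(v\circ F_i,\,v\circ F_i)$, and then letting $s\to0$ via differentiation of the energy measures, using that $\Gamma(R_x,\,R_x)(B_d(x,\,s))/\Gamma(h,\,h)(B_d(x,\,s))\to0$ at a.e.\ $x$. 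In this setting sup-norm oscillation is controlled by energy, so no density-plus-diagonal argument is needed, and none is available along the lines you sketch.
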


With the help of the results in Section \ref{s2} and the above Proposition \ref{p3.1}, Proposition \ref{p3.2} and \eqref{e3.2},
we now prove Theorem \ref{t3.1}.

\begin{proof}[Proof of Theorem \ref{t3.1}.]
Combining Proposition \ref{p3.1} and Theorem \ref{t2.2}, we have that for all $u\in\bd$,
$\Gamma(u,\,u)$ is absolutely continuous with respect to $ m$ and \eqref{e2.23} holds with some $C_1\ge1$.
By Theorem \ref{t2.2} and Proposition \ref{p2.2} or some density arguments,
the proof of Theorem \ref{t3.1} is reduced to verifying that we may take $C_1=1$.
For almost all $x\in K$ satisfying  \eqref{e3.3}  for all $y$,
applying Proposition \ref{p3.2} and \eqref{e3.2}, we have
\begin{eqnarray*}
 \frac{|u(y)-u(x)|}{d(x,\,y)}
&&\le \frac{|u(y)-u(x)-\wz\nabla u(x)\cdot(\Phi(y)-\Phi(x))|}{d(x,\,y)} +
\frac{|\wz\nabla u(x)\cdot(\Phi(y)-\Phi(x))|}{d(x,\,y)}\\
&&\le o(1)+\lf|\wz\nabla u(x) \r|\frac{|\Phi(y)-\Phi(x) |}{d(x,\,y)}\\
&&\le  o(1)+\lf|\wz\nabla u(x)\r|,
\end{eqnarray*}
which implies that $\lip_du(x)\le|\wz\nabla u(x)|$. This  finishes the proof of Theorem
\ref{t3.1}.
\end{proof}

To
prove Proposition \ref{p3.2}, we  need  several
geometric properties of $(K,\,\mathscr E,\,d,\,m)$.
We first recall the geometric description of $\Phi(K)$;
see \cite{k89,k93} and also \cite{k08,ka10}.
Let $\{T_i\}_{i=1}^{3}$ be the linear
 transformation on $\rr^2$ with the  matrix representations:
$$T_1\equiv\lf(\begin{array}{cc}
            3/5\ & 0\\
0 & 1/5
          \end{array}\r),\ \
T_2\equiv\lf(\begin{array}{cc}
            3/10\ & -\sqrt3/10\\
-\sqrt3/10 & 1/2
          \end{array}\r) \
\mbox{and}\ \
T_3\equiv\lf(\begin{array}{cc}
            3/10\ &  \sqrt3/10\\
 \sqrt3/10 & 1/2
          \end{array}\r).$$
Define $H_i(x)=\Phi(p_i)+T_i(x-\Phi(p_i))$ for all $x\in\rr^2$, $i=1,\,2,\,3$. Then
$\Phi(K)$ is exactly the self-similar set determined by the system $\{H_i\}_{i=1}^3$,
namely, $\Phi(K)=\cup_{i=1}^3H_i(\Phi(K))$.
Moreover, $H_i\circ \Phi=\Phi\circ H_i$ and  $\Phi:\ K\to\Phi(K)$ is a homeomorphism.

We recall the ``Riemannian volume'' $m$ on $\Sigma$ introduced in \cite{k89},
namely, the Kusuoka measure via geometric description.
There exists a unique Borel regular probability measure $m_\Sigma$ on $\Sigma$
such that for all  $w=w_1\cdots w_n\in W_\ast$, $m_\Sigma(\Sigma_w)= (5/3)^n\|T_w\|^2$,
where $T_w=T_{w_1}\circ\cdots\circ T_{w_n}$ and $\|T_w\|$ denotes its Hilbert-Schmidt norm;
see \cite{k89}.
The pushforward measure $\pi_\ast m_\Sigma=m_\Sigma\circ\pi^{-1}  $
is exactly the Kusuoka measure $m$  as in Definition \ref{d3.3}.
Indeed, for $w=w_1 \cdots w_n\in W_\ast$,
\begin{eqnarray}\label{e3.4}
 \pi_\ast m_\Sigma(K_w)&&=m_\Sigma(\pi^{-1}(K_w))=m_\Sigma( \Sigma_w) =\lf(\frac 53\r)^n\|T_w\|^2=
m(K_w);
\end{eqnarray}
see \cite{k93} and also \cite[Proposition 2.14]{ka10}.

Now we collect some further properties, which will be used later.
See \cite{k89,k93,k08,ka10}
for their proofs or details.

\begin{lem}\label{l3.1}
(i) If  $u,\,v\in\bd$, then $u\circ F_i,\,v\circ F_i\in\bd$ for $i=1,\,2,\,3$  and
$$\mathscr E(u,\,v)=\frac 53\sum_{i=1}^3\mathscr E(u\circ F_i ,\,v\circ F_i).$$

(ii)  There exists a constant $C_2\ge1$ such that for  $u \in\bd$,
$$\osc_  K u \le C_2\sqrt{\mathscr E(u,\,u)},$$
where $\osc_  E u \equiv \sup_{x\in E}u(x)-\inf_{x\in E}u(x)$ for any   set $E$.
\end{lem}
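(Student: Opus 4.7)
The plan is to derive (i) by direct computation on the finite approximations $\mathscr E_n$ followed by passage to the limit, then to deduce (ii) by iterating (i) together with a telescoping argument across scales. For (i), the crucial observation is that $V_n = \bigcup_{i=1}^{3} F_i(V_{n-1})$ and, moreover, every neighbouring pair $p \sim q$ in $V_n$ lies in a single cell $F_w(V_0)$ with $w \in W_n$, hence in a unique image $F_i(V_{n-1})$ (corresponding to the first letter of $w$). Re-indexing $p = F_i(p')$, $q = F_i(q')$ with $p' \sim q'$ in $V_{n-1}$, the definition of $\mathscr E_n$ yields
\begin{equation*}
\mathscr E_n(u,v) = \frac{5}{3} \sum_{i=1}^{3} \mathscr E_{n-1}(u \circ F_i, v \circ F_i).
\end{equation*}
In particular $\mathscr E_{n-1}(u \circ F_i, u \circ F_i) \le \tfrac{3}{5}\mathscr E_n(u,u) \le \tfrac{3}{5}\mathscr E(u,u)$ uniformly in $n$, so $u \circ F_i \in \mathbb F = \bd$, and letting $n \to \infty$ on both sides (the sum is finite) gives the claimed identity.

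For (ii), begin from $(u(p) - u(q))^2 \le 8\mathscr E_0(u,u) \le 8\mathscr E(u,u)$ for $p,q \in V_0$, which is immediate from the definition of $\mathscr E_0$. Iterating (i) yields $\mathscr E(u \circ F_w, u \circ F_w) \le (3/5)^{|w|}\mathscr E(u,u)$ for every $w \in W_\ast$. For $x \in K$, fix $w(x) = (w_1, w_2, \ldots) \in \Sigma$ with $\pi(w(x)) = x$ and construct a corner sequence $x_n \equiv F_{w_1\cdots w_n}(p_{j_n}) \in V_n$; since each $F_i$ is a Euclidean $\tfrac12$-contraction, $\diam(K_{w_1\cdots w_n}) \to 0$, so $x_n \to x$ in $K$ and hence $u(x_n) \to u(x)$. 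Writing $v_n \equiv u \circ F_{w_1\cdots w_n}$, both $F_{w_1\cdots w_n}^{-1}(x_n) = p_{j_n}$ and $F_{w_1\cdots w_n}^{-1}(x_{n+1}) = F_{w_{n+1}}(p_{j_{n+1}})$ lie in $V_1$, a fixed finite connected graph, so its effective resistance is bounded by some absolute constant $C$ and
\begin{equation*}
|u(x_n) - u(x_{n+1})|^2 = |v_n(p_{j_n}) - v_n(F_{w_{n+1}}(p_{j_{n+1}}))|^2 \le C\mathscr E_1(v_n,v_n) \le C(3/5)^n\mathscr E(u,u).
\end{equation*}

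Telescoping and using $\sqrt{3/5} < 1$,
\begin{equation*}
|u(x) - u(x_0)| \le \sum_{n=0}^{\infty} \sqrt{C(3/5)^n\mathscr E(u,u)} \le \frac{\sqrt{C\mathscr E(u,u)}}{1 - \sqrt{3/5}}.
\end{equation*}
The same bound applies to any $y \in K$ with its own corner $y_0 \in V_0$, and combining with $|u(x_0) - u(y_0)| \le \sqrt{8\mathscr E(u,u)}$ yields a uniform $C_2$. The main obstacle is this telescoping step: one needs the geometric energy decay $(3/5)^n$ from (i) to overpower the summation over scales, which succeeds precisely because each level contributes only a single jump $x_n \to x_{n+1}$ whose control reduces—after affine rescaling by $F_{w_1\cdots w_n}$—to the fixed finite-graph Poincar\'e estimate on $V_1$.
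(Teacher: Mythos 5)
Your proof is correct. Note that the paper itself does not prove Lemma \ref{l3.1}; it simply refers the reader to Kusuoka's and Kigami's works, and your argument is essentially the standard one found there: for (i), the bijection between neighbouring pairs in $V_n$ and triples (first letter $i$, neighbouring pair in $V_{n-1}$) — which is legitimate because two distinct level-one cells meet in at most one point, so the first letter is unique for $p\ne q$ — gives the pre-limit identity, and monotonicity of the $\mathscr E_n$ lets you pass to the limit and conclude $u\circ F_i\in\bd$; for (ii), the finite-graph (effective resistance) bound on $V_1$ applied to $v_n=u\circ F_{w_1\cdots w_n}$ together with the geometric decay $\mathscr E(v_n,v_n)\le(3/5)^n\mathscr E(u,u)$ from iterating (i) makes the telescoping sum converge, and continuity of $u\in\mathbb F\subset\mathscr C(K)$ justifies $u(x_n)\to u(x)$. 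All the steps you invoke (uniqueness of the containing cell, $\mathscr E_1\le\mathscr E$, connectedness of the level-one graph) are valid, so the proposal stands as a self-contained proof of what the paper only cites.
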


For $s\in(0,\,1]$,
denote by $\Lambda(s)$ the collection of all
 $w=w_1 \cdots  w_n\in W_\ast$ such that $\|T_w\|\le s<\|T_{w_1}\circ\cdots\circ T_{w_{n-1}}\|$
when $n\ge2$ and $\|T_w\|\le s$ when $n=1$.
For $x\in K$ and $s\in(0,\,1]$,
set
$$K(x,\,s)\equiv\bigcup_{w\in\Lambda(s),\,x\in K_w}K_w,\ \mbox{and}\
\ U(x,\,s)\equiv\bigcup_{w\in\Lambda(s),\, K_w\cap K(x,\,s)\ne\emptyset}K_w.$$

Then  we have the following results; see \cite{k08} and \cite{ka10}.
\begin{lem}\label{l3.x2}
(i) For all $x\in K$, $s\in(0,\,1]$ and $w\in\Lambda(s)$,
\begin{equation}\label{e3.5}
\sharp\{v\in\Lambda(s)| K_v\cap K(x,\,s)\ne\emptyset\}\le 6\ \
\mbox{and}\ \
\sharp\{v\in\Lambda(s)| K_v\cap K_w\ne\emptyset\}\le 4,
\end{equation}
and that
\begin{equation}\label{e3.6}
B_d(x,\,\sqrt2s/50)\subset U(x,\,s)\subset B_d(x,\,10s).
\end{equation}
(ii) There exists a positive constant $C_3\ge1$ such that
if $w,\,v\in\Lambda(s)$ and $K_w\cap K_v\ne\emptyset$, then
\begin{equation}\label{e3.7}
C_3^{-1}m(K_v)\le m(K_w)\le C_3m(K_v).
\end{equation}
(iii) There exists $C_4\ge 1$ such that for all $w\in W_\ast$ and $i\in\{1,\,2,\,3\}$,
\begin{equation}\label{e3.8}
 C_4^{-1}m(K_w)\le m(K_{wi})\le  m(K_w).
\end{equation}

\end{lem}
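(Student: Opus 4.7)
The plan is to treat the three parts in the order (iii), (ii), (i), handling the measure comparisons first (which reduce to linear-algebraic facts about the matrices $T_i$) and the combinatorial and metric estimates last (which require the geometry of the harmonic embedding $\Phi(K)$).

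For (iii), I use $m(K_w)=(5/3)^{|w|}\|T_w\|^2$ to write $m(K_{wi})/m(K_w)=(5/3)\|T_w T_i\|^2/\|T_w\|^2$. The upper bound $m(K_{wi})\le m(K_w)$ is immediate from submultiplicativity: $\|T_w T_i\|^2\le \|T_w\|^2\|T_i\|_{\mathrm{op}}^2\le (3/5)\|T_w\|^2$. For the lower bound, use the identity $\|AT\|_{\mathrm{HS}}^2=\mathrm{tr}(A^\ast A\,TT^\ast)$ combined with $T_i T_i^\ast\ge \sigma_{\min}(T_i)^2 I$ to obtain $\|T_w T_i\|_{\mathrm{HS}}^2\ge \sigma_{\min}(T_i)^2\|T_w\|_{\mathrm{HS}}^2$; since $\det T_i=3/25$ and $\|T_i\|_{\mathrm{op}}=3/5$ one has $\sigma_{\min}(T_i)=1/5$, and therefore $m(K_{wi})\ge (1/15)m(K_w)$. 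For (ii), the key intermediate claim is that $\|T_w\|\sim s$ uniformly for $w\in\Lambda(s)$: the upper bound $\|T_w\|\le s$ is part of the definition, while the lower bound follows from $\|T_{w^-}\|>s$ together with $\|T_w\|\ge \sigma_{\min}(T_{w_n})\|T_{w^-}\|\ge (1/5)\|T_{w^-}\|$ (by the same trace inequality). If $K_w\cap K_v\ne\emptyset$, this comparability pins $\|T_w\|$ and $\|T_v\|$ together; since each added letter multiplies the norm by a factor in $[1/5,3/5]$, the lengths $|w|$ and $|v|$ differ by at most a universal constant, and substituting into $m(K_w)=(5/3)^{|w|}\|T_w\|^2$ yields the two-sided comparison with some $C_3$.

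Part (i) is the delicate step. For the counting bounds, I use that $\Lambda(s)$ is an antichain in $W_\ast$, so two distinct cells $K_w,K_v$ with $w,v\in\Lambda(s)$ can meet only at shared vertices of a common ancestor cell; at each vertex of $V_n$ at most two cells of $W_n$ agree. Combining this with the bounded spread of $|w|$ from (ii), a finite case analysis on how generations interlace at a shared vertex yields the explicit constants $6$ and $4$. For the inclusions $B_d(x,\sqrt 2 s/50)\subset U(x,s)\subset B_d(x,10s)$ I invoke the dual formula \eqref{e3.1} and inequality \eqref{e3.2}. The outer inclusion is proved by joining $x$ to any $y\in U(x,s)$ through the harmonic images of a uniformly bounded number of cells from $\Lambda(s)$, each with Euclidean diameter $\ls s$. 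The inner inclusion uses \eqref{e3.2} to deduce that exiting $U(x,s)$ forces the harmonic image of any candidate path to traverse a neighboring cell whose diameter is bounded below by a constant multiple of $s$. The main obstacle is tracking these constants through the embedding and converting harmonic (Euclidean) distance bounds into intrinsic distance bounds; this is precisely where the non-comparability of $d_\Phi$ and $d$ noted after \eqref{e3.2} intervenes, forcing one to argue via the explicit self-similar structure of the cells $H_w(\Phi(K))$ rather than by a purely metric comparison.
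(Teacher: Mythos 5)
You should first be aware that the paper does not prove this lemma at all: it is quoted verbatim from Kigami \cite{k08} and Kajino \cite{ka10} ("Then we have the following results; see \cite{k08} and \cite{ka10}"), so your attempt has to be measured against the actual content of those results. Your part (iii), and the preliminary estimate $\|T_w\|\sim s$ for $w\in\Lambda(s)$ in (ii), are correct and elementary: they follow exactly as you say from $\tfrac15\|T_w\|_{HS}\le\|T_wT_i\|_{HS}\le\tfrac35\|T_w\|_{HS}$, since each $T_i$ has singular values $3/5$ and $1/5$.

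The genuine gap is in (ii), at the step ``since each added letter multiplies the norm by a factor in $[1/5,3/5]$, the lengths $|w|$ and $|v|$ differ by at most a universal constant.'' This does not follow. From $\|T_w\|\sim s$ and per-letter factors in $[1/5,3/5]$ you only localize $|w|$ between roughly $\log(1/s)/\log 5$ and $\log(1/s)/\log(5/3)$, a window whose width grows like $\log(1/s)$; words in $\Lambda(s)$ can have very different lengths (compare $\|T_{1^n}\|\sim(3/5)^n$ with products that contract at nearly rate $1/5$). What is needed is that \emph{adjacency} (sharing a vertex) forces comparable generations, and that is precisely the nontrivial content of \eqref{e3.7}: it is the volume-doubling property of the Kusuoka measure over the partitions $\Lambda(s)$, which Kigami proves using the specific algebraic structure of the harmonic matrices at a shared vertex (for two adjacent cells $w=\tau ij^k$, $v=\tau ji^l$ one must compare the prefactors in the decay of $\|T_\tau T_iT_j^k\|$ and $\|T_\tau T_jT_i^l\|$, which rests on identities such as $T_ie_j=T_je_i$ for the top eigenvectors $e_i$ of $T_i$, not on submultiplicativity). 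Note also that the paper's Lemma \ref{l3.2}(i) derives the bounded generation gap \emph{from} \eqref{e3.7} and \eqref{e3.8}, so your proposed route has the logical dependency reversed. A related incompleteness affects (i): the constants $6$ and $4$ come from the partition structure of $\Lambda(s)$ (distinct cells meet in at most one vertex, and each vertex lies in at most two cells of $\Lambda(s)$), not from generation comparability; and the inclusions \eqref{e3.6} with the explicit constants $\sqrt2/50$ and $10$ require two-sided bounds for the intrinsic $d$-diameter of a cell $K_w$ and for the width of the collar $U(x,s)\setminus K(x,s)$ in terms of $\|T_w\|$ --- you correctly identify this as the obstacle (since $d_\Phi$ is not comparable to $d$), but you do not supply the argument, and that is exactly where the work of \cite{k08,ka10} lies.
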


Applying the properties above, we  obtain the following results.

\begin{lem}\label{l3.2}
(i) There exists a positive integer $N$ such that for all
$x\in K$ and $s\in (0,\,1)$, if  $w,\,v\in \Lambda(s)$ satisfy  $x\in K_w$
and $K_v\cap K(x,\,s)\ne\emptyset$, then
$\max\{|w|-N,\,0\}\le |v|\le |w|+N$.

(ii) There exists a constant $C_6\ge1$ such that for all
$x\in K$ and $s\in (0,\,1)$, if $w\in \Lambda(s)$ and  $x\in K_w$, then
$m(B_d(x,\,s))\le C_6m(K_w)$.
\end{lem}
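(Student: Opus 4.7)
The plan rests on the identity $m(K_w) = (5/3)^{|w|}\|T_w\|^2$ from \eqref{e3.4} together with the fact that each $T_i$ is symmetric with singular values $3/5$ and $1/5$. As a preliminary step I will show that for every $w\in\Lambda(s)$,
\begin{equation*}
s/5 < \|T_w\| \le s,
\end{equation*}
which combined with \eqref{e3.4} yields $m(K_w)\sim (5/3)^{|w|}s^2$ with absolute implicit constants. The upper bound is the defining condition; for $|w|\ge2$ the lower bound follows from $\|T_w\|=\|T_{w_1\cdots w_{|w|-1}}T_{w_{|w|}}\|\ge \sigma_{\min}(T_{w_{|w|}})\,\|T_{w_1\cdots w_{|w|-1}}\|> s/5$, and it is trivial for $|w|=1$.

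For (i), given $w,v\in\Lambda(s)$ with $x\in K_w$ and $K_v\cap K(x,s)\ne\emptyset$, the definition of $K(x,s)$ supplies some $u\in\Lambda(s)$ with $x\in K_u$ and $K_u\cap K_v\ne\emptyset$; since also $K_u\cap K_w\ni x$, two applications of \eqref{e3.7} give $m(K_v)\sim m(K_u)\sim m(K_w)$. Comparing with the displayed measure formula yields $(5/3)^{|v|-|w|}\sim 1$, hence $||v|-|w||\le N$ for a universal constant $N$.

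For (ii), it suffices to treat $s\le \sqrt 2/50$: in the complementary range the words in $\Lambda(s)$ have uniformly bounded length, so $m(K_w)$ has a positive uniform lower bound while $m(B_d(x,s))\le m(K)$, and the desired inequality is immediate. Set $s':=50s/\sqrt 2\in(0,1]$. By \eqref{e3.6}, $B_d(x,s)\subset B_d(x,\sqrt 2 s'/50)\subset U(x,s')$, and by the first inequality of \eqref{e3.5} the latter is a union of at most six cells $\{K_v\}_{v\in I}$ with $v\in\Lambda(s')$ and $K_v\cap K(x,s')\ne\emptyset$. The argument of (i) applied at scale $s'$ produces, for each $v\in I$, a cell $u_v\in\Lambda(s')$ with $x\in K_{u_v}$ and $m(K_v)\sim m(K_{u_v})$. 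It remains to compare $m(K_{u_v})$ with $m(K_w)$. Pick $\omega\in\pi^{-1}(x)$ with $w$ as a prefix. If $u_v$ is also a prefix of $\omega$, then, since $s'>s$ and $\|T_{\omega_1\cdots\omega_n}\|$ decreases in $n$, $u_v$ is a prefix of $w$; the bounds $\|T_w\|\ge s/5$ and $\|T_w\|=\|T_{u_v}T_\tau\|\le s'(3/5)^{|w|-|u_v|}$, where $\tau$ is the complementary block, force $|w|-|u_v|\le \log(5s'/s)/\log(5/3)=O(1)$, and hence $m(K_{u_v})\sim m(K_w)$. If instead $u_v$ is a prefix of a different element of $\pi^{-1}(x)$ (which forces $x\in V_\ast$), I introduce the prefix $u_v'\in\Lambda(s')$ of $\omega$; the preceding argument applied to $u_v'$ gives $m(K_{u_v'})\sim m(K_w)$, and \eqref{e3.7} at scale $s'$ gives $m(K_{u_v})\sim m(K_{u_v'})$ because $K_{u_v}\cap K_{u_v'}\ni x$. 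Summing the at most six contributions yields $m(B_d(x,s))\le C_6\,m(K_w)$.

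The main obstacle is the final comparison in (ii) when $x\in V_\ast$: the cell $u_v$ and the prescribed $w$ may sit on different branches of $\pi^{-1}(x)$, and bridging them requires the auxiliary cell $u_v'$ together with a second application of \eqref{e3.7} at scale $s'$. Once the two branches are aligned, the operator-norm-to-length conversion $(3/5)^{|w|-|u_v|}\gtrsim 1$ is routine, with all constants traceable to $C_3$, the ratio $s'/s=50/\sqrt 2$, and the singular values of the $T_i$.
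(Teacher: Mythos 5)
Your argument is correct, and it splits naturally into two comparisons. For part (i) you follow essentially the paper's route: bridge $K_v$ and $K_w$ through a common cell in $\Lambda(s)$ containing $x$ (resp.\ meeting both), apply \eqref{e3.7} twice, and convert the resulting measure comparison into $(5/3)^{|v|-|w|}\sim 1$; the only difference is that you obtain the two-sided bound $s/5<\|T_w\|\le s$ (hence $m(K_w)\sim(5/3)^{|w|}s^2$) directly from the singular values $3/5,\,1/5$ of the $T_i$ and the inequality $\|AB\|_{HS}\ge\sigma_{\min}(B)\|A\|_{HS}$, whereas the paper extracts the same lower bound from \eqref{e3.8}; this makes your version self-contained but is not a different idea. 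For part (ii) your route is genuinely different. The paper keeps everything at the single scale $s$: it uses the doubling property of $m$ (available from Proposition \ref{p3.1}) to pass from $B_d(x,s)$ down to $B_d(x,\sqrt2 s/50)\subset U(x,s)$, and then compares the at most six cells of $U(x,s)$ with $K_w$ via \eqref{e3.7}, exactly as in (i). You instead enlarge the scale to $s'=50s/\sqrt2$ so that $B_d(x,s)\subset U(x,s')$ follows from \eqref{e3.6} alone, avoiding any appeal to doubling, but you then must pay for it with the cross-scale comparison $m(K_{u_v})\sim m(K_w)$ between a cell of $\Lambda(s')$ and the given cell of $\Lambda(s)$; your prefix argument along $\omega\in\pi^{-1}(x)$ (including the branch-switching case $x\in V_\ast$, handled by the auxiliary prefix $u_v'$ and one more application of \eqref{e3.7}) is correct and uses only $\|T_{u_v}\|\le s'$, $\|T_w\|>s/5$ and $\|T_i\|_{op}=3/5$, at the cost of the separate elementary treatment of the range $s>\sqrt2/50$. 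In short, the paper's proof is shorter because doubling is already in hand; yours trades that external input for a slightly longer combinatorial comparison of words at two scales.
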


\begin{proof} (i) Without loss of generality, we may assume that $K_w\cap K_v\ne\emptyset$.
Indeed, there must exist $\sz\in \Lambda(s)$ such that $K_\sz\cap K_w\ne\emptyset$
and $K_\sz\cap K_v\ne\emptyset$.
By \eqref{e3.4},  $ v\in \Lambda(s)$ and \eqref{e3.8}, we have
$$C_4^{-1}\lf(\frac53\r)^{|v|-1} s^2\le C_4^{-1}\lf(\frac53\r)^{|v|-1} \|T_{v_1\cdots v_{n-1}}\|^2
\le \lf(\frac53\r)^{|v|} \|T_v\|^2\le \lf(\frac53\r)^{|v|}s^2,$$
and the same inequality also holds with $v$ replaced by $w$.
>From this, it is easy to see that $\|T_w\|\le s< \|T_{w_1 \cdots w_{n-1}} \|\ls s $.
Moreover, by \eqref{e3.4}, $w,\,v \in \Lambda(s)$ and \eqref{e3.7}, we also have
$$C_3^{-1}\lf(\frac53\r)^{|v|} \|T_v\|^2\le \lf(\frac53\r)^{|w|} \|T_w\|^2 \le C_3 \lf(\frac53\r)^{|v|} \|T_v\|^2
, $$
which gives that
$  \|T_w\|^2 \sim \lf(\frac53\r)^{|v|-|w|}s^2.$ Hence, $\lf(\frac53\r)^{|v|-|w|}\sim 1$,
which yields (i).

(ii) By the doubling property and \eqref{e3.6},
$$m(B_d(x,\,s))\ls m(B_d(x,\,\sqrt 2s/25))\ls m(U(x,\,s)).$$
Then, by \eqref{e3.5}, it suffices to show that for all $v\in \Lambda(s)$
such that
 $K_v\cap K(x,\,s)\ne\emptyset$, we have $m(K_v)\ls m(K_w)$.
But this follows from \eqref{e3.7} and the fact that
there must exist $\sz\in \Lambda(s)$ such that $K_\sz\cap K_w\ne\emptyset$
and $K_\sz\cap K_v\ne\emptyset$.  This finishes the proof of Lemma \ref{l3.2}.
\end{proof}

\begin{lem}\label{al.3}
Let $h\equiv h_1$ or $h\equiv h_2$. Then for all $u,\,v,\,f,\,g\in \bd$ and for almost all $x\in K$,
\begin{equation}\label{e3.9}
  \frac{d\Gamma(u,\,v)}{d\Gamma(h,\,h)}(x)\frac{d\Gamma(f,\,g)}{d\Gamma(h,\,h)}(x)=\frac{d\Gamma(u,\,g)}{d\Gamma(h,\,h)}(x)
\frac{d\Gamma(f,\,v)}{d\Gamma(h,\,h)}(x),
\end{equation}
and for all $c\in\rr$,
\begin{equation}\label{e3.10}
  \frac{d\Gamma(cu+v,\,g)}{d\Gamma(h,\,h)}(x)=
c\frac{d\Gamma(u,\,g)}{d\Gamma(h,\,h)}(x)+\frac{d\Gamma( v,\,g)}{d\Gamma(h,\,h)}(x).
\end{equation}
Moreover, \eqref{e3.9} and \eqref{e3.10} also hold with $\Gamma(h,\,h)$ replaced by $m$.
\end{lem}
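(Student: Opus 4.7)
The identity \eqref{e3.10} will be immediate from the bilinearity of $\Gamma$ as a signed-measure-valued form on $\bd\times\bd$ combined with the $\rr$-linearity of the Radon--Nikodym derivative with respect to any fixed nonnegative reference measure, so the real content of the lemma is the product identity \eqref{e3.9}. Geometrically, that identity asserts that at $m$-a.e. point the gradient bilinear form $\frac{d\Gamma(\cdot,\cdot)}{dm}(x)$ has rank at most one, i.e. Cauchy--Schwarz for $\Gamma$ is attained pointwise.

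The plan is to exploit Kusuoka's rank-one theorem for $(K,\mathscr E,m)$ from \cite{k89,k93}. First I will define the harmonic-coordinate gradient
$$\nabla u(x)\equiv\Bigl(\frac{d\Gamma(u,h_1)}{dm}(x),\ \frac{d\Gamma(u,h_2)}{dm}(x)\Bigr)\in\rr^2$$
for each $u\in\bd$; this is well defined by the Cauchy--Schwarz bound $\Gamma(u,h_i)^2\le\Gamma(u,u)\Gamma(h_i,h_i)\le\Gamma(u,u)\,m$, and it is $\rr$-linear in $u$ by the bilinearity of $\Gamma$. Let $Z(x)$ be the symmetric $2\times 2$ matrix with entries $Z_{ij}(x)\equiv\frac{d\Gamma(h_i,h_j)}{dm}(x)$; Definition \ref{d3.3} gives $\operatorname{tr} Z(x)=1$. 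Kusuoka's theorem, proved via a martingale argument built on the self-similar scaling of Lemma \ref{l3.1}(i) together with the representation of $m$ through the transfer matrices $T_w$, says that $Z(x)$ is a rank-one orthogonal projection at $m$-a.e.\ $x$; hence $Z(x)=e(x)e(x)^T$ for some measurable unit vector field $e(x)\in\rr^2$, and in particular $\frac{d\Gamma(h,h)}{dm}(x)=\phi(h)(x)^2$ where $\phi(u)(x)\equiv e(x)\cdot\nabla u(x)$.

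The decisive step will be to upgrade this rank-one behaviour from the two-dimensional harmonic span to all of $\bd$, namely to establish
$$\frac{d\Gamma(u,v)}{dm}(x)=\phi(u)(x)\,\phi(v)(x)\quad\text{for $m$-a.e.\ }x\in K.$$
For $u$ piecewise $V_0$-harmonic at some level $n$ (i.e.\ $u\circ F_w$ is $V_0$-harmonic for every $w\in W_n$), this factorization follows by a cell-by-cell direct computation using the scaling relation of Lemma \ref{l3.1}(i) and the transformation of $Z$ under the maps $F_w$. Passing from piecewise harmonic $u$ to a general $u\in\bd$ will require an $\mathscr E$-density argument: piecewise $V_0$-harmonic functions are dense in $\bd$, and the quadratic dependence of both sides on $u,v$, together with the closedness of $\mathscr E$, forces the pointwise factorization to survive the limit. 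This extension is the principal obstacle and relies in an essential way on the self-similar structure encoded in Lemma \ref{l3.1}.

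Once the factorization $\frac{d\Gamma(u,v)}{dm}=\phi(u)\phi(v)$ is in place, both assertions of the lemma reduce to algebra. The version of \eqref{e3.9} with $m$ in place of $\Gamma(h,h)$ is the trivial identity $(\phi(u)\phi(v))(\phi(f)\phi(g))=(\phi(u)\phi(g))(\phi(f)\phi(v))$, and \eqref{e3.10} for $m$ is read off from the $\rr$-linearity of $u\mapsto\phi(u)$. For the stated version with $\Gamma(h,h)$ in the denominator, the Radon--Nikodym chain rule at the $\Gamma(h,h)$-a.e.\ point $x$ where $\phi(h)(x)^2>0$ yields $\frac{d\Gamma(u,v)}{d\Gamma(h,h)}(x)=\phi(u)(x)\phi(v)(x)/\phi(h)(x)^2$, and the identities follow by cancellation of a common factor of $\phi(h)^4$ and $\phi(h)^2$ respectively.
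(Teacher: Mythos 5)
Your route is genuinely different from the paper's. The paper does not re-derive the rank-one structure from Kusuoka's matrix theorem: it quotes Hino's results directly, namely \cite[Theorem 5.6]{h10} for the absolute continuity of every $\Gamma(u,\,u)$ with respect to $\Gamma(h,\,h)$, and \cite[Proposition 2.12]{h10} together with the fact that the index of $(K,\,\mathscr E)$ equals $1$ to produce functions $\zeta^i$ with $\frac{d\Gamma(f_i,\,f_j)}{d\Gamma(h,\,h)}=\zeta^i\zeta^j$ for a complete orthonormal system $\{f_i\}$ of $\bd$; the general factorization $\gz(u)\gz(v)=\frac{d\Gamma(u,\,v)}{d\Gamma(h,\,h)}$ is then obtained by an $L^2(K,\,\Gamma(h,\,h))$ expansion $\gz(u)=\sum_i a_i(u)\zeta^i$, and \eqref{e3.9}, \eqref{e3.10} are immediate. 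What you propose instead is to reprove this index-one factorization: rank-one of the harmonic matrix from Kusuoka, the cell-by-cell computation via the scaling in Lemma \ref{l3.1}(i), and then a density passage from piecewise harmonic functions to all of $\bd$. That passage can indeed be made to work, but not through ``closedness of $\mathscr E$'': the operative tool is the Cauchy--Schwarz inequality for energy measures, which gives $|\Gamma(u_n,\,u_n)-\Gamma(u,\,u)|(K)\le \mathscr E(u_n-u,\,u_n-u)^{1/2}[\mathscr E(u_n,\,u_n)^{1/2}+\mathscr E(u,\,u)^{1/2}]$, hence total-variation convergence, hence $L^1(m)$ convergence of the densities and of $\frac{d\Gamma(u_n,\,h_i)}{dm}$, after which the identity survives along an a.e.\ convergent subsequence. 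As it stands, though, your central step is a plan to re-establish what the paper simply cites (it is essentially Proposition \ref{p3.4} / Hino's theorem), and you flag it yourself as the principal obstacle; so the proposal is a sound outline rather than a proof.

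There is also one concrete missing ingredient. Your factorization only yields $\Gamma(u,\,v)\ll m$, whereas the statement requires Radon--Nikodym derivatives with respect to $\Gamma(h,\,h)$ and asserts the identities for almost every $x\in K$. For $\frac{d\Gamma(u,\,v)}{d\Gamma(h,\,h)}$ to exist, and for the conclusions to hold $m$-a.e.\ rather than only $\Gamma(h,\,h)$-a.e., you need $m\ll\Gamma(h,\,h)$, i.e.\ that $\phi(h)^2=\frac{d\Gamma(h,\,h)}{dm}>0$ for $m$-almost every $x$ (equivalently $\Gamma(h_2,\,h_2)\ll\Gamma(h_1,\,h_1)$ when $h=h_1$). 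Restricting to ``the $\Gamma(h,\,h)$-a.e.\ point where $\phi(h)^2>0$'' sidesteps this: Kusuoka's rank-one theorem alone does not rule out that the image of $Z(x)$ is orthogonal to the $h_1$-direction on a set of positive $m$-measure, and without excluding that, $\Gamma(h_2,\,h_2)$ need not be absolutely continuous with respect to $\Gamma(h_1,\,h_1)$ and the chain-rule cancellation is not available $m$-a.e. This mutual absolute continuity is a genuine fact about the gasket which the paper obtains precisely from \cite[Theorem 5.6]{h10}; your argument must either cite it as well or prove it, e.g.\ from the ergodic/martingale description of $Z$ in \cite{k89}.
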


\begin{proof}
By \cite[Theorem 5.6]{h10}, for every $u\in\bd$, $\Gamma(u,\,u)$ is absolutely continuous with respect to
$\Gamma(h,\,h)$.  Thus $\Gamma(h ,\,h )$ and $m$ are mutually absolutely continuous,
and moreover, for every $u\in \mathbb F$, by \eqref{e2.x1} and the Cauchy-Schwarz inequality,
both $\Gamma(u,\,h )$ and $\Gamma(u,\,u) $
are absolutely continuous with respect to $m$ and $\Gamma(h ,\,h )$.
Let $\{f_i\}_{i\in\nn}$ be an arbitrary complete orthonormal system of $\bd$.
By \cite[Proposition 2.12]{h10} and the fact that the index of $(K,\,\mathscr E)$ is $1$ (see \cite[Section 4]{h10}),
there exists a sequence of functions, $\{\zeta^i\}_{i\in\nn}$, such that for all $i,\,j\in\nn$, $\frac {d\Gamma(f_i,\,f_j)}{d\Gamma(h,\,h)}=\zeta^i\zeta^j$
almost everywhere. Recall that $u$ has a unique representation
$u=\sum_{i\in\nn}a_i(u) f_i$ with $\sum_{i\in\nn}a_i(u)^2<\fz$.
Then $\gz(u)\equiv\sum_{i\in\nn}a_i(u)\zeta^i\in L^2(K,\,\Gamma(h ,\,h ))$
is well defined. Indeed, for $u,\,v\in\bd$,
\begin{eqnarray*}
 \lf(\sum_{i=1}^ N a_i(u)\zeta^i\r)\lf(\sum_{j=1}^ Na_j(v)\zeta^j\r)
&&=\sum_{i,\,j=1}^Na_i(u)a_j(v)\frac {d\Gamma(f_i,\,f_j)}{d\Gamma(h,\,h)}\\
&&=\frac{d\Gamma(\sum_{i=1}^ Na_i(u)f_i,\,\sum_{j=1}^Na_j(v)f_j )}{d\Gamma(h,\,h)} \to
\frac{d\Gamma(u,\,v)}{d\Gamma(h,\,h)}
\end{eqnarray*}
as $N\to\fz$ in $L^2(K,\,\Gamma(h,\,h))$ and hence almost everywhere. Moreover, from this, we deduce that
$\gz(u)\gz(v)=\frac{d\Gamma(u,\,v)}{d\Gamma(h,\,h)}$ almost everywhere,
which implies \eqref{e3.9}.

The linearity property  $a_i(cu+v)=ca_i(u)+a_i(v)$ implies linearity of $\gz$
and hence also \eqref{e3.10}
by repeating the above argument.

Since $\Gamma(h ,\,h )$ and $m$ are mutually absolutely continuous, \eqref{e3.9} (resp. \eqref{e3.10})
with $\Gamma(h,\,h)$ replaced by $m$ follows from the Radon-Nikodym theorem and \eqref{e3.9} (resp. \eqref{e3.10}).
Indeed, for almost all $x\in X$,
\begin{eqnarray*}
  \frac{d\Gamma(u,\,v)}{dm}(x)&&=\lim_{r\to0}   \frac{ \int_X 1_{B(x,\,r)}\,d\Gamma(u,\,v)}{\int_X 1_{B(x,\,r)}\,  dm} \\
&&= \lim_{r\to0}   \frac{ \int_X 1_{B(x,\,r)}d \,\Gamma(u,\,v)}
{\int_X 1_{B(x,\,r)}\, d\Gamma(h,\,h)}\cdot \lim_{r\to0} \frac{ \int_X 1_{B(x,\,r)}d \,\Gamma(h,\,h)}
{\int_X 1_{B(x,\,r)}\, dm}\\
&&=
\frac{d\Gamma(u,\,v)} {d\Gamma(h,\,h)} (x) \frac{d\Gamma(h,\,h)}{dm}(x).
\end{eqnarray*}
This finishes the proof of Lemma \ref{al.3}.
\end{proof}

\begin{rem}\rm
The set of points $x\in K$ for which \eqref{e3.10} holds  is independent of $c\in\rr$.
\end{rem}

The following Rademacher theorem is an improvement
on   \cite[Theorem 5.4]{h10} and \cite[Theorem 2.17 (ii)]{ka10}.

\begin{prop}\label{p3.3}
Let $h\equiv h_1$ or $h\equiv h_2$. For every $u\in\bd$,
there exists a unique measurable function $\frac{du}{dh}$   such that
 for almost all $x\in K$ and  all $y\in K$,
\begin{equation}\label{e3.11}
 |u(y)-u(x)-\frac{du(x)}{dh}(h(y)-h(x))|=o(d(x,\,y)).
\end{equation}
Moreover,  $\frac {d\Gamma(u,\,u)}{d\Gamma(h,\,h)}=|\frac{du}{dh}|^2$ almost everywhere.
\end{prop}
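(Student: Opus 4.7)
The natural candidate is $c(x) \equiv \frac{d\Gamma(u,h)}{d\Gamma(h,h)}(x)$, well defined by the Hino absolute-continuity result cited in the proof of Lemma \ref{al.3}. Specializing the quadratic relation \eqref{e3.9} to $u=v$ and $f=g=h$ gives
$\frac{d\Gamma(u,u)}{d\Gamma(h,h)}(x) = c(x)^2$ almost everywhere, which is the ``moreover'' statement as soon as the pointwise expansion \eqref{e3.11} is proved. Uniqueness will follow from the fact, stemming from Hino's theorem applied to $h_1,h_2$ together with $m=\Gamma(h_1,h_1)+\Gamma(h_2,h_2)$, that $m$ and $\Gamma(h,h)$ are mutually absolutely continuous; hence Lemma \ref{l2.4} (whose hypotheses are supplied by Proposition \ref{p3.1}) yields $\aplip_d h>0$ almost everywhere, producing sequences $y_n\to x$ with $|h(y_n)-h(x)|\gs d(x,y_n)$ that force any second candidate satisfying \eqref{e3.11} to coincide with $c(x)$.

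Fix $x$ and set $w_x(y)\equiv u(y)-u(x)-c(x)(h(y)-h(x))$. Since $c(x)$ is a constant in $y$, bilinearity of $\Gamma$ gives $\Gamma(w_x,w_x)=\Gamma(u,u)-2c(x)\Gamma(u,h)+c(x)^2\Gamma(h,h)$, and therefore
$$
\frac{d\Gamma(w_x,w_x)}{d\Gamma(h,h)}(x)=c(x)^2-2c(x)\cdot c(x)+c(x)^2=0 \quad\text{for a.e. } x,
$$
whence also $\frac{d\Gamma(w_x,w_x)}{dm}(x)=0$ a.e. The task is to promote this infinitesimal vanishing into the uniform pointwise estimate $|w_x(y)|=o(d(x,y))$ as $y\to x$.

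The key tool is an oscillation bound on cells. Iterating Lemma \ref{l3.1}(i) gives, for every $w\in W_\ast$, $\mathscr E(u\circ F_w,u\circ F_w) = (3/5)^{|w|}\Gamma(u,u)(K_w)$, and Lemma \ref{l3.1}(ii) then yields $\osc_{K_w} u \le C_2\sqrt{(3/5)^{|w|}\,\Gamma(u,u)(K_w)}$. For $w\in\Lambda(s)$, the identity $m(K_w)\sim (5/3)^{|w|}\|T_w\|^2$ combined with $\|T_w\|\le s$ gives $(3/5)^{|w|}\ls s^2/m(K_w)$, so that
$$
\osc_{K_w}u \ls s\sqrt{\Gamma(u,u)(K_w)/m(K_w)}.
$$
At a Lebesgue point $x$ of $\frac{d\Gamma(w_x,w_x)}{dm}$, applying this with $u$ replaced by $w_x$, and using Lemma \ref{l3.2}(ii) so that $m(K_w)\sim m(B_d(x,s))$ whenever $w\in\Lambda(s)$ and $x\in K_w$, the right-hand side is $o(s)$ as $s\to 0$.

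Given $y\in K$ with $r\equiv d(x,y)$ small, choose $s\sim r$ with $r\le \sqrt2\,s/50$ so that $y\in U(x,s)$ by \eqref{e3.6}. By the definition of $U(x,s)$ there exist $w_0,v\in\Lambda(s)$ with $x\in K_{w_0}$, $y\in K_v$ and $K_v\cap K_{w_0}\ne\emptyset$, so that $w_x(x)=0$ together with a two-cell triangle inequality gives
$$
|w_x(y)| \le \osc_{K_v}w_x + \osc_{K_{w_0}}w_x = o(s) = o(d(x,y)),
$$
where Lemma \ref{l3.2} guarantees $m(K_v)\sim m(K_{w_0})$ so that both oscillation terms are $o(s)$. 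The main obstacle will be this last geometric transfer: one must combine Lemmas \ref{l3.x2} and \ref{l3.2} with \eqref{e3.6} carefully so that the density vanishing at a single point $x$ upgrades to a uniform oscillation bound on all cells in $U(x,s)$, and so that the two-cell chain from $x$ to $y$ produces genuine $o(d(x,y))$ control rather than merely $O(d(x,y))$.
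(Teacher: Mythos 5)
Your existence argument is essentially the paper's own proof: the same candidate $\frac{du}{dh}=\frac{d\Gamma(u,h)}{d\Gamma(h,h)}$, the same remainder $w_x$, the identity $\frac{d\Gamma(w_x,w_x)}{d\Gamma(h,h)}(x)=0$ from \eqref{e3.9}, and the same cell-oscillation estimate via Lemma \ref{l3.1}, the scaling $m(K_w)=(5/3)^{|w|}\|T_w\|^2$, and the geometry of $\Lambda(s)$, $U(x,s)$, \eqref{e3.6} and Lemma \ref{l3.2}; the only cosmetic difference is that you normalize the energy of $w_x$ by $m$ directly, where the paper passes through $\Gamma(h,h)$ and then converts to $m(K_w)$. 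One small point of hygiene: since $w_x$ depends on $x$, the phrase ``Lebesgue point of $\frac{d\Gamma(w_x,w_x)}{dm}$'' should be read through your bilinear expansion $\Gamma(w_x,w_x)=\Gamma(u,u)-2c(x)\Gamma(u,h)+c(x)^2\Gamma(h,h)$, applying the differentiation theorem to the three fixed measures $\Gamma(u,u)$, $\Gamma(u,h)$, $\Gamma(h,h)$ once and for all; this is exactly how the paper handles it, so no real issue.

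The uniqueness step is where you depart from the paper (which bounds $\osc_{B_d(x,s)}h$ from below via $\mathscr E(h\circ F_{w},h\circ F_{w})\gs m(K_w)\gs s$ using martingale convergence), and your idea is a genuine shortcut, but the lemma you cite does not deliver it: Lemma \ref{l2.4} gives $(\aplip u)^2\le C_1\frac{d}{dm}\Gamma(u,u)$, i.e.\ an \emph{upper} bound on $\aplip$, from which positivity of $\aplip_d h$ cannot be deduced. What you need is the reverse inequality $\frac{d}{dm}\Gamma(h,h)\le(\lip_d h)^2$, which is Theorem \ref{t2.1}: it applies because $(K,d,m)$ is doubling (Proposition \ref{p3.1}) and $h$ is $1$-Lipschitz with respect to $d$ by \eqref{e3.2}. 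Combined with the mutual absolute continuity of $m$ and $\Gamma(h,h)$ (Hino's result invoked in Lemma \ref{al.3}, plus $m=\Gamma(h_1,h_1)+\Gamma(h_2,h_2)$), which gives $\frac{d\Gamma(h,h)}{dm}>0$ a.e., you indeed get $\lip_d h>0$ a.e., hence sequences $y_n\to x$ with $|h(y_n)-h(x)|\gs d(x,y_n)$, and any two functions satisfying \eqref{e3.11} must agree a.e. With that single citation corrected, your uniqueness argument is valid and in fact shorter than the paper's; do avoid quoting Theorem \ref{t3.1} or Theorem \ref{t3.2} here, since their proofs rest on Proposition \ref{p3.3} and would make the argument circular.
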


\begin{proof}[Proof of Proposition \ref{p3.3}]
Set
$$\frac{du }{dh}\equiv\frac{d\Gamma(u,\,h)}{d\Gamma(h,\,h)},$$
and
$$R_x(y)\equiv u(y)-u(x)-\frac{du(x)}{dh} (h(y) -h(x))$$
for all $y\in K$  whenever $\frac{du(x)}{dh}$ exists.
Then $R_x(\cdot)\in\mathbb \bd$ and
  \eqref{e3.9} implies that
\begin{equation}\label{e3.12}
\frac{d\Gamma( u  ,\,u ) }{d\Gamma(h,\,h)}(x)=  \frac{d\Gamma( u  ,\,u ) }{d\Gamma(h,\,h)}(x)
 \frac{d\Gamma( h  ,\,h )}{d\Gamma(h,\,h)}(x)= \lf(\frac{d\Gamma(u,\,h)}{d\Gamma(h,\,h)}(x)\r)^2= \lf(\frac{du(x)}{dh}\r)^2.
\end{equation}

Now it suffices to  prove that
for almost all  $x\in K$ and all $s\in(0,\,1)$,
\begin{equation}\label{e3.13}
\sup_{y\in B_d(x,\,s)}|u(y)-u(x)-\frac {du(x)}{dh}(h(y)-h(x))|=o(s).
\end{equation}
Recall that for all $x\in K$ and $s\in(0,\,1/10]$, $B_d(x,\,s)\subset U(x,\,10s)$; see \eqref{e3.6}.
Therefore, for each $y\in B_d(x,\,s)$,
there exist  $w,\,v\in \Lambda(10s)$ such that $x\in K_w$,
$K_w\cap K_v\ne\emptyset$ and $y\in K_v$,
where $w$ and $v$ may be equal. Taking $y_\ast\in K_w\cap K_v=F_w(V_0)\cap F_v(V_0)$ and using $R_x(x)=0$,
we obtain
\begin{eqnarray*}
|R_x(y)|&&\le |R_x(y)-R_x(y_\ast)|+ |R_x(y_\ast)-R_x(x)|\\
&&\le   \dosc _{ K_w}R_x +\dosc _{ K_v} R_x  =   \dosc _{ K}R_x\circ F_w +\dosc_{ K } R_x\circ F_v .
\end{eqnarray*}
By Lemma \ref{l3.1},  we have
\begin{eqnarray*}
|R_x(y)| &&\ls \sqrt{\Gamma(R_x\circ F_w,\,R_x\circ F_w)(K )}+\sqrt{\Gamma(R_x\circ F_v,\,R_x\circ F_v)(K )}\\
&&\ls \sqrt{\lf(\frac35\r)^{|w|}\Gamma(R_x,\,R_x)(K_w)}+\sqrt{\lf(\frac35\r)^{|v|}\Gamma(R_x,\,R_x)(K_v)}.
\end{eqnarray*}
Noticing that $|v|\sim|w|$  by Lemma \ref{l3.2} and
$U(x,\,10s)\subset B_d(x,\,500s/\sqrt2)$ by \eqref{e3.6}, for $s\in(0,\,1/500]$, we have
$$\sup_{y\in B_d(x,\,s)}|R_x(y)|\ls \sqrt{\lf(\frac35\r)^{|w|}\Gamma(R_x,\,R_x)(B_d(x,\,500s/\sqrt2))}.$$
On the other hand, for $s\in(0,\,1)$,
\begin{eqnarray*}
\frac{ \Gamma(R_x,\,R_x)(B_d(x,\,s))}{ \Gamma(h,\,h)(B_d(x,\,s))}
&&=\frac{\Gamma( u-\frac{du(x)}{dh} h ,\,u-\frac{du(x)}{dh}h)(B_d(x,\,s))}{\Gamma(h,\,h)(B_d(x,\,s))} \\
&&=\frac{\Gamma( u  ,\,u )(B_d(x,\,s))}{\Gamma(h,\,h)(B_d(x,\,s))}-2\frac{du(x)}{dh}
\frac{\Gamma(u,\,h)(B_d(x,\,s))}{\Gamma(h,\,h)(B_d(x,\,s))}+ \lf(\frac{du(x)}{dh}\r)^2.
\end{eqnarray*}
By this, \eqref{e3.12}, the definition of $\frac{du}{dh}$ and the Radon-Nikodym theorem, we conclude that
\begin{eqnarray*}
\lim_{s\to0}\frac{ \Gamma(R_x,\,R_x)(B_d(x,\,s))}{\Gamma(h,\,h)(B_d(x,\,s))}&&= \frac{d\Gamma( u  ,\,u )(x)}{d\Gamma(h,\,h) }-2\frac{du(x)}{dh}
\frac{d\Gamma(u,\,h)(x)}{d\Gamma(h,\,h)}+ \lf(\frac{du(x)}{dh}\r)^2  =0
\end{eqnarray*}
for almost all $x\in K$.
Therefore,
\begin{equation}\label{e3.14}
\sup_{y\in B_d(x,\,s)}|R_x(y)|=o\lf(\sqrt{\lf(\frac35\r)^{|w|}\Gamma(h,\,h)(B_d(x,\,500s/\sqrt2))}\r).
\end{equation}
Observe that, by the doubling property, definition of $m$  and   Lemma \ref{l3.1} (ii)
and Lemma \ref{l3.2},  we have
$$\Gamma(h,\,h)(B_d(x,\,500s/\sqrt2))\le m(B_d(x,\,500s/\sqrt2))\ls m(B_d(x,\,10s))\ls m(K_w).$$
Thus by \eqref{e3.4} and $w\in\Lambda(10s)$, we arrive at
$$\sup_{y\in B_d(x,\,s)}|R_x(y)|=o\lf(\sqrt{\lf(\frac35\r)^{|w|}m(K_w)}\r)=o(  \|T_w\|)=o(s),$$
 as desired.

To see the uniqueness, assume that $ a$ is a measurable function such that
\eqref{e3.13} holds with $\frac{du(x)}{dh}$
replaced by $ a(x)$, for almost all $x\in K$.
Then \begin{equation}\label{e3.15}
\sup_{y\in B_d(x,\,s)}\lf|  \frac {du(x)}{dh}-a(x) \r| |h(y)-h(x)|=o(s).
\end{equation}
Take $x=\pi^{-1}(w)\in K_w$ such the above holds, and
for $s\in(0,\,1)$, and  $w_1\cdots w_{n_s}\in\Lambda(\sqrt s/25)$.
Observe that \eqref{e3.6} gives  $K_{w_1\cdots w_{n_s}}\subset U(x,\,\sqrt2s/25)\subset B_d(x,\,s )$.
Since
\begin{eqnarray*}
 \sup_{y\in B_d(x,\,s)}| h(y)-h(x) |&&\ge\frac12\osc_{ B_d(x,\,s)}h \\
&&\ge
\frac12\osc_{ K_{w_1\cdots w_{n_s}}}h \ge\frac12\mathscr E(h\circ F_{w_1\cdots w_{n_s}},\,h\circ F_{w_1\cdots w_{n_s}}),
\end{eqnarray*}
it suffices to show that
$\mathscr E(h\circ F_{w_1\cdots w_{n_s}},\,h\circ F_{w_1\cdots w_{n_s}})\gs s$.
Indeed, this would imply that $\frac {du(x)}{dh}-a(x)=0$.

By the martingale convergence theorem  and the mutual absolute
continuity of $m$ and $\Gamma(h,\,h)$,
for almost all $x\in K$, we have
\begin{eqnarray*}
 \lim_{s\to0}
\frac{\mathscr E(h\circ F_{w_1\cdots w_{n_s}},\,h\circ F_{w_1\cdots w_{n_s}})}{m(K_{w_1\cdots w_{n_s}})}
&& = \lim_{s\to0}\frac{\Gamma(h\circ F_{w_1\cdots w_{n_s}},\,h\circ F_{w_1\cdots w_{n_s}})(K)}{m(K_{w_1\cdots w_{n_s}})}\\
&&=   \lim_{s\to0}
\frac{\Gamma( h ,\,h)(K_{w_1\cdots w_{n_s}})}{m(K_{w_1\cdots w_{n_s}})}=\frac{d\Gamma(h,\,h)}{dm}(x)>0,
\end{eqnarray*}
which together with \eqref{e3.8} and $w_1\cdots w_{n_s}\in\Lambda(\sqrt2 s/25)$
implies that $$\mathscr E(h\circ F_{w_1\cdots w_{n_s}},\,h\circ F_{w_1\cdots w_{n_s}})\gs m( K_{w_1\cdots w_{n_s}})\gs
m( K_{w_1\cdots w_{n_s-1}})\gs s.$$
This finishes the proof of Proposition \ref{p3.3}.
\end{proof}

Now, we are going to identify the intrinsic length structure
with the ``measurable Riemannian structure'' of Kusuoka \cite{k89}  in Theorem \ref{t3.2} below.

Recall that Kusuoka further introduced the ``measurable Riemannian metric'' $Z$ on $K$.
Indeed,  for $w\in W_\ast$, set $Z_m(w)\equiv T_wT_w^\ast/\|T_w\|^2$.
Kusuoka \cite{k89} proved that
\begin{equation}\label{e3.16}
 Z(w)\equiv\lim_{n\to\fz} Z_m(w_1\cdots w_n)
\end{equation}
  exists for almost all $w=w_1w_2\cdots\in\Sigma$,
and moreover, rank $Z(w)=1$ and $Z$ is orthogonal projection onto its image for almost all $w\in\Sigma$.
Since $m_\Sigma(V_\ast)=0$, the pushforward mapping $\pi_\ast Z=Z\circ\pi^{-1}$, which is still denote by $Z$
by abuse of notation, is well defined on $K$.

The above measurable Riemannian structure is identified with the   gradient structure in the following sense;
see \cite{k89,k93} and \cite[Theorem 4.8]{k08}.
Let $\mathscr C^1(K)\equiv\{v\circ\Phi| v\in\mathscr C^1(\rr^2)\}$.
Then $\mathscr C^1(K)$ is dense in $(\bd,\,\|\cdot\|_\bd)$. Moreover, for every $u\in\mathscr C^1(K)$,
define $\nabla u\equiv (\nabla v)\circ\Phi$,  which is well defined
since it is independent of the choice of $v\in\mathscr C^1(\rr^2)$ with
$u=v\circ\Phi$; see \cite[Section 4]{k93}.
Here $\nabla v(x)\equiv(\frac{\partial v(x)}{\partial x_1},\, \frac{\partial v(x)}{\partial x_2})$ denotes the usual gradient
for $v\in\mathscr C^1(\rr^2)$.

\begin{prop}\label{p3.4}
For every $u\in \bd$,
there exists a measurable vector field $Y(u)\in\rr^2$ such that $Y(u) \in {\rm Im\,}Z$,
 $\frac d{dm}\Gamma(u,\,u) =|Y(u) |^2$ almost everywhere, and hence
$\mathscr E(u,\,u)=\int_K |Y(u)(x)|^2 \,dm(x).$
Moreover,   if $u\in\mathscr C^1(K)$, then $Y(u) =Z\nabla u $.
\end{prop}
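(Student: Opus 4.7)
The plan is to first verify the proposition for $u\in\mathscr C^1(K)$ by direct computation from Kusuoka's integral representation of $\mathscr E$, and then extend to arbitrary $u\in\bd$ using the density of $\mathscr C^1(K)$ in $(\bd,\|\cdot\|_\bd)$ together with a Cauchy argument in $L^2(K,m;\rr^2)$. The required identification $Y(u)=Z\nabla u$ for $u\in\mathscr C^1(K)$ will emerge automatically from the construction in this first step.

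For the $\mathscr C^1(K)$ case I would invoke Kusuoka's formula from \cite{k89,k93,k08}, which states that for $u\in\mathscr C^1(K)$ one has $d\Gamma(u,u)/dm=\langle Z\nabla u,\nabla u\rangle$ almost everywhere. Setting $Y(u)\equiv Z\nabla u$ and exploiting that, almost everywhere, $Z$ is an orthogonal projection onto its one-dimensional image (so $Z^\ast=Z$ and $Z^2=Z$), one rewrites
\[
\langle Z\nabla u,\nabla u\rangle=\langle Z^2\nabla u,\nabla u\rangle=\langle Z\nabla u,Z\nabla u\rangle=|Y(u)|^2.
\]
Hence $\frac{d}{dm}\Gamma(u,u)=|Y(u)|^2$ almost everywhere, $Y(u)\in\mathrm{Im}\,Z$ almost everywhere, and integration yields $\mathscr E(u,u)=\int_K|Y(u)|^2\,dm$. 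This simultaneously delivers the conclusion and the ``moreover'' clause on $\mathscr C^1(K)$.

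For general $u\in\bd$ I would choose a sequence $\{u_n\}\subset \mathscr C^1(K)$ with $\|u_n-u\|_\bd\to 0$, using the density stated just before the proposition. Bilinearity of $\Gamma$ together with the formula above applied to $u_n-u_\ell\in\mathscr C^1(K)$ gives
\[
\int_K|Y(u_n)-Y(u_\ell)|^2\,dm=\int_K|Z\nabla(u_n-u_\ell)|^2\,dm=\mathscr E(u_n-u_\ell,u_n-u_\ell)\to 0,
\]
so $\{Y(u_n)\}$ is Cauchy in $L^2(K,m;\rr^2)$; let $Y(u)$ denote its limit. Passing to an almost-everywhere convergent subsequence, and using that $\mathrm{Im}\,Z(x)$ is a closed subspace of $\rr^2$ for almost every $x\in K$, yields $Y(u)(x)\in\mathrm{Im}\,Z(x)$ almost everywhere. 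To identify $|Y(u)|^2$ with $\frac{d}{dm}\Gamma(u,u)$, I would test against $1_U$ for an arbitrary open set $U\subset K$: on the one hand the $L^2$ convergence gives $\int_U|Y(u_n)|^2\,dm\to\int_U|Y(u)|^2\,dm$; on the other hand, by Cauchy--Schwarz for the bilinear form $\Gamma$ one has
\[
|\Gamma(u_n,u_n)(U)-\Gamma(u,u)(U)|\le \sqrt{\mathscr E(u_n-u,u_n-u)}\bigl(\sqrt{\mathscr E(u_n,u_n)}+\sqrt{\mathscr E(u,u)}\bigr)\to 0,
\]
so $\Gamma(u,u)(U)=\int_U|Y(u)|^2\,dm$ for every open $U$, which forces $\frac{d}{dm}\Gamma(u,u)=|Y(u)|^2$ almost everywhere and, integrating, $\mathscr E(u,u)=\int_K|Y(u)(x)|^2\,dm(x)$.

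The most delicate points, though not really obstacles, are the fibrewise measurability of $x\mapsto Y(u)(x)$ (handled because $Y(u)$ is an $L^2$-limit of measurable $\rr^2$-valued functions) and the careful verification that the limit lies in $\mathrm{Im}\,Z$ almost everywhere; both are direct consequences of the a.e.\ closedness of $\mathrm{Im}\,Z(x)$ together with a.e.\ subsequential convergence. Everything else is a routine application of Kusuoka's formula on the dense subspace $\mathscr C^1(K)$ and of the closedness of $\mathscr E$.
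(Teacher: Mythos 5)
Your argument is correct, but be aware that the paper itself does not prove Proposition \ref{p3.4}: it is recalled from Kusuoka \cite{k89}, Kigami \cite{k93} and \cite[Theorem 4.8]{k08}, so there is no in-paper proof to compare against. What you add is an explicit reduction of the general case $u\in\bd$ to the $\mathscr C^1(K)$ case by density, and that reduction is sound: the Cauchy estimate in $L^2(K,m;\rr^2)$ follows from linearity of $v\mapsto Z\nabla v$ on $\mathscr C^1(K)$ together with the convergence $\mathscr E(u_n-u_\ell,u_n-u_\ell)\to0$; membership of the limit in ${\rm Im}\,Z$ follows a.e.\ from $ZY(u_n)=Y(u_n)$ and a.e.\ subsequential convergence; and the identification of the density works because the Cauchy-Schwarz inequality for energy measures on Borel sets gives $\Gamma(u_n,u_n)(U)\to\Gamma(u,u)(U)$, while $|Y(u_n)|^2\to|Y(u)|^2$ in $L^1(K,m)$, and two finite Borel measures on the compact space $K$ that agree on all open sets (and have the same total mass, $\Gamma(u,u)(K)=\mathscr E(u,u)=\lim_n\mathscr E(u_n,u_n)=\int_K|Y(u)|^2\,dm$) coincide, yielding both the absolute continuity and the a.e.\ identity. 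The one point to flag honestly is that your ``Kusuoka's formula'' for $u\in\mathscr C^1(K)$, together with the a.e.\ orthogonal-projection property of $Z$ from \eqref{e3.16} and the well-definedness of $\nabla u$ independently of the extension $v$, is itself exactly the content of the results the paper cites, and indeed \cite[Theorem 4.8]{k08} already contains the statement for all $u\in\bd$ by essentially the same density/closability scheme; so your proof is not independent of those references, but it does make explicit the passage from the $\mathscr C^1$ identity to general $u\in\bd$, which the paper leaves entirely to the citation.
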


Applying Propositions \ref{p3.3} and \ref{p3.4}, we have a formula for the projection $Z$ via the
harmonic embedding (or coordinate) $\Phi$.
\begin{lem}\label{l3.5}
The pushforward $\pi_\ast Z$ to $K$ of the projection $Z$ on $\Sigma$ as in \eqref{e3.16} is given by
$$
\pi_\ast Z=\lf(\begin{array}{cl}
           \frac1{1+a^2}\quad&  \frac a{1+a^2}\\
 \frac a{1+a^2}\quad&\frac{a^2}{1+a^2}
          \end{array}\r)
$$
almost everywhere, where  $a=\frac{dh_2}{dh_1}=\frac{d\Gamma(h_2,\,h_1)}{d\Gamma(h_1,\,h_1)}$.
The eigenvalues of $\pi_\ast Z$ are $\lz_1=0$ and $\lz_2=a^2+1$, the corresponding eigenvectors are
$\xi_1=(-\frac a{1+a^2},\,\frac 1{1+a^2})$ and $\xi_2=(\frac 1{1+a^2},\,\frac a{1+a^2})$.
The projection space is ${\rm Im} Z=(\frac 1{1+a^2},\,\frac a{1+a^2})\rr$.
\end{lem}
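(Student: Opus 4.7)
The plan is to realize $\pi_\ast Z$ as the matrix whose $(i,j)$ entry is $d\Gamma(h_i,h_j)/dm$, and then to use the symmetric-projection structure of $Z$ together with the normalization $m=\Gamma(h_1,h_1)+\Gamma(h_2,h_2)$ to solve for its entries in terms of $a$.

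Since $h_i=\pi_i\circ\Phi$ with $\pi_i\in\mathscr C^1(\rr^2)$ the $i$-th coordinate projection, one has $h_1,h_2\in\mathscr C^1(K)$ with $\nabla h_i=e_i$. Proposition \ref{p3.4} then gives $Y(h_i)=Ze_i$, the $i$-th column of $\pi_\ast Z$, and $d\Gamma(h_i,h_i)/dm=|Y(h_i)|^2$. To get the off-diagonal relation, I would apply Proposition \ref{p3.4} to $h_1\pm h_2\in\mathscr C^1(K)$ and polarize (using the linearity of $\nabla$ on $\mathscr C^1(K)$ and the bilinearity of $\Gamma$) to obtain $d\Gamma(h_i,h_j)/dm=\langle Ze_i,Ze_j\rangle$. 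Because $Z$ is a symmetric orthogonal projection, $Z^TZ=Z^2=Z$, whence $\langle Ze_i,Ze_j\rangle=\langle e_i,Ze_j\rangle=Z_{ij}$, so that $Z_{ij}=d\Gamma(h_i,h_j)/dm$ almost everywhere.

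Three relations then pin down the entries. First, taking trace gives $Z_{11}+Z_{22}=d\Gamma(h_1,h_1)/dm+d\Gamma(h_2,h_2)/dm=1$, by the definition of $m$. Second, the Radon-Nikodym chain rule together with the definition of $a$ yields $a=Z_{21}/Z_{11}$, i.e.\ $Z_{21}=aZ_{11}$. Third, the rank-one condition gives $\det Z=0$, hence $Z_{11}Z_{22}=Z_{12}^2=a^2Z_{11}^2$ and therefore $Z_{22}=a^2Z_{11}$. Solving, $Z_{11}(1+a^2)=1$, so $Z_{11}=1/(1+a^2)$, $Z_{12}=Z_{21}=a/(1+a^2)$, and $Z_{22}=a^2/(1+a^2)$, which is the stated matrix formula. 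The eigenvalue/eigenvector data then follow by direct substitution: $\pi_\ast Z\,\xi_1=0$ for $\xi_1=(-a/(1+a^2),1/(1+a^2))$, while $\pi_\ast Z\,\xi_2=\xi_2$ for $\xi_2=(1/(1+a^2),a/(1+a^2))$, which also identifies $\mathrm{Im}\,Z$ as the line through $\xi_2$.

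The main subtlety is that dividing by $Z_{11}$ in the computation above requires $Z_{11}=d\Gamma(h_1,h_1)/dm>0$ almost everywhere. This is the only point where more than a routine calculation is needed, and it follows from the mutual absolute continuity of $\Gamma(h_1,h_1)$ and $m$ recorded in Lemma \ref{al.3}, which simultaneously guarantees that the ratio $a=d\Gamma(h_2,h_1)/d\Gamma(h_1,h_1)$ is well defined on a set of full $m$-measure.
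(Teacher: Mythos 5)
Your argument is correct, and it reaches the matrix formula by a partly different route than the paper. Both proofs hinge on the same key identification $(\pi_\ast Z)_{ij}=\frac{d\Gamma(h_i,\,h_j)}{dm}$: you get it from Proposition \ref{p3.4} applied to $h_1\pm h_2$ together with $Z=Z^\ast=Z^2$, while the paper does essentially the same computation ($Ze_i\cdot e_j=Z\nabla h_i\cdot\nabla h_j=\frac{d\Gamma(h_i,\,h_j)}{dm}$) more tersely, using $\nabla h_i=e_i$ just as you do. The difference lies in how the entries are then evaluated. The paper first invokes the multiplicative identity \eqref{e3.9} of Lemma \ref{al.3} (Hino's index-one structure of the energy measures) to obtain $a^2=\frac{d\Gamma(h_2,\,h_2)}{d\Gamma(h_1,\,h_1)}$, hence $\frac{dm}{d\Gamma(h_1,\,h_1)}=1+a^2$, and reads off each entry as $\frac1{1+a^2}\frac{d\Gamma(h_i,\,h_j)}{d\Gamma(h_1,\,h_1)}$. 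You instead import the degeneracy from Kusuoka's theorem recalled just before \eqref{e3.16} (rank\,$Z=1$ almost everywhere), which gives $\det\pi_\ast Z=0$, and combine it with the trace relation coming from $m=\Gamma(h_1,\,h_1)+\Gamma(h_2,\,h_2)$ and the ratio $a=Z_{21}/Z_{11}$ obtained by the Radon--Nikodym chain rule. So your proof replaces the energy-measure identity \eqref{e3.9} by the matrix-side rank-one fact; both are legitimate given the background the paper cites, and both need $\frac{d\Gamma(h_1,\,h_1)}{dm}>0$ almost everywhere, which you correctly trace to the mutual absolute continuity recorded in Lemma \ref{al.3} (the paper uses the same fact implicitly when dividing by $\Gamma(h_1,\,h_1)$). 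What your route buys is that it avoids \eqref{e3.9} entirely; what the paper's route buys is that it stays on the energy-measure side and never needs $\det Z=0$.

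One incidental remark: your direct substitution gives $\pi_\ast Z\,\xi_2=\xi_2$, i.e.\ eigenvalue $1$, as is forced for an orthogonal projection (the displayed matrix has trace $1$); the value $\lz_2=a^2+1$ in the statement is evidently a slip (it is the nonzero eigenvalue of the matrix $\bigl(\frac{d\Gamma(h_i,\,h_j)}{d\Gamma(h_1,\,h_1)}\bigr)_{i,j}$, not of $\pi_\ast Z$), so your check is the right one and does not constitute a gap.
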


\begin{proof}
By the Radon-Nikodym theorem and Proposition \ref{p3.3}, we have
$$
a^2=\frac{d\Gamma(h_2,\,h_1)}{d\Gamma(h_1,\,h_1)}\frac{d\Gamma(h_2,\,h_1)}{d\Gamma(h_1,\,h_1)}=
\frac{d\Gamma(h_1,\,h_1)}{d\Gamma(h_1,\,h_1)}\frac{d\Gamma(h_2,\,h_2)}{d\Gamma(h_1,\,h_1)}
=\frac{d\Gamma(h_2,\,h_2)}{d\Gamma(h_1,\,h_1)}$$
and
$$ \frac{dm}{d\Gamma(h_1,\,h_1)}=\frac{d\Gamma(h_1,\,h_1)}{d\Gamma(h_1,\,h_1)}+
\frac{d\Gamma(h_2,\,h_2)}{d\Gamma(h_1,\,h_1)}=1+a^2.
 $$
Hence by Proposition \ref{p3.4} and the Radon-Nikodym theorem,
$$Ze_i\cdot e_j=Z\nabla h_i\cdot\nabla h_j= \frac{d\Gamma(h_1,\,h_1)}{dm}=\frac{d\Gamma(h_i,\,h_j)}{d\Gamma(h_1,\,h_1)}\frac{d\Gamma(h_1,\,h_1)}{dm}
=\frac1{1+a^2}\frac{d\Gamma(h_i,\,h_j)}{d\Gamma(h_1,\,h_1)},
$$
which implies that
 $$Ze_1\cdot e_1=\frac1{1+a^2},\ \  Ze_1\cdot e_2=Ze_2\cdot e_1= \frac a{1+a^2}\ \
\mbox{and}\ \ Ze_2\cdot e_2=\frac{a^2}{1+a^2}.$$
The other conclusions follow from  this by  standard computations.
This finishes the proof of Lemma \ref{l3.5}.
\end{proof}

Now we improve Proposition \ref{p3.4} and \cite[Theorem 2.17 (i)]{k10} as follows.
Notice that Proposition \ref{p3.2}  follows from Theorem \ref{t3.2}.
\begin{thm}\label{t3.2}
For every   $u\in\bd$, there exists a unique measurable vector field $\wz\nabla u$
such that for almost all $x\in K$,
and for all $s\in(0,\,1)$,
\begin{equation} \label{e3.17}
\sup_{y\in B_d(x,\,s)}|u(y)-u(x)- \wz\nabla u(x) \cdot(\Phi(y)-\Phi(x))|=o(s).
\end{equation}
Moreover, $\wz\nabla u=Y(u)\in {\rm Im}Z$ and
$\frac d{dm}\Gamma(u,\,u) =|Y(u) |^2=|\wz\nabla u |^2 $
almost everywhere; in particular, if $u\in\mathscr C^1(K)$,
then $\wz\nabla u=Y(u)=Z\nabla u$ almost everywhere.
\end{thm}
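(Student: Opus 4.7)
My plan is to construct $\wz\nabla u$ directly from the Rademacher-type Proposition \ref{p3.3} applied to both coordinate functions $h_1$ and $h_2$, and then to identify it with $Y(u)$ via Proposition \ref{p3.4} and Lemma \ref{l3.5}. The argument is essentially a first-order harmonic expansion in which the two scalar coefficients are glued together using the bilinear Radon--Nikodym identities of Lemma \ref{al.3}.

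First, I would apply Proposition \ref{p3.3}, in the uniform form \eqref{e3.13} from its proof, to $h = h_1$ and $h = h_2$ separately: for a.e.\ $x \in K$ and all $s \in (0,1)$,
$$\sup_{y \in B_d(x,s)} \lf|u(y) - u(x) - c_i(x)(h_i(y) - h_i(x))\r| = o(s), \quad i = 1,2,$$
with $c_i = du/dh_i$. Applying Proposition \ref{p3.3} also to $u = h_2$ with $h = h_1$ gives $h_2(y) - h_2(x) = a(x)(h_1(y) - h_1(x)) + o(s)$, where $a$ is the function from Lemma \ref{l3.5}. The bilinear identity \eqref{e3.9} in Lemma \ref{al.3}, applied to $(u,v,f,g) = (u,h_2,h_1,h_1)$, together with the definition of $a$ and the fact that $c_i = d\Gamma(u,h_i)/d\Gamma(h_i,h_i)$, yields the algebraic relation $c_1 = a c_2$ almost everywhere.

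Next, I form the convex combination of the two expansions with weights $\lambda_1 = 1/(1+a^2)$ and $\lambda_2 = a^2/(1+a^2)$, obtaining
$$u(y) - u(x) = \frac{c_1}{1+a^2}(h_1(y) - h_1(x)) + \frac{ac_1}{1+a^2}(h_2(y) - h_2(x)) + o(s) = \wz\nabla u(x) \cdot (\Phi(y) - \Phi(x)) + o(s),$$
where I set $\wz\nabla u(x) := \frac{c_1(x)}{1+a^2}(1,a(x))$. By Lemma \ref{l3.5}, $\mathrm{Im}\,Z = \rr(1,a)$, so $\wz\nabla u \in \mathrm{Im}\,Z$. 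Using \eqref{e3.9} with $m$ in place of $\Gamma(h_1,h_1)$, I compute
$$\frac{d\Gamma(u,u)}{dm} = \frac{(d\Gamma(u,h_1)/dm)^2}{d\Gamma(h_1,h_1)/dm} = \frac{(c_1/(1+a^2))^2}{1/(1+a^2)} = \frac{c_1^2}{1+a^2} = |\wz\nabla u|^2$$
almost everywhere. Since both $\wz\nabla u$ and $Y(u)$ lie in the one-dimensional space $\mathrm{Im}\,Z$ with equal magnitude by Proposition \ref{p3.4}, they agree up to sign; checking directly on the dense subspace $\mathscr C^1(K)$ where $\wz\nabla u$ reduces to $Z\nabla u = Y(u)$ fixes the sign and establishes $\wz\nabla u = Y(u)$, from which all the ``moreover'' assertions follow.

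For uniqueness within measurable vector fields valued in $\mathrm{Im}\,Z$, if $\wz w$ is another such field satisfying \eqref{e3.17}, then $v = \wz w - \wz\nabla u \in \mathrm{Im}\,Z$ obeys $\sup_{y \in B_d(x,s)} |v(x) \cdot (\Phi(y) - \Phi(x))| = o(s)$. Expanding $\Phi(y) - \Phi(x) = (h_1(y) - h_1(x))(1,a) + (0, o(s))$ and using the oscillation lower bound $\sup_{y \in B_d(x,s)} |h_1(y) - h_1(x)| \gs s$ extracted from the uniqueness step of Proposition \ref{p3.3} forces $v \cdot (1,a) = 0$, which together with $v \in \rr(1,a)$ gives $v = 0$. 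The main technical obstacle is the algebraic identification $\wz\nabla u = Y(u)$: it rests on the precise bilinear Radon--Nikodym identities of Lemma \ref{al.3} and the explicit form of $Z$ from Lemma \ref{l3.5}, and the sign consistency requires extending from $\mathscr C^1(K)$ by density rather than reading it off from a single pointwise characterization.
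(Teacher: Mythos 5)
Your construction is essentially the paper's own proof: the paper takes $\wz\nabla u=\frac{du}{dh_1}\zeta_1\zeta$ with $\zeta=(1,a)/\sqrt{1+a^2}$, which is exactly your $\frac{c_1}{1+a^2}(1,a)$, and it splits the remainder into the two scalar Rademacher remainders of Proposition \ref{p3.3} for $h_1$ and $h_2$ with the same weights $\frac1{1+a^2}$ and $\frac{a^2}{1+a^2}$, the cross-term identity it verifies via Lemma \ref{al.3} being precisely your relation $c_1=ac_2$. Your explicit sign-fixing of $\wz\nabla u=Y(u)$ through $\mathscr C^1(K)$ and your phrasing of uniqueness among ${\rm Im}\,Z$-valued fields are reasonable sharpenings of details the paper leaves implicit (it just invokes the uniqueness argument of Proposition \ref{p3.3}), but the route is the same.
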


\begin{proof}
We first observe that, by Proposition \ref{p3.4}, $\nabla h_1=e_1$, and
mutual  absolute continuity of $\Gamma(h_1,\,h_1)$ and $m$,
we obtain $$ |Z(x)e_1|^2=|Z(x)\nabla h_1(x)|^2=\frac{d\Gamma(h_1,\,h_1)}{dm}(x) >0$$
for almost all $x\in K$. Similarly, we have that $ |Z(x)e_2|^2>0$ for almost all $x\in K$.
For such an $x\in K$, let $\zeta\equiv (\zeta _1,\,\zeta _2)=\zeta_1e_1+\zeta_2e_2$
with   $\zeta_1\equiv|Ze_1|$ and
$\zeta_2\equiv Ze_1\cdot e_2/ |Ze_1|$.
Take $\wz\nabla u \equiv\frac{du }{dh_1}\zeta_1 \zeta $.
Then for almost all $x\in K$, obviously, $\wz\nabla u(x)\in {\rm Im}Z(x)$.
Since $|\zeta(x)|^2=1$ and $(\zeta_1(x))^2=\frac{d\Gamma(h_1,\,h_1)}{dm}(x)$, applying Lemma \ref{al.3}
and \eqref{e3.12},
we further have
$$|\wz\nabla u(x)|^2= \lf(\frac{du(x)}{dh_1} \r)^2 \frac{d\Gamma(h_1,\,h_1)}{dm}(x)= \frac{d\Gamma(u,\,u)}{d\Gamma(h_1,\,h_1)}  (x) \frac{d\Gamma(h_1,\,h_1)}{dm}(x)=\frac{d\Gamma(u,\,u)}{dm}(x).
 $$

Whenever $\wz\nabla u(x) $ exists,
write
\begin{eqnarray*}
&&u(y)-u(x)- \wz\nabla u(x) \cdot(\Phi(y)-\Phi(x))\\
&&\quad= (\zeta_1(x))^2[u(y)-u(x)- \frac{du(x)}{dh_1}(h_1(y)-h_1(x))]\\
&&\quad\quad+  [(1-(\zeta_1(x))^2)(u(y)-u(x))-\zeta_1(x) \zeta_2(x)\frac{du(x)}{dh_1}(h_2(y)-h_2(x))]\\
&&\quad\equiv\wz R^{(1)}_x(y)+\wz R^{(2)}_x(y)
\end{eqnarray*}
for  all $y\in K$.
Observe that Proposition \ref{p3.3} implies that
$\sup_{y\in B_d(x,\,s)}|\wz R^{(1)}_x(y)|=o(s)$.
Then \eqref{e3.17} is reduced to proving
$\sup_{y\in B_d(x,\,s)}|\wz R^{(2)}_x(y)|=o(s)$.
To this end, observe that by Proposition \ref{p3.4} and the Radon-Nikodym theorem,
$$1-(\zeta_1(x))^2=1- \frac{d\Gamma(h_1,\,h_1)}{dm}(x)= \frac{d\Gamma(h_2,\,h_2)}{dm}(x)$$
and
$$\zeta_1(x)\zeta_2(x)=Z\nabla h_1(x)\cdot\nabla h_2(x)=\frac{d\Gamma(h_1,\,h_2)}{dm}(x)=\frac{d\Gamma(h_1,\,h_1)}{dm}(x)
\frac{d\Gamma(h_1,\,h_2)}{d\Gamma(h_1,\,h_1)}(x)
$$
for almost all $x\in K$.
Also by Proposition \ref{p3.3} and the Radon-Nikodym theorem, for almost all $x\in K$,
\begin{eqnarray*}
\frac{du}{dh_1}(x)&&=\frac{d\Gamma(u,\,h_1)}{d\Gamma(h_1,\,h_1)}(x)\\
&&
= \frac{d\Gamma(u,\,h_1)}{d\Gamma(h_2,\,h_2)}(x)\frac {d\Gamma(h_2,\,h_2)}{d\Gamma(h_2,\,h_2)}(x)\frac {d\Gamma(h_2,\,h_2)}{d\Gamma(h_1,\,h_1)}(x)\\
&& = \frac{d\Gamma(u,\,h_2)}{d\Gamma(h_2,\,h_2)}(x)\frac {d\Gamma(h_1,\,h_2)}{d\Gamma(h_2,\,h_2)}(x)\frac {d\Gamma(h_2,\,h_2)}{d\Gamma(h_1,\,h_1)}(x)\\
&&= \frac{d\Gamma(u,\,h_2)}{d\Gamma(h_2,\,h_2)}(x)\frac {d\Gamma(h_1,\,h_2)} {d\Gamma(h_1,\,h_1)}(x)\\
&&=\frac{du}{dh_2}(x)\frac{dh_2}{dh_1}(x).
\end{eqnarray*}
Therefore,
\begin{eqnarray*}
 \zeta_1(x)\zeta_2(x)\frac{du}{dh_1}(x)
&& = \frac{d\Gamma(h_1,\,h_1)}{dm}(x)  \frac{d\Gamma(h_1,\,h_2)}{d\Gamma(h_1,\,h_1)}(x)\frac {d\Gamma(h_1,\,h_2)} {d\Gamma(h_1,\,h_1)}(x)\frac{d\Gamma(u,\,h_2)}{d\Gamma(h_2,\,h_2)}(x)\\
&& = \frac{d\Gamma(h_1,\,h_1)}{dm}(x)  \frac{d\Gamma(h_2,\,h_2)}{d\Gamma(h_1,\,h_1)}(x) \frac{d\Gamma(u,\,h_2)}{d\Gamma(h_2,\,h_2)}(x)\\
&& = \frac{d\Gamma(h_2,\,h_2)} {dm}(x) \frac{d\Gamma(u,\,h_2)}{d\Gamma(h_2,\,h_2)}(x) \\
&&= (1-(\zeta_1(x))^2) \frac{du}{dh_2}(x).
\end{eqnarray*}
Thus
\begin{eqnarray*}
\wz R^{(2)}_x(y) &&= (1-(\zeta_1(x))^2)[u(y)-u(x)-\frac{du(x)}{dh_2}(h_2(y)-h_2(x))].
\end{eqnarray*}
and hence by Proposition \ref{p3.3},  $\sup_{y\in B_d(x,\,s)}|\wz R^{(2)}_x(y)|=o(s)$ as desired.

The uniqueness of $\wz \nabla u$ follows from  exactly the same argument that was used in the proof of Proposition \ref{p3.3}.
\end{proof}

We also extend Theorems \ref{t3.1} and \ref{t3.2} to the case of
an energy measure $\Gamma(h,\,h)$,
where $h$ is a nontrivial $V_0$-harmonic function with $\mathscr E(h,\,h)=1$.
Notice that $\Gamma(h,\,h)$ and $m$ are mutually absolutely continuous.
Recall that $(K,\,\mathscr E,\, \Gamma(h,\,h))$ is a strongly local Dirichlet form on
$L^2(K,\,\Gamma(h,\,h))$ with domain $\bd=\mathbb F$.
Denote by $d_h$ the associated intrinsic distance. Proposition \ref{p3.1} and Proposition \ref{p3.3} still hold for
$(K,\,\mathscr E,\, \Gamma(h,\,h),\,d_h)$ and a dual formula similar to \eqref{e3.1} is still available.
For the above see \cite{k93,k08,ka10}.
The following result identifies
the length structure with the length of the gradient on $(K,\,\mathscr E,\, \Gamma(h,\,h),\,d_h)$.

\begin{thm}\label{t3.3}
For every  $u\in \bd$, the energy measure $\Gamma(u,\,u)$ is absolutely continuous
with respect to the Kusuoka measure $\Gamma(h,\,h)$ and the square of
the length of the gradient  satisfies
$ \frac {d\Gamma(u,\,u)}{d\Gamma(h,\,h)}=({\lip}_{d_h}  u)^2$ almost everywhere.
Moreover,  for almost all $x\in K$ and  all $y\in K$,
$$
 |u(y)-u(x)-\frac{du(x)}{dh}(h(y)-h(x))|=o(d_h(x,\,y)).
$$
where $\frac{du }{dh}=\frac {d\Gamma(u,\,h)}{d\Gamma(h,\,h)}$
and $|\frac{du }{dh}|^2=\frac {d\Gamma(u,\,u)}{d\Gamma(h,\,h)}$.
%Moreover, if $u\in\lip_{d_h}(K)$, then
%$ \frac {d\Gamma(u,\,u)}{d\Gamma(h,\,h)}=(\lip_{d_h}\, u)^2$ almost everywhere.
\end{thm}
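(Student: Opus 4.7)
The plan is to adapt the proofs of Theorems \ref{t3.1} and \ref{t3.2} to the Dirichlet form $(K,\mathscr E,\Gamma(h,h))$, replacing the Kusuoka measure $m$ by the reference measure $\Gamma(h,h)$ throughout. Since the excerpt asserts that Proposition \ref{p3.1} extends to this setting, we have that $(K,d_h,\Gamma(h,h))$ satisfies a doubling property and $(K,\mathscr E,\Gamma(h,h))$ supports a weak $(1,2)$-Poincar\'e inequality. Applying Theorem \ref{t2.2} to $(K,\mathscr E,\Gamma(h,h))$ gives that for every $u\in\bd$, $\Gamma(u,u)$ is absolutely continuous with respect to $\Gamma(h,h)$ and
\begin{equation*}
\frac{d\Gamma(u,u)}{d\Gamma(h,h)}\le(\aplip_{d_h} u)^2\le C_1\frac{d\Gamma(u,u)}{d\Gamma(h,h)}
\end{equation*}
almost everywhere, where $\aplip_{d_h}u=\lip_{d_h}u$ on $\lip(K)$. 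It remains to sharpen $C_1$ to $1$ and to obtain the Taylor expansion.

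Next I would establish the Rademacher-type expansion with $h_1$ replaced by the general $V_0$-harmonic $h$, in the spirit of Proposition \ref{p3.3}. Set $\frac{du}{dh}\equiv\frac{d\Gamma(u,h)}{d\Gamma(h,h)}$; the identity $|\frac{du}{dh}|^2=\frac{d\Gamma(u,u)}{d\Gamma(h,h)}$ follows from Lemma \ref{al.3}, whose proof rests on Hino's index-$1$ property of the standard resistance form and therefore carries over to any nontrivial $V_0$-harmonic $h$ (whose energy measure is mutually absolutely continuous with $\Gamma(h_1,h_1)$ and with $m$). The oscillation argument of Proposition \ref{p3.3} then applies verbatim to $R_x(y)\equiv u(y)-u(x)-\frac{du(x)}{dh}(h(y)-h(x))$: using Lemma \ref{l3.1}, the cell estimates \eqref{e3.6}--\eqref{e3.8} together with their $d_h$-counterparts supplied by the dual formula analogous to \eqref{e3.1}, together with the Radon-Nikodym-type limit
\begin{equation*}
\lim_{s\to0}\frac{\Gamma(R_x,R_x)(B_{d_h}(x,s))}{\Gamma(h,h)(B_{d_h}(x,s))}=0\quad\text{for almost every }x,
\end{equation*}
one obtains $\sup_{y\in B_{d_h}(x,s)}|R_x(y)|=o(s)$, which is the pointwise expansion asserted in the theorem.

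Finally, the identification of the length structure with the gradient structure follows as in the proof of Theorem \ref{t3.1}. The crucial observation is that since $\Gamma(h,h)\le\Gamma(h,h)$ trivially, both $h$ and $-h$ are admissible in the definition \eqref{e2.1} of $d_h$, which forces $|h(y)-h(x)|\le d_h(x,y)$ for all $x,y\in K$. Combining this with the Rademacher expansion yields, for almost every $x\in K$,
\begin{equation*}
\frac{|u(y)-u(x)|}{d_h(x,y)}\le\lf|\frac{du(x)}{dh}\r|\,\frac{|h(y)-h(x)|}{d_h(x,y)}+o(1)\le\lf|\frac{du(x)}{dh}\r|+o(1),
\end{equation*}
so $(\lip_{d_h} u(x))^2\le|\frac{du(x)}{dh}|^2=\frac{d\Gamma(u,u)}{d\Gamma(h,h)}(x)$; together with the reverse inequality from Theorem \ref{t2.2} this forces $C_1=1$ and proves the claim. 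The main obstacle I anticipate is verifying that the oscillation/cell machinery of Section \ref{s3} (Lemmas \ref{l3.1}--\ref{l3.2}) and the index-$1$ identity of Lemma \ref{al.3} genuinely transfer to the reference measure $\Gamma(h,h)$ for an arbitrary $V_0$-harmonic $h$; this is precisely where the dual formula for $d_h$ and Hino's theorem play their role.
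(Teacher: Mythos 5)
Your proposal is correct and follows essentially the same route as the paper, which itself proves Theorem \ref{t3.3} by noting that the arguments of Theorems \ref{t3.1} and \ref{t3.2} (and Proposition \ref{p3.3}, Lemma \ref{al.3}) carry over verbatim with $m$, $d$ replaced by $\Gamma(h,\,h)$, $d_h$, the required geometric inputs being supplied by \cite{k93,k08,ka10}. Your direct derivation of $|h(y)-h(x)|\le d_h(x,\,y)$ from the definition of the intrinsic distance is a minor simplification of the paper's appeal to the dual formula, but the overall argument is the same.
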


We point out that Theorem \ref{t3.3}  can be proved by repeating the above arguments as in
 Theorems \ref{t3.1} and \ref{t3.2}, and all the properties needed in the arguments
are available by \cite{k93,k08,ka10}. We omit the details.

\section{Heat flow, gradient flow and $\frac d{dm}\Gamma(u,\,u)=({\lip} \, u)^2$ }\label{s4}

In this section, under the Ricci curvature bounds
of Lott-Sturm-Villani, we will clarify the relations between the coincidence
of the intrinsic length structure and the gradient structure  and
the identification of the heat flow of $\mathscr E$ and the gradient flow of entropy;
 see Theorem \ref{t4.1} and \ref{t4.2}. We begin with the definition of Wasserstein distance.

On a given  metric space  $( X,\,d)$,
denote by $\mathscr P( X)$ the {\it collection of all Borel  probability measures} on $ X$ and
endow it with weak $\ast$-topology, that is,  $\mu_i\to\mu$ if and only if
for all $f\in\mathscr C( X)$, $\int_ X f\,d\mu_i\to\int_ X f\,d\mu$.
For $p\in[1,\,\fz)$, denote by $\mathscr P_p(X)$ the collection of all measures $\mu\in\mathscr P(X)$
such that $\int_X d^p(x_1,\,x)\,d\mu(x)<\fz$.
Moreover, for every pair of $\mu,\,\nu\in\mathscr P( X)$, define the {\it  $L^p$-Wasserstein distance} as
\begin{equation}\label{e4.1}
 W_p(\mu,\,\nu)\equiv\inf_\pi\lf(\int_{ X\times X}[d(x,\,y)]^p\,d\pi(x,\,y)\r)^{1/p},
\end{equation}
where the infimum is taken over all couplings $\pi$ of $\mu$ and $\nu$.
Recall that a {\it  coupling }$\pi$ of $\mu$ and $\nu$
is a probability measure $\pi\in\mathscr P( X\times X)$
with the property that for all measurable sets $A\subset X$, $\pi(A\times X)=\mu(A)$ and $\pi( X\times A)=\nu(A)$.
There always exists (at least)
one optimal coupling, and so the above infimum can be replaced by minimum; see for example \cite[Proposition 2.1]{v09}.

In the rest of this section, we always assume that $X$ is   compact,
$\mathscr E$ is a regular, strongly local Dirichlet form
on $X$ and $m$ is a probability measure, namely, $m( X)=1$.
Let $d$ be the associated intrinsic distance as in \eqref{e2.1} and assume that
the topology induced by $d$ coincides with the original topology on $X$.
Then $(X,\,d)$ is a compact length space   by \cite{s94,s98b},
and hence, $\mathscr P_2( X)= \mathscr P( X)$ equipped with the distance $ W_2$
is a compact length space (hence a geodesic space); see \cite{lv09}.
Notice that the topology induced by $ W_2$
coincides with the above weak $\ast$-topology (see for example \cite{v09}).

Let $U:[0,\,\fz)\to [0,\,\fz)$ be a continuous convex function with $U(0)=0$
and define the associated {\it functional} $\mathscr U:\mathscr P_2( X)\to\rr\cup\{+\fz\}$ by setting
$$\mathscr U(\mu)\equiv\int_ X U\lf(\frac{d\mu}{dm}\r)\,dm+U'(\fz)\mu_ {\rm sing }( X),$$
where $\mu_{\rm sing }$ is the  singular part
of the Lebesgue decomposition of $\mu$ with respect to $m$, and
$U '(\fz)\equiv\lim_{r\to\fz}\frac1r U (r)$.
If $U'(\fz)=\fz$, then $\mathscr U(\mu)<\fz$ means that $\mu$ is absolutely continuous with respect to $m$,
namely, $\mu_{\rm sing }=0$.  If $U'(\fz)<\fz$, this is not necessarily the case.
%We also let $\mathscr P^\ac_2( X)$ be the collection of all $\mu\in\mathscr P_2( X)$
% that is absolutely continuous with respect to  $m$.
%The compactness of $ X$ implies that $\diam X<\fz$ and hence $ W_2(\mu,\,\nu)<\fz$.

%We always write $\mathscr P_2( X)=(\mathscr P( X),\, W_2)$.

\begin{defn}\label{d4.1}\rm
Let $U$ be  a continuous convex function with $U(0)=0$ and $\lz\in\rr$. Then
$\mathscr U$ is called  {\it  weakly $\lz$-displacement convex} if for all $\mu_0,\,\mu_1\in\mathscr P_2( X)$,
there exists some Wasserstein geodesic
$\{\mu_t\}_{t\in[0,\,1]}$  along which
\begin{equation}\label{e4.2}
 \mathscr U(\mu_t)\le t\mathscr U(\mu_1)+(1-t)\mathscr U(\mu_0)-\frac12\lz t(1-t) W_2(\mu_0,\,\mu_1)^2.
\end{equation}

\end{defn}

\begin{rem}\rm
If for every pair of $\mu_0,\,\mu_1\in\mathscr P_2( X)$ that are absolutely continuous
with respect to $m$ and have continuous densities,
there exists some Wasserstein geodesic
$\{\mu_t\}_{t\in[0,\,1]}$ along which \eqref{e4.2} holds, then as shown in \cite[Lemma 3.24]{lv09},
 $\mathscr U$ is weakly $\lz$-displacement convex.
\end{rem}

A curve $\{\mu_t\}_{t\in I}\subset\mathscr P_2( X)$ on an interval $I\subset\rr$
is {\it  absolutely continuous} if there exists a function $f\in L^1(I)$ such that
\begin{equation}\label{e4.3}
 W_2(\mu_t,\,\mu_s)\le\int_t^sf(r)\,dr
\end{equation}
for all $s,\,t\in I$ with $t\le s$. Obviously, an absolutely continuous curve is  continuous.
For an absolutely continuous curve $\{\mu_t\}_{t\in I}\subset\mathscr P_2( X)$, its {\it metric derivative}
$$|\dot\mu_t|\equiv\lim_{s\to t}\frac{ W_2(\mu_t,\,\mu_s)}{|t-s|}$$
is well defined for almost all $t\in I$; see \cite[Theorem 1.1.2]{ags}.
Moreover,  $|\dot\mu_t|\in L^1(I)$, and it is the minimal function such that
\eqref{e4.3} holds.
For $\mu\in\mathscr P_2( X)$,
define the {\it local slope} of $\mathscr U$ at $\mu$ as
$$|\nabla^-\mathscr U|(\mu)\equiv\limsup_{\nu\to\mu,\,\nu\ne\mu}\frac{
[\mathscr U(\mu)-\mathscr U(\nu)]_+}{ W_2(\mu,\nu)},$$
where $a_+=\max\{a,\,0\}$.

Now we recall the definition of a gradient flow
of a weakly $\lz$-displacement convex functional  $\mathscr U$,  via the energy dissipation identity.
\begin{defn}\label{d4.2}\rm
Let $\mathscr U:\mathscr P_2( X)\to\rr\cup\{\fz\}$ be
weakly $\lz$-displacement convex for some $\lz\in\rr$.
An absolutely continuous curve $\{\mu_t\}_{t\in[0,\,\fz)}\subset\mathscr P_2( X)$
is called a {\it gradient flow} of $\mathscr U $ if $\mathscr U(\mu_t)<\fz$ for all $t\ge0$,
and for all $0\le t<s$,
\begin{equation}\label{e4.4}
 \mathscr U(\mu_t)=\mathscr U(\mu_s)+\frac12\int_t^s |\dot\mu_r|^2\,dr+
\frac12\int_t^s|\nabla^-  \mathscr U|^2(\mu_r)  \,dr.
\end{equation}
\end{defn}

Associated with the convex function  $U_\fz:\ [0,\,\fz)\to[0,\,\fz)$ defined by
$U_\fz(r)=r\log r$ for $r>0$ and $U(0)=0$,
we have the functional
$\mathscr U_\fz:\ \mathscr P_2( X)\to\rr\cup\{+\fz\}$,
which is well-defined and  lower semicontinuous on $\mathscr P_2( X)$;
see, for example, \cite[Theorem B.33]{lv09}.
Denote by $\mathscr P_2^\ast( X)$ the collection of $\mu\in\mathscr P(X)$ such that $\mathscr U_\fz(\mu)<\fz$.
Since $U'_\fz(\fz)=\fz$,  $\mu\in\mathscr P_2^\ast( X)$ implies that $\mu$ is absolutely continuous
with respect to $m$ and $U_\fz(\frac{d\mu}{dm})\in L^1( X)$.
Recall that if $\mathscr U_\fz$ is weakly $\lz$-displacement convex on $\mathscr P_2(X)$ for some $\lz\in\rr$,
then $(X,\,d,\,m)$ is said to have {\it Ricci curvature bounded from below}  in the sense of Lott-Sturm-Villani \cite{s06a,s06b,lv09}.

Recently, under the compactness of $ X$ and weak $\lz$-displacement convexity
of $\mathscr U_\fz$,
%{\color{red}{Ambrosio,Gigli  and Savar\'e \cite{g10} obtained
%the existence, uniqueness and stability of the gradient flow of $\mathscr U_\fz$; see also \cite[Theorems 15  and 17]{g10} for results without the compactness assumption.}}
Gigli \cite{g10} obtained the existence, uniqueness and stability of the gradient flow of
$\mathscr U_\fz$; for the basics of the theory of gradient flows see
\cite{ags}.
Under some  further additional conditions,
we are going to prove in Theorem \ref{t4.1} below that this gradient flow is
actually given by the heat flow.
Recall that the heat flow is the unique gradient flow of the Dirichlet energy functional $\mathscr E$ on
the Hilbert space $L^2( X).$
%; see \cite{ags} for a general view of the gradient flow on a Hilbert space.
Moreover, the heat flow can be represented by the strongly continuous group $\{T_t\}_{t\ge 0}$ on $L^2( X)$
generated by the unique selfadjoint operator $\Delta $, which is determined by
 $$-\int_ X u \Delta v \,d m =\mathscr E(u,\,v)=\int_ X\,d\Gamma(u,\,v)$$
for all $u,\,v\in\bd$.
Indeed, for every $\mu\in\mathscr P( X)$,
the {\it heat flow} $\{T_t\mu\}_{t\in[0,\,\fz)}$ is  given by
 $T_0\mu=\mu$  and when $ t>0$,
 $T_t\mu$ is defined as the unique nonnegative Borel regular measure satisfying
$$\int_ X \phi dT_t\mu=\int_ X\int_ X \phi(x)T_t(x,\,y)\,d\mu(y)\,d m(x).$$
Then by $T_t1=1$, we have $T_t\mu\in\mathscr P( X)$, and hence
$\{T_t\mu\}_{t\in[0,\,\fz)}$ is a curve in $\mathscr P_2( X)$.
Notice that if $\mu =f m$, then $T_t\mu=(T_tf) m$ for all $t\ge0$.

\begin{thm}\label{t4.1}
Assume that $( X,\,d,\,m)$ is compact and satisfies a doubling property,
and that $( X,\,\mathscr E,\,m)$ satisfies the Newtonian property.
If $\mathscr U_\fz$ is weak $\lz$-displacement convex for some $\lz\in\rr$,
then for every $\mu\in\mathscr P_2^\ast( X)$, the  heat flow $\{T_t\mu\}_{t\in[0,\,\fz)}$
gives the unique gradient flow of $\mathscr U_\fz$ with initial value $\mu$.
\end{thm}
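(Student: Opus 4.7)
The plan is to verify that the heat flow $\{T_t\mu\}$ satisfies the energy dissipation equality (EDE), which under weak $\lz$-displacement convexity characterizes the gradient flow of $\mathscr U_\fz$; uniqueness from \cite{g10} will then identify $\{T_t\mu\}$ with the unique such flow. By Gigli's stability and the lower semicontinuity of $\mathscr U_\fz$, one may reduce to initial data $\mu = fm$ with $f\in\lip(X)$ bounded above and bounded away from $0$; the general case follows by approximation.

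For such $\mu$, setting $\mu_t\equiv (T_tf)m$, the chain rule for $\Gamma$ together with the symmetry of $\mathscr E$ yields the entropy dissipation identity
\begin{equation*}
 -\frac{d}{dt}\mathscr U_\fz(\mu_t) = -\int_X (1+\log T_tf)\,\Delta T_tf\,dm = \int_X \frac{1}{T_tf}\,d\Gamma(T_tf,T_tf) \equiv I(\mu_t),
\end{equation*}
the Fisher information of $\mu_t$. Throughout the argument I will exploit the coincidence $\frac d{dm}\Gamma(u,u)=(\lip u)^2$ almost everywhere for $u\in\lip(X)$: Theorem \ref{t2.1} provides the $\le$ inequality, while the Newtonian property together with the fact that $(X,d)$ is a length space supplies the reverse inequality, since the Borel representative of $\sqrt{d\Gamma(u,u)/dm}$ acts as an upper gradient and must dominate $\lip u$ at Lebesgue points.

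The core of the proof is the verification of two pointwise bounds along $\mu_t$. First, a Kuwada-type metric-speed estimate $|\dot\mu_t|^2\le I(\mu_t)$: for any $\varphi\in\lip(X)$ with $\|\varphi\|_{\lip(X)}\le 1$, the heat equation together with the Cauchy--Schwarz inequality for $\Gamma$ (with weight $T_rf$) gives
\begin{equation*}
 \lf|\int_X \varphi\,d(\mu_s-\mu_t)\r| = \lf|\int_t^s \int_X \frac{d\Gamma(\varphi,T_rf)}{dm}\,dm\,dr\r| \le \int_t^s \lf(\int_X (\lip\varphi)^2 T_rf\,dm\r)^{1/2} \sqrt{I(\mu_r)}\,dr,
\end{equation*}
hence $W_1(\mu_t,\mu_s)\le\int_t^s\sqrt{I(\mu_r)}\,dr$; the refinement to $W_2$ follows by Kuwada's standard trick with weighted test functions combined with the $L^\fz$-regularization of the heat semigroup. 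Second, the slope bound $|\nabla^-\mathscr U_\fz|^2(\mu_t)\le I(\mu_t)$, obtained by differentiating $\mathscr U_\fz$ along Wasserstein geodesics issuing from $\mu_t$ generated by a Kantorovich potential $\varphi$; the coincidence $\lip\varphi=\sqrt{d\Gamma(\varphi,\varphi)/dm}$ is precisely what identifies the dominating term as $I(\mu_t)$.

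Under weak $\lz$-displacement convexity, $|\nabla^-\mathscr U_\fz|$ is a strong upper gradient of $\mathscr U_\fz$ along absolutely continuous curves (see \cite{ags,g10}), so the chain rule combined with Young's inequality yields
\begin{equation*}
 \mathscr U_\fz(\mu_0)-\mathscr U_\fz(\mu_t)\le \int_0^t |\dot\mu_s|\,|\nabla^-\mathscr U_\fz|(\mu_s)\,ds\le \frac12\int_0^t \lf(|\dot\mu_s|^2+|\nabla^-\mathscr U_\fz|^2(\mu_s)\r)ds.
\end{equation*}
Inserting the dissipation identity and the two bounds above forces equality throughout, which is exactly EDE; uniqueness from \cite{g10} then identifies $\{T_t\mu\}$ with the gradient flow. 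The main obstacle I expect is the Kuwada-type metric-speed bound: this is precisely the step where compatibility between the abstract $\Gamma$-energy and the length structure via the Newtonian property is indispensable, since without $\frac d{dm}\Gamma(\varphi,\varphi)\le(\lip\varphi)^2$ the Kantorovich test against $1$-Lipschitz functions cannot be converted into a Fisher-information bound, and the delicate promotion from $W_1$ to $W_2$ can lose sharpness.
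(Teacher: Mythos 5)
Your overall strategy is the same as the paper's: establish the energy dissipation equality along $\{T_t\mu\}$ by combining (a) the one-sided inequality coming from weak $\lz$-displacement convexity (strong upper gradient plus Young), (b) the entropy dissipation identity $\frac{d}{dt}\mathscr U_\fz(T_t\mu)=-\int_X\frac1{T_tf}\,d\Gamma(T_tf,T_tf)$, (c) a Kuwada-type metric-speed bound $|\dot T_t\mu|^2\le\int_X\frac1{T_tf}\,d\Gamma(T_tf,T_tf)$, and (d) the slope bound $|\nabla^-\mathscr U_\fz|^2(T_t\mu)\le\int_X\frac1{T_tf}\,d\Gamma(T_tf,T_tf)$, and then invoke uniqueness from Gigli. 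However, there is a genuine gap: you never secure a weak $(1,2)$-Poincar\'e inequality, and several of your steps silently depend on one. The theorem's hypotheses do not include a Poincar\'e inequality; the paper obtains it from Rajala's result that weak $\lz$-displacement convexity of $\mathscr U_\fz$ implies a weak $(1,1)$- (hence $(1,2)$-) Poincar\'e inequality, and this input is indispensable, not cosmetic.

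Concretely, your claim that the Newtonian property alone yields the reverse inequality $(\lip u)^2\le\frac{d}{dm}\Gamma(u,u)$ a.e.\ --- ``the Borel representative of $\sqrt{d\Gamma(u,u)/dm}$ acts as an upper gradient and must dominate $\lip u$ at Lebesgue points'' --- is unjustified. An upper gradient inequality controls line integrals of $g$ along curves, while a Lebesgue-point statement only controls the $m$-density of the set where $g$ is small; passing from the latter to the former requires an abundance of curves avoiding (in length) a set of small measure, which is exactly what doubling plus Poincar\'e provide via the Cheeger--Shanmugalingam theory (Lemma \ref{l2.5}); indeed the paper's Proposition \ref{p2.2}, which is the precise statement you need, explicitly assumes a weak $(1,p)$-Poincar\'e inequality. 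Without it, the identification of the dominating term in your slope bound with the Fisher information does not follow. The Poincar\'e inequality is also needed, beyond the slope bound, for the machinery behind your ``standard'' steps: the Hamilton--Jacobi subsolution property and Kantorovich-duality argument that promotes the $W_1$ speed estimate to $W_2$ (Proposition \ref{p4.4}, from the Lott--Villani/Balogh et al.\ results, and $|\nabla^- Q_r\phi|=\lip Q_r\phi$ a.e.), Sturm's Gaussian heat-kernel bounds giving that $T_tf$ is continuous and bounded away from zero (used in both your dissipation identity and the speed bound), and the density of $\lip(X)$ in $\bd$ used in the approximation to general $\mu\in\mathscr P^\ast_2(X)$. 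Once you add the citation to Rajala's theorem and route the reverse inequality through Proposition \ref{p2.2} rather than a direct ``Lebesgue point'' argument, your proof becomes essentially the paper's.
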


We follow the procedure of \cite{gko} to prove Theorem \ref{t4.1}.
Let $\{\mu_t\}_{t\in[0,\,\fz)}\subset\mathscr P_2( X)$ be an absolutely continuous curve
 that satisfies $\mathscr U_\fz(\mu_t)<\fz$ for all $t\ge0$.
To  prove that $\{\mu_t\}_{t\in[0,\,\fz)}$ is a gradient flow of $\mathscr U_\fz$,
we observe that since $\mathscr U_\fz$ is
 weak $\lz$-displacement convex   and lower semicontinuous,
by \cite[Corollary 2.4.10]{ags}, for all $s>t\ge0$,
we have
\begin{equation}\label{e4.x5}
|\mathscr U_\fz(\mu_t)-\mathscr U_\fz(\mu_s)|
\le\int_t^s|\nabla^- \mathscr U_\fz|(\mu_r)|\dot\mu_r|\,dr,
\end{equation}
which implies, by Young's inequality, that
\begin{equation} \label{e4.5}
\mathscr U_\fz(\mu_t)\le \mathscr U_\fz(\mu_s)+\frac12\int_t^s |\dot\mu_r|^2\,dr+
\frac12\int_t^s|\nabla^-  \mathscr U_\fz|^2(\mu_r)  \,dr.
\end{equation}
So it suffices to check that for all $s>t\ge0$,
$$ \mathscr U_\fz(\mu_s)+\frac12\int_t^s |\dot\mu_r|^2\,dr+
\frac12\int_t^s|\nabla^-  \mathscr U_\fz|^2(\mu_r)  \,dr\le \mathscr U_\fz(\mu_t),$$
which is further reduced to proving
\begin{equation}\label{e4.6}
 \frac12\ |\dot\mu_r|^2+
\frac12 |\nabla^-  \mathscr U_\fz|^2(\mu_r)   \le -\frac d{dr}\mathscr U_\fz(\mu_r)
\end{equation}
for almost all $r\ge0$.

Therefore, with the aid of \cite{r11,r12}, Theorem \ref{t4.1} will follow from  Lemma  \ref{l4.1},
Proposition \ref{p4.2} and Proposition \ref{p4.3} below.

\begin{lem}\label{l4.1}
Assume that $( X,\,d,\,m)$ is compact and satisfies a doubling property,
and that $( X,\,\mathscr E,\,m)$  supports a weak $(1,\,2)$-Poincar\'e inequality.
If $\mathscr U_\fz$ is weakly $\lz$-displacement convex for some $\lz\in\rr$,
then there exists a constant $C_6\ge1$ such that
for $\mu=fm\in\mathscr P^\ast_2 ( X)$,
\begin{equation}\label{e4.7}
 |\nabla^- \mathscr U_\fz|^2(\mu)\le 4 C_6\int_ X \,d\Gamma(\sqrt f,\,\sqrt f).
\end{equation}
Moreover, if  $( X,\,\mathscr E,\,m)$ satisfies the Newtonian property, then $C_6=1$.
\end{lem}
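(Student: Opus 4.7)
My plan is to follow the Gigli--Kuwada--Ohta strategy from \cite{gko} for bounding the local slope of the entropy by the Fisher information, adapted here via the (quasi-)Newtonian property to convert the abstract energy density $d\Gamma/dm$ into an integrand along curves.

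First, because weak $\lz$-displacement convexity forces the local slope $|\nabla^-\mathscr U_\fz|$ to be lower semicontinuous along $W_2$-convergent sequences (see \cite[Cor.~2.4.10]{ags}), a truncation and regularization argument reduces the bound to the case $\mu=fm$ with $f\in\lip(X)$ bounded away from $0$ and $\fz$; then $\log f\in\lip(X)\cap\bd$. For any $\nu=gm\in\mathscr P_2^\ast(X)$ close to $\mu$ and any optimal coupling $\pi$, the pointwise convexity of $U(r)=r\log r$ together with $\int(g-f)\,dm=0$ yields
\begin{equation*}
\mathscr U_\fz(\mu)-\mathscr U_\fz(\nu)\ \le\ \int(\log f)(f-g)\,dm\ =\ \int_{X\times X}\bigl[\log f(x)-\log f(y)\bigr]\,d\pi(x,y).
\end{equation*}

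Next, I would apply the quasi-Newtonian property (with constant $K\ge 1$) to $\log f$: there exists a Borel representative $g_{\log f}$ of $\sqrt{d\Gamma(\log f,\log f)/dm}$ such that $|\log f(\gz(0))-\log f(\gz(1))|\le K\int_\gz g_{\log f}\,ds$ on every Lipschitz curve $\gz$. Since $(X,d)$ is a compact length space, a measurable selection provides unit-speed geodesics $\gz_{xy}$ joining $x$ and $y$. Two applications of Cauchy--Schwarz, first along each geodesic and then against $\pi$, give
\begin{equation*}
\Bigl|\int[\log f(x)-\log f(y)]\,d\pi\Bigr|\ \le\ K\,W_2(\mu,\nu)\,\Bigl(\int_{X\times X}\!\int_0^1 g_{\log f}^{\,2}(\gz_{xy}(t))\,dt\,d\pi(x,y)\Bigr)^{\!1/2}.
\end{equation*}
The Leibniz and chain rules for $\Gamma$ convert the right-hand factor at $\nu=\mu$ by $f\cdot g_{\log f}^{\,2}=d\Gamma(f,f)/dm=4\,d\Gamma(\sqrt f,\sqrt f)/dm$, so that $\int g_{\log f}^{\,2}\,d\mu=4\int d\Gamma(\sqrt f,\sqrt f)$.

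The main obstacle is justifying that the double integral on the right converges to $\int g_{\log f}^{\,2}\,d\mu$ as $\nu\to\mu$ in $W_2$: on a general metric measure space the interpolating measures $(\gz_{xy}(t))_\ast\pi$ can concentrate on negligible sets, so pushing $g_{\log f}^{\,2}$ through the limit fails. This is precisely where the results of Rajala \cite{r11,r12}, cited in the announcement of the proof, enter: under doubling, $(1,2)$-Poincar\'e, and weak $\lz$-displacement convexity of $\mathscr U_\fz$, one can select the optimal plan $\pi$ so that $(\gz_{xy}(t))_\ast\pi$ has uniformly bounded densities with respect to $m$, which together with the boundedness of $g_{\log f}$ yields the required convergence by uniform integrability. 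Combining everything gives $|\nabla^-\mathscr U_\fz|^2(\mu)\le 4K^2\int d\Gamma(\sqrt f,\sqrt f)$, establishing \eqref{e4.7} with $C_6=K^2$; in particular $C_6=1$ under the Newtonian property.
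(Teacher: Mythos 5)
The first half of your argument is sound: the convexity inequality $\mathscr U_\fz(\mu)-\mathscr U_\fz(\nu)\le\int_{X\times X}[\log f(x)-\log f(y)]\,d\pi(x,y)$, the quasi-Newtonian estimate for $\log f$ along geodesics (the quasi-Newtonian property with some $K$ does follow from the hypotheses via Proposition \ref{p2.1}), and the two Cauchy--Schwarz steps are all correct, as is the identity $\int_X g_{\log f}^2\,d\mu=4\int_X d\Gamma(\sqrt f,\sqrt f)$ (note, though, that $f\,g_{\log f}^2=\frac1f\frac{d\Gamma(f,f)}{dm}$, not $\frac{d\Gamma(f,f)}{dm}$). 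The genuine gap is exactly at the step you flag yourself: you need $\limsup_{\nu\to\mu}\int_{X\times X}\int_0^1 g_{\log f}^2(\gz_{xy}(t))\,dt\,d\pi \le \int_X g_{\log f}^2\,d\mu$, and the appeal to Rajala does not deliver it. The bounded-density interpolation theorems of \cite{r11,r12} bound the densities of the interpolants in terms of the $L^\fz$ norms of \emph{both} endpoint densities; in the definition of the local slope, $\nu$ ranges over all of $\mathscr P_2^\ast(X)$ near $\mu$, its density is in general unbounded, so no bound on $(\gz_{xy}(t))_\ast\pi$ uniform in $\nu$ is available, and without it you cannot pass the merely Borel function $g_{\log f}^2$ through the (only weak) convergence of the interpolants. (In the paper, Rajala's results serve a different purpose: deriving the weak Poincar\'e inequality from displacement convexity in Theorems \ref{t4.1} and \ref{t4.2}; in Lemma \ref{l4.1} the Poincar\'e inequality is already a hypothesis.)

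The paper's proof avoids interpolant densities altogether. It applies Proposition \ref{p4.1} (Villani's Theorem 20.1), which bounds $\liminf_{t\to0}t^{-1}[\mathscr U_\fz(\mu_t)-\mathscr U_\fz(\mu_0)]$ from below by $-\int_{X\times X}U''(\rho_0(x_0))|\nabla^-\rho_0|(x_0)\,d(x_0,x_1)\,d\pi(x_0,x_1)$, an expression involving only the initial density $f$ and the static optimal coupling; H\"older's inequality then gives the bound $W_2(\mu_0,\mu_1)\{\int_X\frac1f|\nabla^-f|^2\,dm\}^{1/2}$, the comparison $|\nabla^-f|\le\lip f\le C_1\sqrt{\frac{d}{dm}\Gamma(f,f)}$ from Section \ref{s2} converts this into the energy (this is where doubling and the $(1,2)$-Poincar\'e inequality enter, and where the Newtonian property produces the constant $1$ through Proposition \ref{p2.2}, rather than through an upper-gradient inequality along curves), and the $\lz$-convexity inequality plus lower semicontinuity of the slope handle the passage from the derivative at $t=0$ to the difference quotient and the approximation to general $f$ with $\sqrt f\in\bd$. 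To rescue your route you would need either interpolant density bounds independent of $\nu$, which the hypotheses do not provide, or to localize the geodesic integral at $t=0$ only --- which is essentially the content of Villani's theorem and leads back to the paper's argument.
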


Lemma \ref{l4.1} follows from the following result; see \cite[Theorem 20.1]{v09}.
\begin{prop}\label{p4.1}
Let $U$ be a continuous convex function on $[0,\,\fz)$.
%Suppose that $\mathscr U$ is weak $\lz$-displacement convex for some $\lz\in\rr$.
Let $\{\mu_t\}_{t\in[0,\,1]}\subset\mathscr P_2( X)$ be an absolutely continuous geodesic with density
$\{\rho_t\}_{t\in[0,\,1]}$,
and $U( \rho_t)\in L^1( X)$ for all $t\in[0,\,1]$.
Further assume that $\rho_0\in\lip( X)$,
$U(\rho_0),\,\rho_0U'(\rho_0)\in L^1( X)$
and $U'$ is Lipschitz on $\rho_0( X)$. Then
$$\liminf_{t\to0}\lf[\frac{\mathscr U(\mu_t)-\mathscr U(\mu_0)}{t}\r]\ge -\int_{ X\times X}
U''(\rho_0(x_0))|\nabla^-\rho_0|(x_0)d(x_0,\,x_1)\,d\pi(x_0,\,x_1),$$
where $\pi$ is an optimal coupling of $\mu_0$ and $\mu_1$.
% associated with the geodesic path
%$\{\mu_t\}_{t\in[0,\,1]}$.
\end{prop}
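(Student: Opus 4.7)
The strategy is to lift the Wasserstein geodesic to the level of curves via a dynamic optimal coupling, use the convexity of $U$ to reduce the estimate to a first-order one involving only $U'$, then exploit the Lipschitz regularity of $U'$ and $\rho_0$ to take the limit pointwise along $\Pi$-a.e.\ geodesic, and finally apply Fatou's lemma with an $L^1(\Pi)$ dominating function.

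\emph{Step 1 (Dynamic coupling and convexity reduction).} Since $(X,d)$ is a compact length (hence geodesic) space, the absolutely continuous Wasserstein geodesic $\{\mu_t\}_{t\in[0,1]}$ admits a lift to a probability measure $\Pi$ on the space of geodesics $\gamma:[0,1]\to X$ such that $\mu_t=(e_t)_\sharp\Pi$ for all $t$ and $\pi=(e_0,e_1)_\sharp\Pi$ is an optimal coupling of $\mu_0$ and $\mu_1$. Write $\mu_t=\rho_t\,m+\mu_t^s$ for the Lebesgue decomposition (note $\mu_0=\rho_0\,m$). Since $U$ is convex with $U(0)=0$, $U'$ is non-decreasing and $U(b)-U(a)\ge U'(a)(b-a)$ for all $a,b\ge0$; combining this on the a.c.\ parts with $U'(\rho_0)\le U'(\infty)$ on the singular part gives
\begin{equation*}
\mathscr U(\mu_t)-\mathscr U(\mu_0)\ge\int_X U'(\rho_0)\,d(\mu_t-\mu_0)=\int\bigl[U'(\rho_0(\gamma_t))-U'(\rho_0(\gamma_0))\bigr]\,d\Pi(\gamma).
\end{equation*}

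\emph{Step 2 (Pointwise liminf of the integrand).} For $\Pi$-a.e.\ $\gamma$, $t\mapsto\gamma_t$ is a geodesic so $d(\gamma_0,\gamma_t)=t\,d(\gamma_0,\gamma_1)$, and $\rho_0\in\lip(X)$ yields $\rho_0(\gamma_t)\to\rho_0(\gamma_0)$. Because $U'$ is Lipschitz on $\rho_0(X)$, $U''$ exists Lebesgue-a.e.\ there and is bounded; at a Lebesgue point $a_0=\rho_0(\gamma_0)$ of $U''$,
\begin{equation*}
\frac{U'(\rho_0(\gamma_t))-U'(\rho_0(\gamma_0))}{\rho_0(\gamma_t)-\rho_0(\gamma_0)}\;\longrightarrow\;U''(\rho_0(\gamma_0))\ge0.
\end{equation*}
On the other hand, by the definition of the descending slope,
\begin{equation*}
\liminf_{t\to 0^+}\frac{\rho_0(\gamma_t)-\rho_0(\gamma_0)}{t}=\liminf_{t\to 0^+}\frac{\rho_0(\gamma_t)-\rho_0(\gamma_0)}{d(\gamma_0,\gamma_t)}\cdot d(\gamma_0,\gamma_1)\ge -|\nabla^-\rho_0|(\gamma_0)\cdot d(\gamma_0,\gamma_1).
\end{equation*}
A product-liminf argument (using $U''\ge 0$ and the existence of the limit of the first factor) then gives
\begin{equation*}
\liminf_{t\to 0^+}\frac{U'(\rho_0(\gamma_t))-U'(\rho_0(\gamma_0))}{t}\ge -U''(\rho_0(\gamma_0))\,|\nabla^-\rho_0|(\gamma_0)\,d(\gamma_0,\gamma_1).
\end{equation*}

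\emph{Step 3 (Fatou and conclusion).} Let $L$ and $L_\rho$ be the Lipschitz constants of $U'|_{\rho_0(X)}$ and of $\rho_0$, respectively. Then
\begin{equation*}
\left|\frac{U'(\rho_0(\gamma_t))-U'(\rho_0(\gamma_0))}{t}\right|\le L\,L_\rho\,\frac{d(\gamma_0,\gamma_t)}{t}=L\,L_\rho\,d(\gamma_0,\gamma_1),
\end{equation*}
and $d(\gamma_0,\gamma_1)\in L^1(\Pi)$ by Cauchy--Schwarz since $\Pi$ is $W_2$-optimal. Applying Fatou's lemma with this integrable lower envelope to the chain of inequalities from Step 1, and combining with the pointwise estimate of Step 2, yields
\begin{equation*}
\liminf_{t\to 0^+}\frac{\mathscr U(\mu_t)-\mathscr U(\mu_0)}{t}\ge -\int U''(\rho_0(\gamma_0))\,|\nabla^-\rho_0|(\gamma_0)\,d(\gamma_0,\gamma_1)\,d\Pi(\gamma),
\end{equation*}
which, after pushing $\Pi$ forward by $(e_0,e_1)$ to $\pi$, is exactly the claim.

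\emph{Main obstacle.} The delicate point is transferring the Lebesgue-a.e.\ statement ``$U''$ exists at $a_0$'' (which holds for almost every $a_0$ in the real line) into the $\Pi$-a.e.\ statement ``$U''$ exists at $\rho_0(\gamma_0)$''. Concretely, one needs the pushforward $(\rho_0)_\sharp\mu_0$ to give zero mass to the Lebesgue-null set of non-Lebesgue points of $U''$ inside $\rho_0(X)$. Because $\rho_0$ is Lipschitz on the compact doubling space $X$ and $\mu_0=\rho_0\,m$ has bounded density, this can be secured via a coarea-type absolute continuity statement for $(\rho_0)_\sharp m$, or by approximating $U$ by $C^2$ convex functions and passing to the limit using the uniform $L^1(\Pi)$ domination above; this regularization step is the only non-routine ingredient in the argument.
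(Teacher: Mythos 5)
The paper does not actually prove Proposition \ref{p4.1}: it is quoted from \cite[Theorem 20.1]{v09}. Your argument is essentially the standard proof of that cited result, so there is nothing to compare at the level of strategy: convexity of $U$ reduces the increment to the linear quantity $\int_X U'(\rho_0)\,d(\mu_t-\mu_0)$, a dynamical optimal plan $\Pi$ (which exists since $X$ is compact and geodesic) turns this into an integral of difference quotients of $U'\circ\rho_0$ along geodesics, and Fatou's lemma with the lower envelope $-L\,L_\rho\,d(\gamma_0,\gamma_1)\in L^1(\Pi)$ reduces the claim to the pointwise liminf, which your Step 2 treats correctly at every point where $U''$ exists at $\rho_0(\gamma_0)$ (the product-liminf argument using $U''\ge 0$ and boundedness of the second factor is fine, as is the identification of the limit integrand as a function of $(\gamma_0,\gamma_1)$ and the pushforward to $\pi$). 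In particular, in the only situation where the paper invokes the proposition (Lemma \ref{l4.1}, with $U(r)=r\log r$ and $\rho_0$ Lipschitz and bounded away from zero, so that $U''$ is continuous on $\rho_0(X)$), your Steps 1--3 already constitute a complete proof, with no Lebesgue-point issue at all.

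The step you defer is, however, a genuine gap in the stated generality, and neither of your proposed repairs works. The pushforward $(\rho_0)_\sharp m$ (hence $(\rho_0)_\sharp\mu_0$) is in general \emph{not} absolutely continuous with respect to Lebesgue measure: if $\rho_0$ is constant on a set of positive measure, the pushforward has an atom, and no coarea-type statement on a doubling space rules this out; the bounded density of $\mu_0$ is irrelevant here. The $C^2$-regularization of $U$ runs into the same obstruction: if $(\rho_0)_\sharp\mu_0$ charges a value $c$ at which $U'$ is not differentiable (e.g. $U'(s)=\min\{s,c\}$, so that the one-sided derivatives at $c$ are $1$ and $0$), the second derivatives of the approximants at $c$ do not converge to any canonical value, and indeed the right-hand side of the proposition is itself representative-dependent at such points, so no limiting argument can recover the stated inequality; note also that $|\nabla^-\rho_0|$ need not vanish on such level sets, so the bad set cannot be discarded. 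The honest fix is to strengthen (or read into) the hypothesis that $U''$ exists at every point of $\rho_0(X)$ -- for instance $U\in C^2$ on a neighbourhood of $\rho_0(X)$ -- under which your proof closes completely and coincides with the argument behind \cite[Theorem 20.1]{v09}; as written, the final regularization paragraph does not bridge the gap.
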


In Proposition \ref{p4.1} and below, for a measurable function $f$ on $ X$, set
$$|\nabla^-f|(x)\equiv\limsup_{y\to x}\frac{[f(x)-f(y)]_+}{d(x,\,y)}.$$
Obviously, $|\nabla^-f|(x)\le \lip\,f(x)$ for all $x\in X$.
However,    if $( X,\,d,\,m)$ satisfies a doubling property and
 supports a weak $(1,\,p)$-Poincar\'e inequality for some $p\in[1,\,\fz)$,
 then $|\nabla^-f|=\lip\,f$ almost everywhere. See \cite[Remark 2.27]{lv07}.

\begin{proof}[Proof of Lemma \ref{l4.1}.]
We first assume that $f\in \lip( X)$ and $f$ is bounded away from zero.
By the definition of $|\nabla^- \mathscr U_\fz|(\mu)$,
it suffices to consider $\nu\in\mathscr P_2( X)$ with
$\mathscr U_\fz(\nu)<\mathscr U_\fz(\mu)$.
Since
$\mathscr U_\fz(\nu)<\fz$,
we have $\nu\in\mathscr P_2^\ast( X)$.
By the convexity of $\mathscr U_\fz$,
there exists a curve $\{\mu_t\}_{t\in[0,\,1]}\subset\mathscr P_2( X)$ such that
$\mu_0=\mu$ and $\mu_1=\nu$, along which
\eqref{e4.2} holds. Moreover, by \eqref{e4.2}, we have $\mathscr U(\mu_t)<\fz$ for all $t\in[0,\,1]$,
which further means that $\mu_t$ is absolutely continuous with respect to $m$.
Denote the density  by $\rho_t$.

Notice that $U_\fz$ and $\{\mu_t\}_{t\in[0,\,1]}$ fulfill all the conditions required in Proposition
\ref{p4.1}.  So  by $U''(s)=\frac1s$, optimality of $\pi$ and the H\"older inequality, we have
\begin{eqnarray*}
\liminf_{t\to0}\lf[\frac{\mathscr U_\fz(\mu_t)-\mathscr U_\fz(\mu_0)}{t}\r]&&\ge -\int_{ X\times X}
 \frac1{[\rho_0(x_0)]^2}|\nabla^-\rho_0|(x_0)d(x_0,\,x_1)\,d\pi(x_0,\,x_1)\\
&& \ge -\lf\{\int_{ X\times X}
 \frac1{[\rho_0(x_0)]^2}|\nabla^-\rho_0|^2(x_0)\,d\pi(x_0,\,x_1)  \r\}^{1/2} \\
&&\quad\quad\times\lf\{\int_{ X\times X}d(x_0,\,x_1)^2\,d\pi(x_0,\,x_1)\r\}^{1/2}\\
&& =-W_2(\mu_0,\,\mu_1)\lf\{\int_ X
 \frac1{[\rho_0(x_0)]^2}|\nabla^-\rho_0|^2(x_0)\,d\mu_0(x_0)  \r\}^{1/2} \\
&& =-W_2(\mu_0,\,\mu_1)\lf\{\int_ X
 \frac1{f}|\nabla^-f|^2 \,dm  \r\}^{1/2},
\end{eqnarray*}
which together with $|\nabla^-f|^2\le |\lip\, f|^2\le C_1^2\Gamma(f,\,f) $ almost everywhere implies that
\begin{eqnarray}\label{e4.8}
\limsup_{t\to0}\lf[\frac{\mathscr U_\fz(\mu_0)-\mathscr U_\fz(\mu_t)}{tW_2(\mu_0,\,\mu_1)}\r]
\le C_1\lf\{\int_ X
 \frac1{f}\,d\Gamma(f,\,f)  \r\}^{1/2}.
\end{eqnarray}
On the other hand, by the weak displacement convexity of $\mathscr U_\fz$, we   have
$$
\lf[\frac{\mathscr U_\fz(\mu_0)-\mathscr U_\fz(\mu_1)}{ W_2(\mu_0,\,\mu_1)}\r]
\le \lf[\frac{\mathscr U_\fz(\mu_0)-\mathscr U_\fz(\mu_t)}{tW_2(\mu_0,\,\mu_1)}\r]-\frac12\lz(1-t) W_2(\mu_0,\,\mu_1),$$
which together with \eqref{e4.8} yields that
 $$
\lf[\frac{\mathscr U_\fz(\mu_0)-\mathscr U_\fz(\mu_1)}{ W_2(\mu_0,\,\mu_1)}\r]
\le  C_1\lf\{\int_ X
 \frac1{f}\,d\Gamma(f,\,f)  \r\}^{1/2}+ \frac12|\lz| W_2(\mu_0,\,\mu_1),$$
and hence, letting $\mu_1\to\mu_0$ with respect to $ W_2$,
$$|\nabla^-\mathscr U_\fz|(\mu_0)
\le  C_1\lf\{\int_ X
 \frac1{f}\,d\Gamma(f,\,f)  \r\}^{1/2}=2C_1\lf\{\int_ X
  \,d\Gamma(\sqrt f,\,\sqrt f)  \r\}^{1/2} .$$
This is as desired.

For $f\in \lip ( X)$ with $\sqrt f\in\bd$,
letting $f_n=(f\vee\frac1n)\wedge n$, we have
$f_n\in\lip( X)$ and $\frac{f_nm}{\|f_n\|_{L^1( X)}}\in\mathscr P_2( X)$.
Moreover, since $f_n\ge \frac1n$, by the above argument,
we have
 $$|\nabla^-\mathscr U_\fz|\lf(\frac{f_nm}{\|f_n\|_{L^1( X)}}\r)
\le  2C_1\frac1{{\|f_n\|_{L^1( X)}}}\lf\{\int_ X
 \,d\Gamma(\sqrt{f_n},\,\sqrt{f_n})  \r\}^{1/2}.$$
Moreover, recall that the lower semicontinuity of $\mathscr U_\fz$ implies that of $|\nabla^-\mathscr U_\fz|$; see \cite[Corollary 2.4.11]{ags}.
Since  $\sqrt {f_n}\to\sqrt f$ in $\bd$,
$\|f_n\|_{L^1( X)}\to1$
and $\frac{f_nm}{\|f_n\|_{L^1( X)}}\to fm$ in $\mathscr P( X)$,
we have
\begin{eqnarray*}
 |\nabla^-\mathscr U_\fz|(fm)&& \le\liminf_{n\to\fz} |\nabla^-\mathscr U_\fz|\lf(\frac{f_nm}{\|f_n\|_{L^1( X)}}\r)
\le 2C_1\lf\{\int_ X
 \,d\Gamma(\sqrt f,\,\sqrt f)  \r\}^{1/2}.
\end{eqnarray*}

Generally, for $\mu=fm\in\mathscr P_2^\ast( X)$, we may assume  $\sqrt f\in\bd$
without loss of generality. By Theorem \ref{t2.2},
we know that $\lip( X)$ is dense in $\bd$. So there exists a sequence
$\{g_n\}_{n\in\nn}\subset\lip( X)$ such that $g_n\to \sqrt f$ in $\bd$.
Since $f\ge 0$ almost everywhere, we still have $|g_n|\to \sqrt f$ in $\bd$.
Notice that $|g_n|^2\in\lip( X)$ and $\|g_n\|_{L^2( X)}\to1$. By the
lower semicontinuity of $\mathscr U_\fz$ again and the above result for Lipschitz functions,
we have
\begin{eqnarray*}
 |\nabla^-\mathscr U_\fz|(\mu)&& \le\liminf_{n\to\fz} |\nabla^-\mathscr U_\fz|\lf(\frac{|g_n|^2m}{\|g_n\|_{L^2( X)}}\r)\\
&&\le 2C_1\liminf_{n\to\fz}\frac1{\|g_n\|_{L^2( X)}}\lf\{\int_ X
 \,d\Gamma(|g_n|,\,|g_n|)  \r\}^{1/2}\\
&&=2C_1\lf\{\int_ X
 \,d\Gamma(\sqrt f,\,\sqrt f)  \r\}^{1/2}.
\end{eqnarray*}
This finishes the proof of Lemma \ref{l4.1}.
\end{proof}

\begin{prop}\label{p4.2}
Assume that $( X,\,d,\,m)$ is compact and satisfies a doubling property,
and that $( X,\,\mathscr E,\,m)$ supports a weak $(1,\,2)$-Poincar\'e inequality.
Let $\mu=fm\in\mathscr P^\ast_2( X)$.
Then $\{T_t\mu\}_{t\in[0,\,\fz)}\subset\mathscr P^\ast_2( X)$,
  $\{\sqrt {T_tf}\}_{t\in[0,\,\fz)}\subset\bd$ with a locally uniform bound on $ (0,\,\fz)$,
and
$\mathscr U_\fz(T_t\mu) $ is locally Lipschitz on $(0,\,\fz)$
and for almost all $t\in(0,\,\fz)$,
\begin{equation}\label{e4.9}
\frac d{dt}\mathscr U_\fz(T_t\mu)=-\int_ X\frac1{T_tf}\,d\Gamma(T_tf,\,T_tf).
\end{equation}
\end{prop}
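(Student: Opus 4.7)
The plan is to exploit the smoothing properties of the heat semigroup under the doubling and weak $(1,2)$-Poincar\'e hypotheses to reduce to bounded, strictly positive densities for $t>0$, and then derive the dissipation identity via the chain rule for $\Gamma$. First I would invoke Sturm's scale-invariant parabolic Harnack inequality (cited just before Theorem \ref{t2.2}) to produce a jointly continuous, strictly positive heat kernel with Gaussian two-sided bounds. Since $X$ is compact and $f$ is a probability density, this yields, for every $t>0$, constants $0<c(t)\le M(t)<\fz$ with $c(t)\le T_tf\le M(t)$ on $X$; mass conservation then gives $T_t\mu\in\mathscr P(X)$, while the upper bound yields $\mathscr U_\fz(T_t\mu)<\fz$, so $T_t\mu\in\mathscr P^\ast_2(X)$. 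Standard analytic semigroup theory on $L^2(X)$ additionally gives $T_tf\in\bd$ with $\mathscr E(T_tf,T_tf)\le Ct^{-1}\|f\|_{L^2}^2$ for $t>0$.

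Next, the chain rule for regular strongly local Dirichlet forms (cf.\ \cite{fot}) applied to the functions $\sqrt{\cdot}$ and $\log(\cdot)$, both smooth on the compact interval $[c(t),M(t)]$, yields $\sqrt{T_tf},\,\log T_tf\in\bd$ together with the pointwise identities
\begin{equation*}
d\Gamma(\sqrt{T_tf},\sqrt{T_tf})=\tfrac{1}{4T_tf}\,d\Gamma(T_tf,T_tf),\qquad d\Gamma(\log T_tf,T_tf)=\tfrac{1}{T_tf}\,d\Gamma(T_tf,T_tf).
\end{equation*}
Combined with the two-sided pointwise bounds and the decay estimate on $\mathscr E(T_tf,T_tf)$, this furnishes the locally uniform bound on $\|\sqrt{T_tf}\|_\bd$ over $(0,\fz)$.

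The dissipation identity will then follow from $\partial_t T_tf=\Delta T_tf$ in $L^2(X)$ for $t>0$, together with integration by parts: for $t>0$,
\begin{equation*}
\frac{d}{dt}\int_X T_tf\log T_tf\,dm=\int_X(\log T_tf+1)\,\Delta T_tf\,dm=-\mathscr E(\log T_tf,T_tf)=-\int_X\tfrac{1}{T_tf}\,d\Gamma(T_tf,T_tf),
\end{equation*}
using $\mathscr E(1,T_tf)=0$ and the chain rule identity above. Since the Fisher information on the right is locally bounded on $(0,\fz)$ by the previous step, $t\mapsto\mathscr U_\fz(T_t\mu)$ is locally Lipschitz there.

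The main obstacle is the rigorous justification of the formal manipulations above, in particular the interchange of time-derivative and integration and the integration by parts at the level of the Dirichlet form, given that $\log T_tf$ is only in $\bd$ for $t>0$ and $f$ itself need not be bounded. To handle this I would establish the identity first on compact subintervals $[a,b]\subset(0,\fz)$, where all quantities are controlled, by differentiating under the integral via dominated convergence (using the $L^2$-continuity of $s\mapsto\Delta T_sf$ on $[a,b]$ together with the uniform $L^\fz$-bounds on $\log T_sf$); the behaviour at $t=0$, namely $T_0\mu\in\mathscr P^\ast_2(X)$ and the continuity of $\mathscr U_\fz(T_t\mu)$ there, will then follow from the lower semicontinuity of $\mathscr U_\fz$ combined with the strong $L^1$-continuity $T_af\to f$ as $a\to 0^+$.
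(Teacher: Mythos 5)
Your proposal is correct and follows essentially the same route as the paper: Sturm's two-sided Gaussian/Harnack estimates give $0<c(t)\le T_tf\le M(t)$ for $t>0$ (hence $T_t\mu\in\mathscr P_2^\ast(X)$), the chain rule turns this into the locally uniform $\bd$-bound on $\sqrt{T_tf}$, and differentiating $\int_X T_tf\log T_tf\,dm$ using $\partial_tT_tf=\Delta T_tf$, integration by parts and $\Gamma(1,\cdot)=0$ gives \eqref{e4.9} together with the local Lipschitz property. One minor point: since $f$ need not belong to $L^2(X)$, the smoothing estimate $\mathscr E(T_tf,T_tf)\lesssim t^{-1}\|f\|_{L^2}^2$ should be applied starting from time $t/2$ to the bounded function $T_{t/2}f$ (equivalently, as in the paper, use $T_{1/n}f\in\bd$ and the monotonicity of $t\mapsto\mathscr E(T_tf,T_tf)$), which the two-sided kernel bounds you already invoked make immediate.
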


\begin{proof}
Recall that, under the assumptions of Proposition \ref{p4.2}, it was proved in \cite{s95} that for every $t>0$,
the kernel $T_t$ is locally H\"older continuous in each variable and satisfies
\begin{equation}\label{e4.10}
C^{-1}\frac1{m(B(x,\,{\sqrt t}))}e^{-\frac{d^2(x,\,y)}{ c_1t}}\le T_t(x,\,y)\le C\frac1{m(B(x,\,{\sqrt t}))}e^{-\frac{d^2(x,\,y)}{ c_2t}}.
\end{equation}
So, for every $t\ge\frac1n$, $T_t f$ is continuous, and moreover,
 $0<C(n)^{-1}\le T_t(x,\,y)\le C(n) $ implies that  $ C(n)^{-1}\le T_tf(x)\le C(n) $ for all $x\in X$. From this, it is easy to  see that
$0\le \mathscr U_\fz(T_t\mu)\le C(n)\log C(n)<\fz$ for all $t\ge 1/n$.
For $t\ge 1/n$,  by $T_{1/n}f\in\bd$  and the fact that
 $\mathscr E(T_tf,\,T_tf)$  is  decreasing in $t$ (both of these facts follow
by functional calculus),
we have
$$\int_ X\frac1{T_tf}\,d\Gamma(T_tf,\,T_tf)\le C(n)\int_ X \,d\Gamma(T_tf,\,T_tf)
\le C(n)\int_ X \,d\Gamma(T_{1/n}f,\,T_{1/n}f),$$
which together with the chain rule implies that $\sqrt {T_tf}\in\bd$
with locally uniform bound on $(0,\,\fz)$.

Observe that the function $U_\fz(s)=s\log s$ is smooth on the interval $(\frac1n,\,n)$ for all $n$,
and that $T_t f$, as  $L^2( X)$-valued function in $(0,\,\fz)$,
is locally Lipschitz on $(0,\,\fz)$. So  $\mathscr U_\fz(T_t\mu)$ is locally Lipschitz in $(0,\,\fz)$.
Therefore, by the chain rule for $\Gamma$
and the fact that $\Gamma (1,\,h)=0$ for all $h\in\bd$, we have
\begin{eqnarray*}
\frac d{dt}\mathscr U_\fz(T_t\mu)&&=\int_ X U_\fz'(T_tf)\Delta T_tf\,dm
=\int_ X (\log T_tf+1)\Delta T_tf\,dm\\
&&=-\int_ X\,d\Gamma(\log T_tf+1,\, T_tf)=-\int_ X\,d\Gamma(\log T_tf ,\, T_tf)\\
&&=-\int_ X\frac1{T_tf}\,d\Gamma( T_tf ,\, T_tf).
\end{eqnarray*}
This is as desired.
\end{proof}

The following result is essentially proved in \cite[Proposition 3.7]{gko}.
We point out that,
comparing with the assumptions of Theorem \ref{t4.1}, we  can get rid of the Newtonian property
here since in the proof, instead of $\frac d{dm}\Gamma(u,\,u)=(\lip\, u)^2$,
it is enough to use $\frac d{dm}\Gamma(u,\,u)\le(\lip\, u)^2$
(after writing the first version of this paper, we learned that this was also realized in \cite[Lemma 6.1]{ags11}).
For completeness, we give its proof.

\begin{prop}\label{p4.3}
Assume that $( X,\,d,\,m)$ is compact and satisfies a doubling property,
and
that $( X,\,\mathscr E,\,m)$ supports a weak $(1,\,2)$-Poincar\'e inequality.
For  every $\mu=fm\in\mathscr P_2^\ast( X)$,
$\{T_t\mu\}_{t\in[0,\,\fz)}$ is an absolutely continuous curve in $\mathscr P_2( X)$
and for almost all $t\in[0,\,\fz)$,
\begin{equation}\label{e4.11}
|\dot T_t\mu|^2\le\int_ X\frac1{T_tf}\,d\Gamma(T_tf,\,T_tf).
\end{equation}
\end{prop}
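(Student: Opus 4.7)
The plan is to follow the Kantorovich duality / Hopf-Lax interpolation strategy initiated in \cite{gko}. For any bounded Lipschitz function $\varphi$ on $X$, let $Q_r\varphi(x)\equiv\inf_{y\in X}\{\varphi(y)+d(x,y)^2/(2r)\}$ denote the associated Hopf-Lax semigroup; since $(X,d)$ is a length space, $Q_r\varphi$ satisfies the Hamilton-Jacobi subsolution bound
\begin{equation*}
\partial_r Q_r\varphi+\tfrac12(\lip\, Q_r\varphi)^2\le 0
\end{equation*}
almost everywhere on $(0,1]\times X$, and on $\mathscr P_2(X)$ one has the Kantorovich duality
\begin{equation*}
\tfrac12 W_2^2(\nu_0,\nu_1)=\sup_{\varphi}\lf[\int_X Q_1\varphi\,d\nu_1-\int_X\varphi\,d\nu_0\r].
\end{equation*}
My strategy is, for fixed $0\le t<s$, to interpolate between $T_t\mu$ and $T_s\mu$ by differentiating
\begin{equation*}
F(r)\equiv \int_X Q_r\varphi\cdot T_{t+r(s-t)}f\,dm,\qquad r\in[0,1],
\end{equation*}
bounding $F(1)-F(0)$ from above, and then optimising over $\varphi$.

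Writing $\tau\equiv t+r(s-t)$ and $g_\tau\equiv T_\tau f$, the Hopf-Lax estimate combined with the integration-by-parts identity $\int_X\psi\,\bdz g\,dm=-\int_X d\Gamma(\psi,g)$ gives
\begin{equation*}
F'(r)\le -\tfrac12\int_X(\lip\, Q_r\varphi)^2 g_\tau\,dm-(s-t)\int_X d\Gamma(Q_r\varphi,g_\tau).
\end{equation*}
I will then apply the pointwise Cauchy-Schwarz inequality for the energy measure $\Gamma$ together with a weighted Young inequality (of weight $1/(s-t)$) to the second term, yielding a bound of the form
\begin{equation*}
-(s-t)\int_X d\Gamma(Q_r\varphi,g_\tau)\le\tfrac12\int_X \tfrac{d\Gamma(Q_r\varphi,Q_r\varphi)}{dm}g_\tau\,dm+\tfrac{(s-t)^2}{2}I(\tau),
\end{equation*}
where $I(u)\equiv\int_X \frac{1}{T_uf}\,d\Gamma(T_uf,T_uf)$. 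Here the crucial input is Theorem \ref{t2.1}, which under the doubling assumption alone gives $\frac{d\Gamma(Q_r\varphi,Q_r\varphi)}{dm}\le(\lip\, Q_r\varphi)^2$ almost everywhere; this is precisely why Proposition \ref{p4.3} does not require the Newtonian property, in contrast to Proposition \ref{p4.2}. The two $(\lip\, Q_r\varphi)^2$-terms then cancel, leaving $F'(r)\le \tfrac{(s-t)^2}{2} I(\tau)$. Integration in $r$, the change of variables $u=\tau$, and the Kantorovich duality produce the main estimate
\begin{equation}\label{e4.ast}
W_2^2(T_t\mu,T_s\mu)\le (s-t)\int_t^s I(u)\,du.
\end{equation}

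The remaining conclusions are routine. By Proposition \ref{p4.2}, $t\mapsto\mathscr U_\fz(T_t\mu)$ is non-increasing with $\frac{d}{dt}\mathscr U_\fz(T_t\mu)=-I(t)$ on $(0,\fz)$, and the lower semicontinuity of $\mathscr U_\fz$ implies $\mathscr U_\fz(T_\epsilon\mu)\to\mathscr U_\fz(\mu)$ as $\epsilon\to 0^+$, whence $\int_0^R I(u)\,du\le\mathscr U_\fz(\mu)-\mathscr U_\fz(T_R\mu)<\fz$ for every $R>0$, so $I\in L^1_{\loc}([0,\fz))$. Setting $\phi(t)\equiv\int_0^t I(u)\,du$ and applying \eqref{e4.ast} with the Cauchy-Schwarz inequality to any disjoint finite family $\{(a_i,b_i)\}\subset[0,R]$, one gets
\begin{equation*}
\sum_i W_2(T_{a_i}\mu,T_{b_i}\mu)\le\sum_i\sqrt{(b_i-a_i)(\phi(b_i)-\phi(a_i))}\le\sqrt{\sum_i(b_i-a_i)}\sqrt{\phi(R)},
\end{equation*}
which establishes absolute continuity of $\{T_t\mu\}_{t\in[0,R]}$ in $\mathscr P_2(X)$ for every $R>0$. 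Dividing \eqref{e4.ast} by $(s-t)^2$, sending $s\to t^+$ and invoking Lebesgue's differentiation theorem yields $|\dot T_t\mu|^2\le I(t)$ at almost every $t$, as required by \eqref{e4.11}.

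The main obstacle will be the rigorous justification of the Hopf-Lax subsolution inequality and of the differentiation under the integral in $F(r)$ in this abstract length-space framework; these matters are by now standard but they use crucially the length-space property of $(X,d)$ (granted by \cite{s94,s98b,s10}) and the $L^2$-continuity of $\tau\mapsto T_\tau f$, and one must arrange the computation so that the a.e.\ relation in $r$ can be integrated against $dm$ with respect to a changing density $g_\tau$. A minor secondary technical point is the pointwise Cauchy-Schwarz inequality for the bilinear measure-valued form $\Gamma$, which follows from its non-negative definiteness by a density argument.
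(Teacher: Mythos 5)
Your proposal is correct and follows essentially the same route as the paper: Kantorovich duality combined with the Hopf--Lax semigroup, differentiation of the interpolated functional, the bound $\frac{d}{dm}\Gamma(Q_r\varphi,\,Q_r\varphi)\le(\lip Q_r\varphi)^2$ from Theorem \ref{t2.1}, and the Cauchy--Schwarz/Young step, yielding exactly the paper's key estimate \eqref{e4.12} and then \eqref{e4.11} by Lebesgue differentiation. The only divergence is your treatment of absolute continuity near $t=0$ via $I\in L^1_{\loc}$ from the entropy dissipation (the paper instead uses the Gaussian bounds to get local Lipschitz continuity on $[1/n,\,\fz)$); note there that the bound $\mathscr U_\fz(T_\ez\mu)\le\mathscr U_\fz(\mu)$ comes from Jensen's inequality and the Markov property of $T_\ez$, not from lower semicontinuity, which only gives the opposite inequality.
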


To prove this, we recall the following result about the Hamilton-Jacobi semigroup established in \cite{lv07,behm}.
For $\phi\in\lip( X)$, set $Q_0f=f$ and for $t\ge0$, define
$$Q_t\phi(x)\equiv\inf_{y\in X}\lf[\phi(y)+\frac{1}{2t}d^2(x,\,y)\r].$$

\begin{prop}\label{p4.4}
Assume that $( X,\,d,\,m)$  satisfies a doubling property and supports
a weak $(1,\,p)$-Poincar\'e inequality for some $p\in[1,\,\fz)$.
Then the following hold:

(i)  For all $t,\,s\ge0$ and all $x\in X$, $Q_tQ_s\phi(x)=Q_{t+s}\phi(x)$;

(ii) For all $t\ge0$, $Q_t\phi\in\lip( X)$;

(iii) For all $t\in (0,\,\fz)$ and almost  all $x\in X$,
$$\frac d{dt}Q_t\phi(x)+\frac12|\nabla^-\, Q_t\phi|^2(x)=0.$$
\end{prop}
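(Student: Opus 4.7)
The plan is to deduce all three assertions from the length-space structure of $(X,d)$, which is guaranteed by the assumptions (doubling plus weak $(1,p)$-Poincar\'e inequality imply that $(X,d)$ is a length space, and compactness of bounded closed sets in the relevant context yields geodesics or near-geodesics). The arguments are essentially those of \cite{lv07, behm}; I sketch how I would organize them.

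For (ii), I would first show that if $\|\phi\|_{\lip(X)}\le L$, then the infimum in $Q_t\phi(x)$ is achieved (or almost achieved) on the bounded set $\{y:d(x,y)\le 2Lt\}$, since taking $y=x$ gives the value $\phi(x)$, and any $y$ with $d(x,y)>2Lt$ yields $\phi(y)+\frac{1}{2t}d^2(x,y)>\phi(x)+\frac{1}{2t}d(x,y)(d(x,y)-2Lt)+Ld(x,y)\cdot(\text{sign})$, a bigger value. For $x,x'\in X$, a competitor at $x'$ using the near-minimizer at $x$ then gives $|Q_t\phi(x)-Q_t\phi(x')|\le C(L,t,\diam X)\, d(x,x')$, hence $Q_t\phi\in\lip(X)$.

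For (i), I would use the length-space property: the inequality $Q_{t+s}\phi\le Q_sQ_t\phi$ follows from the triangle inequality $d^2(x,y)\le (d(x,z)+d(z,y))^2$ together with convexity, applied along any path $x\to z\to y$ and optimized over the split. Conversely, given $y$ near-optimal for $Q_{t+s}\phi(x)$, one picks a near-geodesic $\gz$ from $x$ to $y$ of length close to $d(x,y)$ and chooses $z=\gz(r)$ with $r=\frac{t}{t+s}d(x,y)$; convexity of $r\mapsto r^2$ yields $\frac{1}{2t}d^2(x,z)+\frac{1}{2s}d^2(z,y)\le \frac{1}{2(t+s)}d^2(x,y)+o(1)$, so $Q_sQ_t\phi(x)\le Q_{t+s}\phi(x)$.

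For (iii), local Lipschitz continuity of $t\mapsto Q_t\phi(x)$ on $(0,\fz)$ is immediate from the monotonicity $t\mapsto Q_t\phi(x)$ decreasing and the explicit bound using near-minimizers. The derivative formula comes from the envelope-type computation: pick $y_t$ near-minimizing $Q_t\phi(x)$; then the right derivative equals $-\frac{1}{2t^2}d^2(x,y_t)$. The core identification, which I expect to be the main obstacle, is
\[
\frac{d(x,y_t)}{t}=|\nabla^- Q_t\phi|(x)\qquad\text{for a.e. }t,\,\text{a.e. }x,
\]
which is obtained by producing, via a near-geodesic from $x$ toward $y_t$, test points $x'$ along which $Q_t\phi(x)-Q_t\phi(x')\ge \frac{d(x,y_t)}{t}d(x,x')+o(d(x,x'))$, giving the lower bound on $|\nabla^- Q_t\phi|(x)$. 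The reverse bound follows from the general fact (in a length space) that a Lipschitz function whose descending slope along any near-geodesic is at most $\frac{d(x,y_t)}{t}$ forces $|\nabla^- Q_t\phi|(x)\le \frac{d(x,y_t)}{t}$; here one uses that a test competitor $y_t$ at $x$ provides the same majorization for $Q_t\phi$ at nearby points $x'$, yielding $Q_t\phi(x')-Q_t\phi(x)\le \frac{1}{2t}[d^2(x',y_t)-d^2(x,y_t)]\le \frac{d(x,y_t)}{t}d(x,x')+\frac{1}{2t}d^2(x,x')$. Combining the two bounds gives (iii). The length-space structure and the existence/stability of near-geodesics, guaranteed by the doubling plus Poincar\'e hypotheses on a complete metric measure space, is where the hypothesis on $(X,d,m)$ enters essentially.
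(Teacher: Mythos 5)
First, a point of comparison: the paper does not prove Proposition \ref{p4.4} at all; it is quoted from \cite{lv07,behm}, so your proposal is really a reconstruction of those proofs. Your sketches of (i) and (ii) are essentially correct (localization of the infimum to a ball of radius comparable to $\|\phi\|_{\lip(X)}t$, competitor estimates, midpoint choice on near-geodesics), but note that they use only the fact that $(X,d)$ is a length space, which in this paper is already guaranteed by the standing assumption on the intrinsic metric (via Sturm's results); it is not what the doubling and Poincar\'e hypotheses are for.

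The genuine gap is in (iii), in your ``reverse bound'' $|\nabla^-Q_t\phi|(x)\le d(x,y_t)/t$. The inequality you display, $Q_t\phi(x')-Q_t\phi(x)\le\frac{1}{2t}\bigl[d^2(x',y_t)-d^2(x,y_t)\bigr]$, uses a near-minimizer $y_t$ \emph{for the point $x$} as a competitor at $x'$; it bounds how much $Q_t\phi$ can \emph{increase} at nearby points, i.e.\ the ascending slope at $x$, whereas $|\nabla^-Q_t\phi|(x)$ is the descending slope, governed by $[Q_t\phi(x)-Q_t\phi(x')]_+$. The auxiliary ``general fact'' you invoke (control of the decay rate along near-geodesics controls $|\nabla^-|$) is also not valid as stated, since the decrease toward a nearby point need not be seen at the start of a near-geodesic from $x$. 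To bound the descending slope one must apply the competitor inequality with the roles of $x$ and $x'$ exchanged, i.e.\ with near-minimizers $y'$ for $x'$, and then show that $d(x',y')$ stays controlled and subconverges to the distance of a minimizing point for $x$ as $x'\to x$; this upper-semicontinuity step, which uses (local) compactness and is the crux of the argument in \cite{lv07}, is missing from your sketch. Relatedly, the envelope computation only sandwiches the one-sided $t$-derivatives between $-\overline D^{\,2}(x,t)/2t^2$ and $-\underline D^{\,2}(x,t)/2t^2$, where $\overline D$ and $\underline D$ are the largest and smallest distances from $x$ to minimizing points, and your argument yields the equation only for a.e.\ $t$ at each fixed $x$, while the proposition asserts it for every $t>0$ and $m$-a.e.\ $x$. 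Reconciling these two points is exactly where the doubling property and the weak Poincar\'e inequality enter in \cite{lv07,behm} (for instance through the a.e.\ identification of $|\nabla^-f|$ with $\lip f$), not through ``existence/stability of near-geodesics'' as you suggest; this part of the argument is absent from the proposal.
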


\begin{proof}[Proof of Proposition \ref{p4.3}.]
Let $t,\,s>0$.   By the Kantorovich duality,
\begin{eqnarray*}
\frac12  W^2_2(T_t\mu,\,T_{t+s}\mu)&&
=\lf[\int_ X Q_1\phi\,dT_ {t+s}\mu-\int_ X  \phi\,dT_t\mu\r];
\end{eqnarray*}
for some $\phi\in L^1( X)$; see, for example, \cite[Theorem 5.10]{v09} and \cite[Section 6]{ags}.
Moreover, by checking the proof (see, for example, \cite[p.\,66]{v09}), we know that $|\phi|$ is bounded
and for all $x\in X$,
$$\phi(x)=\sup_{y\in X}[Q_1\phi(y)-\frac12d^2(x,\,y) ].$$
Since $ X$ is compact and hence bounded, we further have that $\phi,\, Q_1\phi \in \lip( X)$;  we omit the details.
Observe that, by Proposition \ref{p4.4},
$Q_r\phi$  as an $L^2( X)$-valued function of $r$ is
Lipschitz on $[0,\,1]$ and hence is differentiable almost everywhere. %, and its derivative is $-\int_ X \frac12[\lip\, Q_r\phi]^2\,dm.$
Similarly, $T_{t+rs}f$  as an $L^2( X)$-valued function of $r$ is
Lipschitz on $[0,\,1]$ and hence is differentiable almost everywhere. %, and its derivative  is  $-\int_ X \Delta T_{t+rs}f\,dm.$
Therefore, $(Q_r\phi)T_{t+rs}f$ as an $L^1( X)$-valued function of $r$
is Lipschitz in $[0,\,1]$ and hence is differentiable almost everywhere. Thus
\begin{eqnarray*}
%&&\int_ X (Q_1\phi)(T_ {t+s}f)\,dm - \int_ X\phi (T_tf)\,dm\\
\frac12  W^2_2(T_t\mu,\,T_{t+s}\mu)&& =\int_0^1\frac d{dr}\int_ X (Q_r\phi)(T_ {t+rs}f)\,dm\,dr\\
&& =\int_0^1\int_ X\lf[-\frac12|\nabla^-\, Q_r\phi|^2 (T_{t+rs}f)+s(Q_r\phi)\Delta T_{t+rs}f  \r]\,dm\,dr.
\end{eqnarray*}
Since $$\frac d{dm}\Gamma(Q_r\phi,\,Q_r\phi)\le (\lip\, Q_r\phi)^2=|\nabla^-\, Q_r\phi|^2$$ almost everywhere
as given by Theorem \ref{t2.1} and \cite[Remark 2.27]{lv07}, we have
\begin{eqnarray*}
\frac12  W^2_2(T_t\mu,\,T_{t+s}\mu)
&& \le-\frac12\int_0^1\int_ X    T_{t+rs}f \,d\Gamma(Q_r\phi,\,Q_r\phi)\,dr
+ s\int_0^1\int_ X (Q_r\phi)\Delta T_{t+rs}f \,dm\,dr.
\end{eqnarray*}
Moreover, by the Cauchy-Schwarz inequality for Dirichlet forms,  we have
\begin{eqnarray*}
  &&\int_ X  (Q_r\phi)\Delta T_{t+rs}f \,dm
 =
 \int_ X\,d\Gamma(Q_r\phi,\, T_{t+rs}f)\\
&&\quad=
 \int_ X (T_{t+rs}f)^{1/2}\cdot\frac1{(T_{t+rs}f)^{1/2}}\,d\Gamma(Q_r\phi,\, T_{t+rs}f)\\
&&\quad\le \frac1{2s} \int_ X T_{t+rs}f\, d\Gamma(Q_r\phi,\,Q_r\phi)
+ \frac s{2 } \int_ X\,\frac1{T_{t+rs}f}\, d\Gamma(T_{t+rs}f,\,T_{t+rs}f),
\end{eqnarray*}
which implies that
%$$
%\frac12  W^2_2(T_t\mu,\,T_{t+s}\mu)\le \frac {s^2}{2 } \int_0^1\int_ X\,\frac1{T_{t+rs}f}\, %d\Gamma(T_{t+rs}f,\,T_{t+rs}f)\,dr.$$
%So
\begin{equation}\label{e4.12}
 W^2_2(T_t\mu,\,T_{t+s}\mu)\le  {s^2}  \int_0^1\int_ X\,\frac1{T_{t+rs}f}\, d\Gamma(T_{t+rs}f,\,T_{t+rs}f)\,dr.
\end{equation}
Since $T_{1/n}f\in\bd$, by an argument as in the proof of Proposition \ref{p4.2}, we have for $t\ge 1/n$,
\begin{eqnarray*}
 W^2_2(T_t\mu,\,T_{t+s}\mu)&&
\le  {s^2}C(n)   \int_0^1\int_ X\ \, d\Gamma(T_{t+rs}f,\,T_{t+rs}f)\,dr\\
&&
\le  {s^2} C(n) \int_ X\, d\Gamma(T_{1/n}f,\, T_{1/n}f),
\end{eqnarray*}
which  implies that $T_t\mu$ is locally Lipschitz continuous
and hence, $\{T_t\mu\}_{t\ge0}$ is an absolutely continuous curve in $\mathscr P_2( X)$. Moreover,
\eqref{e4.12} also implies that
$$ |\dot T_t\mu|^2\le  \int_ X\,\frac1{T_tf}\, d\Gamma(T_tf,\,T_tf),$$
which is as desired.
\end{proof}

For the proof of Theorem \ref{t4.1},
we still need a very recent result of Rajala \cite{r11,r12}:
 $\lambda$-displacement convexity of $\mathscr U_\fz$ implies
that $(X,\,d,\,m)$ supports a weak $(1,\,1)$- and hence a
weak $(1,\,2)$-Poincar\'e inequality.
This combined with Proposition \ref{p2.2} allows us to use
Lemma \ref{l4.1} and Propositions \ref{p4.2} and \ref{p4.3}
 in the proof of Theorem \ref{t4.1}.

\begin{proof}[Proof of Theorem \ref{t4.1}]
For $\mu=fm\in\mathscr P_2^\ast( X)$, it was proved in Proposition \ref{p4.3} that
$\{T_t\mu\}_{t\in[0,\,\fz)}$ is an absolutely continuous curve in $\mathscr P_2^\ast( X)$.
To prove that $\{T_t\mu\}_{t\in[0,\,\fz)}$ is a gradient flow of $\mathscr U_\fz$,
since \eqref{e4.5} follows from the displacement convexity of $\mathscr U_\fz$,
it suffices to check the reverse inequality
which is further reduced to \eqref{e4.6}. But
\eqref{e4.6} follows from \eqref{e4.7} with $C_3=1$, and \eqref{e4.9} and \eqref{e4.11}.
\end{proof}

Under the assumptions of Proposition \ref{p4.2},
for every $\mu\in\mathscr P_2( X)$ and $t>0$,
$T_t\mu$ is absolutely continuous with respect to $m$ and its density is continuous and bounded away from zero.
Indeed, for $t>0$ and every nonnegative $\phi\in\mathscr C( X)$, by $\mu( X)=1$ and \eqref{e4.10},
we have
\begin{eqnarray*}
\int_ X\phi\,dT_t\mu=\int_ X\int_ X\phi(x)T_t(x,\,y)\,d\mu(y)\,dm(x)\le
C(t)  \int_ X\phi(x)\,dm(x),
\end{eqnarray*}
which implies the  absolute  continuity of $T_t\mu$.
Let $f_t=\frac{d}{dm}T_t \mu$ for $t>0$.
Then $f_t\in L^1( X)$, and moreover, by the semigroup property,
$f_t=T_{t/2}f_{t/2}$, which together with \eqref{e4.10} and the continuity of the
kernel of $T_{t/2}$ implies that
$f_t$ is continuous and bounded away from zero.
Relying on the observations above  and Theorem \ref{t4.1}, we conclude the following result.

\begin{cor}\label{c4.1}
Let all the assumptions be as in Theorem \ref{t4.1}.
For every $\mu\in\mathscr P( X)$,
the curve $\{T_t\mu\}_{t\in(0,\,\fz)}\subset\mathscr P^\ast(X)$
is absolutely continuous on each $[\ez,\,\fz)$ for all $\ez>0$,
and $\mathscr U_\fz(T_t\mu)<\fz$ for all $t>0$ and  \eqref{e4.4} holds for all $s>t>0$.
\end{cor}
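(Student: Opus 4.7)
The plan is to exploit the semigroup property of $\{T_t\}_{t\ge 0}$ together with the already-proved Theorem \ref{t4.1}, reducing an arbitrary initial measure $\mu \in \mathscr{P}(X)$ to an initial measure in $\mathscr{P}_2^\ast(X)$ via the instantaneous smoothing available for $t > 0$. The starting point is the observation, already recorded in the paragraph preceding the corollary, that for every $t > 0$ the density $f_t = \frac{d}{dm}T_t\mu$ exists, is continuous on $X$, and is bounded away from $0$; since $X$ is compact, $f_t$ is also bounded above, so $f_t \log f_t$ is a bounded continuous function and $\mathscr{U}_\infty(T_t\mu) < \infty$. Because $X$ is compact we have $\mathscr{P}_2(X) = \mathscr{P}(X)$, hence $T_t\mu \in \mathscr{P}_2^\ast(X)$ for every $t > 0$.

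For the absolute-continuity assertion, I would fix $\epsilon > 0$ and apply Proposition \ref{p4.3} to the measure $\nu := T_\epsilon\mu \in \mathscr{P}_2^\ast(X)$. That proposition furnishes absolute continuity of $\{T_s\nu\}_{s \in [0,\infty)}$ in $\mathscr{P}_2(X)$, and the semigroup identity $T_s\nu = T_{\epsilon + s}\mu$ then transports this conclusion to absolute continuity of $\{T_t\mu\}_{t\in[\epsilon,\infty)}$. Since $\epsilon>0$ was arbitrary, the desired statement follows.

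Finally, to verify \eqref{e4.4} on a pair $s > t > 0$, I would apply Theorem \ref{t4.1} to $T_t\mu \in \mathscr{P}_2^\ast(X)$: the theorem asserts that $\{T_r(T_t\mu)\}_{r\in[0,\infty)}$ is the unique gradient flow of $\mathscr{U}_\infty$ with initial value $T_t\mu$. By Definition \ref{d4.2}, the identity \eqref{e4.4} then holds along this shifted curve between the times $0$ and $s-t$; using $T_r(T_t\mu) = T_{t+r}\mu$ and the change of variables $r \mapsto t + r$ on the right-hand integrals yields \eqref{e4.4} for $T_t\mu$ and $T_s\mu$, as desired. No substantive obstacle arises; the main point is to observe that metric derivative, local slope, and entropy are intrinsic to the curve and transport cleanly under the time shift, and that the compactness of $X$ combined with $m(X)=1$ turns the qualitative ``density bounded above'' into the quantitative $\mathscr{U}_\infty(T_t\mu) < \infty$ required to invoke the machinery of Theorem \ref{t4.1}.
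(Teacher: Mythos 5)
Your proposal is correct and follows essentially the same route as the paper's own proof: both use the instantaneous smoothing of the heat kernel (the paragraph preceding the corollary, together with $\mathscr P_2(X)=\mathscr P(X)$ by compactness) to place $T_\ez\mu$ in $\mathscr P_2^\ast(X)$, then invoke Proposition \ref{p4.3} and Theorem \ref{t4.1} for this shifted initial datum and transfer the conclusions back via the semigroup identity $T_r(T_\ez\mu)=T_{\ez+r}\mu$. The only cosmetic difference is that the paper runs the argument along the sequence $\ez=1/n$ rather than an arbitrary $\ez>0$.
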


\begin{proof}
Let $\mu\in\mathscr P(X)$. For every $n\in\nn$,
by the argument before Corollary \ref{c4.1},
we  see that $T_{1/n}\mu$ is absolutely continuous with respect to $m$
 and that the Radon-Nikodym derivative  $f_{1/n}=\frac{d}{dm}T_{1/n}\mu$
belongs to
$L^1(X)$,
 that is, $T_{1/n}\mu=f_{1/n}m\in\mathscr P_2^\ast(X)$.
Hence Proposition \ref{p4.3} ensures that $\{ T_t(f_{1/n}m) \}_{t\in[0, \infty)}$
is an absolutely
continuous curve in  $\mathscr P_2^\ast(X).$
%Furthermore, by Proposition \ref{p4.2}, we know that
%$\mathscr U_\fz(T_t(f_{1/n}m))<\fz$ for all $t>0.$
%Also $\mathscr U_\fz((f_{1/n})m)<\fz$ will
%follow from $\mathscr U_\fz((f_{1/n})m)=
%\mathscr U_\fz(T_{1/2n}(f_{1/2n})m)$ and $f_{1/2n}\in\mathscr P^\ast_2(X)$.
By Theorem \ref{t4.1},
$\{T_t(f_{1/n}m)\}_{t\in[0,\,\fz)}$ gives the unique heat flow with initial
value $T_{1/n}\mu=f_{1/n}m$,
which means that for all $s>t\ge0$, \eqref{e4.4} holds with
$\mu_r=(T_rf_{1/n})m$ when $t\le r\le s$.
Observe that $\{T_t\mu\}_{t\in[\frac1n,\,\fz)}=\{T_t(T_{1/n}\mu)\}_{t\in[0,\,\fz)}=\{T_t(f_{1/n}m)\}_{t\in[0,\,\fz)}$.
We further obtain $\mathscr U_\fz(T_t\mu )<\fz$ for all $t\ge\frac1n$,
$\{T_t\mu\}_{t\in[1/n,\,\fz)}$ is an absolutely continuous curve in $\mathscr P^\ast(X)$,
and for all $\frac1n\le t<s$, \eqref{e4.4} holds with $\mu_r=T_r\mu$ when $t\le r\le s$.
By the arbitrariness of $n$, we finally have that $\{T_t\mu\}_{t\ge(0,\,\fz)}$ is a locally absolutely continuous curve in $\mathscr P^\ast(X)$, $\mathscr U_\fz(T_t\mu)<\fz$ for all $t>0$
and for all $0< t<s$, \eqref{e4.4} holds with $\mu_r=T_r\mu$ when $t\le r\le s$.
\end{proof}

Furthermore, for $1< N<\fz$,  associated to the convex function
$U_N\equiv Nr-Nr^{1-1/N}$ for $r\ge0$, we have  the
functional $\mathscr U_N:\ \mathscr P_2( X)\to\rr\cup\{+\fz\}$,
which is well defined; see for example \cite{lv09}.
Assume that $\mathscr U_N$ is weakly $\lz$-displacement convex for some $N\in[1,\,\fz)$ and $\lz\ge0$.
Then, as proved in \cite[Theorem 5.31]{lv09}, $( X,\,d,\,m) $ satisfies a doubling property.
%With the additional assumption that $( X,\,d)$ is non-branching, it was proved in \cite{lv07b,r08}
%that $( X,\,d,\,m) $ supports a weak $(1,\,2)$-Poincar\'e inequality.
%Here $ X$ is {\it non-branching} if each pair of geodesics $\gz_1$ and $\gz_2$ with $\gz_1(0)=\gz_2(0)$
%and $\gz_1(t)=\gz_2(t)$ for some $t\in (0,\,1)$ must coincide.
%The following conclusion is a corollary of these facts and Theorem \ref{t4.1}.

\begin{cor}\label{c4.2}
Assume  that $( X,\,\mathscr E,\,m)$ satisfies the Newtonian property.
If $\mathscr U_N$ is weakly $0$-displacement convex for some $N\in(1,\,\fz)$,
and $\mathscr U_\fz$ is weakly $\lz$-displacement convex for some $\lz\in\rr$,
then the heat flow gives the unique gradient flow of $\mathscr U_\fz$.
\end{cor}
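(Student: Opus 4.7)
The plan is to reduce the statement to Theorem \ref{t4.1} by verifying each of its hypotheses in turn. Recall that Theorem \ref{t4.1} requires three conditions on $(X,\mathscr E,d,m)$: (a) $X$ is compact and $(X,d,m)$ satisfies a doubling property; (b) $(X,\mathscr E,m)$ satisfies the Newtonian property; (c) $\mathscr U_\infty$ is weakly $\lambda$-displacement convex for some $\lambda\in\rr$. Under (a)-(c), for every $\mu\in\mathscr P_2^\ast(X)$, the heat flow $\{T_t\mu\}_{t\geq 0}$ is the unique gradient flow of $\mathscr U_\infty$ with initial value $\mu$, which is exactly the conclusion we want.

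First, compactness of $X$ is part of the standing assumption of Section \ref{s4}, so there is nothing to check there. The doubling property for $(X,d,m)$ is exactly what we gain from the assumption that $\mathscr U_N$ is weakly $0$-displacement convex for some $N\in(1,\infty)$: by Sturm's result cited in the paragraph immediately preceding the corollary (that is, \cite[Theorem 5.31]{lv09}), weak $0$-displacement convexity of $\mathscr U_N$ on the compact space $(X,d,m)$ forces a doubling property with a constant depending only on $N$. This verifies (a). Conditions (b) and (c) are directly assumed in the hypotheses of the corollary, so nothing further needs to be shown.

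With (a)-(c) verified, Theorem \ref{t4.1} applies and yields, for every $\mu=fm\in\mathscr P_2^\ast(X)$, that $\{T_t\mu\}_{t\geq 0}$ is the unique gradient flow of $\mathscr U_\infty$ with initial value $\mu$. To promote this to arbitrary $\mu\in\mathscr P(X)$, I would invoke Corollary \ref{c4.1}: the heat kernel estimates of \cite{s95}, which apply since $(X,d,m)$ is doubling and (by Rajala's \cite{r11,r12} theorem implicit in the proof of Theorem \ref{t4.1}) supports a weak $(1,2)$-Poincar\'e inequality, show that $T_t\mu\in\mathscr P_2^\ast(X)$ for every $t>0$. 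Thus the regularized curve $\{T_{t+\varepsilon}\mu\}_{t\geq 0}=\{T_t(T_\varepsilon\mu)\}_{t\geq 0}$ is a gradient flow for each $\varepsilon>0$, and letting $\varepsilon\to 0$ gives the energy dissipation identity \eqref{e4.4} on every $(t,s)\subset(0,\infty)$, as in the proof of Corollary \ref{c4.1}.

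There is no serious obstacle: the entire content of the corollary is that weak $0$-displacement convexity of $\mathscr U_N$ is a convenient geometric hypothesis that automatically supplies the doubling property missing from the hypotheses of Theorem \ref{t4.1}, after which one simply quotes that theorem (together with Corollary \ref{c4.1} for general initial data). The only subtlety worth flagging is that one must use the $CD(0,N)$ condition rather than the $CD(\lambda,\infty)$ condition to get doubling, since weak $\lambda$-displacement convexity of $\mathscr U_\infty$ alone is not known to imply a local doubling bound in this generality; this is precisely why both convexity assumptions appear in the statement.
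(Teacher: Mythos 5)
Your proposal is correct and follows essentially the same route the paper intends: the weak $0$-displacement convexity of $\mathscr U_N$ supplies the doubling property via \cite[Theorem 5.31]{lv09} (as stated in the paragraph preceding the corollary), after which the Newtonian property and the weak $\lz$-displacement convexity of $\mathscr U_\fz$ allow one to quote Theorem \ref{t4.1} directly. The additional extension to general initial data via Corollary \ref{c4.1} is a harmless supplement beyond what the corollary requires.
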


\begin{rem}\rm
As pointed out by the referee, the weak $0$-displacement convexity assumption
on
$\mathscr U_N$
and the weak $\lz$-displacement convexity assumption on $\mathscr U_\fz$
in Corollary
\ref{c4.2}
can be replaced by a weaker condition, the curvature-dimension
condition $CD(\lz,\,N)$ (see \cite[Definition 1.3]{s06b}).
Indeed, under the condition $CD(\lz,\,N)$,
by \cite[Proposition 1.6]{s06b}, $\mathscr U_\fz$
is weakly $\lz$-displacement convex.
By the compactness of $X$ and \cite[Corollary 2.4]{s06b},
  $m$ satisfies the doubling property, and
  by \cite{r11},  $(X,
  \,d,\,m)$ supports a weak $(1,\,
  1)$-Poincar\'e inequality.
  With these, the conclusion of Corollary 5.2 follows from
  Theorem
  \ref{t4.1}.
%
%that
%the weakly $0$-displacement convexity of $\mathscr U_N$
% and the weakly $\lz$-displacement convexity of $\mathscr U_\fz$ imply
% the curvature-dimension
%condition $CD(\lz,\,N)$, and moreover,
%The referee also pointed that
%which together with the Newtonian property is enough to show that
%the heat flow gives the unique gradient flow of $\mathscr U_\fz$.
%Indeed, the $CD(\lz,\,N)$ condition implies the
%the weakly $\lz$-displacement convexity of $\mathscr U_\fz$ by
%\cite[Proposition 1.6]{s06b}
\end{rem}

By the linearity and symmetry of heat flows,
we also have the following property of the gradient flow
of $\mathscr U_\fz$.

\begin{cor}\label{c4.3}
Let all the assumptions be as in Theorem \ref{t4.1}.
For every $\nu\in\mathscr P^\ast_2( X)$, let $\{\mu^\nu_t\}_{t\ge0}$
be the gradient flow of $\mathscr U_\fz$  with $\mu_0^\nu=\nu$

Then

(i) for all $\nu_0,\,\nu_1\in\mathscr P^\ast_2( X)$,  $t\ge0$ and $\lz\in[0,\,1]$,
$$\mu_t^{(1-\lz)\nu_0+\lz\nu_1}= (1-\lz)\mu_t^{\nu_0}+ \lz\mu_t^{\nu_1};$$

(ii) for all nonnegative $f,\,g\in L^1( X)$ with $\|f\|_{L^1( X)}=\|g\|_{L^1( X)}=1$ and $t\ge0$,
$$\int_ X f\,d\mu_t^{gm}=\int_ X g\,d\mu_t^{fm}.$$
\end{cor}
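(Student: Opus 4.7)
The strategy is to reduce the corollary to two elementary properties of the heat semigroup $\{T_t\}_{t\ge 0}$, namely linearity and self-adjointness, by invoking Theorem \ref{t4.1} to identify the gradient flow of $\mathscr U_\fz$ with the heat flow. Under the standing assumptions, Theorem \ref{t4.1} tells us that for every $\nu = fm\in\mathscr P_2^\ast(X)$, the curve $\{T_t\nu\}_{t\ge 0} = \{(T_tf)m\}_{t\ge 0}$ is the unique gradient flow of $\mathscr U_\fz$ starting at $\nu$. Thus $\mu_t^\nu = T_t\nu$, and it suffices to check both claims with $\mu_t^\nu$ replaced by $T_t\nu$.

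For (i), I would first verify that $(1-\lz)\nu_0+\lz\nu_1$ again lies in $\mathscr P_2^\ast(X)$: writing $\nu_i=f_im$, the density of the convex combination is $(1-\lz)f_0+\lz f_1$, and convexity of $U_\fz(r)=r\log r$ gives $\mathscr U_\fz((1-\lz)\nu_0+\lz\nu_1)\le(1-\lz)\mathscr U_\fz(\nu_0)+\lz\mathscr U_\fz(\nu_1)<\fz$. Hence Theorem \ref{t4.1} applies to all three measures simultaneously, and the claim reduces to
\begin{equation*}
T_t\bigl((1-\lz)\nu_0+\lz\nu_1\bigr)=(1-\lz)T_t\nu_0+\lz T_t\nu_1,
\end{equation*}
which is immediate from the linearity of $T_t$ acting on $L^1$ densities, or equivalently from integration against the heat kernel.

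For (ii), writing $\mu_t^{fm}=(T_tf)m$ and $\mu_t^{gm}=(T_tg)m$, the stated identity becomes
\begin{equation*}
\int_X f\,(T_tg)\,dm = \int_X g\,(T_tf)\,dm,
\end{equation*}
which is precisely the self-adjointness of $T_t$ on $L^2(X)$, or equivalently the symmetry $T_t(x,y)=T_t(y,x)$ of the kernel combined with Fubini's theorem. Since $f$ and $g$ are only assumed to lie in $L^1(X)$ rather than $L^2(X)$, a small approximation step (e.g. truncating by $f_n=(f\wedge n)\vee 0$ and similarly for $g$, and then passing to the limit by monotone convergence using the two-sided Gaussian kernel bound \eqref{e4.10}) is needed to justify the self-adjointness on this larger class.

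There is no genuine obstacle here; the corollary is essentially a repackaging of the linearity and symmetry of the heat semigroup, once Theorem \ref{t4.1} has delivered the identification of the abstract gradient flow with the heat flow. The only mild technicalities are the convexity check guaranteeing that $\mathscr P_2^\ast(X)$ is closed under convex combinations in (i), and the $L^1$-approximation argument used to transfer self-adjointness from $L^2$ to $L^1$ in (ii).
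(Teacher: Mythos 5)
Your proposal is correct and matches the paper's intent exactly: the paper gives no separate proof, merely noting that the corollary follows "by the linearity and symmetry of heat flows" once Theorem \ref{t4.1} identifies the gradient flow of $\mathscr U_\fz$ with the heat flow. Your added checks (convexity of $U_\fz$ to keep the convex combination in $\mathscr P_2^\ast(X)$, and the $L^1$ approximation for self-adjointness) simply fill in the details the paper leaves implicit.
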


Recall that, under a non-branching condition,
 additional semiconcavity and local angle conditions,
the  linearity property  in Corollary \ref{c4.3} (i) was proved in \cite{s07}.
For the definitions of {\it $K$-semiconcavity} and {\it local angle condition}
introduced in \cite{s07},  see  Section 6 below.
We do not know if these conditions hold under the assumptions of Theorem \ref{t4.1}.
Also recall that the linearity property fails on Finsler manifolds as pointed out in \cite{os09}.

After we obtained Theorem \ref{t4.1},
we learned about a related result  established in \cite[Theorem 9.3]{ags11}.
Indeed, instead of the Dirichlet energy form $\mathscr E$,  Ambrosio, Gigli and Savar\'e \cite{ags11} considered the Cheeger energy functional
${\bf {Ch}}$ on $L^2(X)$, which is not necessarily Hilbertian.
They showed that, under the convexity of $\mathscr U_\fz$ and very few assumptions on $X$,
the gradient flow of  ${\bf Ch}$ coincides with the gradient flow of the entropy $\mathscr U_\fz$;
see \cite[Theorem 9.3]{ags11}.
Their proof also relies on the procedure outlined in \cite{gko} but works at a  high level of generality.

Assume that $X$ satisfies a doubling property and  supports a weak $(1,\,2)$-Poincar\'e inequality.
Then,  with the aid of Lemma \ref{l2.4}, ${\bf Ch}$ can be  written as
$${\bf {Ch}}(f)=\int_X(\aplip f)^2\,dm.$$
Moreover, by Theorem \ref{t2.2} (ii),
\begin{equation}\label{e4.x2}
 \mathscr E(f,\,f)\le {\bf Ch}(f)\le C_1\mathscr E(f,\,f),
\end{equation}
while $C_1=1$ if we further assume that $(X,\,\mathscr E,\,m)$ supports a Newtonian property.
We point out that, under the assumptions of Theorem \ref{t4.1}, the conclusion of Theorem \ref{t4.1} follows from
\eqref{e4.x2} with $C_1=1$ and \cite[Theorem 9.3]{ags11}.
%However, both proof above is much more direct than that
% provided in \cite[Theorem 9.3]{gko}.
%Notice that both proofs used the procedure outlined in \cite{gko}.

Recall that in Theorem \ref{t4.1}, we showed that the Newtonian property  is a sufficient condition
to identify the heat flow and the gradient flow of entropy. Combining Theorem \ref{t2.2}, Proposition \ref{p4.3}
and \cite[Theorem 9.3]{ags11}, we  will show that the Newtonian property  is also necessary in the following sense.

\begin{thm}\label{t4.2}
Assume that $( X,\,d,\,m)$ is compact and satisfies a doubling property, and
that $\mathscr U_\fz$ is weakly $\lz$-displacement convex for some $\lz\in\rr$.
Then the following are equivalent:

(i) For every $\mu\in\mathscr P_2^\ast( X)$, the heat flow $\{T_t\mu\}_{t\in[0,\,\fz)}$
gives the unique gradient flow of $\mathscr U_\fz$ with initial value $\mu$.

(ii) $( X,\,\mathscr E,\,m)$ satisfies the Newtonian property.

(iii) For all $u\in\bd$, $\frac{d}{dm}\Gamma(u,\,u)=(\aplip u)^2$ almost everywhere.
\end{thm}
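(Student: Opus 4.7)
The plan is to prove the three-way equivalence by closing (ii)$\,\Leftrightarrow\,$(iii), (ii)$\,\Rightarrow\,$(i) and (i)$\,\Rightarrow\,$(iii); only the last of these requires genuinely new work beyond the results already developed. First I would note that by Rajala's result \cite{r11,r12} quoted just before the proof of Theorem \ref{t4.1}, the weak $\lz$-displacement convexity of $\mathscr U_\fz$ forces $(X,\mathscr E,m)$ to support a weak $(1,2)$-Poincar\'e inequality, so Theorems \ref{t2.1}, \ref{t2.2}, Proposition \ref{p2.2}, Lemma \ref{l4.1}, and Propositions \ref{p4.2}--\ref{p4.3} are all available in the present setting.

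The equivalence (ii)$\,\Leftrightarrow\,$(iii) is an immediate consequence of Section \ref{s2}. Indeed (ii)$\,\Rightarrow\,$(iii) is exactly Theorem \ref{t2.2}(iii), which under the Newtonian property upgrades the double inequality in (2.23) to an equality. Conversely, restricting (iii) to $u\in\lip(X)$ and using $\aplip u=\lip u$ a.e.\ (from Theorem \ref{t2.2}(ii)) produces $\frac{d}{dm}\Gamma(u,u)=(\lip u)^2$ a.e., which by Proposition \ref{p2.2} is equivalent to the Newtonian property. The implication (ii)$\,\Rightarrow\,$(i) is Theorem \ref{t4.1}.

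The main step is (i)$\,\Rightarrow\,$(iii), for which I would invoke \cite[Theorem 9.3]{ags11} quoted in the paragraph preceding the theorem: under weak $\lz$-displacement convexity of $\mathscr U_\fz$, the $L^2(X,m)$-gradient flow of the Cheeger energy ${\bf {Ch}}$ realizes the Wasserstein gradient flow of $\mathscr U_\fz$. Under (i) the heat semigroup $\{T_t\}$ of $\mathscr E$ also realizes this Wasserstein flow, so $\{T_t\}$ is simultaneously the $L^2$-gradient flow of $\tfrac12\mathscr E$ and of the appropriately normalized ${\bf {Ch}}$. By the standard characterization of gradient flows of convex lower semicontinuous functionals on a Hilbert space through the energy dissipation identity $F(u_t)=F(u_0)-\int_0^t\|\dot u_r\|^2\,dr$, the difference $\mathscr E(T_tf,T_tf)-{\bf {Ch}}(T_tf)$ is constant in $t$ along the common trajectory. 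Letting $t\to\fz$ and using compactness of $X$ together with $m(X)=1$ to conclude $T_tf\to\int_X f\,dm$ in $L^2$, and the fact that both $\mathscr E$ and ${\bf {Ch}}$ vanish on constants, one obtains $\mathscr E(f,f)={\bf {Ch}}(f)$ for every $f$ in the common domain $\bd=N^{1,2}(X)$ (the last identification being Theorem \ref{t2.2}(i)).

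To extract (iii) from this integrated identity I would combine it with the pointwise inequality $\frac{d}{dm}\Gamma(u,u)\le(\aplip u)^2$ a.e.\ from Theorem \ref{t2.2}(ii) and the representation ${\bf {Ch}}(f)=\int_X(\aplip f)^2\,dm$ on $\bd$ provided by Lemma \ref{l2.4} and Theorem \ref{t2.1}: the integrated equality forces the a.e.\ pointwise inequality to be an equality, which is (iii). The hard part I expect is the careful reconciliation of the $\tfrac12$-factor conventions between the paper's definition of ${\bf {Ch}}$ and that of \cite{ags11}, together with a density argument transferring (i), originally formulated only for $\mu\in\mathscr P_2^\ast(X)$, to an $L^2$-gradient flow statement on a sufficiently rich class of nonnegative densities so that Hilbert space uniqueness of gradient flow can actually be applied.
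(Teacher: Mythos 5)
Your decomposition (ii)$\Leftrightarrow$(iii), (ii)$\Rightarrow$(i), (i)$\Rightarrow$(iii) and the use of Rajala's result to secure the weak $(1,2)$-Poincar\'e inequality match the paper exactly, as does the identification, via the uniqueness in (i) and \cite[Theorem 9.3]{ags11}, of the heat flow trajectory with the $L^2$-gradient flow of ${\bf Ch}$. Where you genuinely diverge is in how you exploit this identification. The paper stays in Wasserstein space: it plays the upper bound on the metric speed from Proposition \ref{p4.3}, $|\dot T_t\mu|^2\le\int_X\frac1{T_tf}\,d\Gamma(T_tf,\,T_tf)$, against the exact speed formula $|\dot T_t\mu|^2=\int_X\frac1{T_tf}(\aplip T_tf)^2\,dm$ from \cite[Theorems 8.5 and 9.3]{ags11}, and then squeezes with $\Gamma(T_tf,T_tf)\le(\aplip T_tf)^2m$ to get the pointwise identity along the flow for a.e.\ $t>0$, finally letting $t\to0$. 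You instead stay in $L^2$: since the common curve is the gradient flow of both (suitably normalized) convex lsc functionals $\mathscr E$ and ${\bf Ch}$, the two Hilbert-space energy dissipation identities share the same speed term, so $\mathscr E(T_tf,T_tf)-{\bf Ch}(T_tf)$ is constant, and the constant is killed at $t\to\infty$, giving the integrated identity $\mathscr E(f,f)={\bf Ch}(f)$ at $t=0$; the one-sided pointwise bound \eqref{e2.23} then upgrades this to (iii). Your route dispenses with Proposition \ref{p4.3} and with the Fisher-information speed formula of \cite[Theorem 8.5]{ags11}, at the price of invoking the Brezis-type dissipation identity for the (non-quadratic but convex, lsc) functional ${\bf Ch}$ and the bookkeeping of the $\tfrac12$-normalizations and of the affine-invariance/density transfer from positive normalized Lipschitz densities to all of $\bd$, all of which you correctly flag and all of which go through.

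One step needs repair as stated: from $T_tf\to\int_X f\,dm$ in $L^2(X)$ you cannot conclude ${\bf Ch}(T_tf)\to0$, because ${\bf Ch}$ is only lower semicontinuous on $L^2(X)$ and lower semicontinuity gives the useless inequality $\liminf_{t\to\fz}{\bf Ch}(T_tf)\ge0$. The fix is immediate with tools already in the paper: by Lemma \ref{l2.4}, i.e.\ the right-hand inequality in \eqref{e4.x2}, ${\bf Ch}(T_tf)\le C_1\mathscr E(T_tf,T_tf)$, and $\mathscr E(T_tf,T_tf)\to0$ as $t\to\fz$ by spectral calculus, so the constant in your dissipation comparison is indeed zero. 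With this correction your argument is complete and gives the same conclusion as the paper's.
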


\begin{proof}
We recall again that, by \cite{r11,r12},
 the $\lambda$-displacement convexity of $\mathscr U_\fz$ implies
 that $(X,\,d,\,m)$ supports a weak $(1,\,2)$-Poincar\'e inequality.
Then the equivalence of (ii) and (iii)  follows from Theorem \ref{t2.2} and Lemma \ref{l2.5}.
If (ii) holds, then by Theorem \ref{t4.1}, we have (i).
Now assume that (i) holds.
Let $f\in\lip (X)$ be a positive function and set $\mu=fm\in\mathscr P_2^\ast( X)$.
By  Proposition \ref{p4.3}, we have that
 for almost all $t\in[0,\,\fz)$,
\begin{equation}\label{e4.x1}
|\dot T_t\mu|^2\le\int_ X\frac1{T_tf}\,d\Gamma(T_tf,\,T_tf).
\end{equation}
Moveover the assumption (i) says that
$\{T_t\mu\}_{t\in[0,\,\fz)}$ is the gradient flow of  $\mathscr U_\fz$.
By this, the convexity of $\mathscr U_\fz$, and Theorem 9.3 and Theorem 8.5 of \cite{ags11},
we know that $ T_t\mu$
coincides with the gradient flow of ${\bf Ch}$ and  satisfies that for almost all $t\in(0,\,\fz)$,
$$|\dot T_t\mu|^2=\int_ X\frac1{T_tf}   (\aplip T_tf)^2\,dm.$$
This and \eqref{e4.x1}, with the aid of $ \Gamma(T_tf,\,T_tf)\le(\aplip T_tf)^2m$ given in \eqref{e2.23},
further gives  $ \Gamma(T_tf,\,T_tf)=(\aplip T_tf)^2m$ for almost all $t\in(0,\fz)$.
Therefore,
$$\mathscr E(f,\,f)=\lim_{t\to\fz}\mathscr E(T_tf,\,T_tf)\ge \liminf_{t\to0}\int_X(\aplip T_tf)^2\,dm.$$
Observing that $$
\int_X(\aplip  f-\aplip T_tf)^2\,dm \le \int_X[\aplip  (f-T_tf)]^2\,dm
\ls \mathscr E(f-T_tf,\, f-T_tf)\to0,$$
we obtain $\mathscr E(f,\,f)= \int_X(\aplip  f )^2\,dm$.
With the help of $ \Gamma( f,\, f)\le(\aplip f)^2m$ given in \eqref{e2.23} again,
we have $ \Gamma( f,\, f)=(\aplip  f)^2m$ as desired.
Then a density argument yields (iii).
\end{proof}

\section{Applications to (coarse) Ricci curvatures}\label{s5}

In this section, we apply Corollary \ref{c2.1} to a variant of the dual
formula of Kuwada \cite{k10}
and the coarse Ricci curvature of Ollivier in Theorem \ref{t5.1} and
Corollary 6.1,
and then apply Theorem 5.1 to the Ricci curvatures  of Bakry-Emery  and
Lott-Sturm-Villani in  Corollary \ref{p5.3}.
We always let $ \mathscr E $ be a regular, strongly local Dirichlet form on $L^2(X,\,m)$,
assume that the topology induced by the intrinsic $d$ coincides with the
original topology on $X$ and that $(X,\,d,\,m)$ satisfies the
doubling property.

%and that $X$ is locally compact in Theorem 6.1 and Corollary 6.1,
%compact in Corollary 6.2.

We begin with a variant of the dual formula established in
\cite[Theorem 2.2]{k10}, which is closely related to the coarse Ricci
curvature of Ollivier \cite{o09}.
Let $\{P_x\}_{x\in X}\subset\mathscr P( X)$ be a family of
probability measures on $X,$ so that the map
$x\to P_x$ from $ X$ to $ \mathscr P( X)$ is continuous. Then
$\{P_x\}_{x\in X} $ defines a bounded linear operator $P$ on $\mathscr C( X)$
by $Pf(x)=\int_ X f(y)\,dP_x(y)$ and we denote its dual
operator by $P^\ast:\, \mathscr P( X)\to \mathscr P ( X)$.
We also assume that $P_x$ is absolutely continuous with respect to $m$ with
density $P_x(y)$ for all $x\in X$,
and that $P_x(y)$ is a continuous function of $x$ for almost all $y\in X$.
Observe that  we do not assume that $\{P_x\}_{x\in X}$ has any
relation with the Dirichlet form $\mathscr E$.
%For example, set $P_x=\frac1{m(B(x,\,\ez))}1_{B(x,\,\ez)}m$ for all $x\in X$,
%where $\ez>0$ is fixed.
%It is not difficult to check that
%$\{P_x\}_{x\in X}$ satisfies all conditions required above with the aid of
%the doubling property.  However, if further assume that
%$(X,\,\mathscr E,\,d,\,m)$ supports a weak $(1,\, 2)$-Poincar\'e inequality,
%then the heat semigroup gives another typical example of $\{P_x\}_{x\in X}$
%by setting  $dP_x(y)= T_t(x,\,y)\,dm(y)$ for all $x\in X$, where
% $t\in(0,\,\fz)$ is fixed.
%Indeed, under the doubling property and weak $(1,\,
%2)$-Poincar\'e inequality, it is well-known that
%for each $t\in(0,\,\fz)$, the kernel $T_t$ is positive and continuous;
%see \cite{s95} and reference therein.
%By the locality, $T_t1(x)=1$ for all $x\in X$.
%Then $\{P_x\}_{x\in X}\subset \mathscr P(X)$ satisfies all the conditions
%required above.

By Corollary \ref{c2.1} and \cite{k10}, we have the following result.

\begin{thm}\label{t5.1}
Assume that $\mathscr E$ is a regular, strongly local Dirichlet form on
$L^2( X)$,
 the topology induced by the intrinsic distance is equivalent to the original
topology on  the locally compact space $X$,
and $( X,\,d,\,m)$ satisfies the doubling property.
Let  $K_1\ge 0$ be a positive constant. Then the following are equivalent:

(i) For all $\mu,\,\nu\in\mathscr P( X)$, $ W_1(P^\ast\mu,\,P^\ast\nu)\le K_1  W_1( \mu,\, \nu).$

(ii) For all $f\in \lip( X)\cap L^\fz( X)$,  $Pf\in \lip(X)$ and
$\|Pf\|_{\lip( X)}\le K_1  \| f\|_{\lip( X)}.$

(iii) For all $f\in \lip( X)\cap L^\fz( X)$,  $Pf\in \lip(X)$ and
$$\|\lip\, Pf\|_{L^\fz( X)}\le K_1  \|\lip\, f\|_{L^\fz( X)}.$$

(iv) For all $f\in \lip( X)\cap L^\fz( X)$, $Pf\in \lip(X)$ and
$$ \lf\|\frac d{dm}\Gamma(Pf,\,Pf)\r\|_{L^\fz( X)}\le (K_1) ^2   \lf\| \frac d{dm}\Gamma(f,\,f)\r\|_{L^\fz( X)}.$$
\end{thm}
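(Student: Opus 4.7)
The plan is to decouple the equivalences into two independent parts: first, the three conditions (ii), (iii), (iv) are immediate reformulations of each other via Corollary \ref{c2.1}; second, the equivalence (i)$\Leftrightarrow$(ii) follows from the Kantorovich--Rubinstein duality for $W_1$ together with a test function argument in the spirit of \cite{k10}.

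For (ii)$\Leftrightarrow$(iii)$\Leftrightarrow$(iv), apply Corollary \ref{c2.1} to $u=f$ and to $u=Pf$ (once we know $Pf\in\lip(X)$). Since $(X,d,m)$ satisfies the doubling property, Corollary \ref{c2.1} gives
\[
\|g\|_{\lip(X)}=\|\lip\,g\|_{L^\fz(X)}=\lf\|\tfrac{d}{dm}\Gamma(g,g)\r\|_{L^\fz(X)}^{1/2}
\]
for every $g\in\lip(X)$. Thus the three inequalities in (ii), (iii), (iv) are term-by-term identical, and the only thing to verify is that whenever one of them provides a finite bound on the relevant norm, $Pf$ does belong to $\lip(X)$; this is automatic from the continuity of $x\mapsto P_x$ combined with the finiteness statement in each condition.

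For (i)$\Rightarrow$(ii), fix $f\in\lip(X)\cap L^\fz(X)$ with $\|f\|_{\lip(X)}\le 1$ and evaluate $Pf$ at two points $x,y\in X$ using $P_x,P_y\in\mathscr P(X)$:
\[
|Pf(x)-Pf(y)|=\lf|\int_X f\,d(P^\ast\delta_x-P^\ast\delta_y)\r|\le\|f\|_{\lip(X)}\,W_1(P^\ast\delta_x,P^\ast\delta_y),
\]
where the inequality is the elementary direction of the Kantorovich--Rubinstein duality. Then (i) yields $W_1(P^\ast\delta_x,P^\ast\delta_y)\le K_1 W_1(\delta_x,\delta_y)=K_1 d(x,y)$, giving $\|Pf\|_{\lip(X)}\le K_1\|f\|_{\lip(X)}$. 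Conversely, for (ii)$\Rightarrow$(i), use the full Kantorovich--Rubinstein formula
\[
W_1(P^\ast\mu,P^\ast\nu)=\sup\lf\{\int_X f\,d(P^\ast\mu-P^\ast\nu)\ :\ f\in\lip(X),\ \|f\|_{\lip(X)}\le 1\r\},
\]
rewrite $\int_X f\,dP^\ast\mu=\int_X Pf\,d\mu$, and apply (ii) together with the same duality formula to $\mu,\nu$; boundedness of $f$ in the supremum can be imposed by truncation thanks to compactness of the relevant balls or by a standard approximation on Polish spaces.

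The only delicate point I anticipate is the justification of the Kantorovich--Rubinstein duality in the precise generality of the paper (locally compact separable Hausdorff $X$, with $d$ inducing the topology, and general $\mu,\nu\in\mathscr P(X)$ possibly of infinite first moment). This will be handled either by restricting attention to $\mu,\nu\in\mathscr P_1(X)$ (noting that both sides of (i) are vacuous or infinite otherwise) or by a localization/truncation argument using the fact that $(X,d)$ is a length space and the doubling hypothesis forces $\sigma$-compactness; no new idea beyond \cite{k10} is required.
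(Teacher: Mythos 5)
Your proposal is correct, and its skeleton coincides with the paper's: the equivalence of (ii), (iii) and (iv) is exactly the paper's argument, namely applying Corollary \ref{c2.1} to $u=f$ and $u=Pf$ (note that each of (ii)--(iv) already contains the hypothesis $Pf\in\lip(X)$, so no separate continuity argument is needed there, and your remark about continuity of $x\mapsto P_x$ is harmless but not required). The difference is in (i)$\Leftrightarrow$(ii): the paper does not argue this step but simply cites Kuwada's dual theorem \cite[Theorem 2]{k10} with the rescaled metric $\wz d=K_1 d$, observing that the $p=1$ case needs no Poincar\'e inequality, and then handles $K_1=0$ by approximating with $K_1+\ez$; you instead reprove that $p=1$ duality directly via Kantorovich--Rubinstein, using $|Pf(x)-Pf(y)|=|\int_X f\,d(P^\ast\dz_x-P^\ast\dz_y)|\le\|f\|_{\lip(X)}W_1(P^\ast\dz_x,P^\ast\dz_y)$ for one direction (which needs only the coupling definition of $W_1$) and the duality formula with bounded $1$-Lipschitz test functions plus $\int_X f\,dP^\ast\mu=\int_X Pf\,d\mu$ for the other. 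What your route buys is self-containedness and the fact that $K_1=0$ requires no $\ez$-limit; what it costs is the need to justify the duality in the stated generality ($X$ only locally compact, not assumed complete, and $\mu,\nu$ possibly of infinite first moment), precisely the delicate point you flag. That gap is routine to close: restrict the supremum to bounded $1$-Lipschitz functions (for the distance cost, the double $c$-transform of a bounded potential is bounded and $1$-Lipschitz), and if completeness is a concern, pass to the completion, noting that any coupling of measures concentrated on $X$ is concentrated on $X\times X$ and that Lipschitz functions extend with the same constant. So both proofs rest on the same duality; the paper outsources it to \cite{k10}, while you carry it out explicitly.
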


\begin{proof}
For $K_1>0$,
the equivalence between (i) and (ii) follows from \cite[Theorem 2]{k10}
(by taking $\wz d=K_1d$ there).
Notice that  the proof of \cite[Theorem 2]{k10} for the case $p=1$ does not
require any weak Poincar\'e inequality.
The equivalence of (ii), (iii) and (iv) follows from
$$\|u\|^2_{\lip(X)} =\|\lip\,u\|^2_{L^\fz(X)}=\lf\|\frac{d}{dm}\Gamma(u,\,u)\r\|_{L^\fz(X)}$$
with $u=f\in L^\fz(X)$ and $u=Pf\in L^\fz(X)$, see Corollary \ref{c2.1}.
For $K_1=0$, the equivalence of (i) through (iv) follows from the case $K_1+\ez$
with $\ez>0$ and an approximation argument.
We omit the details.
\end{proof}

Associated to $(X,\,d,\,P)$ with $P$ as above, Ollivier \cite{ol09} introduced the
{\it coarse Ricci curvature} via
$$\kz(x,\,y)=1-\frac{W_1(P^\ast\dz_x,\,P^\ast\dz_y)}{d(x,\,y)}.$$
$(X,\,d,\,P)$ is said to have the {\it coarse Ricci curvature bounded from below by constant $K$} if
  $\kz(x,\,y)\ge K$ for all $x,\,y\in X$.
Obviously, $K\le1$.
Applying Theorem \ref{t5.1}, we have the following result.

\begin{cor}\label{c5.x1}
Under the assumptions of Theorem \ref{t5.1}, the following are equivalent:

(i) $(X,\,d,\,P)$  has the coarse Ricci  curvature bounded from below by  $K\le 1 $.

(ii)  For all $\mu,\,\nu\in\mathscr P( X)$, $ W_1(P^\ast\mu,\,P^\ast\nu)\le (1-K)  W_1( \mu,\, \nu) $.

(iii) For all $f\in \lip( X)\cap L^\fz( X)$, $Pf\in \lip(X)$ and
$$ \lf\|\frac d{dm}\Gamma(Pf,\,Pf)\r\|_{L^\fz( X)}\le ( 1-K) ^2   \lf\| \frac d{dm}\Gamma(f,\,f)\r\|_{L^\fz( X)}.$$
\end{cor}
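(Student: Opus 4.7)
The plan is to identify Corollary \ref{c5.x1} with Theorem \ref{t5.1} applied with the substitution $K_1 := 1-K \ge 0$. Under this identification, statement (ii) of the corollary is literally statement (i) of Theorem \ref{t5.1}, and statement (iii) of the corollary is literally statement (iv) of Theorem \ref{t5.1}. The equivalence (ii)$\Leftrightarrow$(iii) therefore follows at once from Theorem \ref{t5.1}.

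It remains to verify (i)$\Leftrightarrow$(ii). The direction (ii)$\Rightarrow$(i) is immediate: specializing to $\mu = \delta_x$ and $\nu = \delta_y$ yields $W_1(P^\ast\delta_x,\,P^\ast\delta_y) \le (1-K) d(x,y)$ for every $x,\,y \in X$, which is by definition $\kappa(x,y) \ge K$.

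For the converse (i)$\Rightarrow$(ii), I propose to detour through statement (ii) of Theorem \ref{t5.1}. For $f \in \lip(X) \cap L^\infty(X)$ and any coupling $\pi$ of $P_x = P^\ast\delta_x$ and $P_y = P^\ast\delta_y$, one has
$$
Pf(x) - Pf(y) = \int_X f\,dP_x - \int_X f\,dP_y = \int_{X\times X}[f(u)-f(v)]\,d\pi(u,v) \le \|f\|_{\lip(X)}\int_{X\times X} d(u,v)\,d\pi(u,v).
$$
Taking the infimum over couplings $\pi$ and symmetrizing in $x,\,y$ gives the one-sided Kantorovich bound $|Pf(x) - Pf(y)| \le \|f\|_{\lip(X)}\, W_1(P^\ast\delta_x,\,P^\ast\delta_y)$. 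Assuming (i), this is at most $(1-K)\|f\|_{\lip(X)}\, d(x,y)$, so $Pf \in \lip(X)$ with $\|Pf\|_{\lip(X)} \le (1-K)\|f\|_{\lip(X)}$. This is precisely statement (ii) of Theorem \ref{t5.1} with $K_1 = 1-K$, which by Theorem \ref{t5.1} is equivalent to its statement (i), namely (ii) of the corollary.

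The whole argument is routine once Theorem \ref{t5.1} is granted. The only step requiring any attention is the elementary one-sided bound $|Pf(x) - Pf(y)| \le \|f\|_{\lip(X)}\, W_1(P_x,\,P_y)$, which follows by directly integrating against an arbitrary coupling and needs none of the usual duality hypotheses; in particular, it avoids any measurable selection issues that would arise from the alternative approach of gluing optimal couplings. I anticipate no substantive obstacles.
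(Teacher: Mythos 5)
Your proposal is correct. The equivalence (ii)$\Leftrightarrow$(iii) and the implication (ii)$\Rightarrow$(i) are handled exactly as in the paper: apply Theorem \ref{t5.1} with $K_1=1-K$ and restrict to Dirac masses. Where you differ is in (i)$\Rightarrow$(ii). The paper upgrades the Dirac-mass contraction $W_1(P^\ast\delta_x,\,P^\ast\delta_y)\le(1-K)\,d(x,y)$ to arbitrary $\mu,\,\nu\in\mathscr P(X)$ by citing Kuwada's Lemma 3.3, which is the gluing/extension step for Wasserstein contractions. You instead pass through statement (ii) of Theorem \ref{t5.1}: the elementary coupling estimate $|Pf(x)-Pf(y)|\le\|f\|_{\lip(X)}\,W_1(P^\ast\delta_x,\,P^\ast\delta_y)$ turns (i) into the Lipschitz contraction $\|Pf\|_{\lip(X)}\le(1-K)\|f\|_{\lip(X)}$, and then the duality already packaged in Theorem \ref{t5.1} returns the full $W_1$-contraction for general measures. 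Your route uses only the easy half of Kantorovich duality (integration against an arbitrary coupling) and thus avoids the external lemma and any measurable-selection/gluing considerations, at the cost of invoking Theorem \ref{t5.1} a second time; the paper's route is shorter on the page but imports Kuwada's lemma. One small point to keep in mind in both arguments: when $K=1$ one has $K_1=1-K=0$, which is covered by the approximation remark at the end of the paper's proof of Theorem \ref{t5.1}, so your substitution $K_1=1-K\ge0$ is legitimate in that boundary case as well.
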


\begin{proof}
By Theorem \ref{t5.1}, (i) follows from (ii) or (iii).
Conversely, if (i) holds, then  (ii) holds with $\mu=\dz_x$ and $\nu=\dz_y$, which together with \cite[Lemma 3.3]{k10}
further yields that (ii) holds with $\mu $ and $\nu \in\mathscr P(X)$.
\end{proof}

On the other hand, combining \cite[Theorem 1]{s07}, \cite{k10}, and
Theorems  \ref{t4.1} and \ref{t2.2} of our paper, and following the procedure
of \cite{gko},
we know that, under some additional conditions, a Ricci curvature bound from
below in the sense of
Lott-Sturm-Villani \cite{lv09,s06a,s06b} implies that in the sense of
Bakry-Emery \cite{b97,be83}.
Recall that $(X,\,\mathscr E,\,m)$ is said to have Ricci curvature bounded from below by $\lz\in\rr$ in the sense of Bakry-Emery
if  for all  $f\in \bd$ and $t\ge0$,
and for almost all $x\in X$,
\begin{equation}\label{e5.x3}
\frac d{dm}\Gamma(T_tf,\,T_tf)(x)\le e^{-2\lz  t} T_t\lf(\frac d{dm}\Gamma(f,\,f)\r)(x)
\end{equation}
Indeed, Savar\'e \cite{s07}  obtained  the contraction property of the  gradient  flow of the entropy with the aid of the semiconcavity and local angle
conditions.
Recall that $ X$ is {\it $K$-semiconcave} if $K\ge 1$ and for every geodesic $\gz$ and $y\in X$,
 $$d^2(\gz(t),\,y)\ge (1-t)d^2(\gz(0),\,y)+td^2(\gz(1),\,y)-Kt(1-t)d^2(\gz(0),\,\gz(1)).$$
Moreover, $X$ satisfies the {\it local angle condition} if for every triplet of geodesics $\gz_i$,
$i=1,\,2,\,3$, emanating from the same initial point $x_0$, the corresponding angles $\angle(\gz_i,\,\gz_j) \in[0,\,\pi]$
satisfy $$\angle(\gz_1,\,\gz_2)+\angle(\gz_2,\,\gz_3)+\angle(\gz_3,\,\gz_1)\le 2\pi,$$
where
$$\angle(\gz_i,\,\gz_j)\equiv\liminf_{s,\,t\to0+}\frac{d^2(x_0,\,\gz_i(s))+d^2(x_0,\,\gz_j(t))-d^2( \gz_i(s),\,\gz_j(s))}{
2d(x_0,\,\gz_i(s))d(x_0,\,\gz_j(t))}.$$
Kuwada  established a dual relation between contraction of the
gradient flow in Wassertein distance
and its pointwise Lipchitz constant estimate (see \cite{k10}).
Under our assumptions, Theorem  \ref{t2.2} identifies the pointwise Lipschitz constant with length of the gradient,
while Theorem \ref{t4.1} identifies the heat flow and gradient flow.

\begin{cor}\label{p5.3}
Under the assumptions of Theorem \ref{t4.1},
and further assuming that $(X,\,d)$ is compact,
$( X,\,d,\,m)$ is $K$-semiconcave for some
$K\ge1$ and satisfies a local angle condition,
if $\mathscr U_\fz$ is weakly $\lz$-displacement convex for some $\lz\in\rr$,
then
the following hold:

(i)  For all $\mu,\,\nu\in\mathscr P( X)$,
$ W_2(T_t\mu,\,T_t\nu)\le e^{- \lz t} W_2( \mu,\, \nu),$

(ii) For all  $f\in \bd$ and $t\ge0$, $T_tf\in\lip(X)$
and for all $x\in X$,
\begin{equation}\label{e5.x2}
[\lip\, T_tf (x) ]^2\le e^{-2\lz  t} T_t( \aplip f )^2(x).
\end{equation}

(iii) For all  $f\in \bd$  and $t\ge0$,   \eqref{e5.x3} holds for almost all $x\in X$.
\end{cor}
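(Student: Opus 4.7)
The plan is to prove the three statements in order, using Theorem \ref{t4.1} to identify the heat flow with the gradient flow of $\mathscr U_\fz$, then invoking Savar\'e's contraction theorem from \cite{s07} for (i), Kuwada's duality from \cite{k10} for (ii), and Theorem \ref{t2.2} to pass from pointwise Lipschitz constants to the energy density for (iii).

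First, for (i): under the $K$-semiconcavity and the local angle condition of $(X,\,d)$, together with the weak $\lz$-displacement convexity of $\mathscr U_\fz$, Theorem 1 of \cite{s07} applies and yields the $\lz$-contraction property
\[
 W_2(\mu_t,\,\nu_t) \le e^{-\lz t}  W_2(\mu,\,\nu)
\]
for any two gradient flows $\{\mu_t\}$ and $\{\nu_t\}$ of $\mathscr U_\fz$ with initial data $\mu,\,\nu\in\mathscr P_2^\ast(X)$. By Theorem \ref{t4.1} these gradient flows are exactly the heat flows $\{T_t\mu\}$ and $\{T_t\nu\}$. Extending to arbitrary $\mu,\,\nu\in\mathscr P(X)$ is achieved via the smoothing step $\mu\mapsto T_s\mu$: for $s>0$, $T_s\mu,\,T_s\nu\in\mathscr P_2^\ast(X)$ by Corollary \ref{c4.1}, the semigroup identity $T_{t+s}\mu=T_t(T_s\mu)$ gives the contraction at time $t$ starting from $T_s\mu$, and letting $s\to 0^+$ combined with the weak$^\ast$-continuity of the heat flow (and hence of $W_2$ on the compact space $\mathscr P_2(X)=\mathscr P(X)$) yields (i) in full generality.

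Second, for (ii): the assumption that $\mathscr U_\fz$ is weakly $\lz$-displacement convex implies, by \cite{r11,r12}, that $(X,\,\mathscr E,\,m)$ supports a weak $(1,\,2)$-Poincar\'e inequality, so Theorem \ref{t2.2} and Proposition \ref{p2.2} apply. In particular $\lip(X)\cap\mathscr C_0(X)$ is dense in $\bd$ and the heat kernel estimates of Proposition \ref{p4.2} give that $T_tf$ is (locally) Lipschitz with respect to $d$ for any $f\in\bd$. Now apply the self-dual ($p=q=2$) case of Kuwada's dual theorem \cite[Theorem 2.2]{k10}: the Wasserstein contraction from (i) is equivalent to the pointwise estimate
\[
 [\lip\,T_tf(x)]^2\le e^{-2\lz t}\, T_t\bigl((\aplip f)^2\bigr)(x)
\]
for every $f\in\bd$, at every point $x\in X$ where $T_tf$ admits a pointwise Lipschitz constant. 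This is exactly \eqref{e5.x2}. (A similar scheme was already carried out in Theorem \ref{t5.1} and Corollary \ref{c5.x1} for the $p=1$ case via Corollary \ref{c2.1}; for $p=2$ one rather uses the $L^2(T_t\,dm)$ duality in \cite{k10}.)

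Third, for (iii): by Theorem \ref{t4.1} the Newtonian property holds, so Theorem \ref{t2.2}(iii) gives the identity $\frac{d}{dm}\Gamma(u,\,u)=(\aplip u)^2$ almost everywhere for every $u\in\bd_\loc$. Applied to $u=T_tf$, and using that $T_tf\in\lip(X)$ from (ii) together with Lemma \ref{l2.5} (so that $\aplip T_tf=\lip T_tf$ a.e.), this gives
\[
 \frac{d}{dm}\Gamma(T_tf,\,T_tf)=(\lip\,T_tf)^2\quad\text{a.e.}
\]
Applied to $u=f$, it gives $(\aplip f)^2=\frac{d}{dm}\Gamma(f,\,f)$ a.e.; inserting these identifications into (ii) yields \eqref{e5.x3}.

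The main obstacle I anticipate is the careful invocation of Kuwada's duality in the self-dual $p=2$ setting: Theorem \ref{t5.1} above only records the $p=1$ case, and for $p=2$ one needs to verify that the framework of \cite{k10} truly applies to our heat semigroup $T_t$, i.e., that $T_t$ is a well-defined Markov kernel with continuous density (this is ensured by the heat kernel estimates \eqref{e4.10} used in Proposition \ref{p4.2}) and that the pointwise Lipschitz constant of $T_tf$ is controlled by the $L^\fz$-norm arising from the dual formulation. Once this is in place, the remaining steps are direct applications of Theorems \ref{t2.2} and \ref{t4.1}.
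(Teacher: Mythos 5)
Your overall route is the same as the paper's: (i) from Savar\'e's contraction theorem for gradient flows combined with the identification of heat flow and gradient flow in Theorem \ref{t4.1}, (ii) from Kuwada's duality, and (iii) from (ii) together with Theorem \ref{t2.2}(iii). Parts (i) and (iii) are fine (your smoothing step $\mu\mapsto T_s\mu$ for general $\mu\in\mathscr P(X)$ is even slightly more explicit than the paper).

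There is, however, a genuine gap in your treatment of (ii), which is where most of the actual work in the paper lies. Kuwada's dual theorem (the $p=q=2$ case) converts the $W_2$-contraction of (i) into the gradient estimate \eqref{e5.x2} only for (bounded) Lipschitz functions $f$; it does not apply ``for every $f\in\bd$'' as you assert. Moreover, your claim that ``the heat kernel estimates of Proposition \ref{p4.2} give that $T_tf$ is (locally) Lipschitz for any $f\in\bd$'' is unjustified: the Gaussian bounds \eqref{e4.10} (equivalently, the parabolic Harnack inequality) only yield H\"older continuity of the kernel, and Lipschitz regularity of $T_tf$ is precisely what has to be proved here — in the paper it comes out of the curvature assumption through the approximation argument, not from the kernel bounds. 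The missing step is: take $f_i\in\lip(X)$ with $f_i\to f$ in $\bd$ (density from Theorem \ref{t2.2}(i)), apply Kuwada's estimate to each $f_i$, integrate $\lip\,T_tf_i$ along rectifiable curves, and control the discrepancy $[T_t(\lip f_i)^2]^{1/2}-[T_t(\aplip f)^2]^{1/2}$ by $\|f_i-f\|_{\bd}$ using Theorem \ref{t2.2}(iii) (Newtonian property, so $\aplip(f_i-f)$ is controlled by the energy) together with the sup bound on the kernel; passing to the limit gives simultaneously that $T_tf\in\lip(X)$ and, by continuity of $x\mapsto T_t\bigl((\aplip f)^2\bigr)(x)$, the pointwise-everywhere inequality \eqref{e5.x2}. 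Without this approximation argument the statement for general $f\in\bd$ — and hence your deduction of (iii) — is not established.
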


\begin{proof}
To see (i), since the heat flow coincides with gradient flow of $\mathscr U_\fz$ as given in \ref{t4.1},
it suffices to prove that for the gradient flows $\{\mu_t\}_{t\ge 0}$ and $\{\nu_t\}_{t\ge 0}$,
$ W_2 (\mu_t,\,\nu_t)\le e^{- \lz t} W_2 (\mu_0,\,\nu_0)$. But this was
already proved by Savar\'e \cite{s07}
and hence we have (i).

Obviously, applying (ii) and Theorem \ref{t2.2} (iii), we  have (iii).

Moreover, (ii) follows from (i), \cite[Theorem 2]{k10} and  an approximation argument.
Indeed, for $f\in\lip(X)$, by \cite[Theorem 2]{k10}, \eqref{e5.x2} follows from (i).
Generally, let $f\in \bd$.
By Theorem \ref{t2.2} (i), $\lip(X)$ is dense in $\bd$. Thus, there exists a sequence $f_i\in\lip(X)$
such that $f_i\to f$ in $\bd$ as $i\to\fz$.
For each $x\in X$,
\begin{eqnarray*}
|T_tf(x)-T_tf_i(x)|&&\le  \int_XT_t(x,\,y)|f(y)-f_i(y)|\,dy\\
&&\le  \|T_t(x,\,\cdot)\|_{L^2(X)} \|f -f_i \|_{L^2(X)} \le C(t)\|f -f_i \|_{L^2(X)},
\end{eqnarray*}
where $C(t)=\sup_{x\in X} \|T_t(x,\,\cdot)\|_{L^2(X)}<\fz$. Thus for each pair of $x,\,y\in X$,
$$|T_tf(x)-T_tf(y)|\le 2C(t)\|f -f_i \|_{L^2(X)}+ |T_tf_i(x)-T_tf_i(y)|.$$
Notice that for every rectifiable curve $\gz$,
$$
|T_tf_i(x)-T_tf_i(y)|\le \int_\gz  \lip\, T_tf_i  \,ds\le e^{- \lz t}\int_\gz [T_t(\lip f_i)^2]^{1/2}\,ds
 $$
and by Theorem \ref{t2.2} (iii),
\begin{eqnarray*}
 [T_t(\lip f_i)^2]^{1/2} &&\le [T_t(\lip f_i-\aplip f)^2]^{1/2}+[T_t(\aplip f)^2]^{1/2}\\
&&
\le [T_t(\aplip (f_i-  f))^2]^{1/2}+[T_t(\aplip f)^2]^{1/2}\\
&&\le \wz C(t) \|\aplip(f -f_i) \|_{L^2(X)}+[T_t(\aplip f)^2]^{1/2}\\
&&\le \wz C(t) \| f -f_i  \|_{\bd}+[T_t(\aplip f)^2]^{1/2},
\end{eqnarray*}
where $\wz C(t)=\sup_{x,\,y\in X} T_t(x,\,y)<\fz$.
Then
$$|T_tf(x)-T_tf(y)|\le [2C(t)+\wz C(t) e^{- \lz t}\ell(\gz)]\| f -f_i  \|_{\bd}+ e^{- \lz t}\int_\gz [T_t(\aplip f )^2]^{1/2}\,ds$$
and hence
$$|T_tf(x)-T_tf(y)|\le  e^{- \lz t}\int_\gz [T_t(\aplip f )^2]^{1/2}\,ds\le \wz C(t) \ell(\gz)\|f\|_\bd.$$
Choosing $\gz$ to be a geodesic joining $x$ and $y$, we see that $T_tf\in\lip(X)$.
Moreover, by the continuity of the heat kernel and hence of $e^{- \lz t}[T_t(\aplip f )^2]^{1/2}$,
we have that   $\lip\, T_tf(x)\le e^{- \lz t}[T_t(\aplip f )^2(x)]^{1/2}$ for all $x\in X$.
\end{proof}

\begin{rem}\rm
Notice that,  by \cite{s07,o09}, compact Aleksandrov spaces with curvature
bounded from below
satisfy the assumptions of Corollary 6.2 (in particular, the $K$-semiconcavity
and the local angle condition) and
thus they have Ricci curvature bounded from below in the sense of Bakry-Emery.
This conclusion can also be found in \cite{gko}.
\end{rem}

\section{Asymptotics of the gradient of the heat kernel}\label{s6}

We are going to give a characterization for the condition that
$\Gamma(d_x,\,d_x)=m$ for all
$x\in X$
via the short time asymptotics of the gradient of the heat semigroup; see
Theorem \ref{t6.1} below.

Assume that $X$ is compact and $(X,\,\mathscr E,\,m)$ has a spectral gap,
that is, there exists a positive constant $C_{\rm spec}$
such that for all $u\in \bd$,
$$\int_X|u-\bint_X u\,dm|^2\,dm\le C_{\rm spec}\mathscr E(u,\,u). $$
Obviously, if $(X,\,\mathscr E,\,m)$ satisfies a weak Poincar\'e  inequality in the sense of Section 2,
then it has a spectral gap.
Then the Varahdan asymptotic behavior of heat kernels
was established in \cite{r01}:
for all $x,\,y\in X$,
\begin{equation}\label{e6.7}
\lim_{t\to0}-4t\log T_t(x,\,y)= d^2(x,\,y);
\end{equation}
see \cite{n97} for Lipschitz manifolds and \cite{hr03} for general local and conservative Dirichlet forms.

On the other hand, on a Riemannian manifold,  Malliavin and Stroock \cite{ms96} (see also \cite{st97})
proved that
\begin{equation}\label{e6.x7}
\lim_{t\to0}-4t[\nabla \log T_t(\cdot ,\,y)](x)= [\nabla d^2(\cdot ,\,y)](x),
\end{equation}
for all $y\in M$ and all $x\in M$ outside the cut locus of $y$,
where $\nabla$  denotes the gradient on a Riemannian manifold.
On $\rn$, the Gaussian kernel $h_t(x)=c_nt^{n/2}\exp(-\frac{|x|^2}{4t}) $ satisfies
  $  |\nabla|x|^2|=  {4t|\nabla \log h_t(x)| } $ for all $t\in(0,\,\fz)$.

We show that a weak variant of \eqref{e6.x7} will reflect
a connection  between the length structure and gradient structure of Dirichlet forms.

\begin{thm}\label{t6.1}
Let $\mathscr E$ be a regular, strongly local Dirichlet form on $L^2(X,\,m)$.
Assume that $X$ is compact, the topology induced by $d$ coincides with the original topology,
 and that $(X,\,\mathscr E,\,m)$ has a spectral gap.
Then the following are equivalent:

(i) For all $x\in X$,
 $\Gamma(d_{x},\,d_{x})= m$.

(ii)   For every Borel measurable set  $A$ with positive measure and  each
$\vz\in\bd\cap\mathscr C_0( X)$,
 \begin{equation}\label{e6.3}
 \int_ X\vz d\Gamma(t\log T_t1_A,\,t\log T_t1_A )\to
\int_ X\vz d\Gamma(d_A^2/4,\, d_A^2/4 ).
\end{equation}
\end{thm}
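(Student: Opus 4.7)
The strategy is to use Varadhan's formula \eqref{e6.7} as a bridge between the heat semigroup and the distance function, and to transfer it to the level of energy measures. Setting $u_t^A\equiv -t\log T_t 1_A$ and $u^A\equiv d_A^2/4$ with $d_A(x)=\inf_{y\in A}d(y,x)$, Varadhan's formula applied via Laplace's method (using $m(A)>0$, compactness of $X$ and the Gaussian upper bound on $T_t$) yields the pointwise limit $4u_t^A\to 4u^A$ as $t\to 0$. Thus (ii) is the statement that $\Gamma(u_t^A,u_t^A)\to\Gamma(u^A,u^A)$ weakly, tested against $\vz\in\bd\cap\mathscr C_0(X)$.

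For $\text{(i)}\Rightarrow\text{(ii)}$, I would first identify the limiting energy. By the chain rule $\Gamma(u^A,u^A)=(d_A/2)^2\Gamma(d_A,d_A)$. Since $-d_A=\sup_{y\in A}\{0-d_y\}$ is a pointwise supremum of $d_{A,E}$-type functions, an approximation of $A$ by finite dense subsets together with lower semicontinuity of $\Gamma$ along weakly convergent sequences in $\bd_\loc$ upgrades Lemma \ref{l2.1} to give $\Gamma(d_A,d_A)\le m$, and under (i), $\Gamma(d_A,d_A)=m$; hence $\Gamma(u^A,u^A)=(d_A/2)^2 m$. Next, the chain rule yields $\Gamma(u_t^A,u_t^A)=t^2\Gamma(T_t 1_A,T_t 1_A)/(T_t 1_A)^2$; Li--Yau type gradient estimates (available from the spectral gap combined with the regularity of $\mathscr E$) provide a uniform-in-$t$ bound for this against $\vz\,m$, and allow extraction of a weak limit matching $(d_A/2)^2 m$ via pointwise convergence on a suitable set.

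For $\text{(ii)}\Rightarrow\text{(i)}$, I would shrink $A$ to a point. Apply (ii) with $A_\ez\equiv B(x_0,\ez)$; then $d_{A_\ez}\to d_{x_0}$ uniformly on $X$ as $\ez\to 0$. The RHS equals $(d_{A_\ez}/2)^2\Gamma(d_{A_\ez},d_{A_\ez})$ by the chain rule, while an unconditional (i.e.\ Varadhan-driven) lower bound for the LHS should give $\liminf_{t\to 0}\int_X\vz\,d\Gamma(u_t^{A_\ez},u_t^{A_\ez})\ge\int_X\vz(d_{A_\ez}/2)^2\,dm$. Matching this with the identity in (ii) forces $\Gamma(d_{A_\ez},d_{A_\ez})\ge m$; combined with the universal bound $\Gamma(d_{A_\ez},d_{A_\ez})\le m$ from \cite{s94}, one gets equality. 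Pass $\ez\to 0$ to deduce $\Gamma(d_{x_0},d_{x_0})=m$ for each $x_0$.

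The principal obstacle is the sharp heat-kernel gradient control needed in each direction: Varadhan's asymptotic is at the level of the value of $T_t$, and converting it to matching upper and lower bounds for $\Gamma(t\log T_t 1_A,t\log T_t 1_A)=t^2\Gamma(T_t 1_A,T_t 1_A)/(T_t 1_A)^2$ requires either Li--Yau type parabolic gradient estimates (for the upper bound in $\text{(i)}\Rightarrow\text{(ii)}$) or a robust lower bound reflecting the eikonal behaviour $|\nabla d_A|^2\sim 1$ (for $\text{(ii)}\Rightarrow\text{(i)}$). The delicate point is that these gradient estimates must be asymptotically sharp so as to recover the precise coefficient $(d_A/2)^2$, not merely a constant multiple.
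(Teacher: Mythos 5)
Both directions of your sketch hinge on sharp gradient information that is simply not available under the hypotheses of Theorem \ref{t6.1}, and this is exactly where the proof has to do its work. For (i)$\Rightarrow$(ii) you invoke ``Li--Yau type gradient estimates (available from the spectral gap combined with the regularity of $\mathscr E$)'': no such estimates follow from compactness plus a spectral gap; they require curvature-type or doubling-and-Poincar\'e hypotheses, none of which are assumed here (likewise the Gaussian upper bound you use to run Laplace's method is not available; the pointwise/weak Varadhan asymptotics must instead be quoted from Ram\'irez, i.e. \eqref{e6.7} and Proposition \ref{p6.1}). Even granting a uniform bound on $t^2\Gamma(T_t1_A,T_t1_A)/(T_t1_A)^2$, weak compactness only produces \emph{some} limit measure, and lower semicontinuity bounds it from below by $\Gamma(d_A^2/4,\,d_A^2/4)$; you give no mechanism forcing the limit to equal $(d_A/2)^2m$, which you yourself flag as ``the principal obstacle'' without resolving it. Moreover, your claim that (i) upgrades to $\Gamma(d_A,d_A)=m$ by approximating through Lemma \ref{l2.1} and passing to a weak limit only yields $\Gamma(d_A,d_A)\le m$, since the energy is merely lower semicontinuous along weak convergence; equality in the limit needs a separate argument.

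The (ii)$\Rightarrow$(i) step is circular at the decisive point: the asserted ``unconditional Varadhan-driven lower bound'' $\liminf_{t\to0}\int_X\vz\,d\Gamma(u_t,u_t)\ge\int_X\vz\,(d_A/2)^2\,dm$ is essentially the eikonal identity $\frac{d}{dm}\Gamma(d_A,d_A)=1$, i.e. the statement to be proved. What convergence of the values $u_t\to d_A^2/4$ plus lower semicontinuity actually gives is $\liminf_{t\to0}\int_X\vz\,d\Gamma(u_t,u_t)\ge\int_X\vz\,d\Gamma(d_A^2/4,\,d_A^2/4)$, whose density is $(d_A/2)^2\frac{d}{dm}\Gamma(d_A,d_A)\le(d_A/2)^2$, so the inequality points the wrong way for your purpose. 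The paper avoids pointwise gradient estimates altogether: writing $u_t=-t\log T_t1_A$ and testing the heat equation against $\vz$, the chain rule yields the identity \eqref{e6.4}, which expresses the time-averaged energy $\frac1t\int_0^t\int_X\vz\,d\Gamma(u_s,u_s)\,ds$ through integrals of the values $u_s$ alone; assumption (ii) enters only to show the error term $\frac1t\int_0^t s\,\mathscr E(u_s,\vz)\,ds\to0$, the value integrals pass to the limit by Proposition \ref{p6.1} (and \eqref{e6.2}), and one obtains $\int_X\vz\,d\Gamma(u_0,u_0)=\int_X u_0\vz\,dm$, hence $\Gamma(d_A,d_A)=m$ on $A^\complement$ and then (i) by shrinking $A=\overline{B(x_0,r)}$ to a point; the converse direction exploits the same identity with $\vz=1$ together with the $L^1$-convergence $\|u_t\|_{L^1(X)}\to\|u_0\|_{L^1(X)}$. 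Some version of this energy-balance identity (or another substitute for the missing gradient estimates) is what your argument needs in both directions.
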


Notice that \eqref{e6.3} is a weak variant of \eqref{e6.x7} while \eqref{e6.7} has a weak variant
as established in \cite[Theorem 3.10]{r01}.

\begin{prop}\label{p6.1}
Under the assumptions of Theorem \ref{t6.1},
  for every Borel measurable set  $A$ with positive measure and  each
$\vz\in\bd\cap\mathscr C_0( X)$,
\begin{equation}\label{e6.2}
 \int_ X  (-t\log T_t1_A)\vz\,dm\to
\int_ X\vz d_A^2/4\, dm.
\end{equation}
\end{prop}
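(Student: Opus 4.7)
The plan is to derive \eqref{e6.2} from the pointwise small-time asymptotic
\begin{equation*}
-4t\log T_t 1_A(x)\to d_A^2(x)\qquad (t\to 0^+),
\end{equation*}
upgraded to an integral statement by dominated convergence. Once this pointwise limit is established together with a uniform majorant, multiplication by the bounded function $\varphi$ and integration against the finite measure $m$ become routine. Thus the two substantive tasks are (a) the pointwise Varadhan-type asymptotic for $T_t 1_A$, and (b) a uniform-in-$x$ upper bound on $-t\log T_t 1_A(x)$ for small $t$.

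For task (a), I would apply the Laplace method to $T_t 1_A(x)=\int_A T_t(x,y)\,dm(y)$ using \eqref{e6.7}. For the lower bound on $T_t 1_A(x)$, I would fix $\epsilon>0$, choose a near-minimizer $y_0\in\overline{A}$ with $d(x,y_0)\le d_A(x)+\epsilon$, and bound
\begin{equation*}
T_t 1_A(x)\ge\int_{A\cap B(y_0,\epsilon)}T_t(x,y)\,dm(y)\ge m(A\cap B(y_0,\epsilon))\inf_{y\in B(y_0,\epsilon)}T_t(x,y),
\end{equation*}
then invoke a uniform version of \eqref{e6.7} on the compact set $\overline{B(y_0,\epsilon)}$ to get $-4t\log T_t 1_A(x)\le(d_A(x)+2\epsilon)^2+o(1)$. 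For the reverse inequality, the uniform Varadhan upper bound $T_t(x,y)\le\exp(-d^2(x,y)/(4t)+o(1/t))$ on $X\times X$, restricted to $y\in A$, is dominated by $\exp(-d_A^2(x)/(4t)+o(1/t))$; integrating over $A$ (which has finite $m$-measure since $X$ is compact) yields $-4t\log T_t 1_A(x)\ge d_A^2(x)-o(1)$. Letting $\epsilon\to 0$ gives the pointwise limit.

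For task (b) and the final assembly, dominated convergence is the natural tool. Markovianity of $T_t$ gives $0\le T_t 1_A\le 1$, so $-t\log T_t 1_A\ge 0$. Compactness of $X$ and the assumed topological equivalence of $d$ give $\diam X<\infty$, hence $d_A^2/4\le C$ for some finite $C$. The uniform Varadhan upper bound of the previous paragraph then produces an $x$-uniform majorant of the form $-t\log T_t 1_A(x)\le C+1$ for all sufficiently small $t$. Since $\varphi\in\mathscr C_0(X)\cap\bd$ is bounded and $m(X)<\infty$, the integrands $(-t\log T_t 1_A)\varphi$ are dominated by a constant, $m$-integrable function, and dominated convergence combined with the pointwise limit yields \eqref{e6.2}.

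The main obstacle is ensuring the required uniformity in \eqref{e6.7}: the $o(1/t)$ error must be uniform in $(x,y)\in X\times X$, so that the Laplace-type estimates above close up and provide a uniform-in-$(t,x)$ upper bound. This uniformity is not immediate from the pointwise statement \eqref{e6.7} but is built into the cited references under our standing hypotheses (compact $X$, regular strongly local conservative $\mathscr E$, spectral gap, topological equivalence of $d$ with the original topology); once extracted, everything else is standard.
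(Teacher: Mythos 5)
You should first note that the paper does not actually prove Proposition \ref{p6.1}: it is exactly the ``weak variant'' of \eqref{e6.7} that the text attributes to \cite[Theorem 3.10]{r01}, so the paper's justification is a citation, while you are attempting a genuine derivation via a pointwise Varadhan limit plus dominated convergence. That route has two concrete gaps.

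First, the Laplace-method lower bound for $T_t1_A(x)$ is not sound for a general Borel set $A$: choosing $y_0\in\overline A$ with $d(x,y_0)\le d_A(x)+\epsilon$ gives no control on $m(A\cap B(y_0,\epsilon))$, which may vanish, so the key factor in your estimate can be zero. In fact the pointwise limit $-4t\log T_t1_A(x)\to d_A(x)^2$ with the naive metric distance $d_A(x)=\inf_{y\in A}d(x,y)$ is false in general: $T_t1_A$ is unchanged if $A$ is modified by an $m$-null set, while $d_A$ can drop (take $A=B\cup N$ with $N$ a remote null set), so the limit is governed by an essential (measure-theoretic/intrinsic) distance to $A$, which is how the distance to a set is defined in \cite{r01}; your argument would have to select $y_0$ with $m(A\cap B(y_0,\epsilon))>0$ and replace $d_A$ accordingly. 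Second, the uniformity you invoke in \eqref{e6.7} is not available under the hypotheses of Theorem \ref{t6.1}: only compactness, the compatibility of the topologies and a spectral gap are assumed, with no doubling or Poincar\'e inequality, so there is no analogue of the Gaussian bounds \eqref{e4.10}, the kernel $T_t(x,y)$ need not even be continuous or bounded, and the cited results \cite{r01,hr03} provide precisely integrated (set-to-set, i.e.\ weak) asymptotics rather than a pointwise bound with an error $o(1/t)$ uniform over $X\times X$. The assertion that this uniformity ``is built into the cited references'' is exactly where the proof stops; what is built into \cite[Theorem 3.10]{r01} is the weak statement \eqref{e6.2} itself, which is the statement being proved and the route the paper takes.
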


\begin{proof}[Proof of Theorem \ref{t6.1}]
We first prove that (ii) implies (i).
Let $A$ be a Borel measurable set in $ X$, $u_t=-t\log T_t1_A$ for all $t>0$,
and $u_0=d_A ^2/4$. Then $u_t,\,u_0\in\bd_\loc$.  From  $\Gamma(d_{x },\,d_{x })\le m$,
it follows that $\Gamma(u_0,\,u_0)\le  u_0\,m.$ It suffices to prove the converse inequality.

Notice that the strong  locality of $\mathscr E$ implies that
 $\Gamma$ satisfies the Lebniz rule, namely, for $F\in C^1(\rr)$, every $\phi \in\bd\cap L^\fz( X)$
and  $\vz\in\bd\cap\mathscr C_0( X)$,  we have
$$\int_ X \vz\,d\Gamma(F\circ\phi,\,F\circ\phi)= \int_ X \vz (F'\circ\phi)^2\,d\Gamma( \phi,\,\phi),$$
and if  $F\in C^2(\rr)$,
$$\int_ X\vz\Delta (F\circ\phi)\,d\mu=\int_ X \vz (F'\circ\phi) \Delta\phi\,d\mu+
 \int_ X \vz (F''\circ\phi)\,d\Gamma( \phi,\,\phi).
$$

Then   for every   $\vz\in\bd\cap\mathscr C_0( X)$, by
$$\int_ X \vz\lf[\frac{d}{dt}T_t1_A+\Delta T_t1_A\r]\,d\mu=0,$$
we have
$$t\int_ X \frac{d u_t}{dt}\vz\,d\mu-t\int_ X\Delta u_t\vz\,d\mu=\int_ X u_t\vz\,d\mu-
 \int_ X  \vz\,d\Gamma(u_t,\,u_t),$$
from which it follows that
$$t\int_ X\frac{d u_t}{dt}\vz\,d\mu+t\mathscr E(u_t,\,\vz)=\int_ X u_t\vz\,d\mu-
 \int_ X  \vz\,d\Gamma(u_t,\,u_t)$$
and that
\begin{eqnarray}
 &&\frac1{ t}\int_0^t\int_ X  \vz\,d\Gamma(u_s,\,u_s)\,ds\label{e6.4}\\
&&\quad=\frac1t\int_0^t\int_ X u_s\vz\,d\mu\,ds-\frac1t\int_0^t\int_ X   s\frac{du_s}{ds} \vz\,d\mu\,ds-\frac1t\int_0^ts\mathscr E(u_s,\,\vz)\,ds.\nonumber
\end{eqnarray}

Let $\wz\vz\in \bd\cap\mathscr C_0( X)$ such that $\wz\vz=1$ on the support of $\vz$.
Notice that
$$ |\mathscr E(u_s,\,\vz)|=\lf|\int_ X \wz\vz\,d\Gamma(u_s,\,\vz)\r|
\le  \mathscr E(\vz,\,\vz)\int_ X(\wz\vz)^2\,d\Gamma(u_s,\,u_s ). $$
Then, by \eqref{e6.3}, we know that
$\mathscr E(u_s,\,\phi)$ is uniformly  bounded with respect to $s$.
Hence
\begin{equation}\label{e6.5}
 \frac1t\int_0^ts\mathscr E(u_s,\,\vz)\,ds\to0
\end{equation}
 as $t\to0$.

By \eqref{e6.2},  for every $\vz\in\bd\cap\mathscr C_0( X)$,
$$ \int_ X u_0\vz\,d\mu=\lim_{s\to0}\int_ X u_s\vz\,d\mu.
$$
Hence
$$\frac1t\int_0^t\int_ X u_s\vz\,d\mu\,ds\to \int_ X u_0\vz\,d\mu$$
and
$$
\frac1t\int_0^t\int_ X s\frac{du_s}{ds} \vz\,d\mu=\lim_{\ez\to0}
\frac st\int_ X   u_s  \vz\,d\mu\Big|_{s=\ez}^t- \frac1t\int_0^t\int_ X   u_s  \vz\,d\mu\,ds
\to 0 $$
as $t\to0$.
From these two facts,   \eqref{e6.4} and \eqref{e6.5}, it follows that
$$\frac1{ t}\int_0^t\int_ X  \vz\,d\Gamma(u_s,\,u_s)\,ds\to \int_ X u_0\vz\,d\mu$$
as $t \to0$,
which together with \eqref{e6.3} implies
\begin{equation}\label{e6.6}
\int_ X  \vz\,d\Gamma(u_0,\,u_0)=  \int_ X  u_0\vz\,d\mu.\end{equation}
This  gives $\Gamma(u_0,\,u_0)=  u_0\,\mu$. Moreover, if $A$ is compact, then for all $\vz\in\mathscr C_0( X)$
with $\mathrm {supp}\, \vz\subset A^\complement$,
 $$\int_ X\vz\,d\Gamma(d_A,\,d_A)=  4 \int_ X\vz\,d\Gamma(\sqrt {u_0},\,\sqrt{u_0})=  \int_ X\vz\frac1{ u_0}\,
d\Gamma( {u_0},\,  {u_0})= \int_ X\vz\,d\,\mu,$$
which means that $\Gamma(d_A,\,d_A)=\mu$.
Since $d_{\overline{B(x_0,\,r)}}\to d_{x_0}$ in $\bd_\loc$ as $r\to0$, we have (i).

Now we turn to prove that (i) implies (ii).
 It suffices to prove that $\mathscr E(u_0,\,u_0)\ge \|u_0\|_{L^1(X)}$.
Indeed, from this and $\Gamma(u_0,\,u_0)\le  u_0m$, it follows that $ u_0-\frac{d}{dm}\Gamma(u_0,\,u_0)=0$ almost everywhere,
which further implies that $\frac{d}{dm}\Gamma(d_A,\,d_A)=1$ almost everywhere on $A^\complement$, and hence gives (i).
We first observe that, by our assumption (ii),
\begin{equation}\label{e6.x1}
 \mathscr E(u_0,\,u_0)= \lim_{s\to0}\mathscr E(u_s,\,u_s)= \lim_{t\to0}\frac1t\int_0^t \mathscr E(u_s,\,u_s)\,dt.
\end{equation}
But,  taking $\vz=1$, \eqref{e6.4} yields  that
\begin{eqnarray*}
 \frac1t\int_0^t \mathscr E(u_s,\,u_s)\,dt&&= \frac1t\int_0^t \|u_s\|_{L^1(X)}\,ds-
\frac1t\int_0^ts\frac d{ds}\|u_s\|_{L^1(X)}  \,ds  \\
&&=\frac1t\int_0^t \|u_s\|_{L^1(X)}\,ds- \frac1t(s\|u_s\|_{L^1(X)})\Big|_{s\to0}^{s=t}+
\frac1t\int_0^t \|u_s\|_{L^1(X)}  \,ds  \\
&&=2\frac1t\int_0^t \|u_s\|_{L^1(X)}\,ds- \|u_t\|_{L^1(X)}.
\end{eqnarray*}
Then, by \eqref{e6.2} as given in Proposition \ref{p6.1},  taking $\vz=1$, we  have
$\|u_t\|_{L^1(X)}\to \|u_0\|_{L^1(X)}$ as $t$ tends to  $0$, which yields that
 $$\lim_{t\to0}\frac1t\int_0^t \mathscr E(u_s,\,u_s)\,dt= \|u_0\|_{L^1(X)}.$$
Combining this with \eqref{e6.x1}, we have  $\mathscr E(u_0,\,u_0)\ge \|u_0\|_{L^1(X)}$
as desired.
\end{proof}

\begin{rem}\rm
There exist a large variety of $(X,\,\mathscr E,\,m)$ satisfying
$\Gamma (d_x,\,d_x)=m$ for all $x\in X$,
including compact Riemannian manifolds, compact Alexandrov spaces, and the
Sierpinski gasket considered in Section 3.
Theorem \ref{t6.1} (ii) then gives the short time asymptotics of the gradient of the heat kernel for them.
\end{rem}

%{\bf Acknowledgements.}
%The authors would like to thank Professor Kigami and Kumagai for some helpful discussion
%on this subject.

\noindent Pekka Koskela

\noindent Department of Mathematics and Statistics,
P. O. Box 35 (MaD),
FI-40014, University of Jyv\"askyl\"a,
Finland
\smallskip

\noindent{\it E-mail address}:   \texttt{pkoskela@maths.jyu.fi}

\bigskip

\noindent Yuan Zhou

\medskip

\noindent
Department of Mathematics, Beijing University of Aeronautics and Astronautics, Beijing 100191, P. R. China

and

\noindent Department of Mathematics and Statistics,
P. O. Box 35 (MaD),
FI-40014, University of Jyv\"askyl\"a,
Finland

\smallskip

\noindent{\it E-mail address}:  \texttt{yuanzhou@buaa.edu.cn}

\end{document}